\DeclareMathAlphabet{\mathpzc}{OT1}{pzc}{m}{it} 
\DeclareMathAlphabet{\mathpzc}{OT1}{pzc}{m}{it}
\title{{\LARGE Homotopy groups of highly connected manifolds}}
\author{Samik Basu, Somnath Basu}
\date{}
\theoremstyle{plain}
\newtheorem{theorem}{Theorem}[section]
\newtheorem{prop}[theorem]{Proposition}
\newtheorem{lemma}[theorem]{Lemma}
\newtheorem{cor}[theorem]{Corollary}
\theoremstyle{definition}
\newtheorem{defn}[theorem]{Definition}
\newtheorem{rmk}[theorem]{Remark}
\newtheorem{exam}[theorem]{Example}
\newcommand{\C}{{\mathbb C}}
\newcommand{\Hy}{{\mathbb H}}
\newcommand{\Z}{{\mathbb{Z}}}
\newcommand{\Q}{{\mathbb Q}}
\newcommand{\Oc}{{\mathbb O}}
\newcommand\LL{{\mathcal L}}
\newcommand\MM{{\mathcal M}}
\newcommand\PP{{\mathcal P}}
\newcommand\RR{{\mathcal R}}
\newcommand\PMF{{\PP\kern-2pt\MM\FF}}
\newcommand\PML{{\PP\kern-2pt\MM\LL}}
\newcommand\ep{\epsilon}
\newcommand{\fsubd}{\mathrel{{\scriptstyle\searrow}\kern-1ex^d\kern0.5ex}}
\newcommand{\bsubd}{\mathrel{{\scriptstyle\swarrow}\kern-1.6ex^d\kern0.8ex}}
\newcommand{\fsubeq}{\mathrel{\raise-.7ex\hbox{$\overset{\searrow}{=}$}}}
\newcommand{\bsubeq}{\mathrel{\raise-.7ex\hbox{$\overset{\swarrow}{=}$}}}
\newcommand{\tsh}[1]{\left\{\kern-.9ex\left\{#1\right\}\kern-.9ex\right\}}
\newcommand{\dimn}{\mathit{dim}}
\newcommand{\Lie}{\mathit{Lie}}
\newcommand{\rank}{\mathit{rank}}
\newcommand{\bgc}{\begin{center}}
\newcommand{\edc}{\end{center}}
\newtheorem*{thma}{Theorem A}
\newtheorem*{thmb}{Theorem B}
\newtheorem*{thmc}{Theorem C}
\newtheorem*{thmd}{Theorem D}
\numberwithin{equation}{section}
\newcommand{\subjclass}[4]{%
  \let\@oldtitle\@title%
  \gdef\@title{\@oldtitle\footnotetext{2010 \emph{Mathematics subject classification.} #1, #2 (primary); #3, #4 (secondary).}}%
}
\newcommand{\keywords}[5]{%
  \let\@@oldtitle\@title%
  \gdef\@title{\@@oldtitle\footnotetext{\emph{Key words and phrases.} #1, #2, #3, #4, #5.}}%
}
\begin{document}

\subjclass{16S37}{55Q52}{55P35}{57N15}
\keywords{Homotopy groups}{Koszul duality}{Loop space}{Moore conjecture}{Quadratic algebra}

\maketitle

\begin{abstract}
In this paper we give a formula for the homotopy groups of $(n-1)$-connected $2n$-manifolds as a direct sum of homotopy groups of spheres in the case the $n^\textup{th}$ Betti number is larger than $1$. We demonstrate that when the $n^\textup{th}$ Betti number is $1$ the homotopy groups might not have such a decomposition. The techniques used in this computation also yield formulae for homotopy groups of connected sums of sphere products and CW complexes of a similar type. In all the families of spaces considered here, we establish a conjecture of J. C. Moore.
\end{abstract}

\vspace*{0.2cm}

\tableofcontents
\vspace*{0.75cm}

\setcounter{section}{0}
\setcounter{tocdepth}{2}

\section{Introduction}

One of the primary invariants associated to a topological space are its homotopy groups. Although easier to define than homology groups, these are notoriously hard to compute even for simple examples. For a simply connected space $X$, all the homotopy groups are abelian. In his celebrated thesis \cite{Ser51}, Serre proved that $\pi_i(S^n)$ is torsion unless $i=0,n$, or when $n$ is even $\pi_{2n-1}(S^n)$ has a $\Z$-summand. For $n=2$ this goes back to the existence of the Hopf map $f:S^3\to S^2$. In fact, Serre also showed that a simply connected, finite CW complex $X$ has infinitely many non-zero homotopy groups. He conjectured that such a space with non-trivial $\Z/p\Z$ cohomology has the property that $\pi_n(X)$ contains $\Z/p\Z$ for infinitely many values of $n$. This was proved later by McGibbon and Neisendorfer \cite{McGN85} as an application of an amazing theorem of Miller \cite{Mill84}, which itself was proving a special case of the Sullivan's conjecture on the fixed point set in group actions of a finite group.

Following Serre, computing homotopy groups of spheres has been a point of widespread research. Results about the homotopy groups of manifolds and associated CW complexes have also been of interest. In such cases results are typically of the form that the homotopy groups of a manifold are related to the homotopy groups of spheres via some formula. Leaving aside aspherical manifolds, the real and complex projective spaces have homotopy groups which are well-understood as they support a sphere bundle whose total space is a sphere. In 1955, Hilton \cite{Hil55} computed the homotopy of groups of a finite wedge of spheres\footnote{Hilton calls this a finite {\it union} of spheres.}, as a direct sum of homotopy groups of spheres.  The various spheres were mapped to the wedge by iterated Whitehead products inducing a decomposition of the (based) loop space. In 1972, Milnor's unpublished note \cite{Ada72} generalized Hilton's result to prove that the loop space of the suspension of a wedge of spaces can be written as an infinite product of loop spaces of suspensions of smash products. This version is usually referred to as the Hilton-Milnor Theorem. 

This paper deals with explicit computations of homotopy groups of certain manifolds and allied applications. Our main results involve computations for $(n-1)$-connected $2n$-manifolds and similar CW complexes which are next in line (in terms of complexity of cell structures) from a wedge of spheres. The $(n-1)$-connected $2n$-manifolds were considered by Wall \cite{Wall64} as generalizations of simply connected $4$-manifolds. The theory of these four manifolds has in itself been a topic of considerable interest. The classification of simply connected four manifolds upto homotopy type was achieved in the early works of Whitehead and Milnor and the homeomorphism classification is a celebrated result of Freedman. 

A simply connected $4$-manifold $M$ may be expressed upto homotopy as a CW complex with $r$ two-cells and one four-cell, where $r$ is the second Betti number of $M$. Therefore the homotopy groups of $M$ must be determined by the integer $r$ and the attaching map of the four-cell. Curiously, if $r\geq 1$, one may construct a circle bundle over $M$ whose total space is a connected sum of $(r-1)$ copies of $S^2\times S^3$ (cf. \cite{BaBa15,DuLi05}). It follows that the homotopy groups, upto isomorphism, depend only on the integer $r$ and {\it not} on the attaching map.  

The above arguments present us with two issues. One, whether it is possible to write down an expression for the homotopy groups in terms of homotopy groups of spheres by computing the homotopy groups of the relatively explicit CW complex $\#^{r-1}(S^2 \times S^3)$. Two, the arguments used above are typically geometric relying on the classification of spin $5$-manifolds by Smale \cite{Sma62}. Thus, a natural question is whether one can prove the same theorem using homotopy theoretic methods. In this paper we answer both these questions. 

For the second question above, we compute the homotopy groups of a $(n-1)$-connected $2n$-manifold provided the Betti number of the middle dimension is at least $2$. We prove the following result (cf. Theorem \ref{htpy} and Theorem \ref{htpyform}).
\begin{thma}
{\it Let $M$ be a closed $(n-1)$-connected $2n$-manifold with $n\geq 2$ and $n^\textit{th}$ Betti number $r\geq 2$.  \\
\textup{(a)} The homotopy groups of $M$ are determined by $r$.\\
\textup{(b)} The homotopy groups of $M$ can be expressed as a direct sum of homotopy groups of spheres.   
}
\end{thma}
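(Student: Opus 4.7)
The plan is to study the based loop space $\Omega M$ and show it decomposes, up to weak equivalence, as a weak product of loop spaces of spheres; via $\pi_i(M) \cong \pi_{i-1}(\Omega M)$ this decomposition will yield both parts of the theorem. The central tool is Koszul duality applied to a quadratic algebra built from the cup product structure on $H^*(M; k)$, and the assumption $r \geq 2$ is exactly what makes that algebra Koszul.

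First I would invoke Wall's classification to write $M \simeq (\bigvee_{i=1}^r S^n) \cup_\phi D^{2n}$, so that the length-two Hilton summand of $\phi \in \pi_{2n-1}(\bigvee^r S^n)$ consists of Whitehead brackets $[\iota_i, \iota_j]$ with coefficients forming the unimodular intersection matrix $Q$. Setting $V = H^n(M; k)$ and letting $R \subseteq V \otimes V$ be the kernel of the cup product pairing $V \otimes V \to H^{2n}(M; k) \cong k$, I would analyze the quadratic algebra $A = T(V)/(R)$. When $r \geq 2$, $R$ has codimension one in $V \otimes V$, and a direct Koszul-complex calculation shows $A$ is Koszul, in contrast to the case $r = 1$ in which one recovers $k[x]/(x^2)$. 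The Koszul dual $A^!$ is then a one-relator quotient of the free associative algebra on $r$ generators, dual to $Q$.

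I would next identify $H_*(\Omega M; k)$ with $A^!$ using the cobar / Adams-Hilton construction, realize this algebraic splitting topologically by producing, for each element of a Hall-type basis of $A^!$, a map $\Omega S^{n_\alpha} \to \Omega M$ built from iterated Samelson products of adjoints of the spherical generators of $\pi_n(M)$, and then assemble these into a Hilton-Milnor style product $\prod_\alpha \Omega S^{n_\alpha} \to \Omega M$. A mod-$p$ homology comparison with the Koszul computation shows the assembled map is a weak equivalence, which yields the direct sum decomposition in (b). Part (a) follows because both the Hall basis and the participating sphere degrees are determined by $A^!$ up to isomorphism, and any two unimodular forms of the same rank $r$ produce isomorphic $A^!$ over $k$, so the loop spaces become abstractly equivalent even when the underlying manifolds are not homotopy equivalent. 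The principal obstacle is this final realization step: lifting the Koszul-theoretic homology splitting to a genuine loop-space weak equivalence rather than a mere homology isomorphism, which will require careful Hilton-Milnor assembly and control of $p$-primary obstructions at each prime.
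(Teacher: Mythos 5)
Your proposal follows essentially the same strategy as the paper: present $H^*(M;k)$ as a one-relator quadratic algebra, prove it is Koszul when $r\geq 2$, use the Adams--Hilton/cobar construction to compute $H_*(\Omega M)$ as the Koszul dual one-relator algebra, realize a Hall-type (the paper uses Lyndon) basis of the associated Lie algebra by iterated Whitehead/Samelson products, and show the resulting map $\prod_i \Omega S^{h_i}\to\Omega M$ is a homology isomorphism, hence a weak equivalence between simple spaces. That is exactly the architecture of Theorems \ref{manhomloop} and \ref{htpy}.

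Two points in your sketch are stated loosely and would need repair in a full write-up. First, your justification of part (a) --- ``any two unimodular forms of the same rank $r$ produce isomorphic $A^!$ over $k$'' --- is false as stated: $T(V)/(\mathpzc{r})$ with $\mathpzc{r}$ a symmetric relator is determined, as an algebra, by the \emph{similarity class} of the form, and for example $x_1^2+x_2^2$ and $x_1^2-x_2^2$ are not similar over $\Q$. What is actually true, and what the paper uses, is that the \emph{graded dimensions} of $A^!$ (and hence of the Lyndon/Hall basis in each degree) depend only on $r$, by the Diamond Lemma: for a nonsingular relator one can always normalize it to the form $x_\alpha x_\beta = (\text{lower-order terms})$, so the irreducible-monomial basis has the same count in each weight regardless of the specific form. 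Part (a) then follows from the resulting numerical formula (Theorem \ref{htpyform}), not from an algebra isomorphism.

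Second, and more substantively: when $n$ is even the relator $l(M)=\sum g_{i,j}a_ia_j$ is \emph{symmetric}, so it does not lie in the ungraded free Lie algebra on $a_1,\dots,a_r$. The paper handles this by introducing two Lie algebras --- the graded $\LL^{gr}_r(M)$ with relator $l(M)$ (which needs the squaring operation $x\mapsto x^2$ on odd-degree classes to even make sense as a graded Lie ideal), and the ungraded $\LL^u_r(M)$ with relator $l^u(M)=\sum_{i<j}g_{i,j}[a_i,a_j]$ --- and then comparing them via the observation that $\rho(b(l_i))\equiv b(l_i)$ modulo lower-weight terms (equation \eqref{rhobl}), together with a dimension count using the Diamond Lemma and the graded PBW theorem over $\Z$. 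Your sketch's reference to ``Hilton--Milnor assembly and control of $p$-primary obstructions'' gestures at this step but does not name the actual obstacle. Without the graded/ungraded comparison, the weight-$2$ square classes $a_i^2$ in $\LL^{gr}_r(M)$ have no Lyndon-word representative and the claimed basis is wrong for $n$ even. Also, a minor slip: when $r=1$ the relevant ring is $k[x]/(x^3)$, not $k[x]/(x^2)$; the point is that a cubic relation forces $H^*(M)$ out of the quadratic world, which is why $r\geq 2$ is needed.
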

\noindent In fact, the number of $\pi_jS^l$ occuring in $\pi_jM$ is also determined (cf. Theorem \ref{htpyform}). From this expression the results in \cite{BaBa15} about the rational homotopy groups may also be deduced. A result of such a computational nature is of general significance and interest. Quite curiously and a bit unexpectedly, the case when the $n^\textup{th}$ Betti number is $1$ turns out to be different. Note that this implies that there exists a Hopf invariant one element in $\pi_{2n-1}S^n$ so that $n=2,4,$ or $8$. When $n=2$, Theorem A holds for the case $r=1$. However for $n=4$ or $8$, Theorem A does not extend to the case $r=1$. In fact, a hidden relationship with the homotopy associativity of $H$-space structures on $S^3$ and $S^7$ is revealed. However, when some primes are inverted one obtains an analogous version of Theorem A for the case $r=1$. 

The primary techniques used in the proof of Theorem A are quadratic algebras, Koszul duality of associative algebras and quadratic Lie algebras. These techniques are used to compute the homology of the loop space of the manifold using the cobar construction on the homology of $M$. A naive version of these ideas was used in an earlier paper \cite{BaBa15}. 

The techniques developed to prove Theorem A may be used to thoroughly analyze homotopy groups of other spaces. Let $T$ be the manifold obtained by connected sum of sphere products $\{S^{p_j}\times S^{n-p_j}\}_{j=1}^r$, where each of the constituent sphere is simply connected. That is, $T$ is determined by the numbers $p_j$ and a choice of orientation on each $S^{p_j}\times S^{n-p_j}$. We have the following result (cf. Theorem \ref{htpyT} and Theorem \ref{htpybetti}).
\begin{thmb}
{\it Let $T$ be a connected sum of sphere products.  \\
\textup{(a)} The homotopy groups of $T$ do not depend on the choice of orientation used in the connected sum.\\
\textup{(b)} The homotopy groups of $T$ can be expressed as a direct sum of homotopy groups of spheres.   
}
\end{thmb}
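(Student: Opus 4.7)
The plan is to exploit the canonical cell structure of $T$ and then feed it into the quadratic-algebra / Koszul-duality machinery used to establish Theorem~A. Each factor $S^{p_j}\times S^{n-p_j}$ has the standard CW decomposition with $(n-1)$-skeleton $S^{p_j}\vee S^{n-p_j}$ and top cell attached by the Whitehead product $[\iota_{p_j},\iota_{n-p_j}]$, and the connected sum operation glues the top cells along their boundaries. Consequently $T$ admits a CW structure with $(n-1)$-skeleton the wedge
\[
W \;=\; \bigvee_{j=1}^{r}\bigl(S^{p_j}\vee S^{n-p_j}\bigr)
\]
together with a single $n$-cell attached by an element of $\pi_{n-1}(W)$ of the form $\sum_{j=1}^{r}\epsilon_j\,[\iota_{p_j},\iota_{n-p_j}]$, where $\epsilon_j\in\{\pm 1\}$ records the orientation of the $j$-th summand.

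For part~(a), I would show that every such sign pattern can be transported to the standard pattern $\epsilon_1=\cdots=\epsilon_r=1$ by a self-homotopy-equivalence of $W$. Indeed, the degree $-1$ self-map on the $j$-th $S^{p_j}$ extends by the identity on the remaining summands to a self-equivalence $\phi_j$ of $W$ with $\phi_j\circ\iota_{p_j}\simeq-\iota_{p_j}$; by bilinearity of the Whitehead product, precomposing the attaching map with $\phi_j$ inverts precisely the sign $\epsilon_j$. Iterating this for the appropriate indices carries any $(\epsilon_1,\dots,\epsilon_r)$ to $(+1,\dots,+1)$, so all orientation choices produce homotopy-equivalent adjunction spaces and in particular identical homotopy groups.

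For part~(b) I would identify the cohomology ring $H^*(T)$ (over a suitable coefficient ring) as a quadratic algebra with generators $x_j$ in degree $p_j$ and $y_j$ in degree $n-p_j$, subject to the quadratic relations
\[
x_ix_j=0,\qquad y_iy_j=0,\qquad x_iy_j=0\ (i\neq j),\qquad x_iy_i-x_jy_j=0,
\]
where the last relation encodes the single top class, signs having been normalized by part~(a). Applying the cobar construction together with Koszul duality, as in the treatment of Theorem~A, one then presents $H_*(\Omega T)$ as the universal enveloping algebra of the quadratic-dual graded Lie algebra $L$, generated by $u_j$ of degree $p_j-1$ and $v_j$ of degree $n-p_j-1$ modulo the single relation $\sum_{j=1}^{r}[u_j,v_j]=0$. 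A Hall-basis analysis should then split $L$ as a direct sum of free graded Lie algebras on certain iterated bracket generators, so that by Milnor--Moore and Poincar\'e--Birkhoff--Witt the loop space homology of $T$ factors as the tensor product of the loop space homologies of spheres indexed by those generators.

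The final step would lift this algebraic splitting geometrically, by realizing the Lie generators of $L$ via iterated Whitehead products of the classes $\iota_{p_j},\iota_{n-p_j}$ inside $W$ and pushing them forward into $T$ to assemble a map from a weak infinite product $\prod_\alpha \Omega S^{d_\alpha}$ into $\Omega T$ inducing an isomorphism on integral homology; since all spheres in sight are simply connected this is a weak equivalence, and passing to $\pi_*$ yields the claimed expression of $\pi_*T$ as a direct sum of homotopy groups of spheres. The main obstacle, as in Theorem~A, is the algebraic heart of the argument: proving Koszulness of $H^*(T)$ and carrying out the combinatorial decomposition of $L$ in the presence of the single relation $\sum_j[u_j,v_j]=0$. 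The case $r=1$ reduces to the trivial identification $\Omega(S^p\times S^{n-p})\cong\Omega S^p\times\Omega S^{n-p}$, so the machinery really bites for $r\geq 2$, paralleling the role of Betti number $r\geq 2$ in Theorem~A.
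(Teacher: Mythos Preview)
Your overall strategy for part~(b) matches the paper's: present $H_*(\Omega T;\Z)$ via the cobar construction as $T_\Z(u_1,\dots,u_r,v_1,\dots,v_r)/(\sum_j\epsilon_j[u_j,v_j])$, recognize this as the universal enveloping algebra of a quadratic Lie algebra, extract a basis of iterated brackets, realize them as Whitehead products, and assemble a homology equivalence $\prod_\alpha\Omega S^{d_\alpha}\to\Omega T$. One phrasing should be corrected: the Lie algebra $L$ does \emph{not} split as a direct sum of free graded Lie algebras. What actually happens (and what the paper does, using Lyndon rather than Hall words) is that $L$ is a free $\Z$-module with an explicit basis of iterated brackets; PBW then gives $U(L)$ a monomial basis in these, which matches the polynomial algebra $H_*(\prod_\alpha\Omega S^{d_\alpha})$. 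The paper invests real effort in making this work integrally---showing the quadratic algebra and the Lie algebra are free over~$\Z$ via the Diamond Lemma and Lyndon bases---and you should not elide this under ``suitable coefficient ring''.

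For part~(a) you take a genuinely different and more direct route. The paper does not normalize the signs at all: it carries the $\epsilon_j$ through the entire computation and observes a posteriori that the Lyndon basis of $L$ (the words avoiding $u_1v_1$) and hence the list of heights $h_i$ is independent of the $\epsilon_j$, so the final formula in Theorem~\ref{htpyTform} sees only the dimensions $p_j$. Your argument---flip the sign of $\iota_{p_j}$ by a degree~$-1$ self-map of that wedge summand, use bilinearity of the Whitehead product to flip $\epsilon_j$, and conclude the cofibers are homotopy equivalent---is correct, shorter, and yields the stronger conclusion that the various $T$ are actually homotopy equivalent, not merely that their homotopy groups agree. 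The paper's route, on the other hand, makes the independence visible at the level of the explicit counting formula.
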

\noindent As before, the number of copies of $\pi_jS^l$ in $\pi_jT$ is computed (cf. Theorem \ref{htpyTform}). We also apply these ideas to CW complexes of the type $X= \vee_r S^n \cup e^{2n}$. We consider the quadratic form $Q$ given by the cup product of $X$ and obtain similar results after inverting a finite set  of primes (cf. Theorem \ref{cellhtpy} and Theorem \ref{cellhtpyform}). 
\begin{thmc}
{\it Let $X$ be as above and suppose that $\mathit{Rank}(Q\otimes \Q) \geq 2$. Then there exists a finite set of primes $\Pi_Q$ so that \\
\textup{(a)} The homotopy groups $\pi_j X \otimes \Z_{(p)}$ depends only on $r$ if $p\notin \Pi_Q$,\\
\textup{(b)} The homotopy groups  $\pi_j X\otimes \Z_{(p)}$ can be expressed as a direct sum of homotopy groups of spheres if $p\notin \Pi_Q$.   
}
\end{thmc}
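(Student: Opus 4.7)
The plan is to mimic the strategy behind Theorems A and B, translating the problem into a question about the homology algebra $H_*(\Omega X;\Z_{(p)})$ via the cobar construction, and then bringing the quadratic form $Q$ into a ``standard'' shape after inverting a finite set of bad primes. First I would compute $H_*(X;\Z)$: it is $\Z$ in degree $0$ and $2n$ and a free abelian group of rank $r$ in degree $n$, with a single product relation $Q$ in the middle dimension. Because the attaching map of the top cell lives in $\pi_{2n-1}(\vee_r S^n)$, and this group is (after inverting $2$ if $n$ is even) generated by Whitehead products together with Hopf invariant classes, the cup product form $Q$ on $H^n(X)$ essentially determines the homotopy type of $X$ away from a controlled set of primes.

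The next step is to apply the cobar construction to $H_*(X;\Z_{(p)})$, following the analysis carried out in the proof of Theorem A. The Adams--Hilton model (or equivalently the Adams cobar on the coalgebra $H_*(X)$) is a quadratic associative algebra with $r$ generators in degree $n-1$ modulo one relation corresponding to the cup product form $Q$. For the manifold case the relation is hyperbolic (a sum of terms $x_iy_i-y_ix_i$ after base change) by Poincar\'e duality, and this is exactly what makes the algebra Koszul and the Koszul dual a free Lie algebra after a shift. In the present setting, the relation is encoded by $Q$ alone, so the idea is to diagonalize or hyperbolize $Q$ over $\Z_{(p)}$.

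The key lemma I would prove is that there is a finite set of primes $\Pi_Q$, depending only on the discriminant and torsion invariants of $Q$, such that for $p\notin \Pi_Q$ the form $Q\otimes\Z_{(p)}$ is split into a sum of hyperbolic planes (plus possibly trivial summands when the rank of $Q$ is less than $r$). Once this is achieved, the quadratic associative algebra arising from the cobar construction is, over $\Z_{(p)}$, isomorphic to the one appearing in the proof of Theorem A for the connected sum $\#^s(S^n\times S^n)$ where $2s=\mathit{Rank}(Q\otimes\Q)$, wedged with trivial generators for the remaining $S^n$ summands. In particular it is Koszul, its Koszul dual is the universal enveloping algebra of a free graded Lie algebra, and so $H_*(\Omega X;\Z_{(p)})$ is the tensor algebra on an explicit free module whose Poincar\'e series is determined entirely by $r$.

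From this algebraic identification I would, as in the proofs of Theorems A and B, build a map from a weak infinite product of loop spaces of spheres into $\Omega X$ that is an isomorphism on $\Z_{(p)}$-homology for $p\notin\Pi_Q$, hence a $\Z_{(p)}$-equivalence, which yields statement (b) and the rank formula; statement (a) then follows because the output depends only on $r$ and the hyperbolic splitting. The main obstacle I expect is the precise identification of $\Pi_Q$: one must control not only the primes dividing the discriminant of $Q$ but also the primes where Hopf invariant one phenomena or divided power issues in the cobar construction destroy Koszulness. Handling these is precisely the point where the hypothesis $\mathit{Rank}(Q\otimes\Q)\geq 2$ is needed, in parallel with the $r\geq 2$ hypothesis in Theorem A, so as to avoid the exceptional $n=2,4,8$ behavior discussed after the statement of that theorem.
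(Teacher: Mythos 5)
Your overall architecture — cobar construction, identification of $H_*(\Omega X;\Z_{(p)})$ as a quadratic algebra with a single relation coming from $Q$, and then building a homology-isomorphism from a product of loop spaces of spheres — matches the paper's strategy. However, your proposed key lemma is false, and the subsequent algebra identification does not hold.

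You claim that for $p\notin\Pi_Q$ the form $Q\otimes\Z_{(p)}$ splits as a sum of hyperbolic planes plus a trivial summand. For $n$ odd (skew-symmetric $Q$) this is fine, but for $n$ even this is wrong: a nondegenerate symmetric bilinear form over $\Z_{(p)}$ with $p$ odd is diagonalizable, not hyperbolic, and its isomorphism class carries a discriminant invariant modulo squares. For instance $\langle 1,1\rangle$ is hyperbolic over $\Z_{(p)}$ only when $-1$ is a square, which excludes roughly half the primes, not a finite set. Consequently the follow-up claim that the resulting quadratic algebra is isomorphic to the one for $\#^{s}(S^n\times S^n)$ also fails. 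The paper avoids this entirely: the set $\Pi_Q$ is defined merely as the primes dividing the determinant of a fixed $2\times 2$ minor of $Q$ of nonzero determinant (so that $\mathit{Rank}(Q\otimes\Z/p)\geq 2$), and what is used is not an explicit normal form for $Q$ but rather the Koszulness of $A(V,R)$ for \emph{any} rank-$\geq 2$ symmetric or anti-symmetric one-dimensional relation (Lemma \ref{Kos3}, Proposition \ref{Kos2}, Proposition \ref{man-Kos}), together with the Diamond Lemma (Proposition \ref{freemod}). The dependence on $r$ alone in part (a) is then read off from the Poincar\'e series of $T(a_1,\dots,a_r)/(l)$, which is $1/(1-rt^{n-1}+t^{2n-2})$ regardless of the isomorphism class of $Q$, and from the resulting Lyndon/PBW count of Lie algebra basis elements (Theorem \ref{cellhtpyform}). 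Your argument should be repaired by replacing the hyperbolization lemma with the generating-series computation: the multiplicities of $\pi_*(S^m)$ are determined by the graded dimensions of $\LL^u_r$, which depend only on $r$, not on an isomorphism of algebras.

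One smaller imprecision: you say the quadratic algebra is that of $\#^s(S^n\times S^n)$ ``wedged with trivial generators.'' The correct algebraic statement, once the rank-$2s$ relation has been pushed into the span of the first $2s$ generators, is a free product of associative algebras (coproduct), not a tensor product or direct sum; this is what ultimately feeds into the loop space decomposition $\Omega X\simeq_{R_Q}\Omega(J\vee(J\wedge\Omega(S^n\times S^n)))$ proved at the end of \S\ref{OmM}.
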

The condition $\mathit{Rank}(Q\otimes \Q) \geq 2$ is analogous to the condition that the $n^\textup{th}$ Betti number is at least $2$ in Theorem A.

The homotopy groups of a simply connected space, being abelian, splits into the torsion part and the free part. The free part may be studied via rational homotopy theory. A dichotomy presents itself here - either the sum of the ranks of the rational homotopy groups is finite (such spaces are called {\it rationally elliptic}) or it is not. In the latter case, the spaces are called {\it rationally hyperbolic} and it is known that the partial sums of the ranks of the rational homotopy groups grow at least exponentially. It follows that a space is rationally elliptic if there exists $l$ such that the dimension of $\pi_i X\otimes \Q$ is at most $l$ for any $i$. Otherwise, a space is rationally hyperbolic and no such $l$ exists. 

To study the torsion part, an analysis of the $p$-primary torsion of $\pi_\ast X$ for each prime $p$ has to be made. We may treat the dichotomy in rational homotopy groups as a property of the $0$-primary part. It is therefore natural to ask for suitable conditions on the space such that for a fixed prime $p$, there is a {\it finite $p$-exponent}, i.e., an exponent $s$ such that $p^s$ annihilates the $p$-torsion in $\pi_\ast X$ and $s$ is the smallest such integer. A precise formulation (\cite{FHT01}, pp. 518) is as follows.\\[0.2cm]
\noindent{\bf Conjecture (Moore)}\\
{\it Let $X$ be a finite, simply connected CW complex. It is rationally elliptic if and only if for each prime $p$ some $p^r$ annihilates all the $p$-primary torsion in $\pi_\ast X$.}\\[0.2cm]
Along with Serre's conjecture on $p$-torsion in homotopy groups, this conjecture has been a focus of research in unstable homotopy theory since the late 1970's when it was introduced by J. C. Moore. It has been verified for spheres \cite{CMN79}. It is also known \cite{McGW86} that for elliptic spaces, away from a finite set of primes, Moore conjecture holds.

In all the families mentioned in the results so far, the above conjecture can be established. We have the following result (cf. Theorem \ref{Moorehcm}, Theorem \ref{MooreT}, Theorem \ref{MooreCW} and Remark \ref{MooreX}).
\begin{thmd}
{\it \textup{(a)} Consider any closed $(n-1)$-connected $2n$-manifold $M$ for $n\geq 2$. Then $M$ has finite $p$-exponents for any prime $p$ if and only if $M$ is rationally elliptic.\\
\textup{(b)} Consider any connected sum of sphere products $T$. Then $T$ has finite $p$-exponents for any prime $p$ if and only if $T$ is a sphere product if and only if $T$ is rationally elliptic.\\
\textup{(c)} Consider any CW complex $X$ as in Theorem C with $r\geq 3$. Then $X$ is rationally hyperbolic and except for finitely many primes has unbounded $p$-exponents.}
\end{thmd}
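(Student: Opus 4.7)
The plan is to combine the sphere-summand decompositions of Theorems A, B, and C with the Cohen--Moore--Neisendorfer solution \cite{CMN79} of Moore's conjecture for spheres. The key quantitative input I will use is that at every prime $p$ the $p$-exponent of $\pi_\ast S^l$ is finite and grows without bound as $l\to\infty$. Consequently, a finite direct sum $\bigoplus_{i=1}^N \pi_\ast S^{l_i}$ has finite $p$-exponent at every $p$, while a direct sum indexed by an infinite family of spheres of unbounded dimension must have unbounded $p$-exponent at every $p$.

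For part (a) with $r\geq 2$, I will invoke Theorem \ref{htpy} to write $\pi_\ast M \cong \bigoplus_i \pi_\ast S^{l_i}$ and read off the sphere multiplicities from Theorem \ref{htpyform}. The rational ellipticity of $M$, equivalent to the condition $\sum_i \dim_\ratls \pi_i(M)\otimes\ratls < \infty$, then corresponds to the decomposition having only finitely many nonzero summands, which by the previous paragraph is equivalent to finite $p$-exponent at every prime. The low-rank cases are handled separately: $r=0$ gives $M\simeq S^{2n}$; while $r=1$ forces $n\in\{2,4,8\}$ by Adams' theorem, so that $M$ is homotopy equivalent to $\C\proj^2$, $\Hy\proj^2$, or $\Oc\proj^2$, each rationally elliptic, whose Moore conjecture reduces via the principal Hopf fibrations (for the first two) and via the variant of Theorem A after inverting a finite set of primes (for $\Oc\proj^2$) back to the sphere case.

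Part (b) will be proved in parallel using Theorem \ref{htpyT} together with the multiplicity formula in Theorem \ref{htpyTform}: the pure sphere-product case $T = S^p\times S^{n-p}$ is rationally elliptic with finite $p$-exponents supplied directly by the two sphere factors, whereas any genuine connected sum is rationally hyperbolic, and its sphere decomposition involves dimensions tending to infinity, forcing unbounded $p$-exponents and establishing the triple equivalence in the statement. For part (c), I will first show $X$ is rationally hyperbolic: $H^\ast(X;\ratls)$ is a quadratic algebra on $r$ generators in degree $n$ modulo one degree-$2n$ relation, and its Koszul dual quadratic Lie algebra on $r$ generators modulo one relation has exponential growth once $r\geq 3$. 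Then applying Theorem \ref{cellhtpy} and Theorem \ref{cellhtpyform} for each $p\notin \Pi_Q$ produces a sphere-summand decomposition of $\pi_\ast X\otimes \Z_{(p)}$ with unbounded sphere dimensions, hence unbounded $p$-exponents for those primes.

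The main obstacle I anticipate lies in the unbounded-exponent direction: I need a lower bound showing that the $p$-exponent of $\pi_\ast S^l$ actually tends to infinity with $l$, rather than merely being finite for each $l$, for otherwise an infinite direct sum of sphere homotopy groups could in principle still have a uniform $p$-exponent bound at some prime. This lower bound is supplied by the classical fact that the bottom $p$-primary torsion classes in $\pi_\ast S^{2l+1}$ realise orders growing linearly with $l$, matching the Cohen--Moore--Neisendorfer upper bounds at odd primes, with analogous though weaker lower bounds available at $p=2$ which suffice for our purposes.
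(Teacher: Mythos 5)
Your overall strategy matches the paper's for all three parts: decompose $\pi_\ast$ into sphere homotopy groups via the Lyndon/PBW machinery, observe that rational ellipticity corresponds to finitely many summands, and derive unbounded $p$-exponents from the unboundedness of the sphere dimensions $h_i$. The minor variation in the unbounded-exponent direction is harmless: the paper invokes the image of the $J$-homomorphism (stable $p^s$-torsion becomes unstable for large $l$, cf.\ \cite{Rav86}, Theorem 1.1.13), whereas you appeal to the sharpness of the Cohen--Moore--Neisendorfer bound through the Gray elements; either input works.

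The genuine gap is in your handling of Betti number $1$ in part (a). You claim that $r=1$ forces $M$ to be homotopy equivalent to $\C P^2$, $\Hy P^2$, or $\Oc P^2$. This is false: the paper's \S \ref{Betti1} constructs distinct homotopy types $V^8_{m,1}$ for each $m\in\Z/12\Z$ and $V^{16}_{m,1}$ for each $m\in\Z/120\Z$, corresponding to the different Hopf invariant one attaching maps; they are all rationally elliptic but not all homotopy equivalent to the quaternionic/octonionic projective plane (see Theorems \ref{4assoc}--\ref{8bet1htpy}). Moreover, your fallback for $\Oc P^2$---``Theorem A after inverting a finite set of primes''---only furnishes finite $p$-exponents for the primes that were \emph{not} inverted ($p\notin\{2,3\}$ for $n=8$); it says nothing about $p=2$ or $p=3$, which is exactly where the sphere-decomposition fails and where one would need a separate argument to conclude finite exponents. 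This step is not established in your proposal (nor, for that matter, does the paper cite an explicit proof of it; Theorem \ref{Moorehcm} is stated only for Betti number $\geq 2$). If one wants Theorem D(a) as literally stated for all Betti numbers, the $r=1$, $n\in\{4,8\}$ case requires a genuinely different argument at the inverted primes, e.g.\ via the fibration $S^{n-1}\to S^{3n-1}\to V^{2n}_{m,1}$ when the relevant $H$-space multiplication is homotopy associative, or via the unlocalized exponent theory for elliptic two-cone spaces---neither of which appears in your proposal.
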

\noindent To verify Moore conjecture for a rationally hyperbolic space we ought to exhibit a prime $p$ for which no (finite) exponent exists. In Theorem D (a), (b) the class of rationally hyperbolic manifolds have the stronger property that no $p$-exponent exists for {\it any} prime $p$.\\

In the process of writing this manuscript, we have come across the paper \cite{BeTh14}, where among other results the authors prove part (a) of Theorems A and B in the case $n\neq 4$ or $8$. These are proved using methods that decompose the loop space into simpler factors. Our techniques, being different, also covers the cases $n=4$ and $n=8$. In this respect we directly deduce similar loop space decomposition results in \S \ref{OmM}. These results extend the relevant theorems of \cite{BeTh14} to the case $n=4$ and $n=8$ provided the $n^\textup{th}$ Betti number is at least $2$. \\

\noindent{\bf Organization of the paper} : In \S \ref{quadassLie} we introduce some preliminaries on quadratic associative algebras and Lie algebras. This includes Koszul duality, Poincar\'{e}-Birkhoff-Witt Theorems and Gr\"obner bases. In \S \ref{quadcoh} we apply these properties to cohomology algebras arising from manifolds. In \S \ref{conn} we deduce the main result for $(n-1)$-connected $2n$-manifolds and analyze some consequences. In \S \ref{Appex} we discuss several applications of the methods developed in this paper. In \S \ref{sc4d} we elucidate the special case of simply connected $4$-manifolds. We repeat these techniques for connected sum of sphere products in \S \ref{connsppr} and for certain CW complexes in \S \ref{hgCW}. Finally, in \S \ref{OmM} we prove that the results in \S \ref{conn} and \S\ref{Appex} yield loop space decompositions.   \\

\noindent{\bf Acknowledgements} : The authors would like to thank Professor Jim Davis for asking the question whether the fact that the homotopy groups of simply connected four manifolds are determined by the second Betti number can be deduced by homotopy theoretic methods.

\section{Quadratic Associative Algebras and Lie Algebras}\label{quadassLie}

In this section we introduce some algebraic preliminaries related to Koszul duality of {\it associative algebras} and associated {\it Lie algebras}. We also recall some results and relations between Gr\"obner bases of quadratic algebras, quadratic Lie algebras and, Poincar\'e-Birkhoff-Witt Theorem. These are accompanied by some crucial algebraic results used throughout the manuscript. 

\subsection{Koszul duality of associative algebras}

We begin with some background on Koszul duality of quadratic algebras over a field following \cite{PolPos05} and \cite{LoVa12}. Throughout this subsection $k$ denotes a field, $V$ a $k$-vector space and $\otimes=\otimes_k$ unless otherwise mentioned. 

\begin{defn}
Let $T_k(V)$ denote the tensor algebra on the space $V$. For $R\subset V\otimes_k V$, the associative algebra $A_k(V,R)=T_k(V)/(R)$ is called a {\it quadratic $k$-algebra}. 
\end{defn} 


A quadratic algebra is graded by weight. The tensor algebra $T(V)$ is {\it weight-graded} by declaring an element of $V^{\otimes n}$ to have grading $n$. Since $R$ is homogeneous, the weight grading passes onto $A(V,R):=A_k(V,R)=T_k(V)/(R)$. 

The dual notion leads to quadratic coalgebras. For this note that $T(V)$ has a coalgebra structure by declaring the elements of $V$ to be primitive. When $T(V)$ is thought of as a coalgebra we write it as $T^c(V)$. 
\begin{defn}
For $R\subset V\otimes V$ the {\it quadratic coalgebra} $C(V,R)$ is defined as the maximal sub-coalgebra $C$ of $T^c(V)$ such that $C\to T^c(V) \to V\otimes V/R$ is $0$. The maximal property of $C(V,R)$ implies that if $C$ is weight-graded sub-coalgebra such that the weight $2$ elements are contained in $R$ then $C\subset C(V,R)$. 
\end{defn}

Recall that a $k$-algebra $A$ is {\it augmented} if there is a $k$-algebra map $A\to k$. Analogously a $k$-coalgebra $C$ is {\it coaugmented} if there is a $k$-coalgebra map $k\to C$. For a coaugmented coalgebra $C$ one may write $C \cong k \oplus \bar{C}$ and the projection of $\Delta$ onto $\bar{C}$ as 
$\bar{\Delta} : \bar{C} \to \bar{C} \otimes \bar{C}.$
A coaugmented coalgebra is said to be {\it conilpotent} if for every $c\in \bar{C}$ there exists $r>0$ such that $\bar{\Delta}^r(c)=0$.  

One has adjoint functors between augmented algebras and coaugmented colgebras given by the bar construction and the cobar construction (see \cite{LoVa12}, \S 2.2.8).


\begin{defn}
Let $\bar{A}\subset A$ be the kernel of the augmentation, define $BA= (T(s\bar{A}),d)$, where $s$ denotes suspension and $d$ is generated as a coderivation by 
$$d(s(a))=s(a\otimes a)-s(a\otimes 1) -s(1\otimes a).$$
Dually, let $C=\bar{C}\oplus k$, and define $\Omega C = (T(s^{-1}\bar{C}),d)$ where $d$ is generated as a derivation by the equation 
$$d(s^{-1} c) = s^{-1}(\bar{\Delta} (c))=s^{-1}(\Delta(c)-c\otimes 1 - 1\otimes c).$$
\end{defn}

Note that the above definition makes sense for quadratic algebras (and coalgebras) as these are naturally augmented (respectively coaugmented). There is a differential on $C\otimes \Omega C$ generated by $d(c)=1\otimes s^{-1}c$ and dually a differential on $A\otimes BA$.  
\begin{defn}
The Koszul dual coalgebra of a quadratic algebra $A(V,R)$ is defined as $A^\text{!`}=C(s(V),s^2(R))$. The Koszul dual algebra $A^!$ of a quadratic algebra $A(V,R)$ is defined as $A^!=A(V^*,R^\perp)$ where $R^\perp\subset V^*\otimes V^*$ consists of elements which take the value $0$ on $R\subset V\otimes V$. 
\end{defn}

 The Koszul dual algebra and the Koszul dual coalgebra are linear dual upto a suspension. Let $A^{(n)}$ stand for the subspace of homogeneous $n$-fold products. Then $(A^!)^{(n)}\cong s^n((A^\text{!`})^*)^{(n)}$. 

 For a quadratic algebra $A(V,R)$, there is a natural map from $\Omega A^\text{!`} \rightarrow A$ which maps $v$ to itself. Using this map there is a differential on $A^\text{!`}\otimes_\kappa A$ denoted by $d_\kappa$.      

\begin{defn}{\bf (\cite{LoVa12}, Theorem 3.4.6)}
\,A quadratic algebra $A(V,R)$ is called {\it Koszul} if one of the following equivalent conditions hold:\\
(i) $\Omega A^\text{!`}\rightarrow A$ is a quasi-isomorphism; \\
(ii) the chain complex $A^\text{!`}\otimes_\kappa A$ is acyclic; \\
(iii)\footnote{See  \cite{PolPos05}, Chapter 2, Definition 1.} $Ext_A(k,k)\cong A^!$. 
\end{defn}

Koszulness is an important property of quadratic algebras. One may use this to compute the homology of the cobar construction in various examples. We recall a condition for Koszulness. Fix a basis $(v_1,v_2,\ldots, v_n)$ of $V$, and fix an order $v_1<v_2<\ldots<v_n$. This induces a lexicographic order on the degree $2$ monomials. Now arrange the expressions in $R=\textup{span}_k\{r_1,r_2,\ldots\}$ in terms of order of monomials. An element $v_iv_j$ is called a leading monomial if there exists $r_l=v_iv_j + \mbox{lower order terms}$. Note that (cf. \cite{LoVa12}, Theorem 4.1.1) implies that if there is only one leading monomial $v_iv_j$ with $i\neq j$ then the algebra is Koszul. This leads to the following result.
\begin{prop}\label{Kos2}
Let $V$ be a $k$-vector space and $R=k\mathpzc{r}\subset V\otimes V$ be a $1$-dimensional subspace such that with respect to some basis $\{v_1,\ldots,v_n\}$ of $V$, 
$$\mathpzc{r}=v_iv_j +\sum_{k<i~or~k=i,l<j} a_{k,l} v_kv_l$$
for $i\neq j$ in the sense above. Then the algebra $A=A(V,R)=T(V)/R$ is Koszul.
\end{prop}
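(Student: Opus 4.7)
The strategy is to exhibit the single relation $\mathpzc{r}$ as a quadratic Gr\"obner basis for the two-sided ideal $(R)\subset T(V)$, and then invoke Theorem 4.1.1 of \cite{LoVa12}, which asserts that any quadratic algebra admitting a quadratic Gr\"obner basis is Koszul.

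First I would spell out the rewriting rule attached to $\mathpzc{r}$. With respect to the lexicographic order on length-two monomials induced by $v_1<v_2<\cdots<v_n$, the leading monomial of $\mathpzc{r}$ is $v_iv_j$, so $\mathpzc{r}=0$ is equivalent to the reduction
$$v_iv_j \;\longmapsto\; -\sum_{k<i~\text{or}~(k=i,\,l<j)} a_{k,l}\,v_kv_l.$$
A monomial in the letters $v_1,\ldots,v_n$ is in normal form precisely when it contains no length-two subword equal to $v_iv_j$, and the set of such normal monomials $k$-linearly spans $A(V,R)$.

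The only substantive step is the confluence check. By the Diamond Lemma, $\{\mathpzc{r}\}$ is a Gr\"obner basis once every overlap ambiguity reduces to a common normal form. A self-overlap of the single relation must be a length-three monomial $v_av_bv_c$ in which both $v_av_b$ and $v_bv_c$ are leading monomials. Since $v_iv_j$ is the unique leading monomial, this would force $a=i$, $b=j$, $b=i$, and $c=j$, in particular $i=j$, contradicting the hypothesis $i\neq j$. Hence no overlap ambiguities exist, confluence holds vacuously, and $\{\mathpzc{r}\}$ is a quadratic Gr\"obner basis for $(R)$. Appealing to \cite[Theorem 4.1.1]{LoVa12} then gives that $A=T(V)/(R)$ is Koszul.

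The only potential obstacle is the overlap analysis above, and the assumption $i\neq j$ in the hypothesis is exactly what disposes of it, by ruling out the candidate critical words $v_iv_iv_j$ and $v_iv_jv_j$. Were $i=j$, the triple $v_iv_iv_i$ would be a genuine self-overlap, and confluence would require a nontrivial cancellation among the lower-order terms $a_{k,l}v_kv_l$ which is not automatic; this is precisely why the hypothesis $i\neq j$ is needed for the clean application of the Gr\"obner basis criterion.
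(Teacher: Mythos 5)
Your proposal is correct and takes essentially the same route as the paper: the paper also proves this by appealing to the rewriting method of \cite[Theorem 4.1.1]{LoVa12}, noting that a single leading monomial $v_iv_j$ with $i\neq j$ gives a Koszul algebra. You supply in full the overlap-free/confluence argument that the paper leaves implicit in its citation.
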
 

Next we deduce an useful result which is used throughout the paper. Call an expression in $V\otimes V$ symmetric if with respect to some basis the coefficient of $v_i\otimes v_j$ and $v_j\otimes v_i$ are the same\footnote{Note that this condition does not depend on the choice of basis. An element is symmetric if and only if the corresponding quadratic form is symmetric.}. 
\begin{lemma}\label{Kos3}
Let $V$ be a $k$-vector space and $R\subset V\otimes V$ be a $1$-dimensional subspace generated by a symmetric element. Then the algebra $A=A(V,R)$ is Koszul.
\end{lemma}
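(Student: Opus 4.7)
The plan is to reduce the problem to Proposition \ref{Kos2}, with a separate (and easy) Gr\"obner basis check in one degenerate case.

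I would begin by noting that Koszulness is preserved under flat base change: for any field extension $K/k$, one has $\mathrm{Ext}_{A\otimes K}^\bullet(K,K)\cong \mathrm{Ext}_A^\bullet(k,k)\otimes_k K$ and $(A\otimes K)^!\cong A^!\otimes_k K$, so $A$ is Koszul if and only if $A\otimes K$ is. Hence, without loss of generality, $k$ is algebraically closed. Assuming $\mathrm{char}(k)\neq 2$ (the characteristic two case being analogous), the symmetric bilinear form associated to $\mathpzc{r}$ may be diagonalized, and after rescaling the basis we may assume $\mathpzc{r} = v_1^2+v_2^2+\cdots+v_r^2$, where $r$ is the rank of the form.

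If $r\geq 2$, I would perform the additional change of basis $w_1=v_1+iv_2$, $w_2=v_1-iv_2$ with $i=\sqrt{-1}\in \bar k$. A short computation in the tensor algebra gives $v_1^2+v_2^2 = \tfrac{1}{2}(w_1w_2+w_2w_1)$, with no surviving $w_1^2$ or $w_2^2$ term. Ordering the basis so that $w_2$ is largest, $w_1$ second-largest, and the remaining basis vectors smaller, the leading monomial of $\mathpzc{r}$ in the induced lex order is $w_2w_1$, since the only potentially larger monomials ($w_2^2$ and $w_2v_j$ for $j\geq 3$) do not appear, and every other monomial that does appear ($w_1w_2$ and $v_j^2$ for $3\leq j\leq r$) has strictly smaller first factor. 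Since $2\neq 1$, Proposition \ref{Kos2} applies and shows that $A$ is Koszul.

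If $r=1$, then $\mathpzc{r}=v_1^2$ and $A=T(V)/(v_1^2)$. This case lies slightly outside Proposition \ref{Kos2} because the leading monomial $v_1v_1$ has coinciding indices. However, the single relation $v_1^2\to 0$ is already a Gr\"obner basis: the only overlap to check is at $v_1^3$, and both reductions $v_1\cdot(v_1^2)\to 0$ and $(v_1^2)\cdot v_1\to 0$ agree. By the general Gr\"obner basis criterion \cite[Thm.~4.1.1]{LoVa12}, $A$ admits a PBW basis and is therefore Koszul.

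The main obstacle is ensuring that the $r\geq 2$ transformation truly eliminates the squared terms $w_1^2$ and $w_2^2$; this is precisely what the symmetric choice $w_{1,2}=v_1\pm iv_2$ achieves. The characteristic two case proceeds along the same lines, using a modified change of basis (replacing the role of $i$) to bring the relation into a form to which Proposition \ref{Kos2} applies.
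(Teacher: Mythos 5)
Your argument is correct and takes a genuinely different route from the paper's. Both proofs begin by reducing the symmetric relation to a manageable normal form, but the key step diverges: you base-change to $\bar k$ (legitimately, since $\mathrm{Ext}_{A\otimes K}(K,K)\cong\mathrm{Ext}_A(k,k)\otimes_k K$), diagonalize, and then use the complexification trick $w_{1,2}=v_1\pm i v_2$ to turn the relation into one with a square-free leading monomial, so that everything (except the rank-one degenerate case, handled by an easy confluence check) falls out of Proposition~\ref{Kos2}. The paper instead stays over the ground field $k$: it reduces by elementary row/column operations to $\mathpzc{r}=\sum a_i v_i^2$ with all $a_i\neq 0$, and then verifies $\mathit{Ext}_A(k,k)\cong A^!$ by writing down an explicit length-two free resolution of $k$ over $A$ and checking exactness by hand using the Diamond-Lemma basis of $A$. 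Your reduction is shorter and avoids the resolution computation entirely, at the cost of passing to an extension field; the paper's argument is more self-contained and incidentally gives the explicit projective resolution, which is sometimes useful on its own.

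One place you should tighten: the sentence relegating $\mathrm{char}(k)=2$ to a remark. In characteristic two a symmetric form over $\bar k$ does not diagonalize in general; it splits as a symplectic summand plus a summand with an orthogonal basis. The symplectic case is actually easier (a hyperbolic pair $e,f$ already gives a relation with leading monomial $fe$, so Proposition~\ref{Kos2} applies with no change of basis), and for the diagonal case $a_1v_1^2+a_2v_2^2+\cdots$ one replaces the role of $i$ by $\sqrt{a_2/a_1}$ (which exists in $\bar k$): setting $w_1=v_1$, $w_2=\sqrt{a_2/a_1}\,v_1+v_2$ kills the $w_2^2$ term and produces a nonzero off-diagonal coefficient, again landing in Proposition~\ref{Kos2}. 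So the claim is right, but as written the reader must reconstruct this; the paper's diagonalization step already handles $\mathrm{char}(k)=2$ uniformly, which is the one respect in which its reduction is slightly more careful than yours.
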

\begin{proof}
We start by making some reductions. Suppose that a generator $\mathpzc{r}$ of $R$ is of the form $\mathpzc{r}=\sum a_{i,j} v_i\otimes v_j$ with $a_{i,j}=a_{j,i}$. If $a_{n,n}= 0$ and some $a_{i,n}\neq 0$ for $i<n$, then we put the order $v_1<\ldots <v_n$ and then from Proposition \ref{Kos2} it follows that $A(V,R)$ is Koszul. If $a_{i,n}=0$ for all $i$ then we recursively argue with $\{v_1,\ldots,v_{n-1}\}$.

If $a_{n,n}\neq 0$ then we change the basis by $v_i\mapsto v_i$ for $i<n$ and 
 $$v_n \mapsto v_n - \sum_{i<n} \frac{a_{i,n}}{a_{n,n}} v_i.$$ 
In the new basis the generator is of the form
$$\mathpzc{r}=a_{n,n}v_n^2 + \sum_{i,j<n} a'_{i,j}v_i\otimes v_j.$$
If $a'_{n-1,n-1}\neq 0$ then we apply the same argument as above to $v_{n-1}$. Otherwise, if $a'_{i,n-1}=0$ for all $i\leq n-1$ then we argue with $\{v_1,\ldots, v_{n-2}\}$ and proceed inductively. Therefore we are reduced to the case of $\mathpzc{r}=a_1v_1^2+\ldots +a_lv_l^2$.

Next we make the reduction to $l=n$. In the sense of (\cite{PolPos05}, Ch 3, Corollary 1.2) we have that 
$$T(v_1,\ldots,v_n)/(a_1v_1^2+\ldots +a_l v_l^2) \cong T(v_1,\ldots,v_l)/(a_1v_1^2+\ldots + a_lv_l^2) \sqcup T(v_{l+1},\ldots,v_n)$$
where $\sqcup$ stands for the coproduct in the category of associative $k$-algebras. Since $T(v_{l+1},\ldots, v_n)$ is Koszul, in view of  \cite{PolPos05}, Ch 3, Corollary 1.2, it suffices to prove that $T(v_1,\ldots,v_l)/(a_1v_1^2+\ldots + a_lv_l^2)$ is Koszul. Thus, we may assume $l=n$. 

Denote the $k$-algebra $T(v_1,\ldots,v_n)/(a_1 v_1^2 +\ldots + a_n v_n^2)$ by $A$. Note that $a_i\neq 0$ for every $i$. If $n=1$, then the algebra $A$ is just $T(v)/(v^2)$, which is Koszul. So now assume $n\geq 2$. In this case we verify $A$ is Koszul by proving  $\mathit{Ext}_A(k,k)=A^!$. 

Write down a basis for the algebra $A$ comprising of monomials 
$$\{v_{i_1}v_{i_2}\ldots v_{i_l}|~1\leq i_j\leq n, ~ \nexists ~j~\mbox{such that}~i_j=i_{j+1}=n  \}.$$
The above basis written using the lexicographic order coming from $v_1<v_2\ldots <v_n$ turns out to be
$$v_1,\ldots,v_n,v_1^2,v_1v_2,\ldots,v_{n}v_{n-1},v_1^3,\ldots .$$

Note that multiplication by $v_1$ on the right (respectively on the left) carries a basis of weight grading $l$ to distinct basis elements of weight grading $l+1$ ending with $v_1$ (respectively starting with $v_1$). It follows that right multiplication by $v_1$ on $A$ is injective. A similar argument yields that right multiplication by $v_i$ is injective for every $i<n$. In fact one can say slightly more: if $i<n$ then right multiplication by $v_i$ for different $i$ maps basis monomials of weight $l$ to distinct basis monomials of weight $l+1$. Thus, if $\mathbb{B}_l$ is the set of basis monomials of weight $l$, then the sets 
$$\mathbb{B}_lv_1,~ \mathbb{B}_lv_2,\ldots, \mathbb{B}_lv_{n-1}$$
are all disjoint. It follows that for $\alpha_1,\ldots, \alpha_{n-1}\in A$ 
$$\sum_{i<n}\alpha_i v_i = 0 \implies \alpha_1=\alpha_2=\cdots = \alpha_{n-1}=0.$$

 The algebra $A^!$ is $T(V^*)/(R^\perp)$. Taking a dual basis $\{v_1^*,\ldots,v_n^*\}$ of $V^*$, observe that $R^\perp$ contains terms $v_i^*v_j^*$ for $i\neq j$ and $a_j(v_i^*)^2 - a_i(v_j^*)^2$. Note that all weight three monomials of $A^!$ lie in the ideal generated by $R^\perp$; this is clear for monomials not of the form $(v_i^*)^3$ and, for $(v_i^*)^3$ note that
$$a_j(v_i^*)^3=a_i v_i^*(v_j^*)^2=a_i(v_i^*v_j^*)v_j^* \in (R^\perp).$$
Therefore the algebra $A^!$ is concentrated in weight grading $0$, $1$ and $2$. The basis in weight grading $1$ is $\{v_1^*,\ldots,v_n^*\}$; in weight grading $2$ the algebra is $1$ dimensional and for some generator $z^*$ we have 
$$v_i^*v_j^*=0~\forall ~i\neq j,~ (v_i^*)^2 = a_i z^*.$$ 

We write down a resolution of $k$ by free $A$-modules as 
$$P_2\stackrel{d_2}{\longrightarrow} P_1\stackrel{d_1}{\longrightarrow} P_0\longrightarrow k$$ 
where 
$$P_0=A,~P_1=A^{\oplus n},~P_2=A$$
and for $\alpha_i\in A$
$$d_1(\alpha_1,\ldots,\alpha_n) = \sum \alpha_i v_i,~ d_2( \alpha) = (a_1\alpha v_1, \ldots, a_n \alpha v_n).$$
It is clear that $d_1\circ d_2 (\alpha)= \alpha \big(\sum a_i v_i^2\big)=0$ so that the above is a complex. To prove that this is a resolution we need to check that $ \mathit{Ker}(d_1)\subset \mathit{Im}(d_2)$ and $\mathit{Ker}(d_2)=0$. The latter condition follows from the fact that right multiplication by $v_i$ is injective. 

Note that the $d_i$ preserve grading if we write $P_2=\Sigma^2 A$, $P_1=(\Sigma A)^{\oplus n}$ and $P_0=A$ (with respect to weight grading on $A$). Therefore it is enough to verify $\mathit{Ker}(d_1) \subset  \mathit{Im}(d_2)$ on homogeneous pieces.  Let $(\alpha_1,\ldots,\alpha_n)$ be an element of $\mathit{Ker}(d_1)$ so that the weight of each $\alpha_i$ is $l$. Write each $\alpha_i$ as a linear combination of the basis elements described above. Consider a monomial $v_{i_1}\ldots v_{i_l}$ in $\alpha_n$. Under $d_1$ this goes to $ v_{i_1} \ldots v_{i_l} v_n$. This is linearly independent from monomials that occur in $\alpha_i v_i$ unless $i_l=n$. It follows that $\alpha_n = a_n\alpha v_n$ (the factor $a_n$ can always be introduced as it is a non-zero element of the base field). Hence we have 
$$d_1(\alpha_1,\ldots,\alpha_n)= \sum_{i<n} \alpha_iv_i + a_n \alpha v_n^2 = \sum_{i<n} (\alpha_i -a_i\alpha v_i)v_i \implies \alpha_i=a_i\alpha v_i ~\forall ~ i.$$
Therefore $(\alpha_1,\ldots,\alpha_n)=d_2(\alpha)$ proving that the sequence above is indeed a resolution. Applying $\mathit{Hom}_A(-,k)$, we may compute $\mathit{Ext}_A(k,k)\cong A^!$. 
\end{proof}

\subsection{Lie algebras and quadratic algebras}
Let $\mathcal{R}$ be a Principal Ideal Domain (PID). We recall some facts about quadratic algebras and Lie algebras over $\mathcal{R}$. In this manuscript the domains used will be $\Z$ or some localization of $\Z$. Suppose $V$ is a free $\mathcal{R}$-module that is finitely generated and $R\subset V\otimes_\RR V$ be a submodule. As before, denote by $A(V,R)$ the quadratic $\RR$-algebra $T_\RR(V)/(R)$ where $T_\RR(V)$ is the $\RR$-algebra tensor algebra on $V$ and $(R)$ is the two-sided ideal on $R$.  

Recall the Diamond Lemma from \cite{Berg78}. Suppose that $V$ has a basis $x_1,\ldots,x_m$. Suppose that the submodule $R\subset V\otimes V$ is generated by $n$ relations of the form $W_i = f_i$ where $W_i=x_{\alpha(i)} \otimes x_{\beta(i)}$ and $f_i$ a linear combination of terms $x_j\otimes x_l$ other than $x_{\alpha(i)}\otimes x_{\beta(i)}$. We call a monomial $x_{i_1}\otimes \ldots \otimes x_{i_l}$ {\it $R$-irreducible} if it cannot be expressed as $A\otimes x_{\alpha(i)}\otimes x_{\beta(i)} \otimes B$ for monomials $A,B$ for any $i$. Theorem 1.2 of \cite{Berg78} states that under certain conditions on $R$, the images of the $R$-irreducible monomials in $A(V,R)$ forms a basis. One readily observes that the conditions are satisfied if $n=1$ and $\alpha(1) \neq \beta(1)$.  Therefore we conclude the following theorem.
\begin{theorem}\label{Diamond}
Suppose that $R=k\mathpzc{r}$ and $\mathpzc{r}$ is of the form 
$$x_\alpha\otimes x_\beta = \sum_{(i,j)\neq (\alpha, \beta)} a_{i,j} x_i \otimes x_j$$ 
with $\alpha\neq \beta$. Then the $R$-irreducible elements form a basis for $A(V,R)$. 
\end{theorem}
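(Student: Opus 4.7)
The plan is to deduce the statement as a direct application of Bergman's Diamond Lemma (Theorem~1.2 of \cite{Berg78}), whose conclusion is exactly that $R$-irreducible monomials form a basis once one checks two hypotheses: the existence of a semigroup partial order on the free monoid on $\{x_1,\ldots,x_m\}$ satisfying the descending chain condition and compatible with the reduction system, and the resolvability of every ambiguity arising from the reduction system. The relevant reduction system here consists of the single rule $W \mapsto f$, where $W = x_\alpha \otimes x_\beta$ and $f = \sum_{(i,j)\neq(\alpha,\beta)} a_{i,j}\, x_i\otimes x_j$.

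First I would fix a compatible order. Order the generators $x_1<x_2<\cdots<x_m$ in a way that places $x_\alpha\otimes x_\beta$ strictly greater than every monomial appearing in $f$ (for instance by giving $x_\alpha$ and $x_\beta$ large enough weights and then breaking ties lexicographically, or simply by rearranging the labels so that $(\alpha,\beta)$ is lexicographically maximal among the pairs $(i,j)$ with $a_{i,j}\neq 0$). This produces a semigroup partial order on monomials satisfying the descending chain condition, and with respect to which the single reduction $W\mapsto f$ strictly decreases the leading term. This verifies the ordering hypothesis of the Diamond Lemma.

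Next I would enumerate the ambiguities. Since the reduction system has exactly one rule whose left-hand side is a word of length two, the only possible ambiguities are (a) inclusion ambiguities, where one instance of $W$ strictly contains another, and (b) overlap ambiguities, where the suffix of one instance of $W$ coincides with the prefix of another. Case (a) cannot occur since $W$ has length $2$, which is already the minimal length for a nontrivial word. Case (b) requires the second letter of $W$ to agree with its first letter, i.e., $x_\beta = x_\alpha$, equivalently $\alpha=\beta$; this is excluded by hypothesis. Hence there are no ambiguities at all, so the resolvability condition is satisfied vacuously.

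Having verified both hypotheses, the Diamond Lemma gives the desired conclusion: the images in $A(V,R)$ of the $R$-irreducible monomials form a free $\RR$-module basis of $A(V,R)$. The main point to be careful about is only the reformulation: one must check that the single relation $\mathpzc{r}$, rewritten as the oriented rewrite rule $W\mapsto f$, genuinely fits into Bergman's framework over the PID $\RR$ and that $f$ does not contain the monomial $W$ itself (which is precisely what is ensured by writing the sum over $(i,j)\neq(\alpha,\beta)$). There is no other serious obstacle.
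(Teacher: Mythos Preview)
Your approach matches the paper's: both invoke Bergman's Diamond Lemma and observe that a single length-two rule with $\alpha\neq\beta$ has no overlap or inclusion ambiguities. However, there is a genuine gap in the ordering hypothesis, and in fact the theorem as stated is \emph{false} in this generality, so no argument of this shape can succeed.

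The problem is your claim that one can always choose a semigroup partial order with DCC making $x_\alpha x_\beta$ strictly larger than every monomial in $f$. Take $m=2$, $\alpha=1$, $\beta=2$, and
\[
\mathpzc{r}=x_1x_2-x_2x_1-x_1^2-x_2^2.
\]
Any compatible semigroup order would require $x_1x_2>x_1^2$ and $x_1x_2>x_2^2$. Multiplying the first inequality on the right by $x_2$ gives $x_1x_2^2>x_1^2x_2$; multiplying the second on the left by $x_1$ gives $x_1^2x_2>x_1x_2^2$. These contradict each other, so no compatible order exists. Correspondingly, the reduction system has the cycle $x_1x_1x_2\to x_1x_2x_2\to x_1x_1x_2$, so reduction does not terminate. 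Worse, the conclusion itself fails: a direct computation shows that in $A(V,R)$ one has
\[
x_1r+x_2r+rx_1+rx_2=-2\bigl(x_1^3+x_2x_1^2+x_2^2x_1+x_2^3\bigr),
\]
so over $\Q$ the four degree-$3$ irreducible monomials $x_1^3,\,x_2x_1^2,\,x_2^2x_1,\,x_2^3$ satisfy a nontrivial linear relation in the quotient (and over $\Z$ the quotient acquires $2$-torsion). Hence they cannot form a basis.

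The paper's one-line justification (``one readily observes that the conditions are satisfied'') has the same gap. What rescues the applications in the paper is that Theorem~\ref{Diamond} is only ever invoked for relations of the special shape produced in Proposition~\ref{freemod} (e.g.\ $a_1a_2-a_2a_1+(\text{terms not involving }a_1,a_2)$), and for those a compatible deglex order visibly exists. So the correct fix is to add a hypothesis guaranteeing a compatible order---for instance, that in some ordering of the generators $x_\alpha x_\beta$ is the lexicographically largest monomial occurring in $\mathpzc{r}$---rather than to assume one can always be manufactured.
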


Recall that elements in $V\otimes V$ are symmetric if the corresponding quadratic form is symmetric. Similarly define anti-symmetric elements as those for which the corresponding quadratic form is skew-symmetric and the diagonal entries are zero. Define non-singular elements as those for which the corresponding quadratic form is non-singular. We prove the following proposition.

\begin{prop}\label{freemod}
Consider a quadratic algebra $A(V,R)$ over a Principal Ideal Domain $\RR$ such that $R\subset V\otimes_\RR V$ is a free $\RR$-module of rank $1$ generated by a non-singular, symmetric element (or an anti-symmetric element). Then, 
\begin{enumerate}
\item As a $\RR$-module the quadratic algebra $A(V,R)$ is free. 
\item If $R$ is generated by an anti-symmetric element or if $\dim(V)\neq 2$ or if $\RR=\Z$ or a local ring, then there is a generator of $R$ of the form $W=f$ where $W$ is a monomial and $f$ does not contain $W$. 
\item Under the above assumptions the $R$-irreducible elements form a basis. 
\end{enumerate}
\end{prop}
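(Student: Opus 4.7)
The plan is to address the three parts in the order (2), (3), (1): parts (2) and (3) are established by classifying symmetric and anti-symmetric forms over a PID and then invoking Theorem \ref{Diamond}, while part (1) is reduced to parts (2)--(3) by localization at each nonzero prime. In part (2), I would choose a basis of $V$ and a generator of $R$ that places $\mathpzc{r}$ in the shape $W - f$, where $W = x_\alpha \otimes x_\beta$ is a single monomial (with $\alpha \neq \beta$ whenever $\dim V \geq 2$, so that Theorem \ref{Diamond} applies directly in part (3)) and $f$ does not contain $W$.

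In the anti-symmetric case, the structure theorem for alternating bilinear forms over a PID reduces the Gram matrix of $\mathpzc{r}$ to a block-diagonal form with $2 \times 2$ blocks $\begin{pmatrix} 0 & d_i \\ -d_i & 0 \end{pmatrix}$ satisfying $d_1 \mid d_2 \mid \cdots$; taking $\mathpzc{r}$ primitive forces $d_1 = 1$, which exhibits $W = x_1 \otimes x_2$ with the remaining blocks comprising $f$. In the symmetric case, the strategy is first to produce a unit coefficient somewhere in $\mathpzc{r}$ after a change of basis and then, if the unit appears on the diagonal, to convert it to an off-diagonal unit via the shear $x_i \mapsto x_i + x_j$. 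Over a local PID one invokes Gram--Schmidt (or, when $2 \notin \RR^\times$, the standard normal form with hyperbolic $2 \times 2$ blocks); over $\Z$ one uses the classification of unimodular integral symmetric bilinear forms, whose representatives are either $\pm 1$-diagonal (odd case) or contain hyperbolic plane / $E_8$ summands (even case); and for $\dim V \neq 2$ over a general PID the extra room permits iterated Euclidean-style reductions on pairs of variables to expose a unit entry. The excluded cases --- $\dim V = 2$ over non-local, non-$\Z$ PIDs --- correspond precisely to binary unimodular symmetric forms where a nontrivial class number can obstruct reduction to the $W = f$ shape.

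Part (3) then follows at once by applying Theorem \ref{Diamond} to the generator $W = f$ produced in part (2): the images of the $R$-irreducible monomials form an $\RR$-basis of $A(V, R)$. For part (1) in full generality, I would localize at each nonzero prime $\mathfrak{p}$ of $\RR$. Since $\RR_\mathfrak{p}$ is a local PID, the hypothesis of part (2) is satisfied for $A(V, R)_\mathfrak{p} = A(V_\mathfrak{p}, R_\mathfrak{p})$, so part (3) yields an explicit $\RR_\mathfrak{p}$-basis of this localization; in particular $A(V, R)_\mathfrak{p}$ is torsion-free. Any element of $A(V, R)$ annihilated by a power of $\mathfrak{p}$ vanishes in the localization, so it is killed by an element coprime to $\mathfrak{p}$ as well, and hence is zero. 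Each weight-graded piece of $A(V, R)$ is therefore a finitely generated torsion-free module over the PID $\RR$, and hence is free.

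The main obstacle I expect is the symmetric case of part (2): keeping track of the various subcases and invoking the appropriate normal-form theorem for unimodular symmetric bilinear forms requires some care, and it is precisely the arithmetic subtleties of binary unimodular forms over general PIDs that dictate the exclusions in the hypothesis.
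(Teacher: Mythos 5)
Your outline follows essentially the same road map as the paper: reduce part (2) to producing a generator of $R$ with a monomial $x_\alpha\otimes x_\beta$ ($\alpha\neq\beta$) appearing with a unit coefficient, feed this into Theorem \ref{Diamond} for part (3), and obtain part (1) in general by localizing at each prime of $\RR$ and observing that each weight-graded piece is a finitely generated torsion-free module over a PID. The anti-symmetric argument via the elementary-divisor normal form is the same as the paper's appeal to a symplectic basis over a Dedekind domain, though note that it is \emph{non-singularity}, not primitivity of $\mathpzc{r}$, that does the work here: non-singularity forces every $d_i$ in the normal form to be a unit, whereas primitivity only gives $\gcd(d_i)=1$ (which does yield $d_1=1$ since $d_1\mid d_i$, so your argument goes through, but the hypothesis being used is really the non-singularity of the form).

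There are two places where your sketch of the symmetric case either has a gap or an over-claim. First, for $\dim V\neq 2$ over a general PID, ``iterated Euclidean-style reductions'' is not an argument. The paper's route is concrete and shorter: pass to the fraction field to find a nonzero $Q$-combination $v$ of $a_2,\dots,a_r$ with $\beta(v,a_1)=0$ (possible as soon as $r\geq 3$), clear denominators so $v$ is primitive, use it to change basis so $\beta(a_1,a_2)=0$, then invoke non-singularity to find $w$ with $\beta(a_1,w)=1$, adjust $w$ by a multiple of $a_2$ so that $w$ replaces $a_2$ in a unimodular basis change. This avoids any classification statement and is the step your proposal would need to fill in. Second, your asserted classification of unimodular symmetric forms over $\Z$ --- ``$\pm 1$-diagonal in the odd case, hyperbolic $\oplus\,E_8$ summands in the even case'' --- is false for definite forms of high rank (e.g. $D_{16}^+$, the Niemeier lattices in rank $24$, and so on). It happens to be harmless here because the $\RR=\Z$ hypothesis is only needed for $\dim V = 2$ (the $\dim V\geq 3$ case is handled by the general PID argument just sketched), and rank-$2$ unimodular forms over $\Z$ are indeed covered by the binary classification in Milnor--Husemoller; but as written your appeal is overbroad and would not stand if you actually needed it for higher rank.

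Minor: in part (1), the torsion-freeness argument via localization requires the observation that the weight-graded pieces are finitely generated, so that torsion elements admit a primary decomposition (or so that one can directly apply the structure theorem for finitely generated modules over a PID); you do state this at the end, but it is worth making explicit that the whole point of passing to graded pieces is precisely finite generation.
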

\begin{proof}
Our method relies on the variant of Diamond Lemma as presented in Theorem \ref{Diamond}. If $V=\RR\{a_1,\ldots,a_r\}$ then $R$ is generated by an element of the form $\sum g_{i,j} a_i \otimes a_j$. The element is symmetric (respectively anti-symmetric) if the corresponding quadratic form is symmetric (respectively skew-symmetric). 

The proof for an anti-symmetric element is easier. The non-singular anti-symmetric element induces a symplectic inner product on $V$ (cf. \cite{MiHu73}, Ch 1, Definition 1.3). Any symplectic inner product space over a Dedekind Domain possesses a symplectic basis by (\cite{MiHu73}, Ch 1, Corollary 3.5). Therefore the relation can be written as 
$$a_1a_2 = a_2a_1 + \mathit{terms~not~involving~}a_1\mathit{~and~}a_2.$$
Therefore by Theorem \ref{Diamond}, $A(V,R)$ possesses a basis satisfying the required criteria. 

For the symmetric case one needs to do a little work to prove that the hypothesis of Theorem \ref{Diamond} is satisfied. It suffices to express the element in the form 
$$a_1a_2=  \mathit{linear~combinations~of~terms~other~than~}a_1a_2.$$
Let $\beta$ be the quadratic form and $Q$ be the fraction field of $\RR$. Suppose $\dimn(V)=r\geq 3$.  Due to dimensional constraints there exists a $Q$-linear combination $v$ of $a_2,\ldots,a_r $ such that $\beta(v,a_1)=0$. Clear out denominators of $v$ so that $v$ is a primitive combination of $a_2,\ldots,a_r$. Hence one may change the basis $a_2,\ldots,a_r$ to $v,b_3 \ldots,b_r$. Thus we may assume $\beta(a_1,a_2)=0$. Now as the bilinear form is invertible over $\RR$ there exists $w$ such that $\beta(a_1,w)=1$. Note this computation does not change if we add multiples of $a_2$. Therefore in terms of the basis we may assume that the coefficient of $a_2$ in $w$ is $1$. We may now replace $a_2$ in the basis by $w$ and obtain a new basis with the property $\beta(a_1,a_2)=1$. This basis satisfies the required condition. 

When $r=1$ then $R=(a_1^2)$ and $A(V,R)\cong\RR$ is free. Finally, we consider $r=2$. First consider the case when $\RR=\Z$. From  Ch 2, Theorem 2.2 of \cite{MiHu73} we know that such binary forms either possess an orthogonal basis or is hyperbolic. In either case, there is a basis $\{v_1,v_2\}$ such that $\beta(v_1,v_2)=1$ and the result follows. 

Next suppose that $\RR$ is a local ring, so that we have $R$ is generated by $a a_1^2 +ba_1a_2 +ba_2a_1+d a_2^2$ where $ad-b^2$ is a unit. Suppose the maximal ideal in $\RR$ is of the form $(\pi)$. It follows that $\pi$ cannot divide all of the elements $a,b,d$ thus at least one of them is a unit. If $a$ or $d$ is a unit, the form possesses an orthogonal basis. If $b$ is a unit changing the basis we may arrange $b=1$ so that there is a basis $\{v_1,v_2\}$ so that $\beta(v_1,v_2)=1$. Again the result follows in either case.


Finally consider the case of a general PID $\RR$. We only need to show that $A(V,R)$ is free. The algebra $A(V,R)$ is weight-graded and finitely generated in each degree. Localizing $A(V,R)$ at each prime $\pi$ of $\RR$ gives a free module by the above argument. Thus, $A(V,R)$ is free.
\end{proof} 

\mbox{ }

For a quotient field $k$ of the PID $\RR$, the quadratic algebra with the corresponding basis and relation is obtained by tensoring with $k$. 
\begin{prop}\label{irr}
Let $V,R$ be as in Proposition \ref{freemod} and let $\pi\in \RR$ be an irreducible element, so that $k=\RR/\pi$ is a field. Then 
$$A_\RR(V,R)\otimes_\RR k  \cong A_k(V\otimes_\RR k,R\otimes_\RR k).$$ 
\end{prop}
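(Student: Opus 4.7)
The plan is to reduce the statement to a routine application of right-exactness of base change. The crucial preliminary is that since $V$ is a finitely generated free $\RR$-module, the canonical map $T_\RR(V)\otimes_\RR k \to T_k(V\otimes_\RR k)$ sending $v_{i_1}\otimes\cdots\otimes v_{i_n}\otimes 1 \mapsto \bar v_{i_1}\otimes\cdots\otimes\bar v_{i_n}$ is an isomorphism of graded $k$-algebras; this is immediate because tensor products of free modules commute with base change in each weight.

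Starting from the short exact sequence $0\to (R)\to T_\RR(V)\to A_\RR(V,R)\to 0$ of $\RR$-modules, where $(R)$ denotes the two-sided ideal generated by $R$, I would tensor with $k$ and use right-exactness to obtain a surjection
$$T_k(V\otimes_\RR k)\twoheadrightarrow A_\RR(V,R)\otimes_\RR k$$
whose kernel is the image of $(R)\otimes_\RR k$ inside $T_k(V\otimes_\RR k)$. The key identification will be that this image equals the two-sided ideal generated by the image of $R\otimes_\RR k$ in $V\otimes_\RR V\otimes_\RR k$. One containment follows by writing a general element of $(R)$ as $\sum a_i\mathpzc{r} b_i$ with $\mathpzc{r}$ a generator of $R$ and reducing modulo $\pi$; the reverse containment follows by lifting representatives through the surjections $T_\RR(V)\twoheadrightarrow T_k(V\otimes_\RR k)$ and $R\twoheadrightarrow R\otimes_\RR k$, assembling them into an element of $(R)$, and reducing. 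Granting this identification, the quotient on the right-hand side is precisely $A_k(V\otimes_\RR k, R\otimes_\RR k)$ by definition.

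The main obstacle is to ensure that $R\otimes_\RR k$, as it occurs on the right side of the asserted isomorphism, is genuinely a one-dimensional subspace of $V\otimes_\RR V\otimes_\RR k$ generated by the reduction $\bar{\mathpzc{r}}$ of $\mathpzc{r}$. This is where the hypotheses inherited from Proposition \ref{freemod} enter: writing $\mathpzc{r}=\sum g_{ij}\,a_i\otimes a_j$, the non-singularity of the matrix $(g_{ij})$ forces $\det(g_{ij})$ to be a unit in $\RR$, hence $\det(\bar g_{ij})$ is a unit in $k$. In particular $\bar{\mathpzc{r}}\neq 0$, so the canonical map $R\otimes_\RR k\to V\otimes_\RR V\otimes_\RR k$ is injective and its image is one-dimensional, matching the interpretation used in $A_k(V\otimes_\RR k,\, R\otimes_\RR k)$. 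With that injectivity in hand the two descriptions of the kernel agree and the isomorphism follows.
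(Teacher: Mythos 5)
Your proof is correct, but it takes a genuinely different route from the paper's. The paper proves this by invoking the Diamond Lemma (Theorem \ref{Diamond}): the proof of Proposition \ref{freemod} shows that both $A_\RR(V,R)$ and $A_k(V\otimes_\RR k, R\otimes_\RR k)$ satisfy its hypotheses, so each has an explicit basis of $R$-irreducible monomials, and the natural map \eqref{quadbasech} sends the basis of the left side bijectively onto the basis of the right side. Your argument instead uses right-exactness of $-\otimes_\RR k$ applied to $0\to (R)\to T_\RR(V)\to A_\RR(V,R)\to 0$, together with the base-change isomorphism $T_\RR(V)\otimes_\RR k\cong T_k(V\otimes_\RR k)$ and the elementary identification of the image of $(R)\otimes_\RR k$ with the two-sided ideal generated by $\bar{\mathpzc{r}}$; the non-singularity of $\mathpzc{r}$ is invoked only to ensure $\bar{\mathpzc{r}}\neq 0$, so that $R\otimes_\RR k$ really is a one-dimensional subspace and the notation $A_k(V\otimes_\RR k, R\otimes_\RR k)$ parses as intended. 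Your argument is the more elementary and the more general of the two: it does not require the Diamond Lemma normal form at all, only that $R$ be free of rank one with generator not divisible by $\pi$. What the paper's argument buys is the explicit shared monomial basis, which fits the surrounding development and is used again in the passage to Lyndon bases; your proof establishes the isomorphism abstractly without producing that basis.
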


\begin{proof}
The construction of the quadratic algebras induces a map 
\begin{equation}
A_\RR(V,R)\otimes_\RR k  \to A_k(V\otimes_\RR k,R\otimes_\RR k).\label{quadbasech}
\end{equation}
The proof of Proposition \ref{freemod} implies that $A_\RR(V,R)$ and $A_k(V\otimes_\RR k, R\otimes_\RR k)$ satisfy the conditions of Theorem \ref{Diamond}. It follows that the map is an isomorphism as the basis from Theorem \ref{Diamond} of the left hand side of \eqref{quadbasech} is mapped bijectively onto the basis of the right hand side.
\end{proof}

One may make an analogous construction of a Lie algebra in the case $R\subset V\otimes V$ lies in the free Lie algebra generated by $V$. Construct the quadratic Lie algebra $L(V,R)$ as $L(V,R)=\frac{\Lie(V)}{(R)}$, where $\Lie(V)$ is the free Lie algebra on $V$ and $(R)$ is the Lie algebra ideal generated by $R$. We readily prove the following useful result. 


\begin{prop}\label{univlie}
The universal enveloping algebra of $L(V,R)$ is $A(V,R)$. 
\end{prop}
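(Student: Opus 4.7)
The plan is to apply the universal property of the universal enveloping algebra together with the standard identification $U(\Lie(V))\cong T(V)$, and then verify that passing from $\Lie(V)$ to $\Lie(V)/(R)_{\Lie}$ corresponds on the associative side to passing from $T(V)$ to $T(V)/(R)_{\text{assoc}}$. Here I am writing $(R)_{\Lie}$ for the Lie ideal generated by $R$ in $\Lie(V)$ and $(R)_{\text{assoc}}$ for the two-sided associative ideal generated by $R$ in $T(V)$.

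First I would recall the classical fact that the universal enveloping algebra of the free Lie algebra on $V$ is the tensor algebra $T(V)$; this follows from matching universal properties, since a Lie map from $\Lie(V)$ to an associative algebra (with its commutator bracket) is determined by a linear map out of $V$, which is exactly what factors through $T(V)$. Next I would invoke the general fact that for any Lie algebra $\mathfrak{g}$ and Lie ideal $\mathfrak{i}\subset \mathfrak{g}$, one has $U(\mathfrak{g}/\mathfrak{i})\cong U(\mathfrak{g})/J$, where $J$ is the two-sided associative ideal of $U(\mathfrak{g})$ generated by the image of $\mathfrak{i}$. This too is a straightforward universal-property argument: both sides represent the functor of Lie algebra maps out of $\mathfrak{g}$ that vanish on $\mathfrak{i}$, viewed as associative algebras via commutators.

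Applying this with $\mathfrak{g}=\Lie(V)$ and $\mathfrak{i}=(R)_{\Lie}$ gives
\[
U(L(V,R)) \;\cong\; T(V)\big/\bigl((R)_{\Lie}\bigr)_{\text{assoc}},
\]
so the proof reduces to the identity $\bigl((R)_{\Lie}\bigr)_{\text{assoc}} = (R)_{\text{assoc}}$ inside $T(V)$. The inclusion $(R)_{\text{assoc}}\subset \bigl((R)_{\Lie}\bigr)_{\text{assoc}}$ is immediate since $R\subset (R)_{\Lie}$. For the reverse inclusion, one observes that $(R)_{\Lie}$ is spanned by iterated brackets of elements of $R$ with elements of $\Lie(V)\subset T(V)$; but each such bracket is a commutator $xy-yx$ inside $T(V)$, and therefore lies in the associative ideal generated by $R$. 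By induction on the depth of the iterated brackets one concludes $(R)_{\Lie}\subset (R)_{\text{assoc}}$, which upon taking the generated associative ideal yields the desired equality.

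The only subtlety, and the step that most warrants care, is keeping straight the two distinct notions of ideal (Lie versus associative) and, correspondingly, the two distinct generated ideals attached to the same subset $R$. Once that bookkeeping is done, the proof is essentially a chain of universal-property identifications, with no delicate computation required.
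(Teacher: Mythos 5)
Your proof is correct and rests on the same underlying idea as the paper's: everything follows from the universal property of the enveloping algebra once one knows $U(\Lie(V))\cong T(V)$. The paper verifies the universal property of $A(V,R)$ directly in one step (a Lie map $L(V,R)\to A$ is a linear map $V\to A$ for which the induced algebra map kills $R$, which is exactly an algebra map out of $A(V,R)$), while you route through the general isomorphism $U(\mathfrak{g}/\mathfrak{i})\cong U(\mathfrak{g})/J$ and then reduce to the purely algebraic identity $\bigl((R)_{\Lie}\bigr)_{\textup{assoc}}=(R)_{\textup{assoc}}$ in $T(V)$. Your extra step is the content that the paper keeps implicit: the fact that iterated commutators of $R$ with $\Lie(V)$ land in the two-sided associative ideal because $[x,y]=xy-yx$; proving it by induction is exactly right. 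The two approaches are thus logically equivalent; yours is a touch more modular, the paper's a touch shorter.
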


\begin{proof}
This follows from the universal property of universal enveloping algebras. One knows that the universal enveloping algebra of the free Lie algebra $L(V)=L(V,0)$ is the tensor algebra $T_\RR(V)=A(V,0)$. Consider the composite 
$$L(V)\to T_\RR(V)\to T_\RR(V)/(R).$$
This maps $R$ to $0$. Hence we obtain a map $L(V,R)= \Lie(V)/(R) \to A(V,R)$.

Now suppose we have a map $L(V,R)\to A$ where $A$ is an associative algebra. This means we have an arbitrary map from $V\to A$ so that $R$ is mapped to $0$. Hence it induces a unique map from $A(V,R)\to A$. Thus $A(V,R)$ satisfies the universal property of the universal enveloping algebra.   
\end{proof} 

Recall the Poincar\'e-Birkhoff-Witt Theorem for universal enveloping algebras of Lie algebras over a commutative ring $\RR$. For a Lie algebra $\LL$ over $\RR$ which is free as a $\RR$-module, the universal enveloping algebra of $\LL$ has a basis given by monomials on the basis elements of $\LL$. We may use this to deduce the following result.

\begin{prop}
\label{PBW-quad}
Suppose $V$ and $R$ are as in Proposition \ref{freemod}. Then the quadratic algebra $A(V,R)$ is isomorphic to the symmetric algebra on the $\RR$-module $L(V,R)$.  In terms of the multiplicative structure, the symmetric algebra on $L(V,R)$ is isomorphic to the associated graded of $A(V,R)$ with respect to the length filtration induced on the universal enveloping algebra.  
\end{prop}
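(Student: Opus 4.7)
The plan is to deduce this from the Poincar\'e--Birkhoff--Witt (PBW) theorem applied to $L(V,R)$. By Proposition \ref{univlie}, $A(V,R) = U(L(V,R))$, so $A(V,R)$ carries a length filtration $F_\bullet$ with $F_k$ spanned by products of at most $k$ elements of $L(V,R)$. PBW in its form over a commutative base ring (e.g.\ Bourbaki, \emph{Groupes et alg\`ebres de Lie}, Ch.~I, \S 2.7) asserts: if a Lie algebra $\LL$ over $\RR$ is free as an $\RR$-module, then the canonical map $\operatorname{Sym}_\RR(\LL) \to \operatorname{gr}_F U(\LL)$ is an isomorphism of graded $\RR$-algebras, and consequently $U(\LL) \cong \operatorname{Sym}_\RR(\LL)$ as $\RR$-modules. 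Both halves of the proposition will follow from this once $L(V,R)$ is known to be free over $\RR$.

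The main obstacle is thus the freeness of $L(V,R)$. Since $R$ is homogeneous of weight two, the weight grading on $\Lie(V)$ descends to $L(V,R) = \bigoplus_n L(V,R)_n$, each piece being finitely generated over the PID $\RR$; it suffices to show each $L(V,R)_n$ is torsion-free. I would argue via base change. For the fraction field $Q$ of $\RR$ and for each residue field $k = \RR/\pi$, right-exactness of tensor product gives $L(V,R) \otimes_\RR Q \cong L_Q(V \otimes Q, R \otimes Q)$ and $L(V,R) \otimes_\RR k \cong L_k(V \otimes k, R \otimes k)$, while Proposition \ref{irr} (and its obvious analogue for $Q$) gives the corresponding identifications for $A$. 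Over any field, PBW is classical and yields $A_F(V \otimes F, R \otimes F) \cong \operatorname{Sym}_F L_F(V \otimes F, R \otimes F)$. Since the ranks of the weight-graded pieces of $A(V,R)$ over $\RR$ are pinned down by Proposition \ref{freemod} and match the dimensions of the corresponding pieces after any base change to a field, the weight-graded dimensions of $L_k(V \otimes k, R \otimes k)$ are independent of $\pi$. Combined with finite generation over the PID $\RR$, this forces each $L(V,R)_n$ to be torsion-free and hence free.

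A more hands-on alternative leverages the normal form in Proposition \ref{freemod}: in an adapted basis $a_1,\dots,a_r$ of $V$, the relation reads $[a_1,a_2] = f$ with $f$ an $\RR$-linear combination of brackets among $a_3,\dots,a_r$, producing an explicit rewriting rule that yields a Hall-type $\RR$-basis for $L(V,R)$ whose images in $A(V,R)$ sit among the $R$-irreducible monomials provided by Theorem \ref{Diamond}. Either way, once freeness of $L(V,R)$ is in hand, PBW delivers both the $\RR$-module isomorphism $A(V,R) \cong \operatorname{Sym}_\RR(L(V,R))$ and the multiplicative identification of $\operatorname{Sym}_\RR(L(V,R))$ with $\operatorname{gr}_F A(V,R)$ under the length filtration.
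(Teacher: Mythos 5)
Your argument is correct and reduces the statement to the same core issue as the paper does --- freeness of $L(V,R)$ over $\RR$, after which PBW for free Lie algebras over a commutative ring yields both halves of the proposition --- but you establish that freeness by a genuinely different route. The paper's proof (appearing a bit later, in the ``Bases of quadratic Lie algebras'' subsection) invokes the classical embedding theorem of Cartier, Lazard, and Cohn, which says that over a Dedekind domain the natural map from a Lie algebra into its universal enveloping algebra is injective: since $U(L(V,R)) = A(V,R)$ is free by Propositions \ref{univlie} and \ref{freemod}, and the inclusion $L(V,R)\hookrightarrow A(V,R)$ respects the weight grading, each finitely generated graded piece of $L(V,R)$ is a submodule of a free module over the PID $\RR$ and hence free. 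You instead bypass the embedding theorem entirely by base change: right-exactness of $\otimes$ together with Proposition \ref{irr} identifies $L(V,R)\otimes_\RR k$ with the corresponding quadratic Lie algebra over the residue field $k$, PBW over fields pins down the weight-graded dimensions in terms of those of $A_k$ (which are independent of $k$), and a finitely generated module over a PID whose fiber dimensions are constant across all residue fields must be free. Both routes are sound; yours trades a reference to a nontrivial Dedekind-domain result for a dimension-counting argument that uses only the classical PBW over fields, and in that sense is more self-contained. Your ``hands-on alternative'' via a rewriting rule and a Hall-type basis is in spirit what the paper later carries out rigorously in Theorem \ref{Liebasis} using Lyndon words, but as written it is only a sketch; you would still need to verify spanning and linear independence, and the paper's own version of this argument leans on the base change to $\Q$ anyway.
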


In the above Proposition the length filtration on $U(L)$ for a Lie algebra $L$ is defined inductively by 
$$F_0 U (L) = R$$
and
$$F_nU (L) = F_{n-1}U (L) + \mathit{Im}\{ L \otimes F_{n-1}U (L) \to U (L) \otimes U (L) \to U (L)\}.$$

\subsection{Bases of quadratic Lie algebras} 
For quadratic Lie algebras $L(V,R)$ over a PID $\RR$ as above, one may write down an explicit basis using methods similar to the Diamond Lemma. 

Fix $V$, a free $\RR$-module, and $R$ an anti-symmetric element of $V\otimes V$ of the form 
$$a_1\otimes a_2= a_2\otimes a_1 +  \mathit{terms~not~involving~}a_1\mathit{~and~}a_2.$$
One may conclude this about $R$ if it is generated by a non-singular element as in Proposition \ref{freemod}. The anti-symmetry condition ensures that $R$ lies in the free Lie algebra generated by $V$. The proof of Proposition \ref{freemod} shows that for such $V$ and $R$ one obtains a basis of $A(V,R)$ as prescribed in the Diamond Lemma.

Consider the quadratic Lie algebra $L(V,R)$ for $R$ as the above. As a Lie algebra element $R$ has the form  
$$[a_1, a_2]=  \mathit{terms~not~involving~}a_1\mathit{~and~}a_2.$$

\begin{prop}
The $\RR$-module $L(V,R)$ is free.
\end{prop}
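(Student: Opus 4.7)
The plan is to identify $L(V,R)$ with a submodule of the free $\RR$-module $A(V,R)$ (free by Proposition~\ref{freemod}(1)) via the canonical map $\iota\colon L(V,R)\to A(V,R)=U(L(V,R))$ supplied by Proposition~\ref{univlie}, and then conclude by the classification of finitely generated modules over a PID: a finitely generated submodule of a free module is torsion-free and hence free. Since $L(V,R)$ carries a grading by bracket length inherited from $\Lie(V)$, and each graded piece is a finitely generated $\RR$-module (being a quotient of the corresponding graded piece of $\Lie(V)$), it is enough to prove that $\iota$ is injective on every graded component.

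To prove injectivity of $\iota$ I would exploit the prescribed form of $R$. The anti-symmetry, combined with the explicit form given in the excerpt, lets me rewrite the Lie-algebra relation as $[a_1,a_2]=\psi$, where $\psi$ lies in the sub-Lie algebra of $\Lie(V)$ generated by $\{a_3,\ldots,a_r\}$. I would then fix a Hall basis of $\Lie(V)$ relative to the ordering $a_1<a_2<\cdots<a_r$ and call a Hall monomial \emph{admissible} if the atomic bracket $[a_2,a_1]$ does not occur as a sub-bracket. Iterated substitution of $[a_2,a_1]=-\psi$ rewrites an arbitrary Hall monomial as an $\RR$-linear combination of admissible ones, since every such substitution strictly decreases the number of $[a_2,a_1]$-atoms and produces brackets supported on $\{a_3,\ldots,a_r\}$; thus admissible Hall monomials span $L(V,R)$. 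For $\RR$-linear independence, I would examine their images under $\iota$: each admissible Hall monomial expands to a signed sum of associative monomials in $A(V,R)$, and a classical Hall-word analysis identifies a well-defined lexicographic leading associative monomial. I would verify that these leading monomials are $R$-irreducible in the sense of Theorem~\ref{Diamond} and are pairwise distinct across admissible Hall monomials. Since by Proposition~\ref{freemod}(3) the $R$-irreducible monomials form a basis of $A(V,R)$, the admissible Hall monomials map to $\RR$-linearly independent elements of $A(V,R)$, and $\iota$ is injective.

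The main obstacle will be the leading-term bookkeeping inside $A(V,R)$: one must show that distinct admissible Hall monomials produce distinct $R$-irreducible leading associative monomials under $\iota$, even in the presence of the quadratic relation. This is a Lie-algebra analogue of the Diamond Lemma confluence used in Proposition~\ref{freemod}, and I expect it to be handled by a double induction on bracket length and on the number of $[a_2,a_1]$-atoms, using crucially that $\psi$ is supported on $\{a_3,\ldots,a_r\}$ to guarantee that each reduction strictly lowers the complexity measure and terminates. Once this combinatorial step is in place, $\iota$ is injective, each graded piece $L(V,R)_n$ is a finitely generated torsion-free $\RR$-module, and $L(V,R)=\bigoplus_n L(V,R)_n$ is a free $\RR$-module, with the admissible Hall monomials serving as an explicit basis.
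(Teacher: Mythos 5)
Your proof follows the same high-level skeleton as the paper's: exhibit $L(V,R)$ as a submodule of the free $\RR$-module $A(V,R)$ via the canonical map $\iota\colon L(V,R)\to U(L(V,R))=A(V,R)$, and then use the weight grading together with the classification of finitely generated modules over a PID to conclude freeness graded piece by graded piece. Where you part ways with the paper is the injectivity of $\iota$, and that is exactly where your proposal has a genuine unresolved gap. The paper dispatches this in one line by citing the classical theorems of Cartier, Lazard and Cohn that over a Dedekind domain the natural map from a Lie algebra to its universal enveloping algebra is injective; no further combinatorics are needed. You instead try to prove injectivity by hand via a Hall-basis rewriting argument, reducing $[a_2,a_1]$ to $-\psi$, declaring the resulting Hall monomials ``admissible,'' and matching distinct admissible monomials to distinct $R$-irreducible leading associative monomials in $A(V,R)$.

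The problem is that you flag the decisive step --- confluence of the Lie-level rewriting, i.e.\ that distinct admissible Hall monomials really have distinct $R$-irreducible leading terms in $A(V,R)$ --- as an ``obstacle'' you ``expect'' to resolve by double induction, without carrying it out. This is not a cosmetic omission: what you are sketching is a Composition--Diamond Lemma for Lie algebras (Shirshov/Bokut), and the subtlety there is precisely that termination (your ``number of $[a_2,a_1]$-atoms strictly decreases'') does not imply confluence. One must check all Lie compositions (both intersection and inclusion overlaps), and the interaction between a Hall/Lyndon leading-word calculus and the associative Diamond Lemma in $A(V,R)$ is not automatic. Notice also that the paper itself does something close to what you are attempting in Theorem~\ref{Liebasis} (a Lyndon basis for $L(V,R)$), but only \emph{after} freeness is already known, only over localizations of $\Z$, and by leaning on the Lalonde--Ram standard Lyndon word machinery (Propositions~\ref{SL=L} and~\ref{SLbasis}) rather than redoing the confluence check from scratch. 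So your route, if completed, would actually prove more than the Proposition asks for, but as written it replaces the paper's clean citation with an unfinished combinatorial core. Either complete the Lie Gr\"obner--Shirshov argument in detail or, more economically, invoke the Dedekind-domain embedding theorem as the paper does.
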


\begin{proof}
From Proposition \ref{univlie} we obtain that the enveloping algebra of $L(V,R)$ is $A(V,R)$ which is free by Proposition \ref{freemod}. Over a Dedekind Domain the map from the Lie algebra to its universal enveloping algebra is an inclusion (\cite{Car58},\cite{Laz54}; also see \cite{Cohn63}). It follows that the map $L(V,R)$ to $A(V,R)$ is an inclusion.

 Introduce the weight grading on $L(V,R)$ and $A(V,R)$ by declaring the grading of $V$ to be $1$. Then the inclusion of $L(V,R)$ in $A(V,R)$ is a graded map. Now each graded piece of $A(V,R)$ is free and finitely generated and the corresponding homogeneous piece of $L(V,R)$ is a submodule. This is free as $\RR$ is a PID.   
\end{proof}

Denote by $L^{w}(V,R)$ (respectively $A^w(V,R)$) the homogeneous elements of $L(V,R)$ (respectively $A(V,R)$) of weight $w$. Recall from \cite{LaRam95} and \cite{Loth97} the concept of a Lyndon basis of a Lie algebra described by generators and relations.

\begin{defn}
Define an order on $V$ by $a_1<a_2<\ldots <a_r$. A {\it word} (in the ordered alphabet $\{a_1,\ldots,a_r\}$) is a monomial in the $a_i$'s. The number of alphabets in a word $w$ is denoted by $|w|$ and called the {\it length} of the word. The lexicographic ordering on the words will be denoted by $>$, i.e., $w> \mathpzc{w}$ means that the word $w$ is lexicographically bigger than $\mathpzc{w}$.

A {\it Lyndon word} is one which is lexicographically smaller than its cyclic rearrangements. Fix an order on all words by declaring words $w\, \preceq\, \mathpzc{w}$ if $|w|\leq |\mathpzc{w}|$, and if $|w|=|\mathpzc{w}|$ then $w\geq \mathpzc{w}$ in lexicographic order. 
\end{defn}

Let $L$ be the set of Lyndon words. Any Lyndon word $\mathpzc{l}$ with $|\mathpzc{l}|>1$ can be uniquely decomposed into $\mathpzc{l}=\mathpzc{l}_1\mathpzc{l}_2$ where $\mathpzc{l}_1$ and $\mathpzc{l}_2$ are Lyndon words so that $\mathpzc{l}_2$ is the larget proper Lyndon word occuring from the right in $\mathpzc{l}$. Each Lyndon word $\mathpzc{l}$ is associated to an element $b(\mathpzc{l})$ of the free Lie algebra on $V$. This is done inductively by setting $b(a_i)=a_i$ and for the decomposition above 
$$b(\mathpzc{l})=b(\mathpzc{l}_1\mathpzc{l}_2):=[b(\mathpzc{l}_1),b(\mathpzc{l}_2)].$$
The image, under $b$, of the set of Lyndon words form a basis of $\Lie(a_1,\ldots,a_r)$, the free Lie algebra on $a_1,\ldots ,a_r$ over any commutative ring (\cite{Loth97}, Theorem 5.3.1). 

Let $J$ be a Lie algebra ideal in $\Lie(a_1,\ldots,a_r)$. Denote by $I$ the ideal generated by $J$ in $T(a_1,\ldots,a_r)$. Define a Lyndon word $\mathpzc{l}$ to be {\it $J$-standard} if $b(\mathpzc{l})$ cannot be written as a linear combination of strictly smaller (with respect to $\preceq$) Lyndon words modulo $J$. Let $S^JL$ be the set of $J$-standard Lyndon words. Similarly define the set $S^I$ of $I$-standard words with respect to the order $\preceq$. We will make use of the following results.
\begin{prop} \label{SL=L} {\bf (\cite{LaRam95}, Corollary 2.8)}
With $I,J$ as above we have $L \cap S^I = S^JL$.  
\end{prop}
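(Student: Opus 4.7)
The proof should exploit two well-known triangularity features of Lyndon words that let us pass back and forth between the Lie ideal $J$ and the two-sided associative ideal $I = (J)$. First, for every Lyndon word $\mathpzc{l}$ one has the leading-term identity
\[
b(\mathpzc{l}) \;=\; \mathpzc{l} \;+\; \sum_{\substack{w \prec \mathpzc{l}\\ |w|=|\mathpzc{l}|}} c_w\, w
\]
in $T_\RR(a_1,\ldots,a_r)$, so the $\preceq$-maximal monomial in the expansion of $b(\mathpzc{l})$ is $\mathpzc{l}$ itself. Second, by the PBW theorem coupled with the Lyndon basis of $\Lie(a_1,\ldots,a_r)$, every word $w$ has a unique non-increasing Lyndon factorization $w = \mathpzc{l}_1 \mathpzc{l}_2 \cdots \mathpzc{l}_k$ with $\mathpzc{l}_1 \geq \cdots \geq \mathpzc{l}_k$, and the products $b(\mathpzc{l}_1) \cdots b(\mathpzc{l}_k)$ form an $\RR$-basis of $T_\RR(a_1,\ldots,a_r)$, once again with leading term $w$.

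For the easy inclusion $L \cap S^I \subseteq S^J L$, argue contrapositively: if $\mathpzc{l} \in L$ is not $J$-standard, then $b(\mathpzc{l}) \equiv \sum c_{\mathpzc{l}'} b(\mathpzc{l}') \pmod{J}$ with $\mathpzc{l}' \in L$, $\mathpzc{l}' \prec \mathpzc{l}$. Transporting this congruence to $T_\RR(a_1,\ldots,a_r)$ and using $J \subseteq I$ together with the first triangularity property, one extracts $\mathpzc{l} \equiv \sum c_{\mathpzc{l}'} \mathpzc{l}' + (\text{strictly $\preceq$-smaller words}) \pmod{I}$, so $\mathpzc{l}$ fails to be $I$-standard.

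For the reverse inclusion $S^J L \subseteq L \cap S^I$, suppose $\mathpzc{l} \in L$ is not $I$-standard and write $\mathpzc{l} \equiv \sum_{w \prec \mathpzc{l}} c_w w \pmod{I}$. Re-express each smaller word $w$ through its non-increasing Lyndon factorization as a $b$-product via the second triangularity property, and re-express each element of $I$ as a finite sum of products $A \cdot j \cdot B$ with $j \in J$. One then uses the bracketing $b$ and the Lyndon factorization to recast such an element in the PBW basis. Taking leading terms in $\preceq$ now collapses the identity onto the homogeneous top-weight layer, and produces $b(\mathpzc{l}) \equiv \sum c_{\mathpzc{l}'} b(\mathpzc{l}') \pmod{J}$ with $\mathpzc{l}' \prec \mathpzc{l}$ Lyndon, showing $\mathpzc{l} \notin S^J L$.

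\textbf{Main obstacle.} The delicate step is the second direction: one must prove that the ``leading-term'' operation, applied to an arbitrary element of the two-sided associative ideal $I = (J)$, lands in the Lie ideal $J$. Concretely, one needs to show that for each $A\cdot j\cdot B$ with $j\in J$, the top-weight contribution, when expanded in the Lyndon PBW basis, can be rewritten as an element of $J$. This uses homogeneity of $J$ and requires an inductive argument on $\preceq$ that intertwines the length filtration on $T_\RR(a_1,\ldots,a_r)$ with the bracket structure via the concatenation rule $b(\mathpzc{l}_1\mathpzc{l}_2) = [b(\mathpzc{l}_1),b(\mathpzc{l}_2)]$ for the standard factorization of a Lyndon word. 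Once this leading-term compatibility is established, both containments are consequences of the triangularity of the Lyndon basis.
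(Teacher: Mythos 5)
The paper does not prove this proposition---it is cited verbatim as \cite{LaRam95}, Corollary~2.8, so there is no in-paper argument to compare against. Your proposal therefore has to be judged on its own.

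Your easy direction ($L\cap S^I\subseteq S^JL$, argued contrapositively) is essentially correct: if $\mathpzc{l}$ is not $J$-standard, one has $b(\mathpzc{l})\equiv\sum c_{\mathpzc{l}'}b(\mathpzc{l}')\ (\mathrm{mod}\ J)$ with $\mathpzc{l}'\prec\mathpzc{l}$ Lyndon; since $J\subseteq I$ the congruence survives mod $I$, and the triangularity $b(\mathpzc{l})=\mathpzc{l}+(\text{$\preceq$-smaller, same length})$ converts it into a word-level relation showing $\mathpzc{l}\notin S^I$. You should, however, justify the leading-term claim---$\mathpzc{l}$ is the lexicographically smallest word in the expansion of $b(\mathpzc{l})$, which under $\preceq$ (degree, then reverse lex) makes it the $\preceq$-maximum among same-degree terms---rather than simply assert it.

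The other inclusion, $S^JL\subseteq L\cap S^I$, is where the real content lies, and your text does not actually establish it. You frame the key step (that taking $\preceq$-leading terms of an arbitrary element $A\cdot j\cdot B$ of $I=(J)$ and rewriting in the Lyndon PBW basis produces something reducible back into $J$) as a ``Main obstacle'' and then state that \emph{once this is established} both containments follow. That conditional phrasing is precisely the gap: the difficulty is not just that arbitrary PBW basis elements are products $b(\mathpzc{l}_1)\cdots b(\mathpzc{l}_k)$ that are not themselves Lie elements, but that one must show the $I$-reduction of a Lyndon word collapses, at top weight, to a relation among single-bracket terms $b(\mathpzc{l}')$ modulo $J$. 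This is exactly the compatibility between the Lie and associative Gr\"obner/standard-basis structures that constitutes the body of Lalonde--Ram's Theorem~2.3 and its corollaries, and it requires their ``standard bracketing'' machinery (or an equivalent inductive argument intertwining the length filtration with the bracketing map). As written, your proposal is a correct high-level plan, with the pivotal lemma named but neither stated precisely nor proved.
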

\begin{prop}\label{SLbasis}{\bf (\cite{LaRam95}, Theorem 2.1)}
The set $S^JL$ is a basis of $\Lie(a_1,\ldots,a_r)/ J$ over $\Q$. 
\end{prop}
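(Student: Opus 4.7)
The plan is to establish that $S^JL$ both spans $\Lie(a_1,\ldots,a_r)/J$ over $\Q$ and is linearly independent there, using a transfer principle to the universal enveloping algebra together with Proposition \ref{SL=L}.

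First I would handle spanning by downward induction along $\preceq$ inside the Lyndon basis of the free Lie algebra. By the classical Lyndon basis theorem, $\{b(\mathpzc{l}) : \mathpzc{l} \in L\}$ is a $\Q$-basis of $\Lie(a_1,\ldots,a_r)$, so every element of the quotient is a $\Q$-combination of Lyndon brackets modulo $J$. If $\mathpzc{l}$ is a Lyndon word that is not $J$-standard, then by definition of $S^JL$, $b(\mathpzc{l})$ is congruent modulo $J$ to a $\Q$-combination of strictly $\preceq$-smaller Lyndon brackets. Since $\preceq$ refines length and there are only finitely many Lyndon words of bounded length, this rewriting terminates, leaving an expression purely in terms of $b(\mathpzc{l})$ with $\mathpzc{l} \in S^JL$. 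Hence these elements span.

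Linear independence is the main obstacle. My approach would be to embed $\Lie(V)/J$ into its universal enveloping algebra $U = T(V)/I$ and compare leading terms under the order $\preceq$ on monomials in $T(V)$. Over $\Q$, the PBW theorem guarantees that $\Lie(V)/J \hookrightarrow U$ is injective (this also follows from Proposition \ref{univlie} combined with PBW), so it suffices to show that the images of the $b(\mathpzc{l})$ for $\mathpzc{l} \in S^JL$ are linearly independent in $U$. The key combinatorial input is the Chen--Fox--Lyndon style triangularity statement: for any Lyndon word $\mathpzc{l}$, when $b(\mathpzc{l})$ is expanded as a sum of monomials in $T(V)$, the lexicographically maximal monomial is $\mathpzc{l}$ itself and it appears with coefficient $1$, while all other monomials appearing are $\preceq$-smaller. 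Thus the transition matrix from $\{b(\mathpzc{l}) : \mathpzc{l} \in L\}$ to the monomial basis of $T(V)$ indexed by Lyndon words is unitriangular with respect to $\preceq$.

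Now I would invoke Proposition \ref{SL=L} to identify $S^JL$ with $L \cap S^I$, i.e.\ the Lyndon words that are also $I$-standard as associative words. By the Diamond Lemma / Gröbner basis theory for $T(V)/I$, the set $S^I$ of $I$-standard words projects to a $\Q$-basis of $U$. In particular the $I$-standard Lyndon words are linearly independent in $U$. Combining this with the unitriangularity above, the set $\{b(\mathpzc{l}) : \mathpzc{l} \in S^JL\}$ has linearly independent leading terms in $U$ (each equal to its own indexing Lyndon word), hence the set itself is linearly independent in $U$ and therefore in $\Lie(V)/J$. The delicate step throughout is the triangularity of the Lyndon bracketing inside $T(V)$, which is the technical heart of the Lyndon-basis formalism and the reason Proposition \ref{SL=L} suffices to convert the associative Gröbner statement into the Lie-algebraic one.
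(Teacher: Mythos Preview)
The paper does not prove this proposition: it is quoted as Theorem~2.1 of Lalonde--Ram \cite{LaRam95} and used as a black box, so there is no proof in the paper to compare your proposal against.

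As for your argument itself, the overall strategy---spanning by downward rewriting along $\preceq$, then independence via the embedding into $U=T(V)/I$, triangularity of Lyndon bracketing, and Proposition~\ref{SL=L}---is precisely the Lalonde--Ram approach and is essentially sound. Two small corrections. First, the leading monomial of $b(\mathpzc{l})$ in $T(V)$ is $\mathpzc{l}$ as the lexicographically \emph{minimal} word among those appearing, not the maximal one; under the paper's order $\preceq$ (length first, then reverse lexicographic) this makes $\mathpzc{l}$ the $\preceq$-maximal term, so your conclusion that the remaining terms are $\preceq$-smaller is correct but your wording is inverted. Second, the triangularity you invoke lives in $T(V)$, not yet in $U=T(V)/I$; you need one more sentence observing that when the non-$I$-standard monomials in $b(\mathpzc{l})$ are reduced modulo $I$, by the very definition of $S^I$ this produces only $\preceq$-smaller $I$-standard words, so the unitriangularity survives in $U$ against the basis $S^I$. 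With those two fixes your argument is complete.
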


We restrict our attention to $J=(R)$ where $R$ has the form described above. Let $S_RL$ be the set of all Lyndon words which does not contain $a_1a_2$ consecutively in that order. From the Diamond Lemma, $S^I$ is the same as the $R$-irreducible monomials which are the monomials not of the form $Aa_1a_2B$. It follows that $S^{(R)}L=S^I \cap L = S_RL$.  We prove the following theorem.
\begin{theorem}\label{Liebasis}
Suppose $\RR$ is a localization of $\Z$. Then the images of $b(S_RL)$ form a basis of $L(V,R)$.
\end{theorem}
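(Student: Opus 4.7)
The plan is to lift the rational basis statement of Proposition \ref{SLbasis} to an $\RR$-basis statement by routing through the universal enveloping algebra $A(V,R)$. Concretely, I would first show that the set of non-increasing ordered products $b(\mathpzc{l}_1)\cdots b(\mathpzc{l}_k)$, with $\mathpzc{l}_1\geq\cdots\geq\mathpzc{l}_k$ taken from $S_RL$, is an $\RR$-basis of $A(V,R)$. Then, since $A(V,R)$ is the enveloping algebra of the free $\RR$-module $L(V,R)$ by Propositions \ref{univlie} and \ref{PBW-quad}, a reverse application of PBW forces $b(S_RL)$ itself to be an $\RR$-basis of $L(V,R)$.

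First I would invoke the classical Chen--Fox--Lyndon theorem: every word in $\{a_1,\ldots,a_r\}$ admits a unique factorization as a non-increasing (lexicographic) product of Lyndon words. By Proposition \ref{freemod} the $R$-irreducible monomials form an $\RR$-basis of $A(V,R)$, so the key bookkeeping is to verify the bijection between $R$-irreducible words and non-increasing Lyndon products whose factors all belong to $S_RL$. Interior occurrences of $a_1a_2$ in a factor are clearly ruled out, and boundary occurrences are automatically excluded: a Lyndon word starts with its smallest letter, so a factor $\mathpzc{l}_{i+1}$ beginning with $a_2$ forces every letter of $\mathpzc{l}_{i+1}$ to be $\geq a_2$, while an adjacent $\mathpzc{l}_i$ ending in $a_1$ would have first letter $\leq a_1$ and hence $\mathpzc{l}_i<\mathpzc{l}_{i+1}$ in lex, contradicting non-increasingness.

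Next I would use the classical leading-term identity $b(\mathpzc{l})=\mathpzc{l}+(\text{lex-smaller words of the same length})$ in $T(V)$, from which it follows that for any non-increasing sequence $\mathpzc{l}_1\geq\cdots\geq\mathpzc{l}_k$ in $S_RL$, the product $b(\mathpzc{l}_1)\cdots b(\mathpzc{l}_k)$ equals the concatenation $\mathpzc{l}_1\cdots\mathpzc{l}_k$ plus lex-smaller monomials of the same total length. Projecting to $A(V,R)$ and running Bergman/Diamond reductions on the corrections (which preserves weight and only produces strictly smaller monomials in a suitable extension of lex) expresses each product as its leading $R$-irreducible concatenation with coefficient $1$ plus strictly smaller $R$-irreducible monomials. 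The transition matrix from these ordered products to the $R$-irreducible basis is therefore unitriangular within each weight, hence invertible over $\RR$. This proves ordered products of $b(S_RL)$ form an $\RR$-basis of $A(V,R)$.

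Finally I would invert PBW. The length filtration on $A(V,R)=U(L(V,R))$ satisfies $F_0=\RR$ and $F_1=\RR\oplus L(V,R)$, and any basis of $A(V,R)$ consisting of ordered products of a subset $S\subset L(V,R)$ respects this filtration by number of factors. The length-$\leq 1$ portion of our basis is exactly $\{1\}\cup b(S_RL)$, which must therefore be an $\RR$-basis of $F_1$; projecting modulo $F_0$ gives that $b(S_RL)$ is an $\RR$-basis of $L(V,R)$. The main obstacle will be making the unitriangularity claim in the middle step watertight: one must select a term order compatible with the Bergman reductions, confirm that the leading term of each non-increasing product is the concatenation with coefficient precisely $1$ (not merely a nonzero rational), and verify that no denominators enter when reducing the lex-smaller corrections to $R$-irreducible form, so that the diagonal entries of the transition matrix are units over the localization $\RR$.
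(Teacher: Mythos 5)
Your proposal is correct in outline and takes a genuinely different route from the paper. The paper's proof lifts Lalonde--Ram directly: it cites Proposition~\ref{SLbasis} for linear independence over $\Q$ and hence over the localization~$\RR$, and handles spanning by asserting that any $b(\mathpzc{l})$ with $\mathpzc{l}\notin S^{(R)}L$ can be reduced modulo~$(R)$ to strictly smaller Lyndon brackets, eventually landing in $S^{(R)}L=S_RL$. Your argument instead avoids invoking the rational theorem as a black box: you build the $\RR$-basis of $A(V,R)$ from scratch via the Chen--Fox--Lyndon factorization, matching $R$-irreducible words (the Diamond basis from Theorem~\ref{Diamond} / Proposition~\ref{freemod}) bijectively with non-increasing products of $S_RL$-words (your boundary argument ruling out $a_1a_2$ at a seam is correct, since a Lyndon word begins with its smallest letter), and then passes to $L(V,R)$ by inverting PBW. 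This is more self-contained and makes the integrality transparent: the free-Lie-algebra identity $b(\mathpzc{l})=\mathpzc{l}+(\text{rearrangements})$ has $\Z$-coefficients, and the Bergman reduction $a_1a_2\mapsto a_2a_1+(\text{terms not involving }a_1,a_2)$ has coefficients in~$\RR$ because Proposition~\ref{freemod} normalizes the leading coefficient to~$1$, so the transition matrix is integrally unitriangular. Two points deserve more care than your sketch gives them. First, the order: the corrections are \emph{lex-larger}, not smaller, and the Bergman substitution $a_1a_2\to a_2a_1+\cdots$ also strictly increases lex order within a fixed weight; so the right term order is (degree-)anti-lex, under which both the PBW corrections and the Bergman reductions are strictly decreasing and termination is automatic. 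Second, the final step ``$\{1\}\cup b(S_RL)$ must therefore be a basis of $F_1$'' is not immediate merely from $\{1\}\cup b(S_RL)\subset F_1$ being part of a basis of $A(V,R)$: one has to show that the length-$\geq 2$ ordered products are independent modulo $F_1$, which requires an induction on weight using $\mathrm{gr}\,A(V,R)\cong\mathrm{Sym}(L(V,R))$ (Proposition~\ref{PBW-quad}) and the already-established statement in lower weights. With those two repairs your proof is complete and arguably tighter than the paper's, which leaves the spanning reduction over~$\RR$ (as opposed to~$\Q$) somewhat implicit.
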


\begin{proof}
Proposition \ref{SLbasis} implies the Theorem for $\RR=\Q$. We use it to obtain the result for $\RR$, a localization of $\Z$. First check that the images must indeed generate the Lie algebra $L(V,R)$. Any Lyndon word outside $S^{(R)}L$ can be expressed using lesser elements and continuing in this way we must end up with elements in $S^{(R)}L$. Therefore $S^{(R)}L$ spans $L(V,R)$ and as $S^{(R)}L=S_RL$ the latter also spans. 

It remains to check that $S_RL$ is linearly independent. From Proposition \ref{SLbasis}, $S_RL$ is linearly independent over $\Q$ the fraction field of $\RR$. It follows that it must be linearly independent over $\RR$.
\end{proof}

\subsection{Graded Lie algebras}

We recall the Poincar\'{e}-Birkhoff-Witt Theorem for graded Lie algebras over commutative rings from \cite{Nei10}. A graded Lie algebra over a ring $\RR$ in which $2$ is not invertible carries an extra squaring operation on odd degree classes to encode the relation $x^2 = \frac{1}{2} [x,x]$ whenever $|x|$ is odd. 
\begin{defn}
 A graded Lie algebra $L=\oplus L_i$ is a graded $\RR$-module together with a Lie bracket
$$[~ ,~ ] : L_i \otimes_{\RR} L_j \to  L_ {i+j}$$
and a quadratic operation called {\it squaring} defined on odd degree classes
$$(~)^2 : L_{2k+1} \to L_{4k+2}.$$
These operations are required to satisfy the identities
\begin{align*}
[x, y] & = -(-1)^{\deg(x)\deg(y)}[y, x]\,\,\,(x\in L_i, y\in L_j)\\
[x, [y, z]] & = [[x, y], z] + (-1)^{\deg(x)\deg(y)} [y, [x, z]] \,\,\,(x\in L_i, y\in L_j, z\in L_k)\\
(ax)^ 2 & = a^2 x^2 \,\,\,(a\in \RR, x\in L_{2k+1})\\
(x + y)^2 & = x^2 + y^2 + [x, y] \\
[x, x] & = 0 \,\,\,(x\in L_{2i})\\
2x^2 & = [x, x],\,\,[x, x^2] = 0 \,\,\,\hfill(x\in L_{2k+1})\\
[y, x^2] & = [[y, x], x] \,\,\,(x\in L_{2k+1}, y\in L_i).
\end{align*}
\end{defn}

One obtains analogous constructions of universal enveloping algebras of graded Lie algebras and free graded Lie algebras. The universal enveloping algebra of a graded Lie algebra is also a graded associative algebra.  As a graded algebra the universal enveloping algebra of the free graded Lie algebra on a graded $A$-module $V$ free in each dimension is the tensor algebra $T_A(V)$ (\cite{Nei10}, Example 8.1.4). 

Analogously one may define quadratic graded $A$ algebras $A^{gr}(V,R)$ and quadratic graded Lie algebras $L^{gr}(V,R)$ when the graded subspace $R\subset V\otimes V$ is contained in the free graded Lie algebra generated by $V$. One readily proves a graded version of  Proposition \ref{univlie}.

\begin{prop}
The universal enveloping algebra of $L^{gr}(V,R)$ is $A^{gr}(V,R)$. 
\end{prop}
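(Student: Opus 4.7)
The plan is to mimic verbatim the proof of Proposition \ref{univlie}, with the observation that all the required ingredients have already been provided in graded form. The key fact, quoted from the excerpt, is that the universal enveloping algebra of the free graded Lie algebra $\mathit{Lie}^{gr}(V)$ is the tensor algebra $T_\RR(V) = A^{gr}(V,0)$ as a graded associative algebra (Example 8.1.4 of \cite{Nei10}). From here I would simply chase the universal properties.

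First I would consider the canonical composite of graded associative algebra maps
\[
U(\mathit{Lie}^{gr}(V)) = T_\RR(V) \longrightarrow T_\RR(V)/(R) = A^{gr}(V,R),
\]
and restrict along the universal inclusion $\mathit{Lie}^{gr}(V) \hookrightarrow U(\mathit{Lie}^{gr}(V))$ to obtain a graded Lie algebra map $\mathit{Lie}^{gr}(V) \to A^{gr}(V,R)$. Since $R \subset \mathit{Lie}^{gr}(V)$ is mapped to $0$ (because $R$ is the defining ideal of the quotient), this map descends to a graded Lie algebra homomorphism $\iota\colon L^{gr}(V,R) = \mathit{Lie}^{gr}(V)/(R) \to A^{gr}(V,R)$.

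Next I would verify the universal property. Suppose $A$ is a graded associative $\RR$-algebra and $\phi\colon L^{gr}(V,R) \to A$ is a graded Lie algebra map (where $A$ is regarded as a graded Lie algebra in the standard way, with squaring $x \mapsto x \cdot x$ on odd degree classes). Composing with the canonical surjection $\mathit{Lie}^{gr}(V) \twoheadrightarrow L^{gr}(V,R)$ produces a graded Lie algebra map $\mathit{Lie}^{gr}(V) \to A$, i.e.\ an arbitrary graded $\RR$-linear map $V \to A$ (here one uses that $\mathit{Lie}^{gr}(V)$ is the free graded Lie algebra). By the universal property of the tensor algebra, this extends uniquely to a graded associative algebra map $\tilde{\phi}\colon T_\RR(V) \to A$. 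By construction $\tilde{\phi}$ vanishes on the two-sided ideal $(R)$, hence descends uniquely to a graded associative algebra map $A^{gr}(V,R) \to A$ whose precomposition with $\iota$ recovers $\phi$. Uniqueness is immediate since $\iota(V)$ generates $A^{gr}(V,R)$ as a graded associative algebra.

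The only step that requires genuine care — and which I would single out as the main (mild) obstacle — is the compatibility of the squaring operation with the Lie-to-associative passage in characteristic $2$: one must check that the map $V \to A$ respects $(x)^2 \mapsto \phi(x)^2$ on odd degree generators, not merely the bracket. This is automatic from the definition of the graded Lie algebra structure on an associative algebra together with the identities $(x+y)^2 = x^2 + y^2 + [x,y]$ and $2x^2 = [x,x]$ imposed in the definition of $L^{gr}$, so the extension $\tilde{\phi}$ is forced to intertwine the squarings. With this checked, $A^{gr}(V,R)$ satisfies the universal property of $U(L^{gr}(V,R))$, which proves the proposition.
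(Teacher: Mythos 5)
Your proof is correct and follows precisely the route the paper intends: the paper does not write out a proof for the graded proposition but merely remarks that "one readily proves a graded version of Proposition \ref{univlie}," and your argument is exactly that — the universal-property chase from the ungraded proof, transported to the graded setting using the cited fact that $U(\mathit{Lie}^{gr}(V)) \cong T_\RR(V)$. The one place where you add value beyond the paper's sketch is the explicit check that the squaring operation is respected; your identification of this as the only nontrivial point is right, though the cleanest justification is simply that any graded \emph{associative} algebra map $\tilde\phi$ automatically intertwines the squaring operations $x\mapsto x\cdot x$ (being multiplicative), hence $\tilde\phi$ restricted to $\mathit{Lie}^{gr}(V)$ is a graded Lie map agreeing with the composite $\mathit{Lie}^{gr}(V)\to L^{gr}(V,R)\xrightarrow{\phi} A$ on generators and therefore equal to it — no appeal to the identities $(x+y)^2=x^2+y^2+[x,y]$ or $2x^2=[x,x]$ is actually needed at that step.
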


It is possible to derive a Poincar\'e-Birkhoff-Witt theorem for graded Lie algebras under the extra assumption that the underlying module is free over $\RR$. 

\begin{theorem}\label{PBW} {\bf (\cite{Nei10}, Theorem 8.2.2)} If $L$ is a graded Lie algebra over $\RR$  which is a free $\RR$-module in each degree, then $U (L)$ is isomorphic to the symmetric algebra on $L$. In terms of the multiplicative structure, the symmetric algebra on $L$ is isomorphic to the associated graded of $U(L)$ with respect to the length filtration\footnote{Refer to the discussion immediately following Proposition \ref{PBW-quad}.} induced on $U(L)$. 
\end{theorem}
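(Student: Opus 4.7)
The plan is to establish the two parts of the theorem simultaneously by constructing an explicit $\mathcal{R}$-module basis of $U(L)$ coming from ordered monomials, and then identifying this basis with a standard basis of the symmetric algebra on $L$. First, I would fix a homogeneous $\mathcal{R}$-basis $\{e_\alpha\}_{\alpha\in A}$ of $L$, using that each $L_i$ is free, together with a total order on $A$ that is compatible with the grading. In $U(L)$ one then considers the set $\mathcal{B}$ of ordered monomials $e_{\alpha_1}\cdots e_{\alpha_k}$ with $\alpha_1\leq \alpha_2\leq\cdots\leq \alpha_k$, subject to the constraint that no odd-degree generator appears more than once (with that role being played instead by its image $e_{\alpha_i}^2$ under the squaring operation, regarded as a new degree-doubling generator).

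Second, I would analyze the length filtration. Since $[x,y]$ lies in lower filtration than $xy$ in $U(L)$, in $\operatorname{gr} U(L)$ the bracket vanishes and one gets $xy = (-1)^{|x||y|}yx$; similarly the relation $2x^2=[x,x]$ in $U(L)$ projects to $2x^2=0$ in the associated graded for odd $x$, matching the defining relations of the graded symmetric algebra $S(L)$. Sending each $e_\alpha$ to its class gives a well-defined surjective map of graded associative algebras $\phi\colon S(L)\to \operatorname{gr} U(L)$, and to prove the theorem it suffices to show $\phi$ is injective.

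The key step is to construct a representation $\rho\colon U(L)\to \operatorname{End}_{\mathcal{R}}(S(L))$ such that the orbit map $u\mapsto \rho(u)(1)$ recovers $\phi^{-1}$ on the level of associated gradeds. Following the standard PBW strategy, one defines the action of each $e_\alpha$ on an ordered monomial $m=e_{\beta_1}\cdots e_{\beta_k}$ in $S(L)$ by the rule: if $\alpha\leq\beta_1$, declare $e_\alpha\cdot m = e_\alpha m$ with the usual sign convention; otherwise, use the anti/symmetry relation and the bracket to move $e_\alpha$ past $e_{\beta_1}$ and iterate, invoking the squaring operation when $\alpha=\beta_1$ and both are odd. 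Then one checks by induction on the filtration that the axioms of a graded Lie algebra (Jacobi, the squaring identities $(x+y)^2=x^2+y^2+[x,y]$, $2x^2=[x,x]$, $[y,x^2]=[[y,x],x]$) are all respected, so that this extends to an action of $U(L)$.

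The main obstacle is verifying that this prospective action is consistent — in particular, that the interaction of the squaring operation with the bracket does not produce an inconsistency when one reorders a product in two different ways. This is exactly where working over a general commutative ring $\mathcal{R}$ in which $2$ need not be invertible forces one to invoke the extra axioms for the squaring, rather than simply deriving $x^2=\tfrac12[x,x]$. Once consistency is established, $\rho(1)$ hitting $1$ shows $\phi$ is injective, giving the module identification $U(L)\cong S(L)$ and simultaneously the identification of $S(L)$ with $\operatorname{gr} U(L)$ as graded algebras.
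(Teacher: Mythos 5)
This theorem is quoted from (\cite{Nei10}, Theorem~8.2.2) and is not proved in the paper, so there is no in-paper argument to compare against. Your sketch follows the standard Witt-style PBW strategy --- a surjection $\phi\colon S(L)\to\operatorname{gr}U(L)$ plus a faithful representation of $U(L)$ on $S(L)$ --- which is also the shape of Neisendorfer's argument.

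There is, however, a genuine gap in the step where you check that $\phi$ is well-defined. Here $S(L)$ is the free graded-commutative algebra $\Lambda(L^{\mathrm{odd}})\otimes P(L^{\mathrm{even}})$, whose defining relation on an odd generator $x$ is $x^2=0$, not merely $2x^2=0$. You derive only $2x^2=0$ in $\operatorname{gr}U(L)$ by pushing the Lie algebra axiom $2x^2=[x,x]$ into the filtration. Over $\Z$ or $\Z_{(2)}$ --- exactly the setting the theorem must cover, and the reason the squaring operation exists at all --- this is strictly weaker, and $\phi$ has not been shown to factor through the exterior relations. The relation you should invoke instead is the one built into the definition of $U(L)$ for graded Lie algebras with squaring: the canonical map $\iota\colon L\to U(L)$ satisfies $\iota(x)\iota(x)=\iota(x^2)$ for $x$ odd, with $x^2\in L_{4k+2}$ the Lie algebra squaring. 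Since $\iota(x^2)$ has filtration length $1$ while $\iota(x)\iota(x)$ lives in filtration length $2$, this gives $x^2=0$ on the nose in $\operatorname{gr}^2 U(L)$. Relatedly, when checking that $\rho$ descends from $T(L)$ to $U(L)$, you must verify $\rho(x)\rho(x)=\rho(x^2)$ on $S(L)$ in addition to the commutator relation; your last paragraph gestures at the subtlety of squaring but does not isolate this as the key consistency condition that replaces $x^2=\tfrac12[x,x]$ when $2$ is not invertible.
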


If the base ring $\RR$ is a PID, then we deduce that any graded Lie algebra embeds in its universal enveloping algebra. 

\begin{theorem}
Suppose that $\RR$ is a Principal Ideal Domain. Let $L$ be a graded Lie algebra such that $L_n$ is finitely generated for every $n$. Let $U(L)$ be its universal enveloping algebra. Then the natural map $\iota:L \to U(L)$ is injective. 
\end{theorem}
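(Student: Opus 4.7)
The plan is to bootstrap from Theorem \ref{PBW}, which handles graded Lie algebras free as $\RR$-modules in each degree, via two reductions: localization and a free Lie algebra presentation.

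First, since each $L_n$ is finitely generated over the PID $\RR$, so is each $(\ker\iota)_n$, and a finitely generated module over a PID is determined by its localizations at maximal ideals. The functor $U$ is a left adjoint and hence commutes with flat base change (so $U(L)\otimes_\RR \RR_{(\pi)}\cong U(L\otimes_\RR \RR_{(\pi)})$, with $\iota$ compatible), so it suffices to prove the theorem when $\RR$ is a field or a DVR. The field case is immediate from Theorem \ref{PBW}, since modules over a field are free.

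Assume now $\RR$ is a DVR with uniformizer $\pi$ and residue field $\bar\RR$. Choose a graded surjection $V\twoheadrightarrow L$ with each $V_n$ finitely generated free, and extend to a surjection $\phi\colon \Lie(V)\twoheadrightarrow L$ of graded Lie algebras with kernel the graded Lie ideal $K$. Applying Theorem \ref{PBW} to $\Lie(V)$ embeds $\Lie(V)\hookrightarrow U(\Lie(V))=T(V)$. Passing to quotients gives $U(L)\cong T(V)/(K)$, where $(K)$ is the two-sided associative ideal generated by $K$, and the injectivity of $\iota$ is then equivalent to the identity
\[
(K)\cap \Lie(V) \;=\; K \qquad \text{inside } T(V).
\]
Reducing modulo $\pi$ gives $\Lie(V)\otimes \bar\RR\cong \Lie(\bar V)$ and $T(V)\otimes \bar\RR\cong T(\bar V)$ with $\bar V:=V\otimes\bar\RR$, and the image $\bar K$ of $K$ in $\Lie(\bar V)$ is a Lie ideal whose associative closure is the image of $(K)$. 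Applying Theorem \ref{PBW} over the field $\bar\RR$ to $\Lie(\bar V)/\bar K$ yields the mod-$\pi$ version $(\bar K)\cap \Lie(\bar V)=\bar K$.

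The main obstacle is to lift this identity from $\bar\RR$ to $\RR$. My strategy is to analyze $Q_n:=((K)\cap \Lie(V))_n/K_n\cong (\ker\iota)_n$, a finitely generated $\RR$-submodule of $L_n$, and invoke Nakayama's lemma over the local ring $\RR$ once $Q_n\otimes \bar\RR=0$ has been established. The mod-$\pi$ identity immediately gives $(K)\cap \Lie(V)\subseteq K+\pi\Lie(V)$, but to conclude $Q_n\otimes \bar\RR=0$ one needs a purity assertion of the form $K_n\cap \pi((K)\cap \Lie(V))_n=\pi K_n$ inside the free $\RR$-module $\Lie(V)_n$. I would establish purity by tracking the length filtration on $T(V)$ (cf.\ the discussion following Proposition \ref{PBW-quad}) together with its associated graded, both of which are free $\RR$-modules because $V$ is, and comparing with the analogous decomposition of $T(\bar V)$ provided by the field PBW. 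Once purity is in hand, Nakayama yields $Q_n=0$ in every degree, and hence the desired injectivity of $\iota$.
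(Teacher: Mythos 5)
Your proposal takes a genuinely different route from the paper, and it has a gap that is acknowledged but not closed. The paper's proof is a direct three-line argument: for each prime $\pi$ of $\RR$ set $k=\RR/\pi$, use $U(L\otimes_\RR k)\cong U(L)\otimes_\RR k$ and apply Theorem \ref{PBW} over the field $k$ to get injectivity of $\iota\otimes 1\colon L\otimes k\to U(L)\otimes k$, deduce $\ker(\iota)\otimes_\RR\RR/\pi=0$ for every $\pi$, and conclude $\ker(\iota)_n=0$ from finite generation over the PID. You instead localize, present $L=\Lie(V)/K$ with $V$ free, invoke Theorem \ref{PBW} on $\Lie(V)$ to embed it in $T(V)$, and recast the statement as the ideal-theoretic identity $(K)\cap\Lie(V)=K$ in $T(V)$. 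That reformulation is correct as far as it goes, and the mod-$\pi$ step via PBW over $\bar\RR$ is fine (your identifications $\Lie(V)\otimes\bar\RR\cong\Lie(\bar V)$ and $T(V)\otimes\bar\RR\cong T(\bar V)$ do hold because $V$ is free, though you should say so). But you then reach a purity assertion ($K_n\cap\pi((K)\cap\Lie(V))_n=\pi K_n$), admit you need it to invoke Nakayama, and only sketch a vague plan ("tracking the length filtration \ldots comparing with the analogous decomposition"). That is exactly the hard step, and it is left unproved.

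The detour through a free presentation therefore does not buy anything: you have relocated, not resolved, the essential difficulty. Observe that the paper's one-sentence inference, from "$\iota\otimes 1$ is injective" to "$\ker(\iota)\otimes\RR/\pi=0$," is itself not a valid general module-theoretic implication — for a map $f\colon A\to B$ of finitely generated $\RR$-modules, $f\otimes k$ being injective for every residue field $k$ and for $\mathrm{Frac}(\RR)$ does not force $\ker f=0$ (take $f\colon\Z/4\twoheadrightarrow\Z/2$). Whatever closes that gap must exploit the specific structure of the situation (the length filtration on $U(L)$, purity of the image of $\iota$, or a genuine lifting argument), which is precisely what your purity claim gestures at. So the status is: you identified the real obstruction — credit for that — but your longer route arrives at the same unclosed step, stated in a more elaborate but not more tractable form, so the proposal as written is incomplete.
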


\begin{proof}
Let $\iota$ denote the map $L\to U(L)$. Let $\pi$ be a prime in $\RR$ and $k$ be the field $\RR/\pi$. Note that the universal enveloping algebra of the Lie algebra $L\otimes_\RR k$ over $k$ is isomorphic to $ U(L)\otimes_\RR k$. It follows from Theorem \ref{PBW} that the map 
 $$\iota\otimes 1:L\otimes_\RR k \to U(L)\otimes_\RR k$$ 
is injective. Therefore we obtain for any prime $\pi$ of $\RR$, $\mathit{Ker}(\iota) \otimes_\RR \RR/\pi=0$. As $\mathit{Ker}(\iota)_n$ is finitely generated over the PID $\RR$, it follows that   $\mathit{Ker}(\iota)_n=0$.
\end{proof}


\section{Quadratic Properties of Cohomology Algebras}\label{quadcoh}
In this section we consider the cohomology ring of an $(n-1)$-connected $2n$-manifold $M$. These algebras will be put to use in the following sections. We prove that when the rank of $H^n(M)$ is at least $2$ the cohomology ring is a quadratic algebra which is Koszul. This result will be used frequently in later sections.

Let $M$ be a closed manifold of dimension $2n$ which is $(n-1)$-connected ($n\geq 2$). The cohomology of $M$ has the form 
$$H^i(M)=\left \{\begin{array}{rl} 
                \Z &\mbox{if}~i=0,2n \\
                \Z^r &\mbox{if}~i=n  \\
                0    &\mbox{otherwise}.                   
\end{array}\right.$$  
The multiplication is graded-commutative; for $\alpha,\beta \in H^n(M)$
$$\alpha \beta =\left \{\begin{array}{rl} 
                \beta \alpha &\mbox{if $n$ is even} \\
                -\beta \alpha &\mbox{if $n$ is odd}.                   
\end{array}\right.$$

\begin{prop}\label{quadratic}
Let $k$ be any field. $H^*(M;k)$ is a quadratic algebra if $n$ is odd. If $n$ is even and $r\geq 2$, then $H^*(M;k)$ is a quadratic algebra. 
\end{prop}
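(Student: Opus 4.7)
The plan is to identify $H^*(M;k)$ with the quadratic algebra $A(V,R)$ where $V = H^n(M;k)$ sits in weight $1$ and $R$ is the kernel of the cup product pairing $\phi : V \otimes V \to H^{2n}(M;k) \cong k$. Since $M$ is simply connected (hence orientable) and closed, Poincar\'e duality makes $\phi$ non-degenerate; in particular $\phi$ is surjective and $\dim R = r^2 - 1$.

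First I would produce a comparison map. The inclusion $V \hookrightarrow H^*(M;k)$ extends uniquely to a graded algebra map $T(V) \to H^*(M;k)$, which kills $R$ by the very definition of $R$. We thus obtain a surjection of graded algebras $\Phi : A(V,R) \to H^*(M;k)$. In weights $0$, $1$, $2$ the map $\Phi$ is clearly an isomorphism: the only nontrivial check is in weight $2$, where $(V \otimes V)/R$ is one-dimensional and maps isomorphically onto $H^{2n}(M;k)$.

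The heart of the argument is showing that $A(V,R)$ vanishes in weights $\geq 3$. Since $A(V,R)^{(m+1)}$ is a quotient of $V \otimes A(V,R)^{(m)}$, it suffices to prove $A(V,R)^{(3)} = 0$, equivalently $V \otimes R + R \otimes V = V^{\otimes 3}$. For this I would use the isomorphism $V^{\otimes 3}/(V \otimes R) \xrightarrow{\cong} V$ induced by $1 \otimes \phi$ (applied to the last two tensor factors). Under this identification, a generator $\alpha \otimes \beta \otimes \gamma \in R \otimes V$ (so $\phi(\alpha,\beta) = 0$) is sent to $\phi(\beta,\gamma) \cdot \alpha$, and the desired equality reduces to showing that every $v \in V$ arises in this form.

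The main obstacle is this last claim, and it is where the hypotheses of the proposition enter. Given a nonzero $v \in V$, I would take $\alpha = v$ and seek a nonzero $\beta$ in the orthogonal complement $v^\perp := \{\beta \in V : \phi(v,\beta) = 0\}$; non-degeneracy of $\phi$ then supplies a $\gamma$ with $\phi(\beta,\gamma) = 1$. Since $\dim v^\perp = r-1$ by non-degeneracy, such a $\beta$ exists precisely when $r \geq 2$. For $n$ even this is the standing hypothesis. For $n$ odd the intersection form on the integral lattice $H^n(M;\Z)$ is non-degenerate and skew-symmetric, which forces its rank $r$ to be even; since $r = 0$ corresponds to a homotopy $2n$-sphere (for which $V = 0$ and the assertion is degenerate), we must have $r \geq 2$ in this case as well. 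Once $A(V,R)^{(3)} = 0$ is established, all higher weights vanish automatically, and a dimension comparison in each degree shows $\Phi$ is an isomorphism.
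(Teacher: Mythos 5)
Your proof is correct, but it takes a genuinely different route from the paper's. The paper (in its Proposition~\ref{cellquadratic}) argues by explicit choice of basis: a symplectic basis for $n$ odd, an orthogonal basis for $n$ even, and a mixed basis in characteristic $2$, then writes out generators for $R$ and checks by hand that every weight-$3$ monomial $abc$ lands in the ideal. This forces a case split by parity of $n$ and by $\mathit{char}(k)$. Your argument is coordinate-free: you take $R = \ker\phi$ directly, identify $V^{\otimes 3}/(V\otimes R)\cong V$ via $1\otimes\phi$, and show $R\otimes V$ surjects onto $V$ using nothing but non-degeneracy of $\phi$ and $\dim v^\perp = r-1 \geq 1$. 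This collapses the characteristic cases entirely and makes the role of the hypothesis $r\geq 2$ transparent (in the paper it appears only as the ability to find $j\neq i$). Your observation that $A^{(3)}=0$ forces $A^{(m)}=0$ for all $m\geq 3$ because $A$ is generated in weight $1$ is also a cleaner way to finish than the paper's monomial check.

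One small caveat you should address more carefully: for $n$ odd you dismiss $r=0$ as ``degenerate,'' but strictly speaking the claim \emph{fails} there ($A(V,R)=k$ while $H^*(M;k)=k\oplus k$); the paper is equally silent on this point, so you are not doing worse, but it would be better to say explicitly that $r\geq 1$ is assumed (or that $M$ is not a homotopy sphere) rather than calling the assertion degenerate. Also, your claim that the integral intersection form for $n$ odd forces $r$ even rests on the form being \emph{alternating} over $\Z$ (not merely skew-symmetric), which is what survives reduction mod $2$; you might spell this out, since it is exactly what spares you the characteristic-$2$ case the paper treats separately.
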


\begin{rmk}\label{Hopf}
If $r=1$, then the cohomology ring of $M$ is of the form $\Z[x]/(x^3)$ which is not quadratic. However, for this ring to be the cohomology of a manifold, there must exist elements of Hopf invariant one in $\pi_{2n-1}(S^n)$ which means $n\in \{1,2,4,8\}$.  
\end{rmk}

In view of Remark \ref{Hopf}, Proposition \ref{quadratic} follows from the more general Proposition \ref{cellquadratic} below. For any space $X$ and field $k$ we say that $X$ is a {\it cohomology $(n-1)$-connected $2n$-manifold over $k$} if 
$$H^i(X;k)=\left \{\begin{array}{rl} 
                k &\mbox{if}~i=0,2n \\
                k^r &\mbox{if}~i=n  \\
                0    &\mbox{otherwise}                   
\end{array}\right.$$
and the bilinear form $H^n(X;k)\otimes H^n(X;k) \to k$ given by the cup product is non-degenerate\footnote{Note that this does not depend on the choice of generator of $H^{2n}(X;k)$.}.

\begin{prop}\label{cellquadratic}
Let  $X$ be a cohomology $(n-1)$-connected $2n$-manifold over $k$. If $n$ is odd and $\mathit{char}(k)\neq 2$, or if $r\geq 2$, then $H^*(X;k)$ is a quadratic algebra. 
\end{prop}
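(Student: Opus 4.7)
The plan is to show that when $r \geq 2$ the natural map from the quadratic algebra $A := T(V)/(R)$ to $H^*(X;k)$ is an isomorphism, where $V := H^n(X;k)$ and $R := \ker(\beta)$ with $\beta\colon V \otimes V \to k$ the non-degenerate cup-product pairing. The first case I would dispose of is $r = 1$ falling under the first clause of the hypothesis: if $n$ is odd and $\mathrm{char}(k) \neq 2$ then $\beta$ is skew-symmetric, but on a one-dimensional space $\beta(e,e) = -\beta(e,e) = 0$, contradicting non-degeneracy. So $r = 1$ cannot occur under that clause, and it suffices to work under $r \geq 2$ throughout, with no further constraint on parity or characteristic.

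Non-degeneracy of $\beta$ makes the cup product $V \otimes V \twoheadrightarrow H^{2n}(X;k)$ surjective, so $H^*(X;k)$ is generated as a $k$-algebra by $V$, and the relations $R = \ker \beta$ hold automatically in $H^*(X;k)$. This yields a surjective map of graded $k$-algebras $\varphi\colon A \twoheadrightarrow H^*(X;k)$ sending $A^{(w)}$ to $H^{wn}(X;k)$. By construction $\varphi$ is an isomorphism in weights $0$ and $1$, and in weight $2$ both sides are one-dimensional so $\varphi$ is an isomorphism there as well. It remains to verify $A^{(w)} = 0$ for $w \geq 3$. Since $A$ is generated in weight one, multiplication gives a surjection $V \otimes A^{(w-1)} \twoheadrightarrow A^{(w)}$, so the whole vanishing reduces by induction to the single identity
\[
V^{\otimes 3} \;=\; R \otimes V + V \otimes R.
\]

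To establish this identity, I would use the surjection $\tilde\beta\colon V^{\otimes 3} \twoheadrightarrow V$ defined by $a \otimes b \otimes c \mapsto \beta(a,b)\,c$, whose kernel is exactly $R \otimes V$; it then suffices to show that $\tilde\beta(V \otimes R) = V$. Fix any nonzero $v \in V$ and consider $\tilde\beta_v\colon V \otimes V \to V$, $a \otimes b \mapsto \beta(v,a)\,b$; since $\beta(v,-)$ is a nonzero functional on $V$ by non-degeneracy, $\tilde\beta_v$ is surjective with kernel $\ker\beta(v,-) \otimes V$. The restriction $\tilde\beta_v|_R$ continues to be surjective onto $V$ unless $\ker\beta(v,-) \otimes V \subseteq R = \ker\beta$; unwound, that containment would force every $a$ with $\beta(v,a) = 0$ to lie in the left radical of $\beta$, which is $0$ by non-degeneracy. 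But $\dim \ker\beta(v,-) = r - 1 \geq 1$ precisely when $r \geq 2$, so this is a contradiction, and $\tilde\beta(V \otimes R) = V$ as required.

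The main obstacle is the identity $V^{\otimes 3} = R \otimes V + V \otimes R$, which is precisely where the rank hypothesis $r \geq 2$ enters via the positivity of $\dim \ker\beta(v,-)$. Note that $R$ here has codimension one rather than dimension one, so the Koszul lemmas of the previous subsection do not apply directly; the elementary linear-algebra argument above handles the reduction.
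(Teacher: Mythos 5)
Your argument is correct, and it takes a genuinely different route from the paper. The paper works case by case: for $n$ odd with $\mathrm{char}(k)\neq 2$ it picks a symplectic basis, for $n$ even with $\mathrm{char}(k)\neq 2$ it picks an orthogonal basis (using $r\geq 2$), and for $\mathrm{char}(k)=2$ it decomposes the form into a symplectic part and an orthogonal part and combines the two arguments; in each case it spells out a spanning set for $R$ and checks by hand that every weight-three monomial $abc$ already lies in the ideal. Your approach is basis-free and characteristic-independent: you observe that $R=\ker\beta$ regardless of parity, reduce the whole problem to the single linear-algebra identity $V^{\otimes 3}=R\otimes V+V\otimes R$, and prove it using only the non-degeneracy of $\beta$ and the fact that a nonzero functional on an $r$-dimensional space with $r\geq 2$ has a nonzero kernel. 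This unifies the three cases, eliminates the need for normal forms of bilinear forms (hence the dependence on \cite{MiHu73}), and isolates cleanly the place where $r\geq 2$ is used; the paper's version, on the other hand, produces explicit bases and relations for $R$ that it reuses later (notably in Proposition~\ref{man-Kos} to verify Koszulness), so there is a practical reason for its more concrete formulation. One minor remark: your disposal of $r=1$ under the first hypothesis clause is the same observation the paper makes in Remark~\ref{Hopf} in the integral setting, namely that a non-degenerate alternating form on a one-dimensional space cannot exist when $2$ is invertible.
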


\begin{proof}
We argue differently for the cases $n$ odd and $n$ even. First suppose $n$ is odd and $\mathit{char}(k)\neq 2$. In this case the bilinear form is skew-symmetric and non-degenerate, and hence symplectic. Therefore the dimension of $H^n(X;k)$ is even (that is, $r=2l$) and there is a symplectic basis $\{v_1,w_1,\ldots, v_l,w_l\}$. Let $V=H^n(X;k)$ and $R$ be the subspace of $V\otimes V$ generated by 
$$\{v_i\otimes v_i, ~ w_i\otimes w_i ,~v_i \otimes w_i+w_i \otimes v_i , v_i\otimes v_j,~w_i\otimes  w_j,~v_i\otimes  w_j,~w_j\otimes v_i,v_i\otimes w_i - v_j\otimes  w_j~\mbox{for}~i\neq j\}.$$   
We verify that the quadratic algebra $A=A(V,R)=T(V)/(R)$ is isomorphic to the ring $H^*(X;k)$. Observe that the elements of $R$ are indeed $0$ in $H^*(X;k)$ and hence one has a ring map $\phi:A\to H^*(X;k)$ which sends $V\to H^n(X;k)$ by the identity. We now observe that $\phi$ is an isomorphism. Note that the vector space $V\otimes V/(R)$ is indeed one-dimensional generated by $v_iw_i=-w_iv_i=v_jw_j$. Therefore $\phi$ is an isomorphism between the weight $2$ part of $A$ and $H^{2n}(X;k)$. It remains to check that the weight $3$ part of $A$ is zero. Consider a monomial $abc$ where $a,b,c \in \{v_1,w_1,\ldots, v_l,w_l\}$. Either $ab\in R$ or $bc\in R$ unless $a=v_i,~b=w_i,~c=v_i$ or $a=w_i,~b=v_i,~c=w_i$ for some $i$. In such a case we have the following equations in $A$ 
$$v_iw_iv_i= - w_iv_iv_i=0,~ w_iv_iw_i=-w_iw_iv_i=0.$$
Hence $abc\in (R)$ and $\phi$ is an isomorphism. 

Now consider the case $n$ even  and $\mathit{char}(k)\neq 2$. We proceed similarly assuming $r>1$.  The intersection form is symmetric and non-degenerate. Therefore $V=H^n(X;k)$ has an orthogonal basis $\{v_1,\ldots, v_r\}$  with $ v_i^2=a_i z$ for a chosen generator $z\in H^{2n}(X;k)$ (cf. \cite{MiHu73}, pp. 6, Cor 3.4). We let  $R$ be the subspace of $V\otimes V$ generated by 
$$\{v_iv_j, a_jv_i^2 - a_iv_j^2~\mbox{for}~i\neq j\}.$$   
We again verify that the quadratic algebra $A=A(V,R)$ is isomorphic to $H^*(X;k)$. There is a ring map $\phi:A\to H^*(X;k)$ which sends $V\to H^n(X;k)$ by the identity and $\phi$ is an isomorphism between the weight $2$ part of $A$ and $H^{2n}(X;k)$. Consider a monomial $abc$ where $a,b,c \in \{v_1,\ldots, v_r\}$. Either $ab\in R$ or $bc\in R$ unless $a=b=c=v_i$  for some $i$. Now we use $r\ge 2$ so $\exists~ j \neq i$  and
$$a_jv_iv_iv_i= a_i v_jv_jv_i=0.$$
Hence $abc\in (R)$ and $\phi$ is an isomorphism. 

We are left with the case $\mathit{char}(k)= 2$ and $r\geq 2$. In this case the bilinear form is symmetric and non-degenerate. It follows that the form is the direct sum of a symplectic form and a form which possesses an orthogonal basis (cf. \cite{MiHu73}, pp. 6, Cor 3.3). If the symplectic (respectively orthogonal) part is $0$, then we may repeat the argument when $n$ is even (respectively $n$ odd case) verbatim. In general one has a combination of the two and again $R$ forms a codimension $1$ subspace of $V\otimes V$. The proof follows by using $r \ge 2$ to verify, as before, that there are no non-zero elements of weight at least $3$ in $A(V,R)$. 
\end{proof}

From Proposition \ref{cellquadratic} we record the $V_X$ and $R_X$ so that $H^*(X;k)= A(V_X, R_X)$ for a cohomology $(n-1)$-connected $2n$-manifold $X$ over $k$. We have $V_X=H^n(X;k)$ and  $R_X$ is the kernel $V_X\otimes V_X\to k$ given by $v\otimes w \mapsto a$ where  $v\cup w= a z$. Next we turn our attention to the coalgebra $C(V_X^*,R_X^\perp)$.

The diagonal map on the chain level induces $\Delta:H_*(X)\to H_*(X\times X)$. The target is not quite $H_*(X)\otimes H_*(X)$ and K\"{u}nneth formula gives $H_*(X)\otimes H_*(X) \to H_*(X\times X)$ which evidently is in the wrong direction. However, in case the groups $Tor(H_i(X),H_j(X))$ vanish or with coefficients in a field $k$, the map above is an isomorphism and $H_*(X)$ is a coalgebra. 



\begin{prop}\label{quad-coalg}
Let $k$ be any field and $X$ a  cohomology $(n-1)$-connected $2n$-manifold over $k$. Then  as a coalgebra,  
$$H_*(X;k)\cong C(V_X^*,R_X^\perp).$$
\end{prop}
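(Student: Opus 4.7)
The plan is to identify $H_*(X;k)$ with the graded linear dual coalgebra of $H^*(X;k)$, and then verify that this dual coincides with $C(V_X^*,R_X^\perp)$. Under the hypotheses of Proposition \ref{cellquadratic}, the algebra $H^*(X;k)=A(V_X,R_X)$ is concentrated in weights $0,1,2$ (i.e.\ cohomological degrees $0,n,2n$), so its graded linear dual is the coalgebra $k\oplus V_X^*\oplus R_X^\perp$, using the natural identification $(V_X\otimes V_X/R_X)^*=R_X^\perp$. Since $k$ is a field, the K\"unneth isomorphism makes the diagonal-induced coproduct on $H_*(X;k)$ literally dual to the cup product, so $H_*(X;k)\cong A(V_X,R_X)^*$ as graded coalgebras.

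Next I would dualize the surjection of algebras $T(V_X)\twoheadrightarrow A(V_X,R_X)$ to obtain an inclusion of coalgebras $H_*(X;k)\cong A(V_X,R_X)^*\hookrightarrow T^c(V_X^*)$, where the target carries the deconcatenation coproduct. The weight-$2$ part of this inclusion is precisely the embedding $R_X^\perp\hookrightarrow V_X^*\otimes V_X^*$, so the composition with the projection $V_X^*\otimes V_X^*\twoheadrightarrow V_X^*\otimes V_X^*/R_X^\perp$ vanishes. By the maximal property in the definition of $C(V_X^*,R_X^\perp)$, this produces an inclusion $H_*(X;k)\hookrightarrow C(V_X^*,R_X^\perp)$.

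For the reverse inclusion I need to check that $C(V_X^*,R_X^\perp)$ is concentrated in weights $\leq 2$. Using the elementary identity $(A\otimes U)\cap(B\otimes U)=(A\cap B)\otimes U$ for subspaces, the weight-$w$ part is contained in $\bigl[(R_X^\perp\otimes V_X^*)\cap(V_X^*\otimes R_X^\perp)\bigr]\otimes V_X^{*\otimes(w-3)}$ for every $w\geq 3$, so the question reduces to the vanishing of the weight-$3$ intersection $(R_X^\perp\otimes V_X^*)\cap(V_X^*\otimes R_X^\perp)$.

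The main obstacle is this weight-$3$ vanishing, which I expect to follow cleanly from the non-degeneracy of the cup-product pairing. Concretely, choose a basis $\{v_i\}$ of $V_X$ with $v_iv_j=c_{ij}z$ for a fixed generator $z\in H^{2n}(X;k)$; then $R_X^\perp$ is spanned by $\omega=\sum_{i,j} c_{ij}\,v_i^*\otimes v_j^*$. An element of the intersection written as $\sum_k a_k\,\omega\otimes v_k^*=\sum_i b_i\,v_i^*\otimes\omega$ yields the component-wise relations $a_kc_{ij}=b_ic_{jk}$, and for each fixed $k$ this is the matrix identity $a_kC=b\,c_k^T$ with $C=(c_{ij})$ non-singular and $c_k$ its $k$-th column. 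The right side has rank at most one, the left side has rank $r\geq 2$, so $a_k=0$ for all $k$ and then $b_i=0$ for all $i$. This concludes the proof.
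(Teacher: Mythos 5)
Your proof is correct, and its backbone---identifying $H_*(X;k)$ with the graded linear dual of the algebra $H^*(X;k)=A(V_X,R_X)$ and then matching the dual with the quadratic coalgebra---is exactly the route the paper takes, but stated in one sentence there as ``it follows that the linear dual is the quadratic coalgebra $C(V_X^*,R_X^\perp)$'' with no justification. The genuine value you add is in pinning down that unexplained step, in particular the verification that $C(V_X^*,R_X^\perp)$ vanishes in weight $\geq 3$.

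That said, you could reach the final vanishing more cleanly (and without invoking $r\geq 2$ separately). For finite-dimensional $V$ one has, for every weight $n\geq 2$,
$$\Big(V^{\otimes n}\Big/\textstyle\sum_{i+j+2=n}V^{\otimes i}\otimes R\otimes V^{\otimes j}\Big)^{*}\;\cong\;\bigcap_{i+j+2=n}(V^{*})^{\otimes i}\otimes R^{\perp}\otimes(V^{*})^{\otimes j},$$
which is precisely $(A(V,R)^{(n)})^{*}\cong C(V^{*},R^{\perp})^{(n)}$, because annihilators convert sums to intersections and $(V^{\otimes i}\otimes R\otimes V^{\otimes j})^{\perp}=(V^{*})^{\otimes i}\otimes R^{\perp}\otimes(V^{*})^{\otimes j}$. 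Thus the graded linear dual of a quadratic algebra \emph{is} the corresponding quadratic coalgebra in every weight, and the vanishing of $C(V_X^*,R_X^\perp)$ in weight $3$ is automatic from $A(V_X,R_X)^{(3)}=0$ (established in Proposition \ref{cellquadratic}), via $(R_X^\perp\otimes V_X^*)\cap(V_X^*\otimes R_X^\perp)=\big((R_X\otimes V_X)+(V_X\otimes R_X)\big)^{\perp}=(V_X^{\otimes 3})^{\perp}=0$. Your direct matrix computation with the Gram matrix $C$ is valid (and the hypothesis $r\geq 2$ you rely on is indeed forced by the hypotheses of Proposition \ref{cellquadratic}, since $r=1$ would make $H^*(X;k)$ non-quadratic unless $n$ is odd, in which case $r$ must be even), but it does more work than necessary and obscures that the statement is purely formal linear duality once the algebra is known to be concentrated in weights $\leq 2$.
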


\begin{proof}
Note that the coalgebra structure on the homology with field coefficients is dual to the ring structure on cohomology. Therefore the coalgebra structure on the homology is the linear dual of the algebra structure on cohomology which in turn is $A(V_X,R_X)$. It follows that the linear dual is the quadratic coalgebra $C(V_X^*,R_X^\perp)$.
\end{proof}

We end this section proving that the quadratic algebras above are Koszul.

\begin{prop}\label{man-Kos}
Let $k$ be a field and $X$ a cohomology $(n-1)$-connected $2n$-manifold over $k$ with $\dim H^n(X;k)\ge 2$. Under these assumptions $H^*(X;k)$ is isomorphic to the quadratic algebra $A(V_X,R_X)$ which is Koszul.
\end{prop}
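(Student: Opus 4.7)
The isomorphism $H^*(X;k) \cong A(V_X, R_X)$ is already supplied by Proposition \ref{cellquadratic}, so the real content is to show that $A := A(V_X, R_X)$ is Koszul. The plan is to pass to the Koszul dual, which in this situation is dramatically simpler. Since $R_X$ is by definition the kernel of the cup product pairing $V_X \otimes V_X \to k$ and this pairing is non-degenerate (by the defining assumption on $X$), $R_X$ has codimension exactly one in $V_X \otimes V_X$. Consequently $R_X^\perp \subset V_X^* \otimes V_X^*$ is one-dimensional, and the Koszul dual $A^! = A(V_X^*, R_X^\perp)$ is a quadratic algebra with a single defining relation $\mathpzc{r}$.

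Under the identification $V_X^* \otimes V_X^* \cong (V_X \otimes V_X)^*$, the generator $\mathpzc{r}$ is, up to scalar, the cup product pairing $\beta$ viewed as a linear functional; hence $\mathpzc{r}$ inherits the symmetry of $\beta$. Concretely, $\mathpzc{r}$ is symmetric if $n$ is even or $\mathrm{char}(k) = 2$, and anti-symmetric if $n$ is odd and $\mathrm{char}(k) \neq 2$. In the symmetric case, Lemma \ref{Kos3} applies directly and gives that $A^!$ is Koszul. In the anti-symmetric case, we choose a symplectic basis $v_1, w_1, \ldots, v_l, w_l$ of $V_X^*$ for $\beta$, order it linearly by $v_1 < w_1 < v_2 < w_2 < \cdots < v_l < w_l$, and observe that the lexicographically largest monomial in $\mathpzc{r} = \sum_i (v_i w_i - w_i v_i)$ is $w_l v_l$, with $w_l \neq v_l$. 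After rescaling so this leading coefficient equals $1$, the relation $\mathpzc{r}$ lies in exactly the form required by Proposition \ref{Kos2}, so $A^!$ is Koszul.

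It remains to transport Koszulness from $A^!$ back to $A$. This is handled by the self-duality of the Koszul property for finitely generated quadratic algebras: $A$ is Koszul if and only if $A^!$ is Koszul, a standard consequence of the identification $A^{!!} \cong A$ (valid here because $V_X$ is finite-dimensional) together with characterization (iii), $\mathit{Ext}_A(k, k) \cong A^!$. Therefore $A = A(V_X, R_X)$ is Koszul. The step requiring the most care is the anti-symmetric case, where one must invoke the existence of a symplectic basis for a non-degenerate alternating form over a field of characteristic different from $2$, and then verify that the chosen total order does place a single off-diagonal monomial at the top so that Proposition \ref{Kos2} genuinely applies; the symmetric case is essentially immediate once Lemma \ref{Kos3} is in hand.
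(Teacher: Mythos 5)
Your proof is correct and follows essentially the same route as the paper's: pass to the one-dimensional $R_X^\perp$, which is spanned by the cup product pairing, handle it via Lemma~\ref{Kos3} in the symmetric case and via a symplectic basis plus Proposition~\ref{Kos2} in the anti-symmetric case, then transfer Koszulness back using self-duality. The one mild difference is your treatment of $\mathrm{char}(k)=2$: the paper decomposes the form into a symplectic summand plus an orthogonal one and argues separately for each, while you observe directly that in characteristic $2$ the pairing is symmetric in the paper's sense and invoke Lemma~\ref{Kos3} wholesale. That is a legitimate streamlining (Lemma~\ref{Kos3} makes no non-degeneracy or diagonalizability assumption), and it buys a slightly shorter case analysis at the cost of leaning a bit harder on Lemma~\ref{Kos3}, whose own proof is where the symplectic-in-characteristic-$2$ subtlety ultimately gets absorbed.
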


\begin{proof}
We only need to verify Koszulness. First suppose that $n$ is even and $\mathit{char}(k) \neq 2$. From the proof of Proposition \ref{cellquadratic} there is a basis $\{v_1,\ldots,v_r\}$ of $V_X$ so that $R_X$ is the subspace of $V_X\otimes V_X$ spanned by 
$$\{v_iv_j, a_jv_i^2 - a_iv_j^2~\mbox{for}~i\neq j\}.$$
We deduce $R_X^\perp$ is one-dimensional spanned by $\sum a_i(v_i^*)^2$ which is a symmetric element.  It follows from Lemma \ref{Kos3} that the algebra $A(V_X^*,R_X^\perp)$ is Koszul. This is the Koszul dual algebra of $A(V_X,R_X)$ (the ungraded dual) and therefore, we deduce $A(V_X,R_X)$ is Koszul. 

Now suppose $n$ is odd. Again we prove that $A(V_X^*,R_X^\perp)$ is Koszul. From the proof of Proposition \ref{cellquadratic} we have that there is a basis $\{v_1,w_1,\ldots, v_l,w_l\}$ of $V_X$ so that $R_X$ is spanned by 
$$\{v_i\otimes v_i, ~ w_i\otimes w_i ,~v_i \otimes w_i+w_i \otimes v_i , v_i\otimes v_j,~w_i\otimes  w_j,~v_i\otimes  w_j,~w_j\otimes v_i,v_i\otimes w_i - v_j\otimes  w_j~\mbox{for}~i\neq j\}.$$   
We deduce $R_X^\perp$ is one-dimensional generated by $\mathpzc{r}=\sum (v_i^*w_i^* - w_i^*v_i^*)$. We can introduce an order $v_1^*<w_1^*<\cdots < v_l^*<w_l^*$ for which the element $\mathpzc{r}$ becomes $(-w_l^*v_l^*) + \mbox{lower order terms}$. Thus by Proposition \ref{Kos2} the quadratic algebra $A(V_X^*,R_X^*)$ is Koszul. Hence $A(V_X,R_X)$ is Koszul. 

If $\mathit{char}(k) = 2$, then the bilinear form is a direct sum of a symplectic form and a subspace that possesses an orthogonal basis. If the symplectic summand is non-zero  choose a symplectic basis and sum it with an orthogonal basis. Thus there is a basis $\{v_1,\ldots,v_l\}$ of $V_X$ so that $v_l^2=0$ and $v_lv_{l-1}\neq 0$ ($v_l$ comes from the symplectic summand). Then as above we introduce the order $v_1^*<\ldots<v_l^*$ and $R_X^\perp$ is generated by an element of the form $v_l^*v_{l-1}^*+\mbox{lower order terms}$. It follows that $A(V_X,R_X)$ is Koszul. Otherwise the bilinear form has an orthogonal basis and we repeat the argument of the case of $n$ even verbatim. Hence $A(V_X,R_X)$ is Koszul. 

\end{proof}

\section{Homotopy Groups of $(n-1)$-connected $2n$-manifolds}\label{conn}

In this section we compute the homotopy groups of any closed $(n-1)$-connected $2n$-manifold. The formula expresses the homotopy groups of $M$ in terms of the homotopy groups of spheres in an expression similar to \cite{Hil55}.

The main technique involves computing homology of the loop space $\Omega M$. We express $H_*(\Omega M)$ as the universal enveloping algebra of a certain Lie algebra $\LL_r^u(M)$. The relevant Lie algebra possesses a countable basis $l_1,l_2,\ldots$ which correspond to mapping spheres into $M$ via iterated Whitehead products. We apply the Poincar\'{e}-Birkhoff-Witt Theorem (over $\Z$) to finish off the computation. 


\subsection{Homology of the loop space}
The homology of $M$ is given by the formula 
$$H_i(M)\cong\left \{\begin{array}{rl} 
                \Z &\mbox{if}~i=0,2n \\
                \Z^r &\mbox{if}~i=n  \\
                0    &\mbox{otherwise}.                   
\end{array}\right.$$  
It follows from the Hurewicz theorem that $\pi_n(M)\cong \Z^r$. Let $\alpha_i$ ($1\leq i \leq r$) denote the $r$ inclusions of $S^n \stackrel{\alpha_i}{\longrightarrow} M$ inducing the basis given by the above isomorphism. Then $M$ has a CW complex structure with $r$ $n$-cells and one $2n$-cell attached along an element $[\Lambda] \in \pi_{2n-1}(M)$ which can be expressed as a linear combination of  Whitehead products of $[\alpha_i,\alpha_j]$ ($i<j$) and some classes $\eta_i\in \pi_{2n-1}(S^n)$. 
  
Denote by $a_i$ the Hurewicz images of $\alpha_i$ in $H_*(\Omega M;\Z)$, and by $[\alpha_i]$ the Hurewicz images in $H_*(M)$. Using the Pontrjagin ring structure on $H_*(\Omega M)$ we get a ring map 
$$T_\Z(a_1,\ldots,a_r)\rightarrow H_*(\Omega M).$$
We use the notation in the previous section. Assume that both $n=1,2,4,8$ and $r=1$ does not occur, so that $H^*(M) \cong A(V_M,R_M)$ is a quadratic algebra and $H_*(M)= C(V_M^*,R_M^\perp)$ is a quadratic coalgebra which is Koszul by Propositions \ref{quad-coalg} and \ref{man-Kos}.

\begin{theorem}\label{manhomloop}
There is an isomorphism of algeras
$$H_*(\Omega M) \cong T_\Z(s^{-1}(V_M^*))/(R_M^\perp).$$ 
\end{theorem}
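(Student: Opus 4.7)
The plan is to apply Adams' cobar theorem to a coalgebra model for $M$ and use the Koszulness of $H^*(M)$ from Proposition \ref{man-Kos} to collapse the cobar construction onto the claimed quadratic algebra. Since $M$ is $(n-1)$-connected of dimension $2n$ with $n\geq 2$, it admits a CW structure with cells only in dimensions $0$, $n$, $2n$. Consequently $H_*(M;\Z)$ is free in every dimension, the cellular chain complex has vanishing differential, and the proof of Proposition \ref{quad-coalg} carries over $\Z$ (the K\"unneth obstructions vanish because $H_*(M;\Z)$ is torsion-free) to give an isomorphism of graded coalgebras $H_*(M;\Z)\cong C(V_M^*, R_M^\perp)$. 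The Adams--Hilton model then supplies a DGA quasi-isomorphism
\[
\Omega\bigl(C(V_M^*, R_M^\perp)\bigr) \;\simeq\; C_*(\Omega M; \Z),
\]
reducing the theorem to computing the homology of this cobar.

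Unpacking the cobar construction: the coalgebra $C(V_M^*, R_M^\perp)$ is concentrated in weights $0$, $1$, $2$ with weight pieces $\Z$, $V_M^*$, $R_M^\perp$, so the underlying graded algebra is $T_\Z(s^{-1}V_M^* \oplus s^{-1}R_M^\perp)$. The differential $d$ vanishes on $s^{-1}V_M^*$ (weight-$1$ elements are primitive), whereas on the generator $s^{-1}r$ with $r=\sum x\otimes y\in R_M^\perp\subset V_M^*\otimes V_M^*$ it produces $\sum s^{-1}x \cdot s^{-1}y$. The Leibniz rule shows that the two-sided ideal generated by the image of $R_M^\perp$ inside $T_\Z(s^{-1}V_M^*)^{(2)}$ coincides with the set of boundaries, yielding a canonical surjection of DGAs
\[
\pi\colon \Omega\bigl(C(V_M^*, R_M^\perp)\bigr)\twoheadrightarrow T_\Z(s^{-1}V_M^*)/(R_M^\perp),
\]
with trivial differential on the target.

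To show $\pi$ is a quasi-isomorphism, I would work field by field. For any field $k$, Proposition \ref{man-Kos} gives Koszulness of $A(V_M, R_M)\otimes k$, which by the cobar criterion for Koszulness recalled in \S \ref{quadassLie} is equivalent to the quasi-isomorphism $\Omega(A^\text{!`})\to A$. After the routine translation between $A^\text{!`}$ and the Koszul dual algebra $A^!$, and tracking the internal degree shifts produced by $s^{-1}$, this is exactly the assertion that $\pi\otimes k$ is a quasi-isomorphism for every field $k$. Since $T_\Z(s^{-1}V_M^*)/(R_M^\perp)$ is a free $\Z$-module by Proposition \ref{freemod} (the generator of $R_M^\perp$ is non-singular and either symmetric or anti-symmetric, according to the parity of $n$), and Proposition \ref{irr} identifies its reduction mod each prime with the corresponding quadratic algebra over the residue field, a universal coefficient argument lifts the collection of field-level quasi-isomorphisms to an integral isomorphism.

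The hardest step will be the careful bookkeeping of degree shifts needed to identify the Koszul dual coalgebra $A(V_M,R_M)^\text{!`}=C(sV_M, s^2 R_M)$ with the homology coalgebra $C(V_M^*, R_M^\perp)$ up to the appropriate re-gradings, and then to match the cobar differential with the relations in $R_M^\perp$. The other potential pitfall is the integral form of the Adams cobar equivalence, but the very sparse cell structure of $M$ (only three dimensions with nonzero homology, all torsion-free) places us in the classical regime where the Adams--Hilton model and its integral convergence are standard; once the field-level Koszul collapse is in hand, the freeness provided by Proposition \ref{freemod} is what ultimately drives the integral statement.
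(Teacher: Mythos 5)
Your overall strategy---reduce to the cobar on $H_*(M;\Z)$, compute that cobar's homology over fields via Koszulness, and lift to $\Z$ using freeness of $T_\Z(s^{-1}V_M^*)/(R_M^\perp)$ from Proposition~\ref{freemod} together with Proposition~\ref{irr}---is exactly the paper's, and the field-by-field reduction and the final universal-coefficient step are correct. The bookkeeping worry you flag at the end is also manageable: $C(V_M^*,R_M^\perp)$ is (up to the expected suspension) the Koszul dual coalgebra of $A(s^{-1}V_M^*, s^{-2}R_M^\perp)$, and Koszulness of $H^*(M;k)$ from Proposition~\ref{man-Kos} implies Koszulness of the Koszul dual, which is precisely what you need.

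The genuine gap is the sentence asserting that ``the Adams--Hilton model supplies a DGA quasi-isomorphism $\Omega\bigl(C(V_M^*,R_M^\perp)\bigr)\simeq C_*(\Omega M;\Z)$.'' Adams--Hilton does \emph{not} directly produce the cobar on the homology coalgebra: it produces a free DGA $A(M)$ on generators $a_1,\ldots,a_r,\mu$ (degrees $n-1$ and $2n-1$) whose differential is dictated by the looped attaching maps, and Adams' cobar theorem gives $\Omega C_*(M)\simeq C_*(\Omega M)$ with the \emph{chain} coalgebra $C_*(M)$, not $H_*(M)$. Your observation that the cellular chain complex has zero differential identifies $A(M)$ with $\Omega H_*(M;\Z)$ only at the level of underlying graded algebras; the actual content to be verified is that the Adams--Hilton differential $\partial\mu$ (the Hurewicz image of the looped top-cell attaching map) equals the cobar differential $l(M)=\sum g_{ij}a_ia_j$ coming from the diagonal of Proposition~\ref{mancoalg}. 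That identification is exactly Proposition~\ref{mancobar}, whose proof in the paper pins down $\partial\mu$ by running the Serre spectral sequence of the path-space fibration. Without reproducing that computation (or citing that proposition), step two of your argument is unjustified: the surjection $\pi$ you build lives on a cobar that you have not yet shown computes $H_*(\Omega M)$, so the quasi-isomorphism claim you want to check field-by-field is not the right one until the identification $A(M)\cong\Omega H_*(M;\Z)$ is in place.
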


Note that $V_M^* \cong \Z\{[\alpha_1],\ldots,[\alpha_r]\}$ and hence $s^{-1}(V_M^*) \cong \Z\{a_1,\ldots,a_r\}$. To prove the theorem recall from \cite{Ada56} that for a simply connected space $X$, the homology of $\Omega X$ is the homology of the cobar construction $\Omega C_*(X)$. Following the ideas of \cite{AdHil56}, we prove that for an $(n-1)$-connected $2n$-manifold $M$ the homology of $\Omega M$ may be computed using the cobar construction on $H_*(M)$. We will need the following result. 

\begin{prop}\label{mancoalg}
The coalgebra structure on  $H_*(M)\cong \Z\{1,[\alpha_i],[M] \}$, where $[M]$ stands for the orientation class, is given by formulae 
$$\Delta(1)=1\otimes 1,~ \Delta([\alpha_i])=[\alpha_i]\otimes 1 + 1\otimes [\alpha_i]$$
$$\Delta([M])=[M] \otimes 1 + 1\otimes [M] + \sum_{i,j} g_{i,j}\alpha_i\otimes \alpha_j. $$
The matrix $((g_{i,j}))$ is the matrix of the intersection form of $M$. 
\end{prop}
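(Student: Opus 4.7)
The plan is to deduce the formulas by dualizing the cup product structure on cohomology. Since all homology and cohomology groups of $M$ are finitely generated free $\Z$-modules, the K\"unneth map $H_*(M) \otimes H_*(M) \to H_*(M \times M)$ is an isomorphism, and the coproduct $\Delta_* \colon H_*(M) \to H_*(M) \otimes H_*(M)$ induced by the diagonal is the linear dual of the cup product $\cup \colon H^*(M) \otimes H^*(M) \to H^*(M)$ under the Kronecker pairing.

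First I would set up the dual basis. Let $\{[\alpha_i]^*\}_{i=1}^r$ denote the basis of $H^n(M)$ dual to $\{[\alpha_i]\}$, and let $u \in H^{2n}(M)$ be the class dual to $[M]$. By the very definition of the intersection form in terms of Poincar\'e duality and the cup product, we have
\[
[\alpha_i]^* \cup [\alpha_j]^* \;=\; g_{i,j}\, u,
\]
where $((g_{i,j}))$ is the matrix of the intersection form with respect to the chosen basis. Additionally, $1 \cup x = x$ for all $x \in H^*(M)$, where $1 \in H^0(M)$ is the multiplicative unit, dual to $1 \in H_0(M)$.

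Next I would extract the coproduct by degree. For $\Delta_*([M]) \in (H_*(M) \otimes H_*(M))_{2n}$, the only possible summands are $[M]\otimes 1$, $1\otimes[M]$, and terms $[\alpha_i]\otimes[\alpha_j]$. The coefficient of each is read off by pairing $\Delta_*([M])$ against the corresponding dual basis element of $H^{2n}(M\times M)$. Using $\langle \Delta_*(x), a \otimes b\rangle = \langle x, a \cup b \rangle$, I compute
\[
\langle \Delta_*([M]), [\alpha_i]^* \otimes [\alpha_j]^* \rangle = \langle [M], g_{i,j}\,u\rangle = g_{i,j},
\]
while $\langle \Delta_*([M]), u \otimes 1\rangle = \langle [M], u\rangle = 1$, and similarly for $1 \otimes u$. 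All other pairings vanish by dimension. This yields the claimed formula for $\Delta([M])$.

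The remaining identities are immediate from the same principle: for $[\alpha_i]$ the only available summands in degree $n$ are $[\alpha_j]\otimes 1$ and $1\otimes [\alpha_j]$, and pairing against $[\alpha_j]^* \otimes 1$ and $1 \otimes [\alpha_j]^*$ gives $\delta_{ij}$ using $[\alpha_j]^*\cup 1 = [\alpha_j]^*$. For $1 \in H_0(M)$, the formula $\Delta(1) = 1 \otimes 1$ is forced by the counit axiom. There is no genuine obstacle here beyond bookkeeping; the only point requiring care is invoking K\"unneth (valid because the homology is free) so that the duality between the cup product on $H^*(M)$ and the coproduct on $H_*(M)$ is an honest statement rather than one requiring a Tor correction term.
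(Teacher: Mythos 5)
Your proof is correct and follows the same approach as the paper, which simply states in one line that the coalgebra structure is dual to the ring structure on cohomology. You have merely filled in the bookkeeping details (freeness of homology, K\"unneth, pairing against the dual basis) that the paper leaves implicit.
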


\begin{proof}
The coalgebra structure is dual to the ring structure on cohomology and hence the result follows. 
\end{proof}

Recall from \cite{AdHil56} the construction of a differential graded algebra associated to every CW complex $X$ with a unique $0$-cell and no $1$-cells, whose homology, as an associative algebra, is $H_*(\Omega X)$. Fix an enumeration of the cells of $X$ as $\{\lambda_i\}$ satisfying $\dim(\lambda_i)\le \dim(\lambda_{i+1})$.  Write the set $\{l_i\}$ so that $|l_i|=|\lambda_i|-1$, and define $A(X)$ to be the tensor algebra on $\{ l_i\}$.  

Next define a ring map $\psi_X: A(X)\to C_*(\Omega X)$. The characteristic map of the cell $\lambda_i$ by adjunction induces a map of an $|l_i|$-cell to $\Omega X$. Define $\psi_X(l_i)$ to be the image of the orientation class in $C_*(\Omega X)$. Now using the product structure extend this to $\psi_X: A(X)\to C_*(\Omega X)$. 

 The differential on $A(X)$ is defined inductively on the skeleta of $X$ on each generator. Suppose that the differential is defined on $A(X^{(m)})$ and $\psi_{X^{(m)}} : A(X^{(m)}) \to C_*(\Omega X^{(m)})$ is a quasi isomorphism. For an $(m+1)$-cell $\lambda_i$ consider $  \partial \lambda_i:S^m \to X^{(m)}$ and hence 
$$\hat{\lambda}_i :S^{m-1}\to \Omega S^m \stackrel{\partial \lambda_i}{\longrightarrow} \Omega X^{(m)}.$$
Choose $l\in A(X^{(m)})$ so that the homology class $\psi(l)$ represents the Hurewicz image of $\hat{\lambda}_i$. Define $d(l_i)=l$. Using Liebniz rule for graded differentials one obtains a differential graded algebra $A(X)$. From \cite{AdHil56} one knows that $A(X)$ is quasi-isomorphic to $C_*(\Omega X)$ as a differential graded algebra. 

\begin{prop}\label{mancobar}
The homology of the loop space $H_*(\Omega M)$, as an associative algebra, is the homology of the cobar construction on $H_*(M)$. 
\end{prop}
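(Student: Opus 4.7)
I will show that the Adams-Hilton model $A(M)$ constructed immediately above and the cobar construction $\Omega H_*(M)$ are isomorphic as differential graded algebras, so the Adams-Hilton quasi-isomorphism $\psi_M: A(M)\to C_*(\Omega M)$ transports to the stated identification in homology. Because $M$ is $(n-1)$-connected with a CW decomposition consisting of one $0$-cell, $r$ $n$-cells and a single $2n$-cell, $A(M)$ is the free tensor algebra on generators $a_1,\dots,a_r$ of degree $n-1$ and one generator $b$ of degree $2n-1$; similarly $\Omega H_*(M)$ is the free tensor algebra on $s^{-1}[\alpha_1],\dots,s^{-1}[\alpha_r],s^{-1}[M]$. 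The attaching maps of the $n$-cells are null-homotopic, so $d(a_i)=0$ in $A(M)$; and Proposition \ref{mancoalg} gives $\bar\Delta([\alpha_i])=0$, hence $d(s^{-1}[\alpha_i])=0$ in $\Omega H_*(M)$. Define $\Phi: \Omega H_*(M)\to A(M)$ on generators by $s^{-1}[\alpha_i]\mapsto a_i$ and $s^{-1}[M]\mapsto b$. This is manifestly an isomorphism of graded algebras, and the work is to check that $\Phi$ is a chain map.

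\textbf{Matching the differential on $b$.} The only remaining verification is that $d(b)$, computed in $A(M)$ via the adjoint $\widehat{\Lambda}: S^{2n-2}\to \Omega(\vee^r S^n)$ of the attaching map $[\Lambda]\in \pi_{2n-1}(\vee^r S^n)$, equals
\[
\Phi\bigl(d(s^{-1}[M])\bigr)\;=\;\sum_{i,j} g_{i,j}\, a_i a_j,
\]
which is dictated by $\bar\Delta([M])=\sum g_{i,j}[\alpha_i]\otimes[\alpha_j]$ (Proposition \ref{mancoalg}) and the cobar formula $d(s^{-1}c)=s^{-1}\bar\Delta(c)$. Using the Hilton splitting of $\pi_{2n-1}(\vee^r S^n)$ in this range, I would write $[\Lambda]=\sum_{i<j}c_{i,j}[\alpha_i,\alpha_j]+\sum_i \eta_i$ with $\eta_i\in \pi_{2n-1}(S^n)$. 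Classical computations (Samelson product for Whitehead products, Hopf invariant for $\eta_i$) identify the Hurewicz image of $[\alpha_i,\alpha_j]$ in $H_*(\Omega\vee^r S^n)\cong T_\Z(a_1,\dots,a_r)$ with the graded commutator $a_ia_j-(-1)^{(n-1)^2}a_ja_i$, and that of $\eta_i$ with $H(\eta_i)\cdot a_i^2$. Reading the cup-product structure of $M$ off the attaching map through the cofibre sequence yields $c_{i,j}=g_{i,j}$ and $H(\eta_i)=g_{i,i}$; for $n$ odd the latter vanishes automatically (the diagonal of a skew form is zero, and correspondingly $\pi_{2n-1}(S^n)$ is torsion, so the Hopf invariant is trivial). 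Collecting diagonal and off-diagonal contributions, and using the parity of $n$ to convert $[a_i,a_j]$ into $a_ia_j\pm a_ja_i$, one obtains $d(b)=\sum_{i,j} g_{i,j} a_i a_j$.

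\textbf{Conclusion and main difficulty.} Once this equality is established, $\Phi$ becomes an isomorphism of DGAs, and passing to homology yields
\[
H_*(\Omega H_*(M))\;\cong\;H_*(A(M))\;\cong\;H_*(\Omega M)
\]
as graded associative algebras, which is the proposition. The main technical obstacle is the identification of the coefficients of $d(b)$ with those of the intersection form---in particular, correctly accounting for the Hopf-invariant contributions $g_{i,i}$ when $n\in\{2,4,8\}$, and reconciling the sign conventions arising from the parity of $n$. The individual ingredients (Samelson products, Hopf invariant computed through the cup square on the cofibre, the Hilton splitting) are all standard; what requires care is synchronizing them with the coalgebra formulae of Proposition \ref{mancoalg} and with the Koszul signs in the cobar differential.
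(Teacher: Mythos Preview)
Your approach is correct but differs from the paper's. You compute $d(b)$ by directly analysing the adjoint of the attaching map $[\Lambda]$: you decompose $[\Lambda]$ via the Hilton splitting, push Whitehead products to Samelson commutators and $\eta_i$'s to $H(\eta_i)a_i^2$, and then invoke the classical identification of the cup-product form with the Whitehead/Hopf coefficients of the attaching map. The paper instead bypasses this analysis entirely: it observes that whatever $d\mu=l$ is, one has $H_*(\Omega M)\cong T(a_1,\dots,a_r)/(l)$ in degrees $\le 2n-2$, and then computes $H_{2n-2}(\Omega M)$ independently via the Serre spectral sequence of $\Omega M\to PM\to M$, reading the differentials $d_n$ directly off the cup product. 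Poincar\'e duality (non-singularity of $((g_{i,j}))$) shows $l(M)=\sum g_{i,j}a_ia_j$ is primitive in $\Z\{a_i\}^{\otimes 2}$, forcing $d\mu=\pm l(M)$.

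The trade-off: your route is more transparent about \emph{why} the intersection form appears (it is literally encoded in $[\Lambda]$), but it imports the nontrivial fact that cup squares on the cofibre equal Hopf invariants of the attaching map, and it forces you to track signs through Samelson products, desuspensions, and the parity of $n$ --- exactly the difficulty you flag. The paper's spectral-sequence argument is slicker here: it never touches $[\Lambda]$ explicitly, needs only the cohomology ring of $M$ (already known), and pins down $d\mu$ up to a global sign absorbed by reorienting the top cell. In particular it handles the diagonal terms $g_{i,i}$ and the cases $n\in\{2,4,8\}$ uniformly without any separate Hopf-invariant bookkeeping.
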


\begin{proof}
We need to compute $A(M)$ with respect to the CW complex structure 
$$M= \vee_r S^n \cup_\Lambda e^{2n}.$$
The generators of $A(M)$ are $a_i,\mu$ respectively for $[\alpha_i], [M]$ with $|a_i | = n-1$ and $ |\mu| = 2n-1$. The differentials for $a_i$ are $0$ as the attaching maps of the corresponding cells are trivial. 

It remains to compute $\partial \mu$. Suppose that $\partial \mu = l$. It follows that in the dimension range 
$0\leq * \leq 2n-2$, 
$$H_*(\Omega M) \cong T(a_1,\ldots,a_r)/(l).$$
We may compute $H_*(\Omega M)$ in this range using the Serre spectral sequence associated to the path-space fibration $\Omega M \to PM \to M$. This has the form 
$$E_2^{p,q} = H^p(M) \otimes H^q(\Omega M) \implies H^*(\mathit{pt}).$$
The homology of $M$ being torsion free implies that the cohomology of $M$ is just the dual. In the above range of dimensions $H_*(\Omega M)$ is also torsion free except possibly at $2n-2$. The first possible non-zero differential in the spectral sequence is $d_n$ and from the given identifications we have
$$d_n(a_i^*)=[\alpha_i]^*.$$
It follows that $d_n : E_n^{n,n-1} \to E_n^{2n,0}$ is given by 
$$d_n([\alpha_i]^* \otimes a_j^*) = g_{i,j} [M]^*$$
where $((g_{i,j}))$ is the matrix of the intersection form. Note that by these formulae $d_n$ is surjective onto the $(q=0)$-line so that 
$$d_n : E_2^{0,2n-2}\cong E_n^{0,2n-2} \to E_n^{n,n-1}$$
is injective onto the kernel of $d_n$. Hence $H^{2n-2}(\Omega M) \cong E_2^{0,2n-2}$ may be identified with the kernel of the map 
$$\Z\{a_i^*\}^{\otimes 2} \to \Z,\,\,a_i^*\otimes a_j^* \mapsto g_{i,j}.$$
From Poincar\'{e} duality one knows that the matrix $((g_{i,j}))$ is non-singular and therefore, $l(M)=\sum g_{i,j} a_i \otimes a_j$ is a primitive element in $\Z\{a_i\}^{\otimes 2}$. It follows that 
$$H_{2n-2}(\Omega M) \cong \Z\{a_i\}^{\otimes 2}/(l(M)).$$
Therefore $\partial \mu = \pm l(M)$. By changing the orientation on the top cell if necessary we may assume that the sign is $1$. Note that from Proposition \ref{mancoalg} this formula matches the differential in the cobar construction $\Omega H_*(M)$.
\end{proof}
\mbox{ }

\noindent{\it Proof of Theorem \ref{manhomloop}.}
Observe that the element $l(M)$ in the proof above generates $R_M^\perp$. By Proposition \ref{freemod}, $T_\Z(s^{-1}(V_M^*))/(l(M))$ is a free module. From the proof of Propostion \ref{mancobar} above the ring map 
$$T_\Z(a_1,\ldots,a_r)\longrightarrow H_*(\Omega M)$$
factors through
$$\phi:T_\Z(a_1,\ldots,a_r)/(l(M)) \longrightarrow H_*(\Omega M).$$
By Proposition \ref{mancobar}, the homology $H_*(\Omega M)$  can be computed as the homology of the cobar construction on $H_*(M)$. The coalgebra structure on $H_*(M)$ is computed in Proposition \ref{mancoalg}. 

Let $k=\Z/p\Z~\mbox{or}~\Q$, then $H_*(M;k)$ is the quadratic coalgebra $C(V_M, l(M))$ which by Proposition \ref{man-Kos} is Koszul. Therefore $H_*(\Omega M; k)$ is the Koszul dual algebra 
$$T_k(s^{-1}(V_M))/(l(M)) \cong T_\Z(s^{-1}(V_M^*))/(l(M)) \otimes k.$$
Hence the composite
$$ T_\Z(s^{-1}(V_M^*))/(l(M)) \otimes k \stackrel{\phi\otimes k}{\longrightarrow} H_*(\Omega M)\otimes k \to H_*(\Omega M;k)$$
is an isomorphism for $k= \Z/p\Z~\mbox{or}~\Q$. It follows that $Tor(k,H_*(\Omega M))=0$ and thus $H_*(\Omega M)$ is free. As $\phi$ is an isomorphism after tensoring with $\Z/p\Z$ and $\Q$, we obtain that  $\phi$ is an isomorphism over $\Z$.    
\qed

\subsection{From homology of the loop space to homotopy groups}

The element $l(M) = \sum_{i,j} g_{i,j} a_i\otimes a_j$ belongs to $T_\Z(a_1,\ldots,a_r)$. The matrix $((g_{i,j}))$ is symmetric if $n$ is even and skew-symmetric if $n$ is odd. This implies that the element $l(M)$ lies in free graded Lie algebra generated by $a_1,\ldots,a_r$ which we denote by  $\Lie^{gr}(a_1,\ldots,a_r)$.  Consider the graded Lie algebra $\LL_r^{gr}(M)$ (over $\Z$) given by 
$$\frac{\Lie^{gr}(a_1,\ldots,a_r)}{(l(M))}$$
where $(l(M))$ denotes the graded Lie algebra ideal generated by $l(M)$. 

We make an analogous ungraded construction. Consider the element 
$$l^u(M):= \sum_{i<j} g_{i,j} [a_i,a_j]\in \Lie(a_1,\ldots,a_r). $$
Note that $l^u(M)$ equals $l(M)$ if $n$ is odd. Now we denote by $\LL_r^u(M)$ the Lie algebra
$$\frac{\Lie(a_1,\ldots,a_r)}{(l^u(M))}.$$

Equip $\LL_r^u(M)$ with a grading defining the degree of $a_i$ to be $n-1$ (note that $l^u(M)$ is a homogeneous element of degree $2n-2$). Denote by $\LL_{r,w}^u(M)$ the degree $w$ homogeneous elements of $\LL_r^{u}(M)$. From Proposition \ref{freemod} and Theorem \ref{Liebasis} we know that $\LL^u_r(M)$ is a free module and the Lyndon basis gives a basis of $\LL^u_r(M)$.

Enlist the elements of the Lyndon basis in order as $l_1 < l_2 <\ldots$ and define the height of a basis element by $h_i= h(l_i)=r+1$ if $b(l_i) \in \LL_r^u(M)$. Then $h(l_i)\leq h(l_{i+1})$. Note that $b(l_i)$ represents an element of $\Lie(a_1,\ldots,a_r)$ and is thus represented by an iterated Lie bracket of $a_i$. Use the iterated Whitehead products to define maps $\lambda_i : S^{h_i}\rightarrow M$. 



\begin{theorem}\label{htpy}
There is an isomorphism\footnote{Note that the right hand side is a finite direct sum for each $\pi_n(M)$.}
$$\pi_*(M) \cong \sum_{i\geq 1} \pi_* S^{h_i}$$ 
and the inclusion of each summand is given by $\lambda_i$.
\end{theorem}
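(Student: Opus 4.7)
The plan is to use the maps $\lambda_i : S^{h_i} \to M$ (defined via iterated Whitehead products matching the Lyndon basis elements $b(l_i)$ of $\LL^u_r(M)$) to build a weak homotopy equivalence
$$\Phi \,:\, \prod_{i\geq 1}\!{}' \,\Omega S^{h_i} \longrightarrow \Omega M,$$
where $\prod'$ denotes the weak (colimit of finite) product. The map $\Phi$ is assembled from the adjoints $\Omega\lambda_i : \Omega S^{h_i} \to \Omega M$ by using the associative H-space structure on $\Omega M$; the weak product is well-defined because $h_i \to \infty$, so for each dimension only finitely many factors contribute. If $\Phi$ is a weak equivalence, then passing to homotopy groups and using the H-space structure on the target gives $\pi_*\Omega M \cong \bigoplus_i \pi_* \Omega S^{h_i}$, and looping down one degree yields the statement, with the summand inclusions realized by $\lambda_i$.

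The bulk of the proof is showing $\Phi$ induces an isomorphism on integral homology. First, by the standard correspondence between Whitehead and Samelson products, the Hurewicz image of $\Omega\lambda_i$ applied to the bottom class of $\Omega S^{h_i}$ is precisely the iterated Pontryagin bracket in $H_*(\Omega M)$ representing $b(l_i)$. Next, Theorem \ref{manhomloop} combined with Proposition \ref{univlie} (in its graded form) identifies $H_*(\Omega M)$ with the universal enveloping algebra $U(\LL^u_r(M))$, and Theorem \ref{PBW} then exhibits $H_*(\Omega M)$ as a free $\Z$-module isomorphic, via the length filtration, to the graded symmetric algebra on the basis $\{b(l_i)\}_{i\geq 1}$ provided by Theorem \ref{Liebasis}. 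On the other side, $H_*(\Omega S^{h_i})$ is the universal enveloping algebra of the free graded Lie algebra on one generator $c_i$ of degree $h_i - 1$ (an easy consequence of the James splitting together with Theorem \ref{PBW}), so by K\"unneth the Pontryagin ring of $\prod'_i \Omega S^{h_i}$ is the symmetric algebra on $\{c_i\}$. Under $\Phi_*$ the generator $c_i$ maps to $b(l_i)$, so $\Phi_*$ matches the two PBW bases and is therefore a $\Z$-module isomorphism in every degree.

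Once this homology isomorphism is established, both source and target are simply connected (since $M$ is $(n-1)$-connected with $n \geq 2$, so $\Omega M$ is $(n-2)$-connected), and Whitehead's theorem implies $\Phi$ is a weak equivalence. Applying $\pi_*$ and using that the $h_i$ tend to infinity (making the right-hand sum finite in each degree) gives the desired isomorphism, with the summand $\pi_* S^{h_i}$ embedded via $\lambda_i$.

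The main obstacle I expect is the careful bookkeeping in the homology comparison, particularly that the Pontryagin algebra $H_*(\Omega S^n;\Z)$ for $n$ even, where the bottom generator has odd degree $n-1$, involves a divided power structure rather than a polynomial algebra. The point is that the graded PBW theorem (Theorem \ref{PBW}) over $\Z$ correctly encodes both this and the squaring operation on odd-degree Lie classes in $\LL^u_r(M)$, so the two enveloping algebras genuinely agree as $\Z$-modules. Verifying that the Hurewicz image identification respects these operations, rather than just the commutators, is the subtle point that requires the full graded Lie-algebraic machinery developed in Section \ref{quadassLie}.
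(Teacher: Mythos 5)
Your overall strategy — assemble a map $\Lambda\colon \mathrm{hocolim}_t\,\prod_{i\le t}\Omega S^{h_i}\to\Omega M$ from the $\Omega\lambda_i$, show it is a homology isomorphism, and conclude by simplicity — is exactly the paper's, and your reduction to a PBW comparison via Theorem \ref{manhomloop} and Theorem \ref{PBW} is the right shape of the argument. For $n$ odd this really does come down to matching PBW bases: the $a_i$ lie in even degree, so $l(M)=l^u(M)$, $H_*(\Omega M)=U(\LL^u_r(M))$, and (modulo the lower-order-term observation \eqref{rhobl}) the monomials in $\rho(b(l_i))$ are the PBW basis. Two small corrections here: $H_*(\Omega S^n;\Z)$ is the polynomial algebra $\Z[x_{n-1}]$ for \emph{every} $n$ (there is no divided-power subtlety on the Pontryagin-ring side; that occurs in cohomology), and for $n=2$ the space $\Omega M$ is not simply connected, so you should invoke that both sides are $H$-spaces (hence simple) rather than Whitehead's theorem for simply connected spaces.

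The genuine gap is in the case $n$ even, precisely where you wave at the ``full graded Lie-algebraic machinery.'' When $n$ is even, $H_*(\Omega M)=T(a_1,\dots,a_r)/(l(M))=U(\LL^{gr}_r(M))$, a \emph{graded} enveloping algebra, whereas Theorem \ref{Liebasis} supplies a basis of the \emph{ungraded} Lie algebra $\LL^u_r(M)=\Lie(a_1,\dots,a_r)/(l^u(M))$. These two Lie algebras are genuinely different: already in weight $2$ the graded one has rank $\binom{r}{2}+r-1$ (brackets and squares) while the ungraded one has rank $\binom{r}{2}-1$, so there is no graded-module isomorphism between them and the Lyndon basis of $\LL^u_r(M)$ is not a PBW basis for $U(\LL^{gr}_r(M))$ in any direct sense. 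Your assertion that ``the two enveloping algebras genuinely agree as $\Z$-modules'' is exactly the statement that requires proof, and it does not follow from Theorems \ref{PBW} and \ref{Liebasis} alone. The paper closes this gap in two steps that you need to reproduce: first, a surjectivity argument — every element of $\LL^{gr}_r(M)$ is a linear combination of graded brackets (which are $\rho$ of ungraded brackets) and squares (which are squares of $\rho(b(l_i))$'s), so the monomials in $\rho(b(l_i))$ span $T(a_1,\dots,a_r)/(l(M))$; and second, a rank comparison — both $T(a_1,\dots,a_r)/(l(M))$ and $T(a_1,\dots,a_r)/(l^u(M))$ have bases given by the Diamond Lemma (Proposition \ref{freemod}), hence the same Hilbert series, so a degreewise surjection between free $\Z$-modules of equal finite rank is an isomorphism. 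Without these two steps the key claim ``$\Phi_*$ matches the two PBW bases'' is unsubstantiated for $n$ even.
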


\begin{proof}
Write $S(t) = \prod_{i=1}^t \Omega S^{h_i}$. The maps $\Omega \lambda_i: \Omega S^{h_i} \rightarrow \Omega M$ for $i=1,\ldots,l$ can be multiplied using the $H$-space structure on $\Omega M$ to obtain a map from $S(l)\rightarrow \Omega T$. Use the model for $\Omega M$ given by the Moore loops so that the multiplication is strictly associative with a strict identity. Then the maps  $S(t) \rightarrow \Omega M$ and $S(t')\rightarrow \Omega M$ for $t\leq t'$ commute with the inclusion $S(t)\rightarrow S(t')$ by the basepoint on the last $t'-t$ factors.  Hence we obtain a map 
$$\Lambda: S := \mathit{hocolim}~ S(t) \rightarrow \Omega M$$   
We prove that $\Lambda$ is a weak equivalence by proving that $\Lambda_*$ is an isomorphism on integral homology. There are two cases when $n$ is even and $n$ is odd.

We know from Theorem \ref{manhomloop} that $H_*(\Omega M) \cong Ts^{-1}(V_M^*)/(l(M)) $. Note that 
$$s^{-1}(V_M^*) = \Z\{a_1,\ldots,a_r\}.$$
When $n$ is odd, $l(M)=l^u(M)$ which lies in the Lie algebra generated by $a_1,\ldots,a_r$.  Thus $Ts^{-1}(V_M^*))/(l(M))$ is the universal enveloping algebra of the Lie algebra 
$$\LL_r^u(M) \cong \Lie(a_1,\ldots a_r)/ (l^u(M)).$$
By the Poincar\'e-Birkhoff-Witt Theorem 
$$E_0H_*(\Omega M) \cong S(\LL_r^u(M))$$
where $E_0H_*(\Omega M)$ is the associated graded algebra of $H_*(\Omega M)$ with respect to the length filtration and $S(\LL_r^u)$ is the symmetric algebra on $\LL_r^u(M)$. 

The homology of $S$ is the algebra 
$$H_*(S) \cong T_\Z (c_{h_1 -1})\otimes T_\Z(c_{h_2-1})\otimes\cdots \cong \Z[c_{h_1-1},c_{h_2 -1},\ldots ]$$
where $c_{h_i -1}$ denotes the generator in $H_{h_i -1}(\Omega S^{h_i})$. Now each $c_{h_i-1}$ maps to the  Hurewicz image of $\Omega (\lambda_i)\in H_{h_i-1}(\Omega M)$.  Consider the composite 
$$\rho : \pi_n(X)\cong \pi_{n-1}(\Omega X) \stackrel{\mathit{Hur}}{\longrightarrow} H_{n-1}(\Omega X).$$
We know from \cite{Hil55}, Lemma 2.2 (also see \cite{Sam53}) that 
\begin{equation}\label{hur}
\rho([a,b])=\pm (\rho(a)\rho(b) - (-1)^{|a||b|}\rho(b)\rho(a)).
\end{equation}
 The map $\rho$ carries each $\alpha_i$  to $a_i$. The element  $b(l_i)$ is mapped inside $H_*(\Omega M)$ to the element corresponding to the graded Lie algebra element (upto sign) by equation (\ref{hur}). Denote the Lie algebra element (ungraded) corresponding to $b(l_i)$ by the same notation. We readily discover that the difference of the graded and ungraded elements lie in algebra generated by terms of lower weight, i.e.,
\begin{equation}
\label{rhobl}
\rho(b(l_i)) \equiv b(l_i)~~ (\!\!\!\!\mod~\mathit{lower~order~terms}).
\end{equation}
Therefore
$$E_0Ts^{-1}(V_M^*)/(l(M)) \cong \Z [b(l_1),b(l_2),\ldots] \cong  \Z [\rho(b(l_1)),\rho(b(l_2)),\ldots].$$
It follows that the monomials in $\rho(b(l_i))$, $i=1,2,\ldots$ form a basis of $Ts^{-1}(V_M^*)/(l(M))$ which is $H_*(\Omega M) $. The map $\Lambda : S\to \Omega M$ maps $c_{h_i-1} \to \rho(b_i)$ and takes the product of elements in $\Z[c_{h_1-1},c_{h_2 -1},\ldots ]$ to the corresponding Pontrjagin product. It follows that $\Lambda_*$ is an isomorphism. 


Now we consider the case that $n$ is even. Again it suffices to show that $\frac{T(a_1,\ldots,a_r)}{(l(M))}$ has a basis given by monomials on $\rho(b(l_1)),\rho(b(l_2)),\ldots$ where $\rho(b(l_i))$ is mapped as the above. First observe that $ T(a_1,\ldots,a_r)/(l(M))$ is universal enveloping algebra of the graded Lie algebra $\LL^{gr}_r(M)$ so that the Poincar\'e-Birkhoff-Witt Theorem (cf. Theorem \ref{PBW}) for graded Lie algebras implies 
$$E(\LL^{gr}_r(M)^{\mathit{odd}} ) \otimes P(\LL^{gr}_r(M)^{\mathit{even}} ) \cong  E_0 T(a_1,\ldots,a_r)/(l(M))$$
Next we show that monomials in $b(l_i)$ span $T(a_1,\ldots,a_r)/(l(M))$. In view of the isomorphism above it suffices to show that all the elements in $\LL^{gr}_r(M)$ can be expressed as linear combinations of monomials in $\rho(b(l_i))$. This is done inductively. It is clear for elements of weight $1$. For the weight $2$ elements note that they are generated by $[a_i,a_j]$ for $i<j,~ (i,j)\neq (1,2)$ and $a_i^2$. The former are the Lyndon words and the latter is the square of a monomial. 

Note that $\rho(l^u(M))= l(M)$ and $\rho$ carries ungraded Lie bracket to the graded Lie bracket. The elements of $\LL^{gr}_r(M)$ are linear combinations of graded Lie brackets and squares\footnote{From one of the conditions in the definition of a graded Lie algebra, the bracket with a square can be expressed as a bracket.}. The proper Lie brackets are the images of the corresponding ungraded brackets under $\rho$. The square is represented as the square of the corresponding monomial and hence can be expressed as a monomial in the $\rho(b(l_i))$. 

Hence we have that the graded map
$$ \Z[\rho(b(l_1)),\rho(b(l_2)),\ldots] \longrightarrow     T(a_1,\ldots,a_r)/(l(M))$$
is surjective. We also know 
$$                      \Z[b(l_1),b(l_2),\ldots]    \longrightarrow T(a_1,\ldots,a_r)/(l^u(M))$$
is an isomorphism. Now both $T(a_1,\ldots,a_r)/(l(M))$ and $T(a_1,\ldots,a_r)/(l^u(M))$ have bases given by the Diamond lemma and thus are of the same graded dimension. It follows that the graded pieces of $ \Z[\rho(b(l_1)),\rho(b(l_2)),\ldots]$ and $     T(a_1,\ldots,a_r)/(l(M))$ have the same finite rank. Thus on graded pieces one has a surjective map between free $\Z$-modules of the same rank which must be an isomorphism. Thus $\Lambda_*$ is an isomorphism. 

Therefore we have proved that $\Lambda_*$ is an isomorphism in either case. Both $S$ and $\Omega M$ are $H$-spaces, and hence simple (that is $\pi_1$ is abelian and acts trivially on $\pi_n$ for $n\ge 2$). It follows that $\Lambda$ is a weak equivalence. Hence 
$$\pi_*(M)\cong \pi_{*-1} (\Omega M)  \cong \pi_{*-1} S \cong \sum_{i\ge 1} \pi_{*-1} (\Omega S^{h_i}) \cong \sum_{i\ge 1} \pi_* S^{h_i}.$$
\end{proof}

Theorem \ref{htpy} can be used to find out the number of $\pi_s S^l$ which occurs in $\pi_s M$. 
\begin{theorem}\label{htpyform}
 The number of groups $\pi_s S^l$ in $\pi_s(M)$ is $0$ if $l$ is not of the form $d(n-1)+1$ and if $l=d(n-1)+1$ this number is 
$$\sum_{c|d}\frac{ \mu(c)}{c}\sum_{a+2b=\frac{d}{c}}(-1)^b{a+b \choose b}\frac{r^a}{a+b}.$$
\end{theorem}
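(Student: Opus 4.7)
The plan is to reduce the count to computing the ranks of the weight-graded pieces of the ungraded Lie algebra $\LL_r^u(M)$, then extract those ranks from a Hilbert series identity via Koszul duality. By Theorem \ref{htpy}, the number of copies of $\pi_s S^l$ appearing in $\pi_sM$ equals the number of Lyndon basis elements $l_i \in S_RL$ with $h_i = l$. Since each $a_i$ has degree $n-1$, a Lyndon word of weight $d$ produces via iterated Whitehead products a map $\lambda_i \colon S^{d(n-1)+1}\to M$. Thus the count is $0$ unless $l=d(n-1)+1$, and in that case it equals $b_d := \rank_\Z \LL_{r,d(n-1)}^u(M)$, the rank of the weight-$d$ homogeneous piece.

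Next I would compute the generating function $\sum_d b_d t^d$. By Theorem \ref{manhomloop} the weight-graded algebra $H_*(\Omega M) \cong T_\Z(s^{-1}V_M^*)/(l(M))$ is the Koszul dual of $H^*(M) \cong A(V_M,R_M)$, which has Hilbert series $1+rt+t^2$ by the cohomology of $M$. By Proposition \ref{man-Kos} this algebra is Koszul, so the standard Koszul-duality identity $H_A(-t)\,H_{A^!}(t)=1$ yields
\[
H_{H_*(\Omega M)}(t) \;=\; \frac{1}{1-rt+t^2}.
\]
By Proposition \ref{PBW-quad}, the length filtration identifies $H_*(\Omega M)$ with the symmetric algebra $S(\LL_r^u(M))$ (this uses the Diamond-lemma identification of the Hilbert series of $T/(l(M))$ with $T/(l^u(M))$, which justifies the ungraded PBW even when $n$ is even). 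Hence
\[
\prod_{d\ge 1}(1-t^d)^{-b_d} \;=\; \frac{1}{1-rt+t^2}.
\]

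To extract $b_d$, factor $1-rt+t^2=(1-\alpha t)(1-\beta t)$ with $\alpha+\beta=r,\ \alpha\beta=1$. Taking logarithms of both sides and comparing the coefficient of $t^m$ gives
\[
p_m \;:=\; \alpha^m+\beta^m \;=\; \sum_{d\mid m} d\, b_d,
\]
so by M\"obius inversion $m b_m = \sum_{c\mid m}\mu(c)\,p_{m/c}$. The Waring identity for power sums of the roots of $x^2-rx+1$ gives
\[
p_m \;=\; \sum_{a+2b=m}(-1)^b\,\frac{m}{a+b}\binom{a+b}{b}r^a,
\]
and substituting into the M\"obius inversion and simplifying yields precisely the formula claimed in Theorem \ref{htpyform}, with $m=d$.

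The main obstacle is justifying the Hilbert series computation uniformly for $n$ even and $n$ odd: in the even case, $\LL_r^{gr}(M)$ is a genuinely graded Lie algebra whose graded PBW splits into exterior and symmetric factors, so one must argue (as in the proof of Theorem \ref{htpy}, using the Diamond-lemma dimension count) that the weight-graded Hilbert series of the ungraded symmetric algebra $S(\LL_r^u(M))$ nevertheless matches that of $H_*(\Omega M)$. Once this identification is in place, the combinatorial extraction of $b_d$ via Koszul duality, M\"obius inversion, and Waring's formula is routine.
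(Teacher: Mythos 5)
Your proposal is correct and is essentially the paper's own argument: both reduce the count to the ranks $l_d$ of the weight-graded pieces of $\LL_r^u(M)$, equate the Hilbert series of $H_*(\Omega M) \cong T(a_1,\ldots,a_r)/(l(M))$ with that of $S(\LL_r^u(M))$, take logarithms, apply M\"obius inversion, and expand the resulting log series (your invocation of Waring's identity for $p_m=\alpha^m+\beta^m$ with $\alpha\beta=1$ is exactly the coefficient extraction the paper does by hand on $\log(1-rt^{n-1}+t^{2n-2})$). You are a bit more explicit than the paper in two places — you name Waring's formula, and you flag that justifying the ungraded PBW/Hilbert-series identification when $n$ is even requires the Diamond-lemma dimension comparison between $T/(l(M))$ and $T/(l^u(M))$ from the proof of Theorem~\ref{htpy}, whereas the paper's proof of Theorem~\ref{htpyform} silently reuses that fact — but the structure and content are the same.
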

\begin{proof}
It is enough to compute the dimension $l_d$ of the $d^\textup{th}$-graded part of the Lie algebra $\LL^u_r(M)$. We use the generating series to compute this from the universal enveloping algebra $H_*(\Omega M)$ as in \cite{BaBa15}.

The generating series for $H_*(\Omega M)$ is $p(t)= \frac{1}{1-rt^{n-1} + t^{2n-2}}$. From the Poincar\'e-Birkhoff-Witt theorem, the symmetric algebra on $\LL^u_r(M)$ is $H_*(\Omega M)$. Hence we have the equation 
$$\frac{1}{\prod_d (1-t^d)^{l_d}} =   \frac{1}{1-rt^{n-1} + t^{2n-2}}.$$
Take log of both sides:
\begin{eqnarray*}
\log (1-rt^{n-1}+t^{2n-2} ) & = & \sum_d l_d\log (1-t^d)\\
& = & -\sum_d l_d\left(t^d+\frac{t^{2d}}{2}+\frac{t^{3d}}{3}+\cdots\right).
\end{eqnarray*}
Expanding this and equating coefficients, we see that 
$$
\lambda_m:=\textup{coefficient of $t^m$ in $\log (1-rt^{n-1}+t^{2n-2})$} = -\frac{1}{m}\bigg(\sum_{d|m} d l_d \bigg).
$$
We use the M\"{o}bius inversion formula; it gives us 

$$l_m =-\sum_{d|m} \mu(d)\frac{\lambda_{m/d}}{d}.  $$
In fact, by expanding $\log (1-rt^{n-1}+t^{2n-2})$, we see that $\lambda_m =0$ if $m$ is not divisible by $n-1$ and 
$$\lambda_{d(n-1)}= -\sum_{a+2b=d}(-1)^b{a+b \choose b}\frac{r^a}{a+b}.$$
We conclude that $l_m$ is $0$ if $m$ is not divisible by $n-1$ and 
$$l_{d(n-1)} = \sum_{c|d}\frac{ \mu(c)}{c}\sum_{a+2b=\frac{d}{c}}(-1)^b{a+b \choose b}\frac{r^a}{a+b}.$$

\end{proof}

\subsection{The case when Betti number is 1}
\label{Betti1}
We consider the cases when $M$ is a closed $(n-1)$-connected $2n$-manifold with $H^n(M) = \Z$ so that $n\in \{2,4,8\}$. Curiously, unlike the cases when the $n^\textup{th}$ Betti number is at least $2$, in this case it is not possible to express the homotopy groups as a direct sum of the homotopy groups of some list of spheres. For example, the homotopy groups of the projective plane of Cayley Octanions is not of this form (cf. \cite{Mim67}). 

We carefully consider each of the cases $n=2,4,8$. The obvious examples of manifolds are $\C P^2$, $\Hy P^2,$ and $ \Oc P^2$. In dimension $4$ such a manifold is of the form $S^2 \cup_h e^4$, where $h\in \pi_3S^2\cong \Z$ is a Hopf invariant one class. As there is only one Hopf invariant one class in $\pi_3 S^2$, the only example is $\C P^2$. 

\begin{exam}
Let $M= \C P^2$. We may compute the homotopy groups using the fibration $S^1 \to S^5 \to \C P^2$. The map $S^1 \to S^5$ is null-homotopic. It follows that $\pi_k(\C P^2) \cong \pi_k S^5 \oplus \pi_{k-1}S^1 $ and hence 
$$\pi_k (\C P^2)=\left \{\begin{array}{rl} 
                \Z &\mbox{if}~k=2 \\
                \pi_k S^5  &\mbox{otherwise}.                  
\end{array}\right.$$   
\end{exam}

We shall also consider the cases $n=4$ and $n=8$. First compute the homology of the loop space. In each case $H^*(M) \cong \Z[x]/(x^3)$. Thus $H_*(M)\cong \Z\{1,\ep_n,\ep_{2n}\}$ has the coalgebra structure
$$\Delta (\ep_n)=\ep_n\otimes 1 + 1\otimes \ep_n,~ \Delta(\ep_{2n})= \ep_{2n}\otimes 1 + 1\otimes \ep_{2n} + \ep_n \otimes \ep_n.$$ 

\begin{prop}\label{bet1loophom}
Let $x_{n-1}$ denote the class in $H_{n-1}(\Omega M)$ obtained by looping the class $\ep_n$. Then $H_*(\Omega M)\cong \Z[z_{3n-2},x_{n-1}]/(x_{n-1}^2)$ for a class $z_{3n-2}$ in degree $3n-2$. 
\end{prop}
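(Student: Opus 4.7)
I would compute $H_*(\Omega M)$ via the cobar construction on $H_*(M)$, invoking Proposition~\ref{mancobar} to identify the result with the Pontryagin ring. Writing $a:=s^{-1}\epsilon_n$ and $b:=s^{-1}\epsilon_{2n}$, the reduced coproduct $\bar\Delta(\epsilon_{2n})=\epsilon_n\otimes\epsilon_n$ produces the cobar DGA $(T_\Z(a,b),d)$ with $|a|=n-1$, $|b|=2n-1$, $d(a)=0$, and (after absorbing a sign into $b$) $d(b)=a^2$.

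Both algebra generators named in the proposition arise at once from this model. The class $x_{n-1}:=[a]$ satisfies $x_{n-1}^2=[a^2]=[d(b)]=0$. Since $n\in\{2,4,8\}$ is even, the Leibniz rule gives
\[
d(ab+ba)=(-1)^{n-1}a\cdot a^2+a^2\cdot a=-a^3+a^3=0,
\]
so $w:=ab+ba$ is a cycle of degree $3n-2$; set $z_{3n-2}:=[w]$. Moreover $aw-wa=a^2b-ba^2=d(b^2)$, so $[a]$ and $[w]$ commute in $H_*(\Omega M)$, and the assignment $x_{n-1}\mapsto[a]$, $z_{3n-2}\mapsto[w]$ extends to a well-defined algebra map
\[
\Phi\colon \Z[z_{3n-2},x_{n-1}]/(x_{n-1}^2)\longrightarrow H_*(\Omega M).
\]

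The core task is to show $\Phi$ is an isomorphism, which I would handle by a Diamond-Lemma style rewriting combined with a Hilbert-series dimension count. Using Leibniz applied to $d(b)=a^2$, one should be able to rewrite any cobar monomial $a^{i_0}ba^{i_1}\cdots ba^{i_k}$ modulo boundaries as a $\Z$-linear combination of the normal forms $a^\epsilon w^k$ ($\epsilon\in\{0,1\}$, $k\ge 0$), giving surjectivity. Injectivity follows by matching dimensions in each internal degree; the series
\[
P_{\Z[z,x]/(x^2)}(t)=\frac{1+t^{n-1}}{1-t^{3n-2}}
\]
can be checked against the constraint $P_M(t)\cdot P_{\Omega M}(t)-1=(1+t)\,Q(t)$, with $Q\in\Z_{\ge 0}[[t]]$, forced by collapse of the Serre spectral sequence for $\Omega M\to PM\to M$; the proposed series satisfies it with $Q(t)=(t^{n-1}+t^{2n-1}+t^{3n-2})/(1-t^{3n-2})$.

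The principal obstacle is the non-commutative combinatorics of the cobar: even at a fixed internal degree many chain-level monomials appear and must collapse to a single class in $H_*(\Omega M)$, requiring the full complement of boundaries $d(b^j)$, $d(a^i b^j a^k)$, and so on. A geometric alternative, cleaner when $M$ is homotopy equivalent to $\C P^2$, $\Hy P^2$, or $\Oc P^2$, bypasses this combinatorics: the classical fibration $S^{n-1}\to S^{3n-1}\to M$, together with the section $S^{n-1}\hookrightarrow \Omega M$ coming from the bottom-cell inclusion $S^n\hookrightarrow M$, yields $\Omega M\simeq S^{n-1}\times \Omega S^{3n-1}$ as $H$-spaces, from which the Pontryagin ring $\Z[z_{3n-2},x_{n-1}]/(x_{n-1}^2)$ reads off immediately.
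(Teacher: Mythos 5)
Your setup is right and your chain-level representatives agree with the paper's: working in the cobar DGA $(T_\Z(a,b),d)$ with $d(a)=0$, $d(b)=a\otimes a$, you take $x_{n-1}=[a]$ and $z_{3n-2}=[ab+ba]$, verify $x_{n-1}^2=[d(b)]=0$ and $[a,\,ab+ba]=[d(b^2)]=0$, and so obtain a well-defined algebra map $\Phi:\Z[z_{3n-2},x_{n-1}]/(x_{n-1}^2)\to H_*(\Omega M)$. Up to here you match the paper exactly.

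The gap is that you never actually prove $\Phi$ is an isomorphism; both routes you offer have problems. The Diamond-Lemma rewriting (``one should be able to rewrite any cobar monomial\ldots'') is asserted, not carried out, and the subtlety of working modulo boundaries in a non-commutative DGA is precisely the work to be done. The dimension-count half is worse: the Serre spectral sequence for $\Omega M\to PM\to M$ certainly does \emph{not} collapse --- its $E_\infty$ page is $\Z$ concentrated in bidegree $(0,0)$ while $E_2=H_*(M)\otimes H_*(\Omega M)$, so there must be many nontrivial differentials --- and the constraint you write, $P_M(t)P_{\Omega M}(t)-1=(1+t)Q(t)$ with $Q\in\Z_{\ge 0}[[t]]$, is neither a consequence of any such collapse nor strong enough to determine $P_{\Omega M}$ even if it held. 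Consistency of your proposed series with a weak inequality does not give injectivity. Finally, the ``cleaner geometric alternative'' via $S^{n-1}\to S^{3n-1}\to M$ only exists when $M$ is $\C P^2$, $\Hy P^2$ or $\Oc P^2$; the proposition is stated for \emph{all} $(n-1)$-connected $2n$-manifolds with middle Betti number $1$, and for $n=4,8$ this includes the Tamura manifolds $V^8_{m,1}$ and $V^{16}_{m,1}$ which carry no such sphere fibration (indeed \S\ref{Betti1} shows the homotopy groups are genuinely different for some $m$).

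The paper closes the gap by a much more elementary observation that you miss: after forgetting the internal grading, the cobar complex on $H_*(M)$ is precisely the normalized bar complex of the commutative ring $\Z[x]/(x^3)$, with $a\leftrightarrow s(x)$ and $b\leftrightarrow s(x^2)$. Hence $H_*$ of that complex is $\mathrm{Tor}^{\Z[x]/(x^3)}_*(\Z,\Z)$, which is $\Z$ in each homological degree by the standard two-periodic free resolution
$$\cdots \xrightarrow{\cdot x}\frac{\Z[x]}{(x^3)}\xrightarrow{\cdot x^2}\frac{\Z[x]}{(x^3)}\xrightarrow{\cdot x}\frac{\Z[x]}{(x^3)}\to \Z.$$
Matching this rank-one answer in each degree against the explicit cycles $z^k$ and $z^k x$ (where $z=ab+ba$) immediately yields the ring $\Z[z_{3n-2},x_{n-1}]/(x_{n-1}^2)$ with no rewriting system and no Hilbert-series estimate needed. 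I'd recommend replacing the rewriting/Serre-SS plan with this $\mathrm{Tor}$ computation: it is shorter, it is rigorous, and it covers all the manifolds in scope.
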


\begin{proof}
By Proposition \ref{mancobar}, as an associative algebra $H_*(\Omega M)$ is the homology of the cobar construction on $H_*(M)$. Thus, $H_*(\Omega M)$ is the homology of the differential graded algebra 
$$(T(x_{n-1}, x_{2n-1}),d),~ d(x_{n-1})=0,~d(x_{2n-1})=x_{n-1}\otimes x_{n-1}.$$  
Note that there is an ungraded isomorphism of this complex with the bar construction of the commutative algebra $\Z[x]/(x^3)$ with $x_{n-1}$ corresponding to the $s(x)$ and $x_{2n-1}$ to $s(x^2)$. The homology of the bar construction is $Tor_*^{\Z[x]/(x^3)}(\Z,\Z)$ which can be computed using an explicit resolution : 
$$\cdots \stackrel{\times x}{\longrightarrow}\frac{\Z[x]}{(x^3)}\stackrel{\times x^2}{\longrightarrow}\frac{\Z[x]} {(x^3)} \stackrel{\times x}{\longrightarrow}\frac{\Z[x]}{(x^3)} \to \Z.$$
It follows that $Tor_i^{\Z[x]/(x^3)}(\Z,\Z)\cong \Z$ for each $i$. We can match this up with the homology of the complex above. Let $z_{3n-2}= x_{2n-1} \otimes x_{n-1} + x_{n-1} \otimes x_{2n-1}$ and note that $z_{3n-2}^k$ is a cycle but not a boundary. The elements of $Tor_i$ are represented by the elements $z_{3n-2}^k$ for $i=2k$ and $z_{3n-2}^k x_{n-1}$ for $i=2k+1$. It follows that $H_*(\Omega M) \cong \Z[z_{3n-2},x_{n-1}]/(x_{n-1}^2)$. 
\end{proof}

Note  that the inclusion of the $n$-skeleton of $M$, which is homotopy equivalent to $S^n$, loops to give $\iota_x: S^{n-1} \to \Omega M$ whose Hurewicz image is the class $x_{n-1}$. Therefore the map $\iota_x$ is $(3n-3)$-connected and it follows that for $k\leq 3n-4$
$$\pi_{k+1} M \cong \pi_k \Omega M \cong \pi_k S^{n-1}.$$
In terms of the kind of formula we have (Theorem \ref{htpy}) a naive guess would be that $\pi_k M \cong \pi_{k-1} S^{n-1} \oplus \pi_k S^{3n-1}$. However, for that we would require a map $\beta : S^{3n-1} \to M$ such that $\rho(\beta) = z_{3n-2}$ using which one can construct a weak equivalence 
$$S^{n-1} \times \Omega S^{3n-1} \longrightarrow \Omega M.$$  
This is indeed the case when $n=2$. In the following we observe that when $n=4$ this is often the case but not always. In \cite{Mim67} it is shown that this does not hold even for the Cayley projective plane $\Oc P^2$. We shall prove that after inverting some primes the above formula holds. 

Recall that the attaching maps of the $2n$-cell lie in $\pi_7 S^4$ if $n=4$ and $\pi_{15} S^8$ if $n=8$. These groups are $\pi_7 S^4 \cong \Z\oplus \Z/12\Z$ and $\pi_{15} S^8 \cong \Z \oplus \Z/120\Z$. From methods in \cite{Tam61}, we may conclude the following proposition.
\begin{prop}
For every Hopf invariant one element $h:S^{2n-1} \to S^n$ the mapping cone of $h$ is homotopy equivalent to a topological manifold. 
\end{prop}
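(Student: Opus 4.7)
The plan is to construct, for each Hopf invariant one class $h\in\pi_{2n-1}(S^n)$ with $n\in\{2,4,8\}$, an explicit topological $2n$-manifold realizing the homotopy type of $S^n\cup_h e^{2n}$, handling the three values of $n$ separately. The case $n=2$ is immediate: since $\pi_3(S^2)\cong\Z$ is generated by the Hopf class $\eta$, the only Hopf invariant one elements are $\pm\eta$, with mapping cones $\C P^2$ and its orientation-reverse, both smooth $4$-manifolds.

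For $n\in\{4,8\}$ I would decompose $\pi_{2n-1}(S^n)\cong\Z\{h_0\}\oplus T_n$, where $h_0$ denotes the standard quaternionic (resp.\ octonionic) Hopf map and $T_n$ is the torsion summand, equal to $\Z/12$ for $n=4$ and $\Z/120$ for $n=8$; these coincide with the kernel of the Hopf invariant. Any Hopf invariant one element therefore has the form $\varepsilon h_0+\alpha$ with $\varepsilon\in\{\pm 1\}$ and $\alpha\in T_n$. The elements $\pm h_0$ are realized by the manifolds $\Hy P^2$, $\Oc P^2$ and their orientation-reverses. For the remaining classes the idea, following \cite{Tam61}, is to realize $\varepsilon h_0+\alpha$ as the attaching map of the top cell of a disk-bundle construction. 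Concretely, one takes an oriented rank-$n$ real vector bundle $\xi\to S^n$ with Euler number $\pm 1$; the disk bundle $D(\xi)$ is a smooth $2n$-manifold whose boundary $S(\xi)$ is a homotopy $(2n-1)$-sphere precisely because $e(\xi)=\pm 1$, and hence (in these low dimensions) is homeomorphic to $S^{2n-1}$. Capping $D(\xi)$ off with a $2n$-disk glued along $S(\xi)\cong S^{2n-1}$ produces a closed smooth $2n$-manifold whose cellular structure exhibits it as the cone on the clutching function $J(\xi)\in\pi_{2n-1}(S^n)$, where $J$ is the unstable $J$-homomorphism $\pi_{n-1}(SO(n))\to\pi_{2n-1}(S^n)$.

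The crucial step is then to verify that as $\xi$ ranges over rank-$n$ oriented vector bundles over $S^n$ with $e(\xi)=\pm 1$, the resulting classes $J(\xi)$ exhaust the Hopf invariant one coset $\pm h_0+T_n$. This reduces to a computation of the image of the unstable $J$-homomorphism in the specific dimensions $n=4$ and $n=8$, and is the main content of Tamura's constructions: one checks that the kernel of the Hopf invariant, namely $T_4=\Z/12$ and $T_8=\Z/120$, lies entirely within the image of $J$ via explicit identification of $\pi_{n-1}(SO(n))$ with clutching data for rank-$n$ bundles over $S^n$. The main obstacle is this unstable $J$-image calculation, after which the manifold structure follows automatically from the disk-bundle construction. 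Should a given torsion class happen to lie outside the bundle-realizable subset, one can alternatively modify $\Hy P^2$ or $\Oc P^2$ by excising a small coordinate $2n$-disk and re-gluing via a self-homeomorphism of $S^{2n-1}$ representing $\alpha$, which changes the attaching map of the top cell by exactly $\alpha$ while preserving manifoldness.
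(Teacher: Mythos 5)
Your main argument coincides with the paper's: realize the attaching classes via the unstable $J$-homomorphism $\pi_{n-1}(SO(n))\to\pi_{2n-1}(S^n)$, which the paper notes is surjective for $n=4,8$, build the disk bundle $D(\xi)$ of the corresponding Euler-number-$\pm1$ bundle, observe the sphere bundle $S(\xi)$ is homeomorphic to $S^{2n-1}$ (the paper cites Milnor's Morse-theoretic argument from \cite{Mil56}; you invoke the high-dimensional Poincar\'e conjecture — same fact, different citation), and cap off with a disk, following \cite{Tam61}. So up to and including the $J$-homomorphism computation, your proof is essentially the paper's.

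However, your closing fallback is wrong and should be dropped. Excising a coordinate $2n$-disk from $\Hy P^2$ or $\Oc P^2$ and regluing via a self-homeomorphism $f$ of $S^{2n-1}$ changes the attaching map of the top cell by \emph{precomposition} with $f$, and since any self-homeomorphism of a sphere has degree $\pm 1$, the homotopy class of $f$ in $\pi_{2n-1}(S^{2n-1})$ is $\pm\iota$; thus the attaching map is replaced by $\pm h_0$, not by $h_0+\alpha$. There is no self-homeomorphism of $S^{2n-1}$ ``representing'' a class $\alpha\in T_n\subset\pi_{2n-1}(S^n)$ — those two groups are unrelated. The fallback is unnecessary in any case, since the surjectivity of $J$ onto $\pi_7(S^4)$ and $\pi_{15}(S^8)$ already shows every Hopf invariant one class is bundle-realizable, but as written that sentence describes a construction that does not do what you claim.
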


\begin{proof}
We may use the $J$-homomorphism $\pi_k SO(n) \to \pi_{n+k} S^n$. For $n=4,~k=3$, this gives a surjective map 
$$\Z \oplus \Z \cong \pi_3SO(4) \to \pi_7 S^4 \cong \Z \oplus \Z/12\Z$$
and for $n=8,~k=7$, this gives a surjective map 
$$\Z \oplus \Z \cong \pi_7 SO(8) \to \pi_{15} S^8 \cong \Z \oplus \Z/120\Z.$$
We may choose generators of $\pi_3SO(4)$ (respectively $\pi_7 SO(8)$) using conjugation and left multiplication in quarternions (respectively octanions). After applying $J$ the left multiplication gives the Hopf invariant one element and the image of the conjugation generates the other part. Thus all the Hopf invariant one elements are of the form $h_{m,1}= J(m,1)$ in this representation. 

Consider the element $(m,1) \in \pi_3SO(4)$ (respectively $\pi_7SO(8)$). This corresponds to a $4$-dimensional (respectively $8$-dimensional) bundle $\xi_{m,1}$ over $S^4$ (respectively $S^8$). The methods in \cite{Mil56} demonstrate that the associated sphere bundles $S(\xi_{m,n})$ to $\xi_{m,n}$ are homeomorphic to $S^7$ (respectively, $S^{15}$) if $n=1$ and the homeomorphism can be refined to a diffeomorphism if $m=0$. In these cases the composite 
$$S^7 \cong S(\xi_{m,1}) \to S^4$$
represents the element $h_{m,1} = J(m,1)$. Now use the disk bundle $D(\xi_{m,1})$ to form the union 
$$V_{m,1}= D(\xi_{m,1}) \cup_{(S(\xi_{m,1})\cong S^7)} D^8$$
to obtain a $8$-manifold homotopy equivalent to the mapping cone of $h_{m,1}$ (and analogously a $16$-manifold).  
\end{proof}

\begin{rmk}
Note that not all of the manifolds above can be smoothed. It is proved in \cite{Tam61} that $V_{m,1}^8$ is smooth implies $m(m+1) \equiv 0 ~(\mbox{mod~}4)$, whence $m=0,3,4,7,8,11\in \Z/12\Z$. Similarly, $V_{m,1}^{16}$ is smooth implies $m(m+1) \equiv 0~ (\mbox{mod~}8)$.
\end{rmk}

First we work with the case $n=4$. Note that the different Hopf invariant one elements in $\pi_7 S^4$ correspond to the different multiplications on $S^3$. These maps are obtained by applying the Hopf construction on the multiplication $S^3\times S^3\to S^3$ to get $S^7 = S^3 * S^3 \to S(S^3) = S^4$. It turns out that the computation of homotopy groups of $V_{m,1}$ is intricately related to the homotopy associativity of the multiplication which is discussed in \cite{Jam57}. We follow the notation in that paper. 

The left multiplication on $S^3$ induces the usual Hopf invariant one map $\gamma$ coming from quarternionic multiplication. This implies $h_{0,1} = \gamma$. The inverse of conjugation composed with left multiplication induces the opposite multiplication and hence $h_{-1,1}= \bar{\gamma}$. Thus we have $h_{m,1}=\gamma + m E\omega$.
We prove the following result.
\begin{theorem}\label{4assoc}
If $m\equiv 0~\mbox{or~}2 ~(\mbox{mod~} 3)$ then there is a weak homotopy equivalence 
$$\Omega V_{m,1}^8 \simeq S^3 \times \Omega S^{11}.$$ 
In particular, $\pi_k V_{m,1}^8 \cong \pi_{k-1} S^3 \oplus \pi_k S^{11}$. 
\end{theorem}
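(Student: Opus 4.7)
The plan is to promote the homology decomposition in Proposition \ref{bet1loophom} to a weak homotopy equivalence. By Proposition \ref{bet1loophom}, $H_*(\Omega V_{m,1}^8) \cong \Z[z_{10},x_3]/(x_3^2)$, with $x_3$ the Hurewicz image of the map $\iota_x : S^3 \to \Omega V_{m,1}^8$ obtained by looping the bottom-cell inclusion $S^4 \hookrightarrow V_{m,1}^8$. Mimicking the strategy used in the proof of Theorem \ref{htpy}, it suffices to exhibit a map $\beta : S^{11} \to V_{m,1}^8$ whose loop-adjoint $\tilde\beta : S^{10} \to \Omega V_{m,1}^8$ has Hurewicz image equal to $z_{10}$. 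Using the Moore loop space model, the product
\begin{equation*}
\Lambda \colon S^3 \times \Omega S^{11} \xrightarrow{\iota_x \times \Omega\beta} \Omega V_{m,1}^8 \times \Omega V_{m,1}^8 \xrightarrow{\mu} \Omega V_{m,1}^8
\end{equation*}
then sends the monomial basis $\{x_3^\epsilon \otimes z_{10}^k : \epsilon \in \{0,1\},\, k\ge 0\}$ of $H_*(S^3 \times \Omega S^{11})$ bijectively onto the corresponding monomial basis of $H_*(\Omega V_{m,1}^8)$, making $\Lambda_*$ an isomorphism. Both source and target are simple (as $H$-spaces, or loops thereof), so $\Lambda$ is a weak equivalence.

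The main work is the construction of $\beta$, and this is where the arithmetic hypothesis $m\equiv 0,2\pmod 3$ enters. The Hopf invariant one class $h_{m,1} = \gamma + mE\omega$ is the Hopf construction of the multiplication $\mu_m$ on $S^3$ (in the notation of \cite{Jam57}), so $V_{m,1}^8 \simeq S^4\cup_{h_{m,1}} e^8$ is the \emph{projective plane} $P^2(S^3,\mu_m)$ associated with the $H$-space $(S^3,\mu_m)$. The natural candidate for $\beta$ is the attaching map of the $12$-cell in a hypothetical projective $3$-space $P^3(S^3,\mu_m) = V_{m,1}^8 \cup_\beta e^{12}$; for the classical quaternionic multiplication ($m=0$) this is nothing other than the Hopf bundle projection $S^{11} \to \Hy P^2$. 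The obstruction to building such a third stage is the failure of $\mu_m$ to admit an $A_3$-structure, and James's analysis (as in \cite{Jam57}) locates it in the $3$-primary summand $\pi_9(S^3)\cong\Z/3$, where it is computed to be a multiple of $m(m+1)$; this vanishes precisely when $m(m+1)\equiv 0\pmod 3$, i.e.\ when $m\equiv 0,2\pmod 3$. Under this hypothesis we obtain $\beta : S^{11}\to V_{m,1}^8$.

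It remains to verify that $\rho(\beta)=z_{10}$ in $H_{10}(\Omega V_{m,1}^8)$. By Proposition \ref{bet1loophom} and the identification of the cobar complex with the bar construction of $\Z[x]/(x^3)$, the class $z_{10}$ is uniquely characterised (up to sign) as the generator of the kernel of $d_n$ in the Serre spectral sequence for the path-loop fibration $\Omega V_{m,1}^8 \to P V_{m,1}^8 \to V_{m,1}^8$ at position $(0,10)$. The map $\beta$ arising from the $P^3$-construction has the property that its composition with the pinch $V_{m,1}^8 \to S^8$ is a suspension of $h_{m,1}$-type, which forces $\rho(\beta)$ to be the unique primitive generator in $H_{10}(\Omega V_{m,1}^8)$ modulo decomposables; combined with indecomposability of $z_{10}$ (since $x_3^2=0$), this pins down $\rho(\beta)=\pm z_{10}$.

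The hardest step is clearly the second paragraph: justifying the existence of $\beta$ under the hypothesis $m\equiv 0,2\pmod 3$, which requires unpacking James's obstruction theory for higher homotopy associativity of $\mu_m$ and identifying the $3$-primary obstruction explicitly. Granting that, the weak equivalence $\Omega V_{m,1}^8 \simeq S^3\times \Omega S^{11}$ yields
\begin{equation*}
\pi_k V_{m,1}^8 \cong \pi_{k-1}(S^3\times \Omega S^{11}) \cong \pi_{k-1} S^3 \oplus \pi_k S^{11},
\end{equation*}
as claimed.
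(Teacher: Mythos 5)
Your strategy is sound and, at its core, rests on exactly the same input as the paper: James's theorem (from \cite{Jam57}) that the multiplication $\mu_m$ on $S^3$ corresponding to $h_{m,1} = \gamma + mE\omega$ is homotopy associative precisely when $m\equiv 0,2\pmod 3$, with the obstruction living in $\pi_9(S^3)\cong\Z/3$. The difference is in how you cash this in. The paper invokes Stasheff's quasi-fibration $S^3\to S^3\ast S^3\ast S^3 \to P^2(S^3,\mu_m)$, i.e.\ $S^3\to S^{11}\to V_{m,1}^8$, uses its long exact sequence directly for the $\pi_k$-statement, and then builds the weak equivalence by observing that the $10$-cell of $\Omega V_{m,1}^8$ (whose $9$-skeleton is $S^3$) has trivial attaching map — this uses the retraction $\Omega V_{m,1}^8\to S^3$ coming from the fibration — and then extends $S^{10}\to\Omega V_{m,1}^8$ multiplicatively over $J(S^{10})\simeq\Omega S^{11}$. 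You instead take the total-space-to-base map of that same quasi-fibration as your $\beta:S^{11}\to V_{m,1}^8$ and loop it. Both routes are valid, and your version is closer in spirit to the proof of Theorem \ref{htpy}. One small caveat on phrasing: $\beta$ arises already from the $A_3$-structure as the projection $E^2 S^3\to P^2 S^3$; describing it as the attaching map of a $12$-cell in ``$P^3(S^3,\mu_m)$'' is formally the same map, but the Stasheff projective $3$-space proper requires an $A_4$-structure, so you should make clear that only the map (not the $A_4$-space $P^3$) is being used.

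The one genuine gap is the verification that $\rho(\beta)=\pm z_{10}$. The argument via ``composition with the pinch $V_{m,1}^8\to S^8$ is a suspension of $h_{m,1}$-type, which forces $\rho(\beta)$ to be the unique primitive generator modulo decomposables'' is too vague to be checkable as written: it is not explained what kind of class the composite is, nor why that property would control the Hurewicz image of $\tilde\beta$ in $H_{10}(\Omega V_{m,1}^8)$. A clean replacement, still in the spirit of your construction, is to loop the quasi-fibration: $\Omega S^{11}\to\Omega V_{m,1}^8\to S^3$ is a fibration whose $E^2$-page in the homology Serre spectral sequence is $\Z\{1,x_3\}\otimes\Z[c_{10}]$, which for degree reasons and by rank-counting against $\Z[z_{10},x_3]/(x_3^2)$ must collapse; the fibre inclusion therefore sends the generator $c_{10}$ of $H_{10}(\Omega S^{11})$ to a generator of $H_{10}(\Omega V_{m,1}^8)\cong\Z$, i.e.\ $\rho(\beta)=\pm z_{10}$. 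With that substitution your proof goes through.
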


\begin{proof} Recall that if $X$ is a $H$-space then we have a quasi-fibration (cf. \cite{Sta70})
$$X \to X*X \to S(X)$$
where the last map is obtained by the Hopf construction. If $X$ is homotopy associative (that is, $X$ is an $A_3$-space) then using \cite{Sta63} we may construct a quasi-fibration 
$$X\to X*X*X \to M$$
where $M$ is the mapping cone of $X*X \to S(X)$. 

By \cite{Jam57}, the multiplication on $S^3$ corresponding to $h_{m,1}$ is homotopy associative for the assumed values of $m$. Thus, we have a quasi-fibration 
\begin{equation}
\label{Vm18}
S^3 \to S^{11} \to \mbox{~Cone}(h_{m,1}) (= V_{m,1}^8).
\end{equation}
Here $\textup{Cone}(h_{m,1})$ is the mapping cone of the map $S^3\ast S^3 \to S^4$ induced by $h_{m,1}$. 
The long exact sequence of  the fibration \eqref{Vm18} implies the last statement of the Theorem. 

Consider the inclusion of the $4$-skeleton $\iota:S^4\hookrightarrow V_{m,1}^8$. Since $S^4$ is the suspension of $S^3$, we may consider the associated map $f:S^3\to \Omega V_{m,1}^8$. Note that we have a map $\Omega V_{m,1}^8\to S^3$ using \eqref{Vm18} which when pre-composed with $f$ gives a self-map of $S^3$ which is an isomorphism on $\pi_{k\leq 3}$. By generalized Whitehead's Theorem, this implies that the self-map is a homotopy equivalence. Therefore $f$ is injective on homotopy groups. Moreover, this map is an isomorphism on $\pi_{k\leq 9}$. 

Let $e^{10}$ be the $10$-cell such that $S^3\cup e^{10}$ is the $10$-skeleton of  $\Omega V_{m,1}^8$. Then the attaching map of $e^{10}$ is trivial and we get a map $S^{10}\to \Omega V_{m,1}^8$. Using the multiplicative structure of $\Omega V_{m,1}^8$ we get a map $g:J(S^{10}) \to \Omega V_{m,1}^8$, where $J(X)$ is the James reduced product on $X$. Finally, combining the maps $f$ and $g$ we have a map $f\times g:S^3 \times J(S^{10}) \to \Omega V_{m,1}^8$. This is a homology isomorphism which we observe from the homology computation in Proposition \ref{bet1loophom}. Therefore
$$\Omega V_{m,1}^8 \simeq S^3 \times J(S^{10})$$
as both sides are simple spaces. Now using $J(S^n)\simeq \Omega S^{n+1}$, we arrive at the weak equivalence of the Theorem.
\end{proof}

Theorem \ref{4assoc} derives the desired expression of the homotopy groups of $ V_{m,1}^8$ when $m\neq 1,4,7,10\in \Z/12\Z$. It turns out that when $m$ takes these values the homotopy groups are different. In these cases we compute $\pi_{10}V_{m,1}^8$ and find it to be different from $\pi_9S^3$. From the inclusion $S^4 \to V_{m,1}^8$ we have the long exact sequence 
$$ \cdots \to\pi_k S^4 \to \pi_k V_{m,1}^8 \to \pi_k(V_{m,1}^8,S^4) \to \pi_{k-1} S^4\to \cdots $$  
The inclusion $\iota:S^4\hookrightarrow V_{m,1}^8$ is $7$-connected. Thus, the map $\iota_\ast:\pi_k S^4 \to \pi_k V_{m,1}^8$ is an isomorphism if $k\le 6$. From the excision property of homotopy groups\footnote{If a CW pair $(X,A)$ is $r$-connected and $A$ is $s$-connected then the map $\pi_i(X,A)\to \pi_i(X/A)$ induced by the quotient map $X\to X/A$ is an isomorphism for $i\leq r+s$.} we have $ \pi_k(V_{m,1}^8,S^4) \cong \pi_k S^8$ if $k\leq 10$. In these degrees the boundary map $\pi_k(V_{m,1}^8,S^4) \to \pi_{k-1} S^4$ is the composite 
$$\pi_k(V_{m,1}^8,S^4)\cong\pi_kS^8\cong \pi_{k-1}S^7 \stackrel{h_{m,1}}{\longrightarrow} \pi_{k-1} S^4$$
which is injective\footnote{The map $h_{m,1}:\pi_kS^7\to \pi_kS^4$ is injective for any $k$.}. Thus the map $\pi_{10} S^4 \to \pi_{10} V_{m,1}^8$ is surjective. 

Let $s\in \pi_8(V_{m,1}^8,S^4)$ be the characteristic map. From Theorem 1.4 of \cite{Jam54} we have 
$$\pi_{11}(V_{m,1}^8,S^4) \cong s_*\pi_{11}(D^8,S^7) \oplus \Z \{[\iota_4,s]\}.$$
The map $s_*\pi_{11}(D^8,S^7)\to \pi_{10}S^4$ is equivalent to the composite 
$$\pi_{11}(D^8,S^7) \cong \pi_{11}(S^8) \cong \pi_{10}S^7 \stackrel{h_{m,1}}{\longrightarrow} \pi_{10} S^4.$$
We use the notation in \cite{Jam57}. Recall $\pi_{10} S^7 \cong \Z_{24}\{\beta\}$ denoting $\beta$ by a generator. Let $\gamma$ denote the Hopf invariant one class from the Hopf construction of the quarternionic multiplication. Then, with $E$ denoting suspension, we have
$$\pi_{10}S^4 \cong \pi_{10}S^7 \oplus E(\pi_9 S^3) \cong \Z_{24}\{\gamma \circ \beta\} \oplus \Z_3 \{E\delta\}.$$
Let $E\omega$ denote a generator of $E(\pi_6 S^3)\subset \pi_7 S^4$ so that $h_{m,1}=\gamma + mE\omega$. As the class $\beta$ is in the image of $E$,
$$h_{m,1}\circ \beta = \gamma \circ \beta + mE\omega \circ \beta = \gamma \circ \beta - mE\delta.$$
The last equality is from \cite{Jam57} Equation (3.7b). The image $\partial [\iota_4, s]=[\iota_4,h_{m,1}]=[h_{m,1},\iota_4]$ can be computed as 
$$[h_{m,1},\iota_4] = (1+2m)\gamma\circ \beta + m E\delta.$$
Observe that if $m\equiv 1 ~(\mbox{mod}~3)$, 
$$\frac{ \Z_{24}\{\gamma \circ \beta\} \oplus \Z_3 \{E\delta\}}{\langle  \gamma \circ \beta - mE\delta,(1+2m)\gamma\circ \beta + m E\delta\rangle} = 0.$$
Thus for the above values of $m\equiv 1 ~(\mbox{mod}~3)$, $\pi_{10}V_{m,1}^8=0$ and so $\pi_{k-1} S^3$ is not a summand of $\pi_k V_{m,1}^8$ for $k=10$. However, we can prove an expression similar to Theorem \ref{4assoc} once the prime $3$ is inverted. 

\begin{theorem}\label{4bet1htpy}
For any $m$ 
$$\pi_k V_{m,1}^8 \otimes \Z[1/3] \cong (\pi_{k-1}S^3\otimes \Z[1/3])\oplus (\pi_k S^{11} \otimes \Z[1/3]). $$
If $m,m'$ are both congruent to $1 ~(\mbox{mod}~3)$ then $\pi_*V_{m,1}^8 \cong \pi_*V_{m',1}^8$. 
\end{theorem}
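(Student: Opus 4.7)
The plan is to deduce both assertions from Theorem \ref{4assoc} by working one prime at a time, leveraging the factorization $12 = 4 \cdot 3$: the torsion generator $E\omega \in \pi_7 S^4$ has order $12$ integrally, order $4$ after inverting $3$, and order $3$ after localizing at $3$. Since localization of simply connected finite CW complexes commutes with mapping cones, whenever $h_{m,1}$ and $h_{m',1}$ become equal in a localization of $\pi_7 S^4$, the corresponding localizations of $V_{m,1}^8$ and $V_{m',1}^8$ are homotopy equivalent.

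For the first assertion, if $m \equiv 0$ or $2 \pmod 3$ the splitting holds integrally by Theorem \ref{4assoc}, and hence after inverting $3$. For $m \equiv 1 \pmod 3$, I would use the Chinese Remainder Theorem to pick $m' \equiv 0 \pmod{3}$ with $m' \equiv m \pmod{4}$. Then $(m - m')E\omega$ vanishes in $\pi_7 S^4 \otimes \Z[1/3]$ since $4 \mid (m-m')$ while $E\omega$ has order $4$ there. Consequently $V_{m,1}^8$ and $V_{m',1}^8$ become homotopy equivalent after inverting $3$, and applying Theorem \ref{4assoc} to $m'$ yields $\Omega V_{m',1}^8 \simeq S^3 \times \Omega S^{11}$ integrally, which upon inverting $3$ produces the desired direct sum decomposition for $\pi_k V_{m,1}^8 \otimes \Z[1/3]$.

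For the second assertion with $m \equiv m' \equiv 1 \pmod 3$, the first assertion already gives $\pi_* V_{m,1}^8 \otimes \Z[1/3] \cong \pi_* V_{m',1}^8 \otimes \Z[1/3]$, since the resulting formula is independent of $m$. For the $3$-local comparison, $(m-m')E\omega$ vanishes in $\pi_7 S^4 \otimes \Z_{(3)}$ because $3 \mid (m-m')$ and $E\omega$ has order $3$ after $3$-localization; thus $h_{m,1}$ and $h_{m',1}$ agree in $\pi_7 S^4 \otimes \Z_{(3)}$, giving $V_{m,1}^8 \otimes \Z_{(3)} \simeq V_{m',1}^8 \otimes \Z_{(3)}$ and hence equal $3$-local homotopy groups. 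Since each $\pi_k V_{m,1}^8$ is finitely generated and all its $p$-localizations match those of $\pi_k V_{m',1}^8$, the integral homotopy groups coincide as abstract abelian groups.

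I expect no serious obstacle: the main technical point is that the homotopy type of the mapping cone of a map into a simply connected space is determined by the homotopy class of the map, and this is preserved by the relevant localizations. The key arithmetic observation is that the order of $E\omega$ splits cleanly as $4 \cdot 3$, so that the obstruction at $3$ and the obstruction at other primes can be disentangled independently, reducing the exceptional case $m \equiv 1 \pmod 3$ entirely to Theorem \ref{4assoc}.
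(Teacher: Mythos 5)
Your proposal is correct, and for the first assertion it takes a genuinely different route than the paper. The paper re-runs the quasi-fibration construction of Theorem \ref{4assoc} after inverting $3$, using the fact that the obstruction to homotopy associativity of multiplications on $S^3$ lives in $\pi_9 S^3 \cong \Z/3$ and hence vanishes on $S^3_{\Z[1/3]}$; in contrast, you sidestep the $A_3$-structure discussion entirely by choosing $m' \equiv 0 \pmod 3$, $m' \equiv m \pmod 4$ via CRT, observing that $h_{m,1}$ and $h_{m',1}$ then agree in $\pi_7 S^4 \otimes \Z[1/3]$ (since $E\omega$ has order $4$ there), so $V_{m,1}^8$ and $V_{m',1}^8$ become equivalent after inverting $3$, and invoking Theorem \ref{4assoc} for $m'$. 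This is more elementary and still recovers the loop space decomposition $\Omega V_{m,1}^8 \simeq_{\Z[1/3]} S^3 \times \Omega S^{11}$. For the second assertion, the paper constructs a zigzag of $\Z[1/2]$-equivalences through $\textup{Cone}(4h_{m,1}) = \textup{Cone}(4h_{m',1})$ built from degree maps on $S^7$, and then combines $\Z[1/3]$- and $\Z[1/2]$-information; you instead observe directly that $h_{m,1} = h_{m',1}$ in $\pi_7 S^4 \otimes \Z_{(3)}$ (since $3 \mid m-m'$ and $E\omega$ has order $3$ there), which together with the $\Z[1/3]$-comparison covers all primes. Your $\Z_{(3)}$-versus-$\Z[1/3]$ split is slightly cleaner than the paper's $\Z[1/2]$-versus-$\Z[1/3]$ split and avoids the degree-$4$ cone manipulation, though the underlying mechanism (agreement of attaching maps after suitable localization) is the same. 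The passage from localwise isomorphism of the finitely generated groups $\pi_k$ to an integral isomorphism is correct as you state it.
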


\begin{proof}
Since the obstructions to homotopy associativity of multiplications (on $S^3$) lie in $\pi_9S^3\cong \Z/3\Z$ (see \cite{Jam57}), once $3$ is inverted the techniques of \cite{Jam57} implies that all the multiplications on the $\Z[1/3]$ localised sphere $S^3_{1/3}$ are homotopy associative. We may now repeat the proof of Theorem \ref{4assoc} to deduce the first statement.  

Now turn to the second statement. We know that once $3$ is inverted the homotopy groups are isomorphic. It suffices to prove that the homotopy groups are isomorphic once $2$ is inverted. Recall that $V_{m,1}^8 \simeq \textup{Cone}(h_{m,1})$. We have maps 
$$\textup{Cone}(2h_{m,1}) \to \textup{Cone}(h_{m,1}),\,\,\textup{Cone}(4h_{m,1})\to \textup{Cone}(h_{m,1})$$
which are isomorphisms in $\Z[1/2]$-homology. For all $m\equiv 1 ~(\mbox{mod}~3)$, all the values of $4m$ are equal $(\mbox{mod}~12)$. Therefore there is a zigzag of $\Z[1/2]$-equivalences 
$$\textup{Cone}(h_{m,1})\leftarrow \textup{Cone}(4h_{m,1}) = \textup{Cone}(4h_{m',1}) \to \textup{Cone}(h_{m',1})$$   
which implies the result.
\end{proof}

Now consider $n=8$. We know that for the octonionic projective plane $V_{0,1}^{16}$ the homotopy groups are not split as a direct sum as above. However, we can deduce the following theorem.

\begin{theorem} \label{8bet1htpy}
For any $m$, 
$$\pi_k V_{m,1}^{16} \otimes \Z[1/6] \cong (\pi_{k-1}S^7\otimes \Z[1/6])\oplus (\pi_k S^{23} \otimes \Z[1/6]). $$
\end{theorem}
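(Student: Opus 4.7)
The plan is to mirror closely the proof of Theorem \ref{4bet1htpy}, replacing the role of $S^3$ (and the prime $3$) with that of $S^7$ (and the primes $2$ and $3$). Recall that each Hopf invariant one class $h_{m,1}\in \pi_{15}S^8$ is obtained by the Hopf construction from a multiplication on $S^7$, and $V_{m,1}^{16}\simeq \textup{Cone}(h_{m,1})$ is the associated projective plane.

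The first step is to establish an obstruction-theoretic input analogous to that of \cite{Jam57} for $S^3$: namely, that the primary obstructions to upgrading an arbitrary multiplication on $S^7$ to a homotopy associative (i.e., $A_3$) structure lie in certain finite torsion subgroups of $\pi_\ast S^7$ which are concentrated at the primes $2$ and $3$. This is the \textbf{main obstacle} of the argument and explains why $6$ (rather than a single prime) must be inverted in the statement. Granting this, after inverting $6$ every multiplication on $S^7_{1/6}$ becomes $A_3$.

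With this input in hand, I would apply Stasheff's construction \cite{Sta63} exactly as in the proof of Theorem \ref{4assoc}: from the homotopy associative multiplication on $S^7_{1/6}$ one obtains a quasi-fibration
$$S^7_{1/6} \longrightarrow (S^7 \ast S^7 \ast S^7)_{1/6} \simeq S^{23}_{1/6} \longrightarrow (V_{m,1}^{16})_{1/6},$$
since the base is the mapping cone of the Hopf construction $S^7 \ast S^7 \to S(S^7) = S^8$, which is precisely $\textup{Cone}(h_{m,1})$.

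Splitting and identification of the summands would then follow by a skeletal argument parallel to Theorem \ref{4assoc}. The bottom cell inclusion $S^8\hookrightarrow V_{m,1}^{16}$ adjoints to $f:S^7 \to \Omega V_{m,1}^{16}$; composing with the loop of the fibration projection and localizing, one obtains a self-map of $S^7_{1/6}$ that is an isomorphism on low homotopy and hence a weak equivalence, so $f$ is split injective on $\Z[1/6]$-local homotopy. After inverting $6$, the attaching map of the $22$-cell in the cell structure of $\Omega V_{m,1}^{16}$ becomes null-homotopic, giving $g:S^{22}_{1/6}\to \Omega (V_{m,1}^{16})_{1/6}$ and, via James' reduced product, a combined map
$$S^7_{1/6}\times J(S^{22})_{1/6} \longrightarrow \Omega (V_{m,1}^{16})_{1/6},$$
which by the loop space homology computation in Proposition \ref{bet1loophom} is a $\Z[1/6]$-homology isomorphism, hence a weak equivalence of simple spaces. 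Applying $\pi_\ast$ and using $J(S^{22})\simeq \Omega S^{23}$ yields the claimed splitting. Every step beyond the obstruction input is a direct adaptation of the $n=4$ case.
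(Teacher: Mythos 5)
Your proposal matches the paper's proof step for step: James' classification of multiplications on $S^7$, the fact that the $A_3$-obstruction is concentrated at the primes $2$ and $3$, Stasheff's quasi-fibration, and the skeletal splitting argument lifted verbatim from Theorem \ref{4assoc}. The one input you flag as your ``main obstacle'' (locating the homotopy-associativity obstruction) is filled in by the paper simply by citing \cite{Jam57} that it lives in $\pi_{21}S^7 \cong \Z/24\Z \oplus \Z/4\Z$, which indeed dies after inverting $6$.
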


\begin{proof}
 We know that the Whitehead product $[\iota_7,\iota_7]=0$ so that $S^7$ is a $H$-space with $120$ different multiplications parametrized by $\pi_{14}S^7 \cong \Z/120\Z$. By \cite{Jam57}, the homotopy associativity obstruction lies in $\pi_{21}S^7$ and is non-zero for each of the multiplications. The different manifolds $V_{m,1}^{16}$ are homotopy equivalent to the mapping cones of the Hopf constructions on these multiplications.

Since $\pi_{21}S^7 \cong \Z/24\Z \oplus \Z/4\Z$, when $2$ and $3$ are inverted this group becomes $0$. Thus all the multiplications induced on the sphere localised at $6$ are homotopy associative. Now repeat the proof of Theorem \ref{4assoc} to deduce the Theorem.  
\end{proof}


\subsection{Computing rational homotopy groups}

As an application we may compute the rational homotopy groups of the $(n-1)$-connected $2n$-manifolds from above formulae. As in \cite{BaBa15}, we determine the precise dimensions of the rational homotopy groups and deduce exponential growth of geodesics when the manifolds are rationally hyperbolic. Note that the explicit formula for the dimension of rational homotopy groups were obtained in \cite{BerMad13} and explicit formula for the homology of the free loop space is obtained in \cite{BerBor15}. We reprove some of their results using the techniques above. 

\begin{theorem}\label{rathom}
Let $M^{2n}_{r}$ be a closed $(n-1)$-connected $2n$-manifold with $n^{th}$ Betti number $r\ge 2$. The rank $m_j(r)$  of $\pi_{j+1}(M^{2n}_{r})\otimes \Q$ is $0$ if $j$ is not a multiple of $n-1$ and when $j=d(n-1)$ we have
$$m_{d(n-1)} =(-1)^{d(n-1)}\sum_{c|d}(-1)^{\frac{d(n-1)}{c}}\frac{ \mu(c)}{c}\sum_{a+2b=\frac{d}{e}}(-1)^b{a+b \choose b}\frac{r^a}{a+b}.$$
\end{theorem}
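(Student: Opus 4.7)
The plan is to combine Theorem~\ref{htpy} with Theorem~\ref{htpyform} and Serre's classical computation of $\pi_\ast S^l \otimes \Q$. Recall that Serre proved $\pi_j S^l \otimes \Q = \Q$ precisely when $j=l$, or when $l$ is even and $j = 2l-1$, and that it vanishes otherwise. Tensoring the decomposition of Theorem~\ref{htpy} with $\Q$ yields
$$\pi_{j+1}(M^{2n}_r)\otimes \Q \;\cong\; \bigoplus_i \pi_{j+1}(S^{h_i})\otimes \Q,$$
and Theorem~\ref{htpyform} tells us each $h_i$ has the form $d(n-1)+1$ and that there are exactly
$$N_d \;:=\; \sum_{c \mid d}\frac{\mu(c)}{c}\sum_{a+2b = d/c}(-1)^b\binom{a+b}{b}\frac{r^a}{a+b}$$
indices $i$ with $h_i = d(n-1)+1$. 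Since every $h_i$ is congruent to $1$ modulo $n-1$, Serre's dichotomy immediately forces $m_j(r) = 0$ unless $n-1$ divides $j$, giving the first claim.

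First I would dispatch the case $n$ odd. Here $n-1$ is even, so every $h_i$ is odd, and only the condition $j+1 = h_i$ can produce a $\Q$-summand. Thus $m_{d(n-1)}(r) = N_d$. Since $(-1)^{d(n-1)} = 1 = (-1)^{d(n-1)/c}$ for every $c \mid d$, the compact formula in the theorem collapses to $N_d$, matching this case.

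For $n$ even the situation is more delicate. Now $n-1$ is odd, and $h_i = d'(n-1)+1$ is even precisely when $d'$ is odd; such even-dimensional spheres contribute a second $\Q$ at degree $2h_i - 1$. Fixing $j = d(n-1)$, there are two families of rational contributions: (i) spheres with $h_i = d(n-1)+1$, giving $N_d$ copies of $\Q$; (ii) even spheres with $2h_i - 1 = d(n-1)+1$, i.e.\ $h_i = d'(n-1)+1$ with $d'$ odd and $d = 2d'$. Family (ii) is non-empty only when $d \equiv 2 \pmod 4$, and in that case contributes $N_{d/2}$ extra copies. Hence
$$m_{d(n-1)}(r) \;=\; N_d \;+\; \begin{cases} N_{d/2} & \text{if } d \equiv 2 \pmod 4,\\ 0 & \text{otherwise.}\end{cases}$$

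The remaining and main bookkeeping obstacle is verifying that the single closed-form expression in the theorem reproduces both cases of this parity analysis uniformly; this is a clean exercise in M\"obius inversion. Writing $(-1)^{d/c} = 1 - 2\chi_c$, where $\chi_c$ equals $1$ when $d/c$ is odd and $0$ otherwise, the formula for $n$ even becomes $(-1)^d N_d$ minus twice a correction supported only on divisors $c$ that absorb every power of $2$ in $d$. Writing $d = 2^a m$ with $m$ odd, multiplicativity of $\mu$ collapses this correction to $\frac{\mu(2^a)}{2^a}\,N_m$, which vanishes for $a \geq 2$ and equals $-\tfrac{1}{2}N_{d/2}$ for $a = 1$. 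The $d$ odd case is handled separately by observing that then every $d/c$ is odd, so the outer and inner signs cancel and one recovers $N_d$. Combined, these identities give precisely the case distinction above and complete the proof.
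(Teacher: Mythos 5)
Your proof is correct, but it takes a genuinely different route from the paper. The paper proves Theorem~\ref{rathom} directly from the generating series of $H_*(\Omega M;\Q)$: it invokes Milnor--Moore to identify $H_*(\Omega M;\Q)$ with the universal enveloping algebra of the rational homotopy Lie algebra $L = \pi^\Q_*(\Omega M)$, applies the graded PBW theorem to pass to the free graded-commutative algebra on $L$, equates Poincar\'e series
$$\frac{\prod_{i\,\text{odd}} (1+t^i)^{m_i}}{\prod_{i\,\text{even}} (1-t^i)^{m_i}} = \frac{1}{1-rt^{n-1}+t^{2n-2}},$$
and then reads off $m_j$ by taking logarithms and applying M\"obius inversion --- exactly the same machinery used one theorem earlier for the integral count in Theorem~\ref{htpyform}. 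You instead feed the already-proved integral sphere decomposition $\pi_*(M)\cong\bigoplus_i\pi_*S^{h_i}$ of Theorem~\ref{htpy}, together with the multiplicities from Theorem~\ref{htpyform} and Serre's dichotomy for $\pi_*S^l\otimes\Q$, and then check by an elementary (but nontrivial) arithmetic identity that the uniform signed-M\"obius formula reproduces your parity-split answer $N_d$ (plus the extra $N_{d/2}$ contribution when $n$ is even and $d\equiv 2\pmod 4$). Your route is more geometric and makes the origin of each rational class transparent --- each one comes either from a sphere of dimension $j+1$ or from the degree-$(2l-1)$ class of an even-dimensional sphere in the product --- whereas the paper's route is shorter and avoids the parity bookkeeping by never leaving the generating-series formalism. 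Your verification of the case $n$ even, including the factorization $d=2^a m$ and the observation that $\mu(2^a)=0$ for $a\geq 2$ so only $a=1$ contributes, is correct. (You also correctly read $\tfrac{d}{e}$ in the theorem statement as the obvious typo for $\tfrac{d}{c}$.)
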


\begin{proof}
 By Theorem \ref{manhomloop} we know that
$$H_*(\Omega M;\Q)\cong T(s^{-1}V_M^*)/(R_M^\perp).$$
Hence the generating series of the graded vector space $H_*(\Omega M;\Q)$ is 
$$\frac{1}{1-rt^{n-1}+t^{2n-2}}. $$
The rational homology $H_*(\Omega M;\Q)$ is isomorphic to the universal enveloping algebra of the rational homotopy Lie algebra $L:=\pi^\Q_*(\Omega M)$ (cf. \cite{MM65}). We may apply the Poincar\'{e}-Birkhoff-Witt theorem (Theorem \ref{PBW}) to deduce that $H_*(\Omega M ;\Q)$ is isomorphic to the free graded commutative algebra on $L$. The notation in the statement implies that the dimension of $L_j$ is $m_j(r)$. Hence we have the equation 
$$\frac{\prod_{i\,\textup{odd}} (1+t^i)^{m_i}}{ \prod_{i\,\textup{even}} (1-t^i)^{m_i}} =   \frac{1}{1-rt^{n-1} + t^{2n-2}}.$$
As in Theorem \ref{htpyform} we take log of both sides and obtain an equation from which we deduce the value of $m_j$ by M\"obius inversion formula. We have that $m_j$ is $0$ if $j$ is not a multiple of $(n-1)$ and  
$$m_{d(n-1)} =(-1)^{d(n-1)}\sum_{c|d}(-1)^{\frac{d(n-1)}{c}}\frac{ \mu(c)}{c}\sum_{a+2b=\frac{d}{c}}(-1)^b{a+b \choose b}\frac{r^a}{a+b}.$$
\end{proof}

Recall that simply connected, compact cell complexes either have finite dimensional rational homotopy groups or exponential growth of ranks of rational homotopy groups (cf. \cite{FHT01}, \S 33). The former are called rationally elliptic while the latter are called rationally hyperbolic.  The above formula imply the following result.

\begin{prop}
An $(n-1)$-connected $2n$-manifold is rationally elliptic if and only if the $n^{th}$ Betti number $r$ is at most $2$.
\end{prop}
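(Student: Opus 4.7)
The plan is to exploit the explicit generating series of $H_*(\Omega M^{2n}_r;\Q)$ computed in Theorem \ref{manhomloop}, namely $\frac{1}{1-rt^{n-1}+t^{2n-2}}$, together with the Felix--Halperin--Thomas dichotomy (\cite{FHT01}, \S 33). By Milnor--Moore, the graded Lie algebra $L:=\pi_*(\Omega M)\otimes\Q$ has universal enveloping algebra $U(L)\cong H_*(\Omega M;\Q)$, and $M$ is rationally elliptic precisely when $L$ is finite-dimensional. Since a finite-dimensional graded Lie algebra gives rise via PBW to a finitely generated graded-commutative $U(L)$, whose Hilbert series has only polynomial growth of coefficients, the question reduces to analyzing whether the coefficients of $\frac{1}{1-rt^{n-1}+t^{2n-2}}$ grow polynomially or exponentially.

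For the elliptic direction, I would handle the cases $r=0,1,2$ separately. If $r=0$, $M$ is a rational homology sphere of dimension $2n$, hence $M\simeq_\Q S^{2n}$ is rationally elliptic. If $r=1$, Remark \ref{Hopf} forces $n\in\{2,4,8\}$, and by Proposition \ref{bet1loophom} the Hilbert series of $H_*(\Omega M;\Q)$ is $(1+t^{n-1})/(1-t^{3n-2})$; equivalently the minimal Sullivan model is $(\Lambda(x,y), dy=x^3)$ with $|x|=n,|y|=3n-1$, with only two generators, hence elliptic. If $r=2$, the generating series telescopes to $\frac{1}{(1-t^{n-1})^2}$, whose coefficient of $t^{k(n-1)}$ equals $k+1$; polynomial growth of $U(L)$ forces $L$ to be finite-dimensional and therefore $M$ is rationally elliptic.

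For the hyperbolic direction, suppose $r\geq 3$. Then $1-rx+x^2$ has two distinct positive real roots whose product is $1$, so the smaller root $\alpha=(r-\sqrt{r^2-4})/2$ lies strictly in $(0,1)$. The power series $\frac{1}{1-rx+x^2}$ therefore has radius of convergence $\alpha<1$, so after the substitution $x=t^{n-1}$ the dimensions $\dim_\Q H_{k(n-1)}(\Omega M;\Q)$ grow exponentially in $k$. If $L$ were finite-dimensional, then $U(L)$ would be a finitely generated graded-commutative algebra whose Hilbert series is a rational function with only polynomial coefficient growth, a contradiction. Hence $L$ is infinite-dimensional and $M$ is rationally hyperbolic. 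The only mild subtlety is bridging ``exponential growth of $U(L)$'' with ``infinite-dimensionality of $L$'', but this is a direct consequence of PBW applied to the graded-commutative structure of $U(L)$, so no real obstacle arises beyond combining Theorem \ref{manhomloop} with the standard dichotomy.
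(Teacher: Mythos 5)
Your proof is correct, and it uses the same underlying engine (the Poincar\'e series $\frac{1}{1-rt^{n-1}+t^{2n-2}}$ of $H_*(\Omega M;\Q)$ coming from Theorem \ref{manhomloop}, combined with the F\'elix--Halperin--Thomas dichotomy), but the way you close the two directions differs from the paper in a manner worth noting. For the hyperbolic direction $r\geq 3$, the paper leans on the explicit polynomials $p_j(r)$ of Theorem \ref{rathom} that give the ranks of the rational homotopy groups directly, and observes that $p_j(r)^{1/j}\to\frac{r+\sqrt{r^2-4}}{2}>1$; you instead argue at the level of $U(L)$ by locating the pole of the generating function inside the unit disc and then invoking PBW to pass back to $L$. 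These are two sides of the same coin — the paper works with $\dim L_j$ and you with $\dim U(L)_j$ — and both are legitimate, though the paper's route is a hair more direct since Theorem \ref{rathom} is already in hand. For the elliptic direction $r\leq 2$, the paper's argument is genuinely different: it appeals to formality of these manifolds and identifies the rational homotopy type with one of the model spaces $S^{2n}$, $J_2S^n$, $S^n\times S^n$, $J_2S^n\#(\pm J_2S^n)$, each of which is known to be elliptic, whereas you give a uniform Hilbert-series argument (using the telescoping $\frac{1}{(1-t^{n-1})^2}$ for $r=2$, Proposition \ref{bet1loophom} for $r=1$, and the rational homology sphere observation for $r=0$). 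Your route has the advantage of being self-contained and not requiring formality or the classification; its only subtlety, which you correctly flag, is that ``$U(L)$ grows polynomially $\Rightarrow L$ finite-dimensional'' is not a pure PBW statement but requires the dichotomy (since $L\hookrightarrow U(L)$ over $\Q$, infinite-dimensional $L$ must grow exponentially by FHT, contradicting polynomial growth of $U(L)$), and you do cite FHT for exactly this step.
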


\begin{proof}
As in \cite{BaBa15}, we observe that there are polynomials $p_j(x)$ with top coefficient $1/j$ such that the rank of $\pi_{j(n-1)+1}^\Q(M_r)$ is $p_j(r)$. If $r\geq 3$ then these polynomials satisfy 
$$\lim_{j\to \infty} (p_j(r))^{1/j}=\frac{r+\sqrt{r^2-4}}{2}.$$
Since this limit is not $0$, the rational homotopy groups cannot all vanish outside a finite stage. 

If $r\le 2$ then the manifolds are formal so their rational homotopy groups match one of $S^{2n},~J_2S^n,~S^n\times S^n,~J_2S^n\# (\pm J_2S^n)$. The space $J_2(S^n)$ is the second stage of the James construction having the CW complex structure $J_2S^n = S^n \cup_{[\iota,\iota]} e^{2n}$. For $\ep = \pm 1$, the space 
$$J_2S^n\# (\ep J_2S^n) = (S^n \vee S^n) \cup_{[\iota_1,\iota_1] + \ep [\iota_2,\iota_2]} e^{2n}.$$
In each of the cases the manifolds are rationally elliptic.    
\end{proof}

For rationally hyperbolic manifolds one is often interested in results about the growth of geodesics by estimating the free loop space homology. In this respect one may directly verify a conjecture of Gromov \cite{FOT08} for a class of manifolds.  

\begin{cor}
Let $M$ be an $(n-1)$-connected $2n$-manifold with $n^{th}$ Betti number at least $3$. For a generic metric on $M$, the number of geometrically distinct closed geodesics of length at most $\ell$ grows exponentially in $\ell$.
\end{cor}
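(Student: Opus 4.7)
The plan is to chain together three ingredients: the explicit formula for the rational homotopy groups of $M$ from Theorem \ref{rathom}, a Sullivan--Vigu\'e-Poirrier style transfer from rational homotopy growth to growth of free loop space Betti numbers, and Gromov's theorem relating the latter to the number of geometrically distinct closed geodesics under a generic metric.

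First, I would invoke Theorem \ref{rathom} together with the hypothesis $r \geq 3$. As already observed in the proof of the preceding Proposition, the ranks $m_{j(n-1)}(r)$ are polynomials in $r$ of degree $j$ with leading coefficient $1/j$, and they satisfy
\[
\lim_{j \to \infty} m_{j(n-1)}(r)^{1/j} \;=\; \frac{r + \sqrt{r^2 - 4}}{2} \;>\; 1.
\]
Hence the total rank of the rational homotopy Lie algebra $\pi_*^\Q(\Omega M)$ through degree $N$ grows exponentially in $N$, so $M$ is sharply rationally hyperbolic.

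Second, I would pass from the rational homotopy of $M$ to the rational cohomology of the free loop space $LM$. By a theorem of Sullivan and Vigu\'e-Poirrier (strengthened by Lambrechts, and also by Friedlander--Halperin in the elliptic-versus-hyperbolic dichotomy), for a simply connected finite CW complex which is rationally hyperbolic the Betti numbers $b_k(LM;\Q)$ themselves grow exponentially in $k$. For the manifolds at hand one can in fact extract this directly from Theorem \ref{manhomloop}: it identifies $H_*(\Omega M;\Q)$ with the Koszul algebra $T_\Q(s^{-1}V_M^*)/(R_M^\perp)$, and combined with a Hochschild / cyclic bar model for $H_*(LM;\Q)$ one reads off exponential growth from the generating series $1/(1 - rt^{n-1} + t^{2n-2})$.

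Finally, I would apply Gromov's theorem (as formulated in \cite{FOT08}): for a simply connected compact Riemannian manifold $M$ with $\sum_{k\leq N} b_k(LM;\Q)$ growing exponentially in $N$, and for a $C^\infty$-generic (bumpy) metric, the number of geometrically distinct closed geodesics of length at most $\ell$ is bounded below by a positive multiple of $\sum_{k\leq c\ell} b_k(LM;\Q)$ for some $c>0$. Combined with the previous step this produces the desired exponential lower bound. The main technical hurdle is upgrading ``unbounded'' to ``exponentially growing'' Betti numbers of $LM$; this is where Theorem \ref{manhomloop} is genuinely useful, since it gives a very small explicit model, whereas in general the Sullivan-model argument is more delicate.
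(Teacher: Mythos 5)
Your overall skeleton (rational hyperbolicity $\Rightarrow$ exponential growth of $b_k(LM;\Q)$ $\Rightarrow$ exponential growth of closed geodesics for generic metrics) matches the paper's, but there is a real gap in the middle step. You assert as a theorem that ``for a simply connected finite CW complex which is rationally hyperbolic the Betti numbers $b_k(LM;\Q)$ themselves grow exponentially in $k$,'' attributing this to Sullivan--Vigu\'e-Poirrier, Friedlander--Halperin, and Lambrechts. That statement, in this generality, is \emph{not} a theorem; it is an open conjecture. What Sullivan--Vigu\'e-Poirrier actually proved is that if $H^*(M;\Q)$ needs at least two generators then $\sum_{k\le N} b_k(LM;\Q)$ is unbounded (not exponentially growing), and Friedlander--Halperin established the elliptic/hyperbolic dichotomy for $\pi_*\otimes\Q$, not for the free loop space. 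Lambrechts' result \cite{Lam01b}, which is the one actually usable here, proves exponential growth of $b_k(LM;\Q)$ only for \emph{coformal} spaces. The crucial input you are missing is precisely the observation the paper makes: $(n-1)$-connected $2n$-manifolds are rationally formal and coformal, and it is coformality that makes Lambrechts' theorem applicable. Without that, invoking a general ``hyperbolic $\Rightarrow$ exponential free loop homology'' implication does not work.

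Your fallback suggestion --- reading off exponential growth of $H_*(LM;\Q)$ directly from the Koszul model $T_\Q(s^{-1}V_M^*)/(R_M^\perp)$ via a Hochschild/cyclic bar construction --- is in fact in the right spirit (it is essentially how one proves the coformal case), but you do not carry it out, and the passage from exponential growth of $H_*(\Omega M;\Q)$ to exponential growth of Hochschild homology requires an argument, not just inspection of the generating series $1/(1-rt^{n-1}+t^{2n-2})$. The paper avoids all of this by citing coformality and \cite{Lam01b}, then finishing with Ballmann--Ziller \cite{BaZi82}, which plays the same role as the Gromov-type theorem from \cite{FOT08} that you invoke in your final step (that last step is fine). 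To repair your proof, replace the false general implication with the observation that $M$ is coformal and cite \cite{Lam01b}, or else actually prove exponential growth of the Hochschild homology of the explicit Koszul algebra.
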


\begin{proof}
We have seen that $M$ is rationally hyperbolic if the Betti number $r\geq 3$. All the $(n-1)$-connected $2n$-manifolds are rationally formal and coformal. Therefore from  \cite{Lam01b}, we conclude that the homology of the free loop space has exponential growth. It follows from \cite{BaZi82} that the number of geodesics grow exponentially for a generic metric.  
\end{proof}

As an application we can verify a conjecture of J. C. Moore (see introduction for details) for $(n-1)$-connected $2n$-manifolds. 
\begin{theorem}\label{Moorehcm}
The conjecture of Moore is valid for $(n-1)$-connected $2n$-manifolds with $n^\textit{th}$ Betti number at least $2$.
\end{theorem}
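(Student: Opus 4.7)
The plan is to combine the direct sum decomposition of $\pi_*(M)$ from Theorem \ref{htpy} with the Cohen--Moore--Neisendorfer theorem on $p$-exponents of spheres. By the proposition preceding Theorem \ref{rathom}, under the standing assumption $r\geq 2$ the manifold $M$ is rationally elliptic precisely when $r=2$; thus I would aim to show that finite $p$-exponents exist at every prime $p$ when $r=2$, and that no finite $p$-exponent exists at any prime $p$ when $r\geq 3$.

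In the elliptic case $r=2$, Theorem \ref{rathom} combined with the rank formula forces $\LL_2^u(M)\otimes\Q$ to be finite-dimensional, and by Proposition \ref{freemod} together with Theorem \ref{Liebasis} the Lyndon basis $\{b(l_i)\}$ is then finite, say $\{l_1,\ldots,l_N\}$. Consequently $\pi_*(M)\cong\bigoplus_{i=1}^N \pi_*S^{h_i}$ is a finite direct sum, and the maximum of the $p$-exponents supplied by \cite{CMN79} for each $S^{h_i}$ gives the required finite $p$-exponent for the $p$-primary torsion of $\pi_*M$ at every prime.

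In the hyperbolic case $r\geq 3$, the formula in Theorem \ref{rathom} shows that $\dim_\Q \LL_r^u(M)_{d(n-1)}$ grows roughly like $\bigl(\tfrac{r+\sqrt{r^2-4}}{2}\bigr)^{d}/d$, so Lyndon basis elements $b(l_i)$ of arbitrarily large weight $w$ exist. Since $h_i = w(n-1)+1$, I can always select $h_i$ odd and unbounded: automatically when $n$ is odd, and by restricting to even-weight words when $n$ is even. Fix a prime $p$ and any target exponent $p^N$; choose a Lyndon generator with $h_i=2m+1$ for $m\geq N$. The Cohen--Moore--Neisendorfer theorem then guarantees $p$-primary elements of order $\geq p^m$ inside $\pi_*S^{h_i}$, and since the map $\lambda_i$ embeds $\pi_*S^{h_i}$ as a direct summand of $\pi_*M$, no single power $p^s$ can annihilate all $p$-primary torsion in $\pi_*M$.

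The principal obstacle will be pinning down the quantitative lower bound on sphere $p$-exponents: for odd primes the sharp statement (exponent exactly $p^m$ for $S^{2m+1}$) is Cohen--Moore--Neisendorfer, with Selick's explicit $p^m$-order classes realizing the bound, while for $p=2$ I would rely on the analogous existence of $2$-primary torsion of unbounded order as the sphere dimension grows. Once these sphere-level inputs are invoked, the translation back to $M$ is automatic from the direct-sum decomposition in Theorem \ref{htpy}, so the whole argument reduces Moore's conjecture for $M$ to the corresponding conjecture for spheres combined with the Lie-algebraic growth estimates already established in Theorem \ref{rathom}.
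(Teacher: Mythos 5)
Your proposal follows essentially the same route as the paper's proof. For $r=2$ you argue that $\LL_2^u(M)$ is finite, giving a finite direct sum; the paper phrases this as $\pi_*M\cong\pi_*(S^n\times S^n)$, which is the same computation (after the change of basis from Proposition \ref{freemod} the unique relation is $[a_1,a_2]=0$, so $\LL_2^u(M)$ is the two-dimensional abelian Lie algebra). For $r\geq 3$ both you and the paper observe that the heights $h_i$ are unbounded, reducing the problem to producing $p$-torsion of unbounded order in $\pi_*S^l$ as $l\to\infty$.

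The one step you yourself flag as the ``principal obstacle'' is the place where your sketch is genuinely incomplete. You restrict to odd $h_i$ and invoke Cohen--Moore--Neisendorfer together with Selick's realizations, which handles odd primes cleanly but leaves only a vague appeal for $p=2$; it also obliges you to verify that even-weight Lyndon generators survive in $\LL^u_r(M)$ when $n$ is even, a detail you assert without argument. The paper avoids both issues by noting that for every prime $p$ and every $s$, torsion of order $p^s$ already occurs in the stable homotopy groups of spheres (e.g.\ in the image of the $J$-homomorphism, see \cite{Rav86}), and that each stable group is realized as an unstable $\pi_*S^l$ once $l$ is large. This works uniformly at all primes and needs no parity restriction on $h_i$; substituting it for your CMN/Selick step closes the gap and makes your argument coincide with the paper's.
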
   

\begin{proof}
We know that $(n-1)$-connected $2n$-manifolds are rationally elliptic if and only if the $n^\textup{th}$ Betti number is at most $2$. We gather from Theorem \ref{htpyform} that the homotopy groups are the same for all manifolds with the same Betti number. Therefore if the Betti number is $2$ then $\pi_*M \cong \pi_*(S^n \times S^n)$ and the latter has finite $p$-exponents for all primes. The case for $p=2$ is due to James \cite{Jam56} while the case for $p$ odd is due to Toda \cite{Tod56}; the precise exponent for odd primes (the $p$-exponent for $S^{2n+1}$ is $p^n$) is due to Cohen-Moore-Neisendorfer \cite{CMN79}.

It remains to verify that the exponents are not finite if the Betti number is at least $3$. In fact the $p$-exponents are not finite for any $p$. For this note that the Lie algebra $\LL^u(M)$ has infinite dimension if the Betti number of $M$ is at least $3$. It follows that in the decomposition in Theorem \ref{htpy} the sequence $h_i$ is unbounded. Hence for arbitrarily large $l$, $\pi_*S^l$ occurs as a summand of $\pi_*M$. For the conclution now observe that any $p^s$ may occur as the order of an element in $\pi_*S^l$ for arbitrarily large $l$. This follows from the fact that the same is true for the stable homotopy groups and the stable homotopy groups occur as unstable homotopy groups of $S^l$ for $l$ arbitrarily large. For example, torsion of order $p^s$ for any $s$ occurs in the image of the $J$-homomorphism (cf. \cite{Rav86}, Theorem 1.1.13).  
\end{proof}

\section{Applications and Examples}\label{Appex}
In \S \ref{conn}, we computed the homotopy groups of $(n-1)$-connected $2n$-manifolds in terms of homotopy groups of spheres. A special case is that of simply connected $4$-manifolds and, essentially, $4$-manifolds with finite fundamental group. The techniques of \S \ref{conn} can be applied to other cases as well - connected sums of products of spheres and CW complexes formed by attaching a $2n$-cell to a wedge of $n$-spheres. 

\subsection{Homotopy groups of simply connected 4-manifolds}\label{sc4d}
Simply connected $4$-manifolds are the first examples of the manifolds dealt with in the previous section in the case $n=2$. It is known that (see \cite{BaBa15}, \cite{DuLi05}) that the homotopy groups are determined by the second Betti number. By putting $n=2$ in Theorem \ref{htpyform}, Theorem \ref{rathom} and using \S \ref{Betti1} we have the following result.
\begin{theorem}\label{htpy4}
Let $M_r$ be a simply connected $4$-manifold with $H^2(M_r;\Z)\cong\Z^r$. \\
\textup{(i)} If $r=1$, then $\pi_2M_1= \Z$ and $\pi_s M_1 \cong \pi_s S^5$ if $s\geq 3$. \\
\textup{(ii)} If $r\geq 2$, then
$$\pi_s M_r \cong \sum_{k=2}^s (\pi_s S^k)^{g_k}, ~ g_k =\sum_{c|k-1} \frac{ \mu(c)}{c}\sum_{a+2b=\frac{k-1}{c}}(-1)^b{a+b \choose b}\frac{r^a}{a+b}. $$\\
\textup{(iii)} Suppose $m_d$ is the dimension of $\pi_{d+1}(M_r)\otimes \Q$ for $r\ge 2$. Then 
$$m_{d} =(-1)^{d}\sum_{c|d}(-1)^{\frac{d}{c}}\frac{ \mu(c)}{c}\sum_{a+2b=\frac{d}{c}}(-1)^b{a+b \choose b}\frac{r^a}{a+b}.$$
\end{theorem}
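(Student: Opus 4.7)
The plan is to deduce this theorem as a direct specialization of the general machinery developed in Section \ref{conn}, taking $n=2$ throughout. I would first dispose of part (i) separately using the classification of simply connected $4$-manifolds with $r=1$.

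For part (i), I would observe that a simply connected $4$-manifold with $H^2\cong \Z$ has the homotopy type of $\mathbb{CP}^2$, since its cell structure is $S^2\cup_h e^4$ with $h\in \pi_3 S^2\cong \Z$ of Hopf invariant $\pm 1$ (and both choices are homotopy equivalent after orientation). Then I would invoke exactly the Hopf fibration argument already carried out in Example \ref{Betti1}, namely the fiber sequence $S^1\to S^5\to \mathbb{CP}^2$ combined with the null-homotopy of $S^1\hookrightarrow S^5$, which immediately yields $\pi_2 M_1\cong \Z$ and $\pi_s M_1\cong \pi_s S^5$ for $s\geq 3$.

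For part (ii), a simply connected $4$-manifold with $r\geq 2$ is precisely a closed $(n-1)$-connected $2n$-manifold with $n=2$ and $n^\textup{th}$ Betti number $r\geq 2$. Theorem \ref{htpy} then gives the splitting $\pi_s M_r\cong \bigoplus_i \pi_s S^{h_i}$ where each $h_i$ is of the form $d(n-1)+1=d+1$, so every sphere in the summation is of the form $S^k$ with $k\geq 2$. Theorem \ref{htpyform} then counts the multiplicities: the number of times $\pi_s S^k$ appears, with $k=d(n-1)+1=d+1$ so $d=k-1$, is exactly
\[
g_k = \sum_{c\mid k-1}\frac{\mu(c)}{c}\sum_{a+2b=\frac{k-1}{c}}(-1)^b\binom{a+b}{b}\frac{r^a}{a+b},
\]
matching the stated formula. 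The range $k=2,\ldots,s$ accounts for the finitely many summands that can contribute to $\pi_s M_r$, since $\pi_s S^k=0$ for $k>s$.

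For part (iii), I would simply specialize Theorem \ref{rathom} to $n=2$. Setting $n-1=1$, the statement that $m_j=0$ unless $j$ is a multiple of $n-1$ becomes vacuous, and for every $d\geq 1$ the rank of $\pi_{d+1}(M_r)\otimes \Q$ is given by the formula in Theorem \ref{rathom} with $n-1=1$, which reads
\[
m_d=(-1)^{d}\sum_{c\mid d}(-1)^{\frac{d}{c}}\frac{\mu(c)}{c}\sum_{a+2b=\frac{d}{c}}(-1)^b\binom{a+b}{b}\frac{r^a}{a+b},
\]
exactly as required. There is no real obstacle here beyond bookkeeping; the only step requiring a separate argument is part (i), and that is handled by the classical Hopf fibration calculation already referenced in the paper.
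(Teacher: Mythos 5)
Your proposal is correct and follows essentially the same route as the paper, which simply says the theorem follows ``by putting $n=2$ in Theorem \ref{htpyform}, Theorem \ref{rathom} and using \S \ref{Betti1}.'' Your handling of part (i) (identifying $M_1\simeq \mathbb{CP}^2$ and invoking the Hopf fibration $S^1\to S^5\to \mathbb{CP}^2$) is exactly the Example in \S\ref{Betti1}, and parts (ii) and (iii) are the direct $n=2$ specializations of Theorems \ref{htpyform} and \ref{rathom}.
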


Fix the notation $M_r$ for a simply connected $4$-manifold with $2^\textup{nd}$ Betti number $r$. We can write the homotopy groups of a $4$-manifolds with finite fundamental group in terms of homotopy groups of $M_r$ and hence in terms of the homotopy groups of spheres. Observe that the formula of such homotopy groups depends only on the order of the fundamental group.

\begin{cor}
Suppose $M$ is a connected, closed $4$-manifold with $|\pi_1 M|= l$ which is finite. Let $r$ be the $2^{nd}$ Betti number of $M$. For $k\geq 2$, $\pi_k(M) \cong \pi_k(M_{l(r+2)-2})$. 
\end{cor}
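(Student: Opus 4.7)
The plan is to reduce to the simply connected case via the universal cover. Since $\pi_1 M$ is finite of order $l$, the universal cover $p \colon \widetilde M \to M$ is an $l$-sheeted covering, and $\widetilde M$ is a closed, connected, simply connected $4$-manifold. Covering-space theory gives $\pi_k(M) \cong \pi_k(\widetilde M)$ for all $k\geq 2$. So it suffices to identify $\widetilde M$, up to the invariants controlling higher homotopy groups, with $M_{l(r+2)-2}$.

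Next I would compute the second Betti number of $\widetilde M$ from the Euler characteristic. Since $\pi_1 M$ is finite, $H_1(M;\Q)=0$, so $b_1(M)=0$, and Poincar\'e duality gives $b_3(M)=0$ as well. Thus $\chi(M) = 2 + r$. Multiplicativity of the Euler characteristic under finite covers yields $\chi(\widetilde M) = l\,\chi(M) = l(r+2)$. Because $\widetilde M$ is simply connected, $b_1(\widetilde M)=b_3(\widetilde M)=0$, hence $b_2(\widetilde M) = \chi(\widetilde M) - 2 = l(r+2) - 2$.

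Finally, invoke Theorem \ref{htpy4}: the higher homotopy groups of a closed simply connected $4$-manifold depend only on its second Betti number. Applying this to $\widetilde M$ gives $\pi_k(\widetilde M) \cong \pi_k(M_{l(r+2)-2})$ for every $k\geq 2$, and combined with the covering-space isomorphism we conclude $\pi_k(M)\cong \pi_k(M_{l(r+2)-2})$ as required. The only delicate point is the computation that $b_2(\widetilde M) = l(r+2)-2$; since all the hard work (the universality of homotopy groups in terms of the Betti number) is already packaged in Theorem \ref{htpy4}, this corollary is essentially a bookkeeping argument with Euler characteristics, and no genuine obstacle arises.
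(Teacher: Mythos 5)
Your proof is correct and follows the same route as the paper: pass to the $l$-sheeted universal cover $\widetilde M$, use $\pi_k(M)\cong\pi_k(\widetilde M)$ for $k\geq 2$, compute $b_2(\widetilde M)$ from multiplicativity of $\chi$ under finite covers, and then apply Theorem \ref{htpy4}. Your write-up merely spells out the intermediate Betti number computations ($b_1(M)=b_3(M)=0$) slightly more explicitly than the paper does.
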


\begin{proof}
Recall that for $k\geq 2$, $\pi_k M \cong \pi_k N$ where $N$ is a covering space of $M$. Choose $N$ to be the universal cover of $M$. From Theorem \ref{htpy4}, $\pi_k N \cong \pi_k M_t$ where $t$ is the second Betti number of $N$. It remains to compute $t$. We know that $\chi(N) = l. \chi(M)$. Hence $2+t= l(2+r)$. Therefore $t=l(r+2)-2$ and the result follows. 
\end{proof}

\subsection{Homotopy groups of connected sums of sphere products}\label{connsppr}
We consider manifolds which are connected sums of products of spheres where each constituent sphere is simply connected. Such manifolds are obtained from a wedge of spheres by adding a cell along some combination of Whitehead products. We write down the homotopy groups of these manifolds in terms of the homotopy groups of spheres. It is interesting to note that the final expression does not depend on the choice of orientation used to form the connected sum. 

\begin{rmk}
The homotopy groups of a connected sum of spheres is related to the computation of homotopy groups of simply connected $4$-manifolds. Recall from \cite{BaBa15}, \cite{DuLi05} that for a closed, simply connected $4$-manifold $M_r$ with Betti number $r\ge 2$ there is a fibre bundle 
$$S^1 \to \#^{r-1}(S^2\times S^3) \to M_r.$$
Therefore for $k\ge 3$, $\pi_k(M_r) \cong \pi_k(\#^{r-1}(S^2\times S^3))$.    
\end{rmk}

Fix a dimension $n$. We define, following \cite{BeTh14}, a connected sum of sphere products to be an $n$-manifold $T$ of the form 
 $$T= (S^{p_1}\times S^{n-p_1})\# \ldots \#(S^{p_r}\times S^{n-p_r})$$ 
where the connected sum is taken using some choice of orientation and $p_i,n-p_i\geq 2$. Since each constituent sphere is simply connected, this necessarily implies that $n\geq 4$. We want to compute $\pi_j T$ in terms of homotopy groups of spheres using the methods described in earlier sections. 


We start by fixing some notation. For $T$ as above denote $q_i = n - p_i$ and $\alpha_i, \beta_i$ ($i=1,2,\ldots,r$) the composites 
$$\alpha_i : S^{p_i}\to S^{p_i}\vee S^{q_i} \simeq S^{p_i}\times S^{q_i} - \{\mathit{pt}\}\to T$$
$$\beta_i : S^{q_i}\to S^{p_i}\vee S^{q_i} \simeq S^{p_i}\times S^{q_i} - \{\mathit{pt}\} \to T.$$
 The manifold $T$ has a CW complex structure with $2r$ cells of dimension  $p_1,\ldots,p_r$, $q_1,\ldots, q_r$, one $0$-cell and one $n$-cell. These $2r$ cells are attached to the $0$-cell by the constant map. The maps of these cells into $T$ are given by $\alpha_1,\ldots,\alpha_r$,$\beta_1,\ldots,\beta_r$; we use the same letters to denote the cells. The $n$-cell is attached along the sum of Whitehead products given by 
$$\gamma=\ep_1[\alpha_1,\beta_1]+ \ldots +\ep_r [\alpha_r,\beta_r]$$
where $\ep_i=\pm 1$ are signs according to the choice of orientation made while forming the connected sum $T$. 

Our first task is to compute the homology of the loop space $\Omega T$. We denote by $a_i,b_i$ the Hurewicz images of $\alpha_i,\beta_i$ respectively in $H_*(\Omega T;\Z)$. Then using the Pontrjagin ring structure on $H_*(\Omega T)$ we get a ring map 
$$\phi: T_\Z(a_1,\ldots,a_r,b_1,\ldots,b_r)\rightarrow H_\ast(\Omega T).$$
The Hurewicz homomorphism 
$$H : \pi_s(T) \cong \pi_{s-1}(\Omega T) \to H_{s-1}(\Omega T)$$
carries the Whitehead product to the graded Lie bracket (up to a sign; cf. \cite{Sam53}) on $H_*(\Omega T)$ defined by 
$$[\lambda,\mu] = \lambda. \mu - (-1)^{|\lambda||\mu|} \mu . \lambda$$
using the Pontrjagin product on $H_*(\Omega T)$. Thus the class $\gamma\in \pi_{n-1}(\vee_i (S^{p_i}\vee S^{q_i}))$ maps to $y\in H_{n-1}(\Omega ( \vee_i( S^{p_i}\vee S^{q_i})))$ given by 
$$y=\pm( \ep_1 [a_1,b_1] + \ldots + \ep_r [a_r,b_r]).$$
Since $\phi(y)=0$ there is an induced map
$$\phi: T_\Z(a_1,\ldots,a_r,b_1,\ldots,b_r)/(y)\to H_*(\Omega T)$$
where $(y)$ denotes the $2$-sided ideal generated by $y$.

\begin{theorem}\label{homloop}
The map 
$$\phi:T_\Z(a_1,\ldots,a_r,b_1,\ldots,b_r)/(y)\to H_*(\Omega T) $$ 
defined above is an isomorphism. 
\end{theorem}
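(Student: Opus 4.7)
The plan is to mirror the proof of Theorem \ref{manhomloop} that was carried out for $(n-1)$-connected $2n$-manifolds. That argument had three ingredients: (i) the Adams--Hilton model identifies $H_*(\Omega T)$ with the homology of an explicit differential graded algebra built from the cells of $T$; (ii) Koszul duality identifies, over any field, the homology of the cobar construction on $H_*(T;k)$ with the quadratic algebra $T_k(V^*)/(R^\perp)$; and (iii) an integral lifting argument using the Diamond lemma (Proposition \ref{freemod}, Theorem \ref{Diamond}) promotes the fieldwise isomorphism to $\Z$.

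First I would compute the cohomology algebra $H^*(T;k)$ for any field $k$. Setting $V_T$ to be the span of the duals $\alpha_i^*,\beta_i^*$, the cup product is governed by the intersection form of $T$: $\alpha_i^*\smile\beta_i^* = \pm\ep_i\,[T]^*$ and all other pairwise products of elements of $V_T$ vanish. Thus $H^*(T;k)\cong A(V_T,R_T)$ is a quadratic algebra, and dually $H_*(T;k)\cong C(V_T^*,R_T^\perp)$ is its Koszul dual coalgebra, with $R_T^\perp$ one-dimensional, spanned by an element whose class in $V_T^*\otimes V_T^*$ is precisely the tensorial form of $y$ after the degree shifts. I would then apply the graded versions of Proposition \ref{Kos2} or Lemma \ref{Kos3}: because $y$ has a leading monomial $a_{i_0}\otimes b_{i_0}$ involving two \emph{distinct} generators for any choice of lexicographic order extending a suitable basis, $A(V_T^*,R_T^\perp)$ is Koszul, hence so is $A(V_T,R_T)$.

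Next I would run the Adams--Hilton construction on the CW decomposition $T=\bigvee_i(S^{p_i}\vee S^{q_i})\cup_{\gamma} e^n$. The generators of $A(T)$ are $a_i,b_i,\mu$ in degrees $p_i-1,q_i-1,n-1$; since all cells below the top attach trivially, their differentials vanish. To compute $\partial\mu$ I would either quote that the Adams--Hilton differential on the top generator is the Hurewicz image of the adjoint of the attaching map (which carries the Whitehead sum $\gamma$ to the graded commutator $y$ by \cite{Sam53}) or, as in Proposition \ref{mancobar}, run the Serre spectral sequence of $\Omega T\to PT\to T$, noting that non-singularity of the intersection form forces the first nontrivial differential to pick out $y$ as the only relation in degree $n-2$. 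Either way $\partial\mu=\pm y$, and after adjusting signs by a change of orientation on the top cell we can take $\partial\mu = y$. Combined with the Koszul step, this yields over every field $k$ an isomorphism
\[
T_k(a_1,\ldots,a_r,b_1,\ldots,b_r)/(y)\;\cong\;H_*(\Omega T;k).
\]

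For the lift to $\Z$ I would observe that $y$ satisfies the hypothesis of Theorem \ref{Diamond}: one of its terms is a monomial in two distinct generators, so after normalization $y$ expresses $a_{i_0}b_{i_0}$ (say) as a combination of other monomials. Hence $T_\Z(a_i,b_i)/(y)$ is $\Z$-free with a basis of $R$-irreducible monomials, and by Proposition \ref{irr} its reductions modulo each prime $p$ and to $\Q$ agree with the quadratic algebras computed above. Since $\phi$ is an isomorphism after tensoring with each $\F_p$ and with $\Q$ and the source is $\Z$-free, a standard universal coefficients comparison shows $H_*(\Omega T)$ is $\Z$-free and that $\phi$ itself is an isomorphism. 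The main obstacle will be the bookkeeping of graded signs coming from the varying parities $(p_i-1)(q_i-1)$: the original manifold proof had all generators in a single degree, whereas here one must verify that the mixed-degree relation $y$ still lies in the free graded Lie algebra, still generates $R_T^\perp$ in the correct form, and still meets the Diamond-lemma hypotheses when passed through the signs dictated by graded commutativity.
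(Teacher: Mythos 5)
Your proposal is correct and follows essentially the same route as the paper: both show the quadratic (co)algebra coming from the intersection form is Koszul over every field via a leading-monomial criterion (the paper uses the order $\alpha_1<\beta_1<\cdots<\alpha_r<\beta_r$ so that $\hat{\gamma}$ has leading term $\pm\beta_r\alpha_r$, matching your ``leading monomial in two distinct generators''), identify $H_*(\Omega T;k)$ with the Koszul dual via the Adams--Hilton/cobar model, and then pass to $\Z$ by noting the domain is $\Z$-free (Proposition \ref{freemod}) and $Tor(H_*(\Omega T),\F_p)=0$ for all $p$. The graded-sign bookkeeping you flag is genuine but harmless here, since all one needs for the Diamond-lemma and Koszulness steps is that $y$ has a monomial $a_{i_0}b_{i_0}$ (or $b_{i_0}a_{i_0}$) with unit coefficient in two distinct letters, which the anti-symmetric form of $y$ guarantees regardless of the parities $(p_i-1)(q_i-1)$.
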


We again compute  $H_*(\Omega T)$ by computing the homology of the cobar construction $\Omega C_*(T)$.  We will need the following result.
\begin{prop}\label{coalg}
Let  $\zeta$ denote the orientation class of $T$. The coalgebra structure on $H_*(T)$ is given by 
$$\Delta(1)=1\otimes 1,~ \Delta(\alpha_i)=\alpha_i\otimes 1 + 1\otimes \alpha_i,~ \Delta(\beta_i)= \beta_i\otimes 1 + 1\otimes \beta_i$$
$$\Delta(\zeta)=\zeta \otimes 1 + 1\otimes \zeta + \sum_{i=1}^r \ep_i(\alpha_i\otimes \beta_i +(-1)^{p_i+q_i} \beta_i \otimes \alpha_i).$$
\end{prop}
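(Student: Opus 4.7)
The plan is to obtain $\Delta$ on $H_\ast(T)$ by dualising the cup product on $H^\ast(T)$. Since every $p_i,q_i\geq 2$, the homology $H_\ast(T)$ is free abelian and finitely generated in each degree, so the universal coefficient pairing is perfect and K\"unneth yields $H_\ast(T\times T)\cong H_\ast(T)\otimes H_\ast(T)$ over $\Z$. The comultiplication induced by the diagonal is then linearly dual to the cup product through the Koszul-signed identity
$$\langle x\cup y,\,\omega\rangle \;=\; \sum (-1)^{|y|\,|\omega^{(1)}|}\langle x,\omega^{(1)}\rangle\langle y,\omega^{(2)}\rangle,$$
where $\Delta(\omega)=\sum\omega^{(1)}\otimes\omega^{(2)}$. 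Because $\Delta(1)=1\otimes 1$ is forced by counitality, the problem reduces to evaluating $\Delta$ on the generators $\alpha_i,\beta_i,\zeta$.

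First I would determine the ring structure on $H^\ast(T)$. Let $\alpha_i^\ast,\beta_i^\ast,\zeta^\ast$ be the basis dual to $\alpha_i,\beta_i,\zeta$. For each $i$ there is a collapse map $c_i\colon T\to S^{p_i}\times S^{q_i}$ obtained by pinching every summand other than the $i$-th to a point; by the very definition of $\ep_i$ this has degree $\ep_i$ on fundamental classes, while pulling back the obvious generators of $H^\ast(S^{p_i}\times S^{q_i})$ recovers $\alpha_i^\ast$ and $\beta_i^\ast$. Pulling back the standard relation $\alpha_i^\ast\cup\beta_i^\ast=\mathrm{or}_{S^{p_i}\times S^{q_i}}^\ast$ therefore gives
$$\alpha_i^\ast\cup\beta_i^\ast \;=\; \ep_i\zeta^\ast,$$
and $\beta_i^\ast\cup\alpha_i^\ast$ is then determined by graded commutativity of the cup product. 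For $i\neq j$ the products $\alpha_i^\ast\cup\alpha_j^\ast$, $\alpha_i^\ast\cup\beta_j^\ast$, $\beta_i^\ast\cup\beta_j^\ast$ all vanish, since each factors through some $c_k$ which sends one of the two factors to zero; dimensional reasons or this same factoring handle the remaining binary products.

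Next I would dualise. For $\omega=\alpha_i$ (or $\omega=\beta_i$), every cup product of two positive-degree classes pairs trivially with $\omega$ for degree reasons, so the identity above forces $\Delta(\alpha_i)=\alpha_i\otimes 1+1\otimes\alpha_i$ and likewise for $\beta_i$. For $\omega=\zeta$, the non-trivial pairings $\langle x\cup y,\zeta\rangle$ are precisely $\langle 1\cup\zeta^\ast,\zeta\rangle=\langle\zeta^\ast\cup 1,\zeta\rangle=1$, together with $\langle\alpha_i^\ast\cup\beta_i^\ast,\zeta\rangle=\ep_i$ and the corresponding $\langle\beta_i^\ast\cup\alpha_i^\ast,\zeta\rangle$ obtained from graded commutativity of the cup product; reading off the duality identity one sees that $\Delta(\zeta)$ must contain $\zeta\otimes 1$, $1\otimes\zeta$, $\ep_i\,\alpha_i\otimes\beta_i$, and a matching $\beta_i\otimes\alpha_i$ term with the sign produced by the Koszul rule. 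Equivalently, the sign in front of $\ep_i\,\beta_i\otimes\alpha_i$ is forced by graded cocommutativity of $\Delta$ once the $\alpha_i\otimes\beta_i$ coefficient has been pinned down.

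The only delicate point is the sign bookkeeping in the last step; the rest is a direct consequence of Poincar\'e duality on each summand of the connected sum together with the behaviour of the pinch maps $c_i$.
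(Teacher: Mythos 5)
Your approach is the same one the paper has in mind: the paper gives no explicit proof of Proposition \ref{coalg}, but the analogous Proposition \ref{mancoalg} is disposed of with the single sentence ``the coalgebra structure is dual to the ring structure on cohomology,'' and \ref{coalg} is clearly intended to be proved the same way. You simply flesh that one line out: torsion-freeness gives the K\"unneth identification, the pinch maps $c_i$ identify the cup product on $H^*(T)$, and the Koszul-signed duality formula then reads off $\Delta$. All of this is correct and, if anything, more careful than what the paper records.

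The one place where your carefulness matters is the sign in front of $\beta_i\otimes\alpha_i$. You correctly observe that once the coefficient of $\alpha_i\otimes\beta_i$ is fixed, the coefficient of $\beta_i\otimes\alpha_i$ is forced by graded cocommutativity of $\Delta$ (equivalently, by graded commutativity of the cup product plus the Koszul pairing rule). That forces the ratio of the two coefficients to be $(-1)^{|\alpha_i|\,|\beta_i|}=(-1)^{p_iq_i}$. The proposition as stated in the paper instead has $(-1)^{p_i+q_i}$. These agree only when $p_i$ and $q_i$ are both even; for example they disagree whenever $n=p_i+q_i$ is odd, or when $n$ is even with $p_i,q_i$ both odd. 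So if you carry your sign bookkeeping to the end you will not reproduce the paper's displayed exponent; the sign in the paper appears to be a typo (and one can cross-check this against the intended desuspension to $y=\pm\sum\ep_i[a_i,b_i]$ later in the section, which also forces the $(-1)^{p_iq_i}$ ratio). Your argument is sound; just be explicit that the graded-cocommutativity sign is $(-1)^{p_iq_i}$ rather than deferring it, and note the discrepancy with the stated formula.
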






As in Proposition \ref{mancobar} we may verify that $H_*(\Omega T)$ is the homology of the cobar construction on $H_*(T)$ as an associative algebra. Using this we prove Theorem \ref{homloop}.


\mbox{ }

{\it Proof of Theorem \ref{homloop}.}
First observe that both the domain and codomain of $\phi$ are free $\Z$-modules. The domain is free by Proposition \ref{freemod} as $y$ is an anti-symmetric element. In order to check that $H_*(\Omega T)$ is free it suffices to prove that 
$$Tor(H_*(\Omega T),\Z/p\Z)=0$$
for every index $*$ and prime $p$. 

Let $k=\Z/p\Z~\mbox{or}~\Q$ and let $V$ be the vector subspace of  $H_*(T;k)$ spanned by $\alpha_i,\beta_i$. Let $\hat{\gamma}$ be defined by 
$$\hat{\gamma} := \sum_{i=1}^r \ep_i(\alpha_i\otimes \beta_i +(-1)^{p_i +q_i} \beta_i \otimes \alpha_i).$$
 From Proposition \ref{coalg} it is clear that $H_*(T;k)$ is the quadratic coalgebra $C(V,\hat{\gamma})$. 

Following the notation of Proposition \ref{Kos2} we introduce the order 
$$\alpha_1<\beta_1<\ldots <\alpha_r<\beta_r.$$
Observe that the expression $\hat{\gamma}$ is of the form ($\pm \beta_r\alpha_r+\mbox{lower terms}$) and hence the quadratic coalgebra is Koszul. It follows that the homology of the cobar construction is the Koszul dual algebra 
$$A(s^{-1}V,s^{-2}\hat{\gamma})\cong T(a_1,\ldots ,a_r,b_1,\ldots ,b_r)/(y)\cong T_\Z(a_1,\ldots ,a_r,b_1,\ldots ,b_r)/(y)\otimes k$$   
where the last isomorphism comes from Proposition \ref{irr}. Therefore the composite 
$$A_k(V,R)\cong A(V,R)\otimes k \stackrel{\phi\otimes k}{\longrightarrow} H_*(\Omega T)\otimes k \to H_*(\Omega T;k)$$
is an isomorphism. It follows that $H_*(\Omega T) \otimes k\to H_*(\Omega T;k)$ is surjective so that the groups $Tor(H_*(\Omega T),k)$ are $0$ and $H_*(\Omega T;k)\cong H_*(\Omega T)\otimes k$. Therefore $H_*(\Omega T)$ is a free $\Z$-module.

Thus both sides of $\phi$ are graded $\Z$-modules which are finitely generated and free in each degree. Hence it suffices to check that\\
(i) $\phi \otimes \Z/p\Z$ is an isomorphism for each prime $p$, and \\
(ii) $\phi \otimes \Q$ is an isomorphism. \\
These have been checked above and the result follows. \qed

\mbox{ }

The next step involves passing from the above computation of homology of the loop space to a computation of homotopy groups as in the previous section. Consider the graded Lie algebra $\LL_r^{gr}$ (over $\Z$) given by 
$$\frac{\Lie^{gr}(a_1,\ldots,a_r,b_1,\ldots,b_r)}{(y)}$$
and an analogous ungraded construction
 $$\LL_r^u= \frac{\Lie(a_1,\ldots,a_r,b_1,\ldots,b_r)}{(y^{u})}$$
where $y^u= \ep_1 [a_1,b_1]^u + \ldots + \ep_r [a_r,b_r]^u$ with the ungraded brcket $[x,y]^u=x.y - y.x$.  

Equip $\LL_r^u$ with a grading defining $|a_i|=p_i-1$, $|b_i|=q_i-1$. Denote by $\LL_{r,w}^u$ the degree $w$ homogeneous elements of $\LL_r^{u}$. From Proposition \ref{freemod} and Theorem \ref{Liebasis} we know that $\LL^u_r$ is a free module and the Lyndon basis gives a homogeneous basis of $\LL^u_r$.

Enlist the elements of the Lyndon basis in order as $l_1 < l_2 <\ldots$ so that $|b(l_i)|\leq |b(l_{i+1})|$. Define the height of a basis element by $h_i= h(l_i)=|b(l_i)|+1$. Note that $b(l_i)$ is represented by an iterated Lie bracket of $a_i$'s and $b_i$'s. Use the corresponding iterated Whitehead product of $\alpha_i,\beta_j$ to define maps $\lambda_i : S^{h_i}\rightarrow T$. 



\begin{theorem}\label{htpyT}
There is an isomorphism
$$\pi_* T \cong \sum_{i\geq 1} \pi_* S^{h_i}$$ 
and the inclusion of each summand of the right hand side\footnote{Note that the right hand side is a finite direct sum for each $n$.} is induced by $\lambda_i$. 
\end{theorem}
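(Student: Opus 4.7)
The plan is to follow exactly the template set by the proof of Theorem \ref{htpy}, since Theorems \ref{homloop} and \ref{manhomloop} play parallel roles. Using the Moore loop model so that the multiplication on $\Omega T$ is strictly associative with strict unit, the maps $\Omega\lambda_i\colon \Omega S^{h_i}\to \Omega T$ can be multiplied coherently to give maps $S(t)=\prod_{i=1}^t\Omega S^{h_i}\to \Omega T$ compatible with the basepoint inclusions $S(t)\hookrightarrow S(t+1)$, and hence a map $\Lambda\colon S=\mathit{hocolim}\, S(t)\to \Omega T$. Since both $S$ and $\Omega T$ are H-spaces (hence simple), it will suffice to show that $\Lambda_*$ is an isomorphism on integral homology; then $\pi_*T\cong \pi_{*-1}S\cong \bigoplus_i \pi_*S^{h_i}$, with the $i$-th summand carried by $\lambda_i$.

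For the homology computation, Theorem \ref{homloop} identifies $H_*(\Omega T)$ with $T_\Z(a_1,\dots,a_r,b_1,\dots,b_r)/(y)$, where $y$ lies in the free graded Lie algebra on the generators because of the anti-symmetry of the coefficients. Thus $H_*(\Omega T)$ is the universal enveloping algebra of $\LL_r^{gr}$. Applying the graded Poincar\'e--Birkhoff--Witt theorem (Theorem \ref{PBW}), the associated graded of $H_*(\Omega T)$ with respect to the length filtration is isomorphic to the free graded commutative algebra on $\LL_r^{gr}$. On the side of $S$, each factor $\Omega S^{h_i}$ contributes a class $c_i=\rho(\lambda_i)$ of degree $h_i-1$ in its Hurewicz image, and the Hilton--Samelson identity $\rho([x,y])=\pm(\rho(x)\rho(y)-(-1)^{|x||y|}\rho(y)\rho(x))$ shows that $\rho(b(l_i))$ equals the corresponding graded bracket modulo products of lower-weight classes. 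Hence the image of $\Lambda_*$ contains the $\rho(b(l_i))$ and therefore surjects onto each graded piece.

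To promote this surjection to an isomorphism I would compare graded ranks. Theorem \ref{Liebasis} yields that $\LL_r^u$ is $\Z$-free with Lyndon basis $\{b(l_i)\}$, and Proposition \ref{freemod} (applied via Diamond) gives a corresponding $\Z$-basis of $T_\Z(a,b)/(y^u)$ made of non-decreasing monomials in $b(l_i)$. Both $T_\Z(a,b)/(y)$ and $T_\Z(a,b)/(y^u)$ satisfy the hypothesis of the Diamond lemma (they are obtained by killing the same leading monomial, $y$ differing from $y^u$ only in signs of the lower terms), so they have identical graded ranks. As in the second half of the proof of Theorem \ref{htpy}, one writes an arbitrary element of $\LL_r^{gr}$ as a combination of proper graded brackets and squares of odd-degree classes: the brackets are hit by $\rho$ applied to ungraded brackets, and the squares of odd-degree classes $x^2=\tfrac12[x,x]$ are already monomials in $\rho(b(l_i))$. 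Consequently the monomials in $\rho(b(l_i))$ span $H_*(\Omega T)$, and a surjection between finitely generated free $\Z$-modules of the same rank in each graded piece must be an isomorphism.

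The main obstacle will be the bookkeeping for the squaring operation on $\LL_r^{gr}$ when some $a_i$ or $b_i$ is even-degree (i.e.\ some $p_i$ or $q_i$ is odd), where one must carefully verify that the squares generated by the graded PBW decomposition are actually produced as Pontrjagin squares of the $\rho(b(l_i))$ and not some independent new elements. Once that parity bookkeeping is in hand, the rank-matching argument closes the proof, $\Lambda$ becomes a weak equivalence by simplicity, and the splitting of $\pi_*T$ follows.
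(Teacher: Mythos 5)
Your proposal is correct and follows essentially the same route the paper takes: construct $\Lambda\colon S\to\Omega T$ from the iterated Whitehead products, identify $H_*(\Omega T)$ via Theorem \ref{homloop}, apply PBW and the Lyndon basis to show the monomials in $\rho(b(l_i))$ give a basis, and conclude by simplicity of $S$ and $\Omega T$. If anything you spell out more explicitly the bookkeeping the paper compresses: the paper's own proof of Theorem \ref{htpyT} flatly identifies $T(a,b)/(y)$ with $U(\LL_r^u)$ and then says ``similar arguments from the proof of Theorem \ref{htpy} apply here,'' whereas you correctly separate the graded enveloping algebra picture (giving the structure of $H_*(\Omega T)$) from the ungraded Lie algebra $\LL_r^u$ (used only for rank counting via Diamond/Lyndon), and you make the rank-matching surjection-implies-isomorphism step explicit in the mixed-parity case. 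One small slip in your closing paragraph: the squaring subtlety arises precisely when some generator has \emph{odd} degree (i.e.\ some $p_i$ or $q_i$ is even), not even degree; and over $\Z$ the identity is $2x^2=[x,x]$ rather than $x^2=\tfrac12[x,x]$. Neither affects the substance of the argument, which matches the paper.
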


\begin{proof}
Write $S(t) = \prod_{i=1}^t \Omega S^{h_i}$. As in Theorem \ref{htpy}, we obtain a map 
$$\Lambda: S := \mathit{hocolim}~ S(t) \rightarrow \Omega T$$   
We prove that $\Lambda_*$ is an isomorphism. Since both $S$ and $\Omega T$ are $H$-spaces, it follows that $\Lambda$ is a weak equivalence. 

We know from Theorem \ref{homloop} that $H_*(\Omega T) \cong T(a_1,\ldots,a_r,b_1,\ldots,b_r)/(y)$ which is the universal enveloping algebra of the Lie algebra $\LL_r^u$. Filter $H_*(\Omega T)$ by the monomial degree. By the Poincar\'e-Birkhoff-Witt Theorem, the associated graded algebra of $H_*(\Omega T)$ is isomorphic to the symmetric algebra on $\LL_r^u$. 

The homology of $S$ is the polynomial algebra 
$$H_*(S) \cong \Z[c_{h_1-1},c_{h_2 -1},\ldots ]$$
where $c_{h_i -1}$ denotes the generator in $H_{h_i -1}(\Omega S^{h_i})$. Each $c_{h_i-1}$ maps to the  Hurewicz image of $\Omega (\lambda_i)\in H_{h_i-1}(\Omega T)$. As in Theorem \ref{htpy} we use $\rho$ for this map. Similar arguments from the proof of Theorem \ref{htpy} apply here. The equation \eqref{hur} holds, $\rho$ carries each $\alpha_i$  to $a_i$ and the difference of the graded and ungraded elements are generated by terms of lower weight. Hence \eqref{rhobl} holds and we have isomorphisms
$$E_0(T(a_1,\ldots,a_r,b_1,\ldots,b_r)/(y)) \cong \Z [b(l_1),b(l_2),\ldots] \cong  \Z [\rho(b(l_1)),\rho(b(l_2)),\ldots].$$
The map $\Lambda : S\to \Omega T$ maps $c_{h_i-1} \to \rho(b_i)$ and takes the product of elements in $\Z[c_{h_1-1},c_{h_2 -1},\ldots ]$ to the corresponding Pontrjagin product. It follows that $\Lambda_*$ is an isomorphism. 
\end{proof}

We may compute a formula for the number of $\pi_s S^m$ that appears in $\pi_s T$ as in Theorem \ref{htpyform}. Let 
$$
\eta_m:=\textup{coefficient of $t^m$ in $\log \big(1-\sum_{i=1}^r (t^{p_i-1}+t^{q_i-1}) + t^{n-2}\big)$}.$$

\begin{theorem}\label{htpyTform}
The number of groups $\pi_s S^j$ in $\pi_s T$ is given by
$$l_{j-1} =-\sum_{d|j-1} \mu(d)\frac{\eta_{(j-1)/d}}{d}.  $$
\end{theorem}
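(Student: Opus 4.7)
The plan is to mirror the proof of Theorem \ref{htpyform}, replacing the single-degree generating series with the more general one coming from the varying $p_i, q_i$. By Theorem \ref{htpyT}, the number of copies of $\pi_sS^j$ in $\pi_sT$ equals the number of Lyndon basis elements $b(l_i)$ with $h_i = j$, which by Theorem \ref{Liebasis} equals $l_{j-1}$, the rank of the degree $(j-1)$ piece $\LL_{r,j-1}^u$.

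First, I would compute the Poincar\'e series of the universal enveloping algebra $U(\LL_r^u) \cong T_\Z(a_1,\ldots,a_r,b_1,\ldots,b_r)/(y^u)$ (which by Theorem \ref{homloop} is $H_\ast(\Omega T)$). Since the generators have gradings $|a_i|=p_i-1, |b_i|=q_i-1$ and the single relation $y^u$ has grading $n-2$, and since by Proposition \ref{freemod} the quotient is free with a basis given by $R$-irreducible monomials, the Poincar\'e series satisfies
$$\sum_d \dim U(\LL_r^u)_d \cdot t^d = \frac{1}{1-\sum_{i=1}^r(t^{p_i-1}+t^{q_i-1}) + t^{n-2}}.$$

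Next, I would apply the Poincar\'e–Birkhoff–Witt theorem (Proposition \ref{PBW-quad}, since $\LL_r^u$ is ungraded here with the auxiliary grading by weight): as a $\Z$-module $U(\LL_r^u)$ is the symmetric algebra on $\LL_r^u$, so
$$\frac{1}{\prod_d (1-t^d)^{l_d}} = \frac{1}{1-\sum_{i=1}^r(t^{p_i-1}+t^{q_i-1}) + t^{n-2}}.$$
Taking logarithms of both sides and expanding $-\log(1-t^d) = \sum_k t^{kd}/k$, the coefficient of $t^m$ on the left becomes $\frac{1}{m}\sum_{d\mid m} d\, l_d$, while the coefficient on the right is $-\eta_m$ by definition. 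This gives the identity
$$\eta_m = -\frac{1}{m}\sum_{d\mid m} d\, l_d.$$

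Finally, M\"obius inversion (exactly as in the proof of Theorem \ref{htpyform}) converts this into
$$l_m = -\sum_{d\mid m}\mu(d)\,\frac{\eta_{m/d}}{d},$$
and setting $m = j-1$ yields the stated formula. The only real subtlety is that, unlike in Theorem \ref{htpyform}, the generators now carry several different weights $p_i-1, q_i-1$, so one cannot immediately conclude $\eta_m=0$ for $m$ not divisible by a fixed integer; however, this is irrelevant to the statement, which allows $\eta_{(j-1)/d}$ to be nonzero for any divisor $d$. No step should present a genuine obstacle once the generating series above is in hand; the main care is simply to use the ungraded PBW (Proposition \ref{PBW-quad}) with the weight grading, rather than the graded PBW of Theorem \ref{PBW}, so that no sign adjustments enter the Euler product.
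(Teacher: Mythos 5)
Your proposal is correct and follows essentially the same route as the paper: compute the Poincar\'e series of $H_*(\Omega T)\cong T_\Z(a_1,\ldots,a_r,b_1,\ldots,b_r)/(y)$ via the $R$-irreducible (Diamond lemma) basis, apply the ungraded PBW (Proposition \ref{PBW-quad}) with the weight grading so that $U(\LL_r^u)$ has the Euler product $\prod_d(1-t^d)^{-l_d}$, take logarithms, and M\"obius-invert. Your remark that one must use the ungraded PBW rather than Theorem \ref{PBW} is precisely the point that makes the Euler product sign-free, and your observation that $\eta_m$ need not vanish off a fixed arithmetic progression (unlike the $(n-1)$-connected $2n$-manifold case) is correct but, as you say, harmless.
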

\begin{proof}
It is enough to compute the dimension $l_d$ of the $d^\textup{th}$-graded part of the Lie algebra $\LL^u_r$. We use the generating series to compute this from the universal enveloping algebra $H_*(\Omega T)$ as in \cite{BaBa15}.

The generating series for $H_*(\Omega T)$ is 
$$p(t)= \frac{1}{1-\sum_{i=1}^r (t^{p_i-1}+t^{q_i-1}) + t^{n-2}}.$$
From the Poincar\'e-Birkhoff-Witt theorem the symmetric algebra on $\LL^u_k(M)$ is $H_*(\Omega M)$. Hence we have the equation 
$$\frac{1}{\prod_d (1-t^d)^{l_d}} =   \frac{1}{1-\sum_{i=1}^r (t^{p_i-1}+t^{q_i-1}) + t^{n-2}}$$
Take log of both sides :
\begin{eqnarray*}
\log \Big({1-\sum_{i=1}^r (t^{p_i-1}+t^{q_i-1}) + t^{n-2}}\Big) & = & \sum_d l_d\log (1-t^d)\\
& = & -\sum_d l_d\left(t^d+\frac{t^{2d}}{2}+\frac{t^{3d}}{3}+\cdots\right)
\end{eqnarray*}
Expanding this and equating coefficients, we see that 
$$
\eta_m= -\frac{1}{m}\Big(\sum_{d|m} d l_d \Big).
$$
We use the M\"{o}bius inversion formula; it gives us 

$$l_m =-\sum_{d|m} \mu(d)\frac{\eta_{m/d}}{d}.  $$
\end{proof}
\noindent As an application we can verify a conjecture of J. C. Moore for connected sum of sphere products. 
\begin{theorem}\label{MooreT}
The conjecture of Moore is valid for connected sum of sphere products, i.e., if $T$ is a connected sum of sphere products then $\pi_\ast T$ has finite $p$-exponents for every prime $p$ if and only if $T$ is a sphere product if and only if $T$ is rationally elliptic.
\end{theorem}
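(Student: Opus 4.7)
The plan is to establish the two implications sphere product $\Rightarrow$ rationally elliptic $\Rightarrow$ sphere product and sphere product $\Rightarrow$ finite $p$-exponents for all $p$ $\Rightarrow$ sphere product. The first link (sphere product $\Rightarrow$ rationally elliptic) is immediate since $\pi_\ast(S^p\times S^q)\otimes\Q$ is finite dimensional. Similarly, sphere product $\Rightarrow$ finite $p$-exponents for all $p$ follows from the decomposition $\pi_\ast(S^p\times S^q)\cong \pi_\ast S^p\oplus \pi_\ast S^q$ together with the exponent theorems of James \cite{Jam56}, Toda \cite{Tod56}, and Cohen--Moore--Neisendorfer \cite{CMN79} as used in the proof of Theorem \ref{Moorehcm}.

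The substantive content is the two converse directions, both of which reduce to showing that if $r\geq 2$ then the graded Lie algebra $\LL_r^u$ is infinite dimensional. First I would compute the denominator of the Poincar\'e series $p(t)$ of $H_\ast(\Omega T;\Q)$ from Theorem \ref{htpyTform}. When $r=1$ the denominator factors as $(1-t^{p_1-1})(1-t^{q_1-1})$, so $\LL_1^u$ is two-dimensional (indeed $\LL_1^u$ is the abelian Lie algebra on $a_1,b_1$ since the defining relation is $[a_1,b_1]=0$), confirming rational ellipticity and the sphere-product decomposition. When $r\geq 2$, evaluating the denominator
$$D(t)=1-\sum_{i=1}^r(t^{p_i-1}+t^{q_i-1})+t^{n-2}$$
at $t=1$ gives $D(1)=2-2r\leq -2<0$, while $D(0)=1>0$, so $D$ has a real root in $(0,1)$. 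Hence the coefficients of $p(t)$ grow at least exponentially, forcing $H_\ast(\Omega T;\Q)$ to have exponential growth, and by Milnor--Moore together with PBW, $\LL_r^u\otimes\Q$ (and hence $\LL_r^u$) is infinite dimensional. Since the rational homotopy Lie algebra $\pi_\ast(\Omega T)\otimes\Q$ is isomorphic to $\LL_r^u\otimes\Q$, this shows $T$ is rationally hyperbolic, which closes the ellipticity equivalence.

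For the $p$-exponent direction, assume $r\geq 2$. Since $\LL_r^u$ is infinite dimensional, Theorem \ref{Liebasis} gives an infinite Lyndon basis, and the associated sequence of heights $h_1\leq h_2\leq\cdots$ appearing in Theorem \ref{htpyT} is unbounded. Thus $\pi_\ast T$ contains $\pi_\ast S^{h_i}$ as a direct summand for arbitrarily large $h_i$. As in the proof of Theorem \ref{Moorehcm}, for any prime $p$ and any $s\geq 1$ one finds $p^s$-torsion in the stable homotopy groups of spheres (for instance in the image of the $J$-homomorphism, cf.\ \cite{Rav86}), and these elements appear unstably in $\pi_\ast S^l$ for $l$ sufficiently large. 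Consequently $\pi_\ast T$ has no finite $p$-exponent for any prime $p$, which completes the contrapositive and proves the theorem.

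The only step requiring genuine care is the verification that $D(t)$ has a root strictly inside $(0,1)$ for every choice of $p_i,q_i\geq 2$ with $p_i+q_i=n$; the intermediate value argument above handles this uniformly, so no obstacle is expected. The remaining ingredients — PBW for graded Lie algebras (Theorem \ref{PBW}), Milnor--Moore over $\Q$, and the classical sphere exponent results — are invoked exactly as in Theorem \ref{Moorehcm}.
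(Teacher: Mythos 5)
Your proof is correct, and it diverges from the paper's in one place: the argument that $r\ge 2$ forces rational hyperbolicity. The paper invokes Theorem 33.3 of F\'elix--Halperin--Thomas and exhibits an explicit Lyndon word (namely $a_1b_1a_2b_2a_1a_2$ after arranging $|a_i|\ge|b_i|$) claimed to produce a nonzero rational homotopy class in degree exceeding $2n$. You instead work directly with the Poincar\'e series of $H_*(\Omega T;\Q)$, evaluate the denominator $D(t)=1-\sum_i(t^{p_i-1}+t^{q_i-1})+t^{n-2}$ at $t=0$ and $t=1$, and use the intermediate value theorem to produce a root in $(0,1)$; since the coefficients of $1/D$ are nonnegative, the radius of convergence is at most that root, so $H_*(\Omega T;\Q)$ grows exponentially and hence, by PBW, $\LL^u_r\otimes\Q$ (and therefore $\pi_*(\Omega T)\otimes\Q$, which has the same graded dimensions) is infinite-dimensional. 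This is a cleaner and more uniform argument: it avoids the dichotomy theorem and the need to locate a class past the formal-dimension threshold, and it is insensitive to the particular values of $p_i,q_i$. In fact the paper's degree count is delicate at small $n$ — the word $a_1b_1a_2b_2a_1a_2$ has $\LL_r^u$-degree $2n+p_1+p_2-6$, which for $n=4$, $p_1=p_2=q_1=q_2=2$ is only $6$, landing at homotopy degree $7<2n=8$ — whereas your intermediate-value argument works for all $n\ge 4$ with no case analysis. The remainder of your proof (the $r=1$ case and the $p$-exponent implication via the unbounded heights $h_i$ and the $J$-homomorphism) tracks the paper's argument exactly.
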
   

\begin{proof}
Let $T$ be a connected sum of $k$ sphere products:
$$T:=S^{p_1}\times S^{n-p_1}\#\cdots\#S^{p_r}\times S^{n-p_r}.$$
We know that if $r=1$ then $T$ is rationally elliptic and 
$$\pi_\ast T=\pi_\ast(S^{p_1})\times\pi_\ast(S^{n-{p_1}})$$
has finite $p$-exponents for any prime $p$ (cf. proof of Theorem \ref{Moorehcm}). 

Now consider $r>1$. It follows from \cite{FHT01} Theorem 33.3 that if $\pi_j(T)\otimes\mathbb{Q}\neq 0$ for some $j>2n$ then $T$ is rationally hyperbolic. Recall from the discussion before Theorem \ref{htpyT} about the Lie model for $T$. From our construction of a basis of free Lie algebra via Lyndon words, if $|a_1|\geq|b_1|, |a_2|\geq |b_2|$ we observe that the Lyndon word
$$a_1b_1a_2b_2a_1a_2$$
corresponds to a non-zero element, of degree $3n-2>2n$, in $\mathcal{L}^u_r$.  

Moreover, the $p$-exponents are not finite for any $p$. For this note that the Lie algebra $\LL^u_k(M)$ has infinite dimension if $r\geq 2$. It follows that in the decomposition in Theorem \ref{htpy} the sequence $h_i$ is unbounded. Hence for arbitrarily large $l$, $\pi_*S^l$ occurs (cf. proof of Theorem \ref{Moorehcm}) as a summand of $\pi_*M$. 
\end{proof}

As an upshot, we verify a conjecture of Gromov.
\begin{cor}
For $T$, a connected sum of sphere products, the number of distinct closed geodesics associated to a generic metric grows at least exponentially with respect to length if $T$ is rationally hyperbolic.
\end{cor}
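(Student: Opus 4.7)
The plan is to mirror the proof of the earlier corollary for $(n-1)$-connected $2n$-manifolds: reduce the statement to exponential growth of the Betti numbers of the free loop space $H_\ast(\Lambda T;\Q)$, apply Lambrechts' theorem from \cite{Lam01b} to obtain that growth from the rational homotopy data of $T$, and then invoke Ballmann--Ziller \cite{BaZi82} to upgrade free loop space growth to a lower bound on the number of closed geodesics for a generic metric. Thus the proof will consist of verifying the rational homotopy theoretic hypotheses required to apply \cite{Lam01b}.

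Concretely, I would argue as follows. First, observe that $T$ is \emph{coformal}: by the construction in \S\ref{connsppr}, $T$ is a wedge of spheres $\bigvee_i(S^{p_i}\vee S^{q_i})$ with a single top cell attached along the sum of Whitehead products $\gamma=\sum\ep_i[\alpha_i,\beta_i]$, so in the Quillen (Lie) model the differential is quadratic, which is exactly coformality. Second, $T$ is \emph{formal}: its rational cohomology ring is a Poincar\'e duality algebra of the simple form computed in Proposition \ref{coalg} (dually), and one can exhibit an explicit quasi-isomorphism from a minimal Sullivan model to $(H^\ast(T;\Q),0)$ by the same argument that shows sphere products are formal, combined with the fact that the single top-cell attaching produces no new rational Massey products. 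Third, $T$ is rationally hyperbolic by hypothesis (and in the proof of Theorem \ref{MooreT} the explicit Lyndon word $a_1b_1a_2b_2a_1a_2$ already witnessed the exponential growth of $\dim\pi_\ast(T)\otimes\Q$).

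With these three properties in hand, Lambrechts' theorem \cite{Lam01b} directly yields that the Betti numbers $\dim H_k(\Lambda T;\Q)$ grow at least exponentially in $k$. Then \cite{BaZi82} implies that for a generic Riemannian metric on $T$ the number of geometrically distinct closed geodesics of length at most $\ell$ grows at least exponentially in $\ell$, completing the proof.

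The only step that is not essentially formal is the verification of formality of $T$; coformality is automatic from the Lie model description and rational hyperbolicity is already established. The formality issue is the main obstacle, but it should be handled either by giving a direct minimal model argument using the Poincar\'e duality structure together with the fact that the rational cohomology is generated in degrees where the only relation is the top-dimensional intersection pairing, or alternatively by quoting that simply connected Poincar\'e duality spaces whose rational cohomology admits only quadratic relations of the above shape are intrinsically formal in the sense of Sullivan. Once formality is in place, the argument parallels the earlier corollary verbatim.
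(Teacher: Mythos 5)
Your approach is a valid alternative route, but it is \emph{not} the one the paper takes for this corollary. The paper does not go through coformality and Lambrechts' result \cite{Lam01b}; instead it invokes a different theorem of Lambrechts, namely \cite{Lam01} (``The Betti numbers of the free loop space of a connected sum''), which applies directly to a connected sum $M_1\#M_2$ under the sole hypothesis that the cohomology rings $H^\ast(M_i;\Q)$ are not monogenic. For $T=(S^{p_1}\times S^{q_1})\#\cdots\#(S^{p_r}\times S^{q_r})$ with $r\geq 2$ this is immediate: split off $M_1=S^{p_1}\times S^{q_1}$ and take $M_2$ to be the remaining connected sum; each $H^\ast(S^{p_i}\times S^{q_i};\Q)$ needs two algebra generators since $p_i,q_i\geq 2$, so neither summand is monogenic. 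That avoids entirely the need to establish formality (which you flag as ``the main obstacle'') and coformality. Your route mirrors the earlier corollary for $(n-1)$-connected $2n$-manifolds, where the paper does use \cite{Lam01b}, but for connected sums of sphere products the paper switches to the sharper, hypothesis-free tool.

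Two remarks on your version. First, your formality step is left incomplete: you should either invoke that a connected sum of simply connected formal manifolds is formal together with the formality of products of spheres, or give a genuine minimal model argument; the appeal to ``intrinsic formality'' of a general class of Poincar\'e duality algebras with quadratic relations is not justified as stated. Second, the parenthetical claim that the single Lyndon word $a_1b_1a_2b_2a_1a_2$ ``witnessed the exponential growth'' of $\dim\pi_\ast(T)\otimes\Q$ is imprecise: in the proof of Theorem \ref{MooreT} that word only witnesses $\pi_j(T)\otimes\Q\neq 0$ for one $j>2n$, and exponential growth then follows from the elliptic/hyperbolic dichotomy of \cite{FHT01}, not from the word itself. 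Neither remark affects the overall soundness of your strategy, but they need to be tightened before your proof is complete, whereas the paper's route via \cite{Lam01} requires no such verification.
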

\begin{proof}
We have seen that $r\geq 2$ are precisely the rationally hyperbolic ones among all possible cases of $T$. It follows from Lambrechts result  \cite{Lam01} that the ranks of the cohomology groups of the free loop space $L(M_1\#M_2)$ of a connected sum $M_1\# M_2$ grows at least exponentially if the rings $H^\ast(M_i)$ are not monogenic for $i=1,2$. This implies that the ranks of $H^\ast(LT)$ grow at least exponentially fast. Combining this with Ballmann-Ziller's result \cite{BaZi82}, we deduce that $T$ has the property for closed geodesics as claimed. 
\end{proof}

\subsection{Homotopy groups of some CW complexes}\label{hgCW}

We widen our view from $(n-1)$-connected $2n$-manifolds to CW complexes of a similar type. In this case the cup product pairing is no longer non-singular. Making appropriate assumptions on rank we can write down a formula for the homotopy groups of such a complex after inverting a finite set of primes. 

Suppose that $X$ is a CW complex with $r$ $n$-cells and one $2n$-cell and $n\geq 2$, i.e.,
$$X= \vee_r S^n \cup_f e^{2n}$$
where $f\in \pi_{2n-1} (\vee_r S^n)$. Let $\alpha_1,\ldots, \alpha_r$ denote the $r$ $n$-cells and also denote the maps $S^n\to T$. Fix a generator $\zeta \in H_{2n}(X)\cong \Z$. Let $\alpha_i^*$ denote the dual basis of $H^n(X)\cong \Z^r$. We have a bilinear form $Q: H_n(X) \times H_n (X) \to \Z$ given by
$$Q(\alpha_i,\alpha_j) = \langle \alpha_i^* \cup \alpha_j^* , \zeta \rangle $$
The form $Q$ is symmetric if $n$ is even and skew-symmetric if $n$ is odd. We have the following result.
\begin{prop}\label{rank2quad}
Suppose that $\mathit{Rank}(Q\otimes \Q) \geq 2$. Then $H^*(X)\otimes \Q$ is a quadratic algebra and there is a finite set of primes $\Pi_Q = \{p_1,\ldots, p_l\}$ such that for $p\notin \Pi_Q$, $H^*(X) \otimes \Z/p\Z$ is a quadratic algebra.  
\end{prop}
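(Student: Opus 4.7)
The plan is to adapt Proposition \ref{cellquadratic} to a possibly degenerate cup product form, using that the radical of $Q$ contributes only relations which annihilate everything. I would first set up the argument field-by-field, treating $k = \Q$ and $k = \F_p$ uniformly whenever $\operatorname{Rank}(Q \otimes k) \geq 2$. Note that since $X$ has cells only in dimensions $0, n, 2n$ and $n \geq 2$, the cellular chain complex has trivial differentials, so $H^{2n}(X;k) \cong k$ for any $k$.

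Let $V = H^n(X;k)$ and decompose $V = V_0 \oplus V_1$ where $V_1 = \ker(Q \otimes k)$ is the radical and $V_0$ is a complement on which the induced form is non-degenerate, with $\dim V_0 \geq 2$ by hypothesis. Using the standard classification of bilinear forms over a field (orthogonal diagonalization for symmetric forms when $\operatorname{char}(k) \neq 2$, symplectic normal form for skew-symmetric forms, and the corresponding splitting into hyperbolic plus diagonal parts when $\operatorname{char}(k) = 2$), choose a basis of $V_0$ of the type used in Proposition \ref{cellquadratic}; extend it by any basis of $V_1$. Define $R \subset V \otimes V$ to be generated by the relations on $V_0$ from the proof of Proposition \ref{cellquadratic}, together with all products $v_0 \otimes v_1$, $v_1 \otimes v_0$, and $v_1 \otimes v_1'$ where $v_0 \in V_0$ and $v_1, v_1' \in V_1$. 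These relations all lie in the kernel of the cup product since the radical $V_1$ pairs trivially with $V$, so there is an induced algebra map $\phi : A(V, R) \to H^*(X;k)$ restricting to the identity on $V$.

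The verification that $\phi$ is an isomorphism then follows the template of Proposition \ref{cellquadratic}. A dimension count shows $R$ has codimension $1$ in $V \otimes V$, and the unique non-zero class there maps to a generator of $H^{2n}(X;k)$, giving an isomorphism in weight $\leq 2$. For weight $\geq 3$, a monomial all of whose factors lie in $V_0$ vanishes by the argument already given in Proposition \ref{cellquadratic} (which uses $\dim V_0 \geq 2$ in an essential way), while any monomial containing a factor from $V_1$ vanishes because two adjacent factors always produce a relation in $R$. Hence $H^*(X;k) \cong A(V, R)$ is quadratic.

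To produce $\Pi_Q$, fix an integral basis of $H^n(X;\Z)$ and let $\Pi_Q$ be the finite set of primes dividing the gcd of all $2 \times 2$ minors of the matrix of $Q$. Since $\operatorname{Rank}(Q \otimes \Q) \geq 2$, at least one such minor is non-zero, making this gcd non-zero and $\Pi_Q$ finite. For $p \notin \Pi_Q$, some $2 \times 2$ minor is a unit in $\F_p$, so $\operatorname{Rank}(Q \otimes \F_p) \geq 2$ and the argument above applies with $k = \F_p$. The main obstacle is verifying that the normal-form reductions over $\F_p$ (particularly for $p = 2$, when the distinction between symmetric and skew-symmetric collapses) do not require any further primes to be inverted; this follows from the $\operatorname{char}(k) = 2$ discussion in Proposition \ref{cellquadratic}, which extends immediately to the present setting once the decomposition $V = V_0 \oplus V_1$ is in place.
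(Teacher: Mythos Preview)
Your proof is correct and follows essentially the same route as the paper: both reduce to showing that $H^*(X;k)$ is quadratic whenever $\mathit{Rank}(Q\otimes k)\geq 2$, and then use a nonvanishing $2\times 2$ minor to guarantee this for all but finitely many primes. The paper's proof is terse on the first step (it simply says to ``recall the proof of Proposition~\ref{quadratic} and readily deduce''), whereas you make the adaptation explicit via the radical decomposition $V=V_0\oplus V_1$; your choice of $\Pi_Q$ as the primes dividing the gcd of all $2\times 2$ minors is a harmless refinement of the paper's choice of a single minor.
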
   

\begin{proof}
Recall the proof of Proposition \ref{quadratic} and readily deduce that $H^*(X) \otimes k$ is a quadratic algebra if $\mathit{Rank}(Q\otimes k) \geq 2$. Hence the assertion for $\Q$ is clear. We have to prove that away from a finite set of primes $Q\otimes \Z/p\Z$ has rank at least $2$. 

Denote also by $Q$ the matrix of the form $Q$. This is symmetric if $n$ is even and skew-symmetric if $n$ is odd. In this case $\mathit{Rank}(Q\otimes \Q) \geq 2$ implies that the matrix has rank $\geq 2$ over the field $\Q$. That is, there is some $2\times 2$ minor with determinant $d \neq 0$. Thus if $p$ does not divide $d$ then $Q \otimes \Z/p\Z$ has rank at least $2$.   
\end{proof}

Fix the set of primes $\Pi_Q$ as above and let 
$$R_Q:= \Z\big[{\textstyle \frac{1}{p}}\,|\,p\in \Pi_Q\big].$$
We will compute $\pi_*(X) \otimes R_Q$ using our methods. First we need to compute $H_*(\Omega X; R_Q)$. Define, as before, $a_i\in H_{n-1}(\Omega X)$ to be obtained by looping the classes $\alpha_i$. The coalgebra structure is given as follows.

\begin{prop}\label{cellcoalg}
The coalgebra structure on $H_*(X)\cong \Z\{1,[\alpha_i],\zeta \}$, where $\zeta$ stands for a fixed generator of $H_{2n}(X)$, is given by formulae 
$$\Delta(1)=1\otimes 1,~ \Delta([\alpha_i])=[\alpha_i]\otimes 1 + 1\otimes [\alpha_i]$$
$$\Delta(\zeta)=\zeta \otimes 1 + 1\otimes \zeta + \sum_{i,j} \ep_{i,j}\alpha_i\otimes \alpha_j $$
where the matrix $((\ep_{i,j}))$ is the matrix of the form $Q$. 
\end{prop}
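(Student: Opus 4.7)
The plan is to mimic the argument of Proposition \ref{mancoalg} verbatim, exploiting the fact that the coalgebra structure on $H_\ast(X)$ is dual to the cup product on $H^\ast(X)$. First I would note that $H_\ast(X)$ is a free $\Z$-module (concentrated in degrees $0$, $n$, and $2n$), so the Künneth map $H_\ast(X) \otimes H_\ast(X) \to H_\ast(X \times X)$ is an isomorphism. Hence the chain-level diagonal induces a genuine coassociative coproduct $\Delta : H_\ast(X) \to H_\ast(X) \otimes H_\ast(X)$, characterized by the duality formula
\[
\langle a \cup b,\, x\rangle \;=\; \sum_{(x)} \langle a, x'\rangle\,\langle b, x''\rangle, \qquad a,b \in H^\ast(X),\ \Delta(x) = \sum x' \otimes x''.
\]

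Next I would read off each of the three formulas. For $\Delta(1)$ the class $1\in H_0(X)$ is the counit, so $\Delta(1) = 1\otimes 1$. For $\Delta([\alpha_i])$ the only pairs $(j,k)$ with $j+k=n$ for which $H^j(X)$ and $H^k(X)$ are both nonzero are $(0,n)$ and $(n,0)$ (since $H^\ell(X)=0$ for $0<\ell<n$), and in both cases the cup product with $1$ is the identity. This forces $\Delta([\alpha_i]) = [\alpha_i]\otimes 1 + 1 \otimes [\alpha_i]$. For $\Delta(\zeta)$ the contributing bidegrees are $(0,2n)$, $(2n,0)$, and $(n,n)$; the first two give the primitive part $\zeta\otimes 1 + 1\otimes \zeta$, while the $(n,n)$-part is determined by the pairing
\[
\langle \alpha_i^\ast \cup \alpha_j^\ast, \zeta\rangle \;=\; \epsilon_{i,j}
\]
coming directly from the definition of $Q$ as the cup product form. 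Hence the coefficient of $\alpha_i \otimes \alpha_j$ in $\Delta(\zeta)$ is exactly $\epsilon_{i,j}$, yielding the claimed formula.

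There is essentially no obstacle here; the only thing to be careful about is that for integral coefficients the duality argument requires the Künneth map to be an isomorphism, which is guaranteed by the absence of torsion in $H_\ast(X)$. No hypothesis on the rank of $Q$ (as in Proposition \ref{rank2quad}) is needed at this stage, since we are only describing the coalgebra structure, not asserting quadraticity. This proposition will then feed into the analogue of Proposition \ref{mancobar} and Theorem \ref{manhomloop} for $X$, where the matrix $((\epsilon_{i,j}))$ will play the role formerly played by the intersection form.
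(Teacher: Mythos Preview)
Your proposal is correct and follows essentially the same approach as the paper: both arguments observe that $H_\ast(X)$ is torsion free, so the coalgebra structure is the linear dual of the cup product on $H^\ast(X)$, and then read off the coproduct from the formula $\alpha_i^\ast \cup \alpha_j^\ast = \epsilon_{i,j}\,\zeta^\ast$. Your version simply spells out the duality pairing and the degree-by-degree analysis in more detail than the paper's terse proof.
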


\begin{proof}
Note that $H_*(X)$ is torsion free so that $H^*(X)$ is the linear dual of $H_*(X)$. The cup product in $H^*(X)$ and the coalgebra structure on $H_*(X)$ are dual. The multiplication formula on $H^*(X)$ is given by $\alpha_i^* \alpha_j^* = \ep_{i,j} \zeta^*$. This gives the above formulae on the coalgebra structure. 
\end{proof}

Define $z$ to be the class obtained by looping the class $\zeta$. We have a statement similar to Proposition \ref{mancobar}.
\begin{prop}\label{cellcobar}
The homology $H_*(\Omega X; R_Q)$ is the homology of the cobar of the coassociative coalgebra $H_*(X;R_Q)$.
\end{prop}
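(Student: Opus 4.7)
The plan is to adapt the proof of Proposition \ref{mancobar} to the present setting, working with $R_Q$-coefficients throughout so that the rank hypothesis on $Q$ forces the cohomology algebra of $X$ to be quadratic. I would begin by applying the Adams--Hilton construction \cite{AdHil56} to the CW decomposition $X = \vee_r S^n \cup_f e^{2n}$, obtaining a DGA $A(X)$ with generators $a_1, \ldots, a_r$ in degree $n-1$ (one per $n$-cell) and a single generator $\mu$ in degree $2n-1$ (for the top cell), quasi-isomorphic as a DGA to $C_*(\Omega X)$. Since the $n$-cells attach to the basepoint we immediately get $d a_i = 0$, so the entire content of the proposition reduces to computing $d\mu \in T(a_1,\ldots,a_r)$ and matching it with the cobar differential on $H_*(X; R_Q)$ given by Proposition \ref{cellcoalg}.

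To pin down $d\mu$ I would analyze the Serre spectral sequence of the path-space fibration $\Omega X \to PX \to X$ with $R_Q$-coefficients. Exactly as in the manifold case, in the range $*\leq 2n-2$ there is an isomorphism
$$H_*(\Omega X; R_Q) \cong T_{R_Q}(a_1,\ldots,a_r)/(d\mu),$$
and the only potentially nontrivial differential in this range is $d_n$, which satisfies $d_n(a_i^*) = \alpha_i^*$. Combining this with Proposition \ref{cellcoalg} (multiplicativity of the spectral sequence against the cup product) yields
$$d_n(\alpha_i^* \otimes a_j^*) = \ep_{i,j}\, \zeta^*.$$
Reading off $H^{2n-2}(\Omega X; R_Q)$ as the kernel of the map $R_Q\{a_i^*\}^{\otimes 2} \to R_Q$ sending $a_i^*\otimes a_j^* \mapsto \ep_{i,j}$, one concludes that $d\mu$ is a primitive $R_Q$-generator of the dual kernel, and I would identify it with $\sum \ep_{i,j}\, a_i \otimes a_j$, absorbing any sign by reorienting the top cell. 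This is precisely the element appearing in the cobar differential of $H_*(X; R_Q)$ described in Proposition \ref{cellcoalg}.

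The main obstacle is the verification that, after inverting $\Pi_Q$, the element $\sum \ep_{i,j}\, a_i \otimes a_j$ genuinely spans the relevant kernel up to $R_Q$-scalars, so that $d\mu$ can be identified with it without ambiguity. In the manifold case this was immediate from the nonsingularity of the intersection form, but here we have only the weaker rank condition $\mathrm{Rank}(Q\otimes \Q)\geq 2$. Proposition \ref{rank2quad} and the definition of $R_Q$ ensure that $Q$ has rank at least $2$ over $R_Q/\mathfrak{p}$ for every prime $\mathfrak{p}$ of $R_Q$, which is what lets the quadratic coalgebra structure on $H_*(X; R_Q)$ survive and forces the kernel to be generated, in the appropriate sense, by the one primitive relation. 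Once $d\mu$ is identified, the proof finishes exactly as in Proposition \ref{mancobar}: the Adams--Hilton DGA $A(X)\otimes R_Q$ matches term-for-term with the cobar construction $\Omega_* H_*(X; R_Q)$, and passing to homology yields the claim.
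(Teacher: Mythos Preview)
Your proposal is correct and follows essentially the same route as the paper: the paper also computes the Adams--Hilton differential on the top generator via the Serre spectral sequence of the path fibration, obtains $d_n([\alpha_i]^*\otimes a_j^*)=\ep_{i,j}\zeta^*$, and then uses exactly the rank condition on $Q$ over $R_Q$ to conclude that $l=\sum \ep_{i,j}a_i\otimes a_j$ is primitive in $R_Q\{a_i\}^{\otimes 2}$, whence $dz=\pm l$. You have correctly isolated the one new wrinkle compared to Proposition~\ref{mancobar}---primitivity of $l$ no longer comes from nonsingularity but from the rank~$\geq 2$ hypothesis after inverting $\Pi_Q$---and handled it the same way the paper does.
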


\begin{proof}
As in the proof of Proposition \ref{mancobar} we have $H_*(\Omega X)$ is the homology of the complex $A(X)$. Therefore $H_*(\Omega X;R_Q)$ is the homology of the complex $A(X)\otimes R_Q$.

In $A(X)= (T(a_1,\ldots,a_r,z),d)$ observe that $d(a_i)=0$ as the attaching maps of the corresponding cells are trivial. It remains to compute $dz$. Suppose that $dz = l$. It follows that in the dimension range $0\leq * \leq 2n-2$, $H_*(\Omega X) \cong T(a_i)/(l)$. We may compute $H_*(\Omega X)$ in this range using the Serre spectral sequence associated to the fibration $\Omega X \to PX \to X$. This has the form 
$$E_2^{p,q} = H^p(X) \otimes H^q(\Omega X) \implies H^*(\mathit{pt}).$$
In the above range of dimensions $H_*(\Omega X)$ is also torsion free except possibly at $2n-2$. The first possible non-zero differential in the spectral sequence is $d_n$ and from the given identifications we have
$$d_n(a_i^*)=[\alpha_i]^*$$
It follows that $d_n : E_n^{n,n-1} \to E_n^{2n,0}$ is given by 
$$d_n([\alpha_i]^* \otimes a_j^*) = \ep_{i,j} \zeta^*$$
Note that by these formulae $d_n$ is surjective onto the $q=0$-line so that 
$$d_n : E_2^{0,2n-2}\cong E_n^{0,2n-2} \to E_n^{n,n-1}$$
is injective onto the kernel of $d_n$. Hence $H^{2n-2}(\Omega X) \cong E_2^{0,2n-2}$ may be identified with the kernel of the map 
$$\Z\{a_i^*\}^{\otimes 2} \to \Z,\,\,a_i^*\otimes a_j^* \mapsto \ep_{i,j}.$$
As $((\ep_{i,j}))$ has rank $\geq 2$ over $R_Q$  we have $l=\sum \ep_{i,j} a_i \otimes a_j$ is primitive in $R_Q\{a_i\}^{\otimes 2}$. It follows that 
$$H_{2n-2}(\Omega X;R_Q) \cong R_Q\{a_i\}^{\otimes 2}/(l)$$
and hence $dz = \pm l$. By changing the orientation on the top cell if necessary we may assume that the sign is $1$. 
\end{proof}

We may compute the required cobar construction in a similar way to obtain the following theorem.

\begin{theorem}\label{cellloop}
There is an isomorphism
$$H_*(\Omega X; R_Q) \cong T_{R_Q}(a_1,\ldots,a_r)/(l)$$ 
where $l= \sum \ep_{i,j} a_i a_j$.  
\end{theorem}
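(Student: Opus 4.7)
The plan is to mimic the argument used for Theorem \ref{manhomloop}, with $\Z$ replaced by the PID $R_Q$. By Proposition \ref{cellcobar}, $H_*(\Omega X; R_Q)$ is computed as the homology of the cobar construction on the coalgebra $H_*(X; R_Q)$, whose comultiplication is given explicitly in Proposition \ref{cellcoalg}. The element $l = \sum \ep_{i,j}a_i a_j$ is the image of the dual of the cup product pairing, so the Pontrjagin ring map from $T_{R_Q}(a_1,\ldots,a_r)$ to $H_*(\Omega X; R_Q)$ sends $l$ to $0$, yielding a natural ring homomorphism
$$\phi: T_{R_Q}(a_1,\ldots,a_r)/(l) \longrightarrow H_*(\Omega X; R_Q).$$

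First I would show that the domain of $\phi$ is a free $R_Q$-module whose graded dimensions match those we expect on the target. Because $\mathit{Rank}(Q\otimes \Q) \geq 2$ and $R_Q$ was formed by inverting exactly the primes obstructing this rank bound, the element $l$ is non-singular (symmetric if $n$ is even, anti-symmetric if $n$ is odd) over $R_Q$; hence Proposition \ref{freemod} applies and gives a Diamond Lemma basis of $T_{R_Q}(a_1,\ldots,a_r)/(l)$ as a free $R_Q$-module, and Proposition \ref{irr} shows this basis is preserved under base change to any residue field of $R_Q$.

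Next I would verify that $\phi \otimes k$ is an isomorphism for every quotient field $k$ of $R_Q$, i.e. for $k=\Q$ and for $k=\Z/p\Z$ with $p\notin \Pi_Q$. For such $k$, by construction of $\Pi_Q$ the form $Q\otimes k$ still has rank $\geq 2$, so Proposition \ref{cellquadratic} gives that $H^*(X;k)$ is quadratic of the form $A(V_X,R_X)$, and Proposition \ref{man-Kos} (whose proof only uses the rank $\geq 2$ hypothesis on the pairing, not closedness of the manifold) shows this algebra is Koszul. Dualizing, $H_*(X;k) \cong C(V_X^*, R_X^\perp)$ is a Koszul quadratic coalgebra, and its cobar homology is the Koszul dual algebra
$$T_k(a_1,\ldots,a_r)/(l) \cong \bigl(T_{R_Q}(a_1,\ldots,a_r)/(l)\bigr)\otimes_{R_Q} k,$$
using Proposition \ref{irr} for the last identification. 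This identifies $\phi\otimes k$ with an isomorphism.

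Finally, I would use these field-coefficient statements to upgrade to $R_Q$-coefficients. The isomorphisms over each $k$ first imply $\mathit{Tor}^{R_Q}(H_*(\Omega X; R_Q), k)=0$ for every residue field $k$ of the PID $R_Q$, so $H_*(\Omega X; R_Q)$ is $R_Q$-free and the universal coefficient map $H_*(\Omega X; R_Q)\otimes k \to H_*(\Omega X; k)$ is an isomorphism. Then $\phi$ is a map between graded free $R_Q$-modules, finitely generated in each degree, which becomes an isomorphism after tensoring with every residue field of $R_Q$; hence $\phi$ is itself an isomorphism. The main obstacle here is the verification that Proposition \ref{man-Kos}'s Koszulness argument indeed extends to the CW setting for every allowed residue characteristic; once one checks that only the rank of $Q\otimes k$ (and not Poincar\'e duality of $X$) enters the proof, the rest of the argument is a routine PID base-change.
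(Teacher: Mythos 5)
Your proof matches the paper's argument step for step: reduce to the cobar construction via Proposition \ref{cellcobar}, factor the Pontrjagin map through $T_{R_Q}(a_1,\ldots,a_r)/(l)$, check that $\phi\otimes k$ is an isomorphism for each residue field $k$ of $R_Q$ using the Koszulness of $H_*(X;k)$ and Proposition \ref{irr}, and upgrade to an $R_Q$-isomorphism by the $\mathit{Tor}$-vanishing argument. This is precisely how the paper proceeds, mimicking Theorem \ref{manhomloop}.

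One inaccuracy is worth flagging, though it is one the paper itself shares in citing Proposition \ref{freemod}. You assert that $l$ is \emph{non-singular} over $R_Q$, inferring this from $\mathit{Rank}(Q\otimes\Q)\geq 2$ and the choice of $\Pi_Q$. That inference is false: a rank-$2$ form on a rank-$r\geq 3$ module is singular, and inverting $\Pi_Q$ only preserves the bound $\mathrm{rank}\geq 2$ in every residue field, not full rank. So Proposition \ref{freemod} does not apply verbatim. The desired conclusions (freeness of $T_{R_Q}(a_1,\ldots,a_r)/(l)$ and a Diamond-Lemma normal form) still hold, because the rank-$\geq 2$ condition lets one find, after a change of basis over $R_Q$, an off-diagonal coefficient of $Q$ that is a unit, with any degenerate directions contributing free tensor-algebra factors; but that requires a sentence of justification that neither you nor the paper supplies. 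Your closing remark, that the Koszulness argument of Proposition \ref{man-Kos} uses only $\mathrm{rank}(Q\otimes k)\geq 2$ and not the non-degeneracy furnished by Poincar\'{e} duality, is exactly right and is the key compatibility to check in carrying the manifold argument over to the CW setting.
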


\begin{proof}
Observe that the element $l$ in the proof above generates $R^\perp$. By Proposition \ref{freemod} the $R_Q$-module $ T_{R_Q}(a_1,\ldots,a_r)/(l)$ is free. It follows from the proof above that the ring map 
$$T_{R_Q}(a_1,\ldots,a_r)\rightarrow H_*(\Omega X)$$
factors through
$$T_{R_Q}(a_1,\ldots,a_r)/(l) \rightarrow H_*(\Omega X).$$
Let us denote this map by $\phi$. From Proposition \ref{cellcobar} the homology $H_*(\Omega X)$ can be computed as the homology of the cobar construction on $H_*(X)$. The coalgebra structure on $H_*(X)$ is computed in Proposition \ref{cellcoalg}. 

Let $k=\Z/p\Z~\mbox{for}~p\notin \Pi_Q~\mbox{or}~\Q$, then $H_*(X;k)$ is a quadratic coalgebra which by Proposition \ref{man-Kos} is Koszul. Hence $H_*(\Omega X; k)$ is the Koszul dual algebra $T_k(a_1,\ldots,a_r)/(l)$ which is isomorphic to $T_{R_Q}(a_1,\ldots,a_r)/(l) \otimes_{R_Q} k$. Hence the composite
$$ T_{R_Q}(a_1,\ldots,a_r)/(l) \otimes_{R_Q} k \stackrel{\phi\otimes k}{\longrightarrow} H_*(\Omega X;R_Q)\otimes_{R_Q} k \to H_*(\Omega X;k)$$
is an isomorphism for $k= \Z/p\Z~\mbox{for}~p\notin \Pi_Q~\mbox{or}~\Q$. It follows that $Tor(k,H_*(\Omega X))=0$ for $k = R_Q/\pi$ where $\pi$ is a prime in $R_Q$. Thus $H_*(\Omega X;R_Q)$ is free. As $\phi$ is an isomorphism after tensoring with $R_Q/\pi$ and $\Q$, we obtain that  $\phi$ is an isomorphism over $R_Q$.    
\end{proof}

Next we use the expression for the homology of the loop space to compute the homotopy groups. Since we have the formula after inverting the finite set of primes $\Pi_Q$, the final result is obtained in the category of spaces localised at $R_Q$. 

The element $l = \sum_{i,j} \ep_{i,j} a_i\otimes a_j$ belongs to $T_{R_Q}(a_1,\ldots,a_r)$. The matrix $((\ep_{i,j}))$ is symmetric if $n$ is even and skew-symmetric if $n$ is odd. As $|a_i|=n-1$, the element $l$ lies in free graded Lie algebra over $R_Q$ generated by $a_1,\ldots,a_r$ equalling 
$$l= \sum_{i<j} \ep_{i,j} [a_i,a_j]^{gr}.$$
The superscript $gr$ is used to denote the graded bracket. Consider the graded Lie algebra $\LL_r^{gr}$ (over $R_Q$) given by 
$$\LL^{gr}_r= \Lie^{gr}_{R_Q}(a_1,\ldots,a_r)/(l).$$
We make an analogous ungraded construction. Consider the element 
$$l^u= \sum_{i<j} \ep_{i,j} [a_i,a_j]\in \Lie_{R_Q}(a_1,\ldots,a_r). $$
 Note that $l^u$ equals $l$ if $n$ is odd. Analogously denote $\LL_r^u= \frac{\Lie(a_1,\ldots,a_r)}{(l^u)}$.

The Lie algebra  $\LL_r^u$ has a grading with $|a_i|=n-1$. Denote by $\LL_{r,w}^u$ the degree $w$ homogeneous elements of $\LL_r^{u}$. From Proposition \ref{freemod} and Theorem \ref{Liebasis} we know that $\LL^u_r$ is a free $R_Q$-module and the Lyndon basis gives a basis of $\LL^u_r$.

List the elements of the Lyndon basis in order as $l_1 < l_2 <\ldots$ and define $h_i= h(l_i)=r+1$ if $b(l_i) \in \LL_r^u$, so that $h(l_i)\leq h(l_{i+1})$. Note that $b(l_i)$ represents an element of $\Lie(a_1,\ldots,a_k)$ and is thus represented by an iterated Lie bracket of $a_i$. Use the iterated Whitehead products to define maps $\lambda_i : S^{h_i}\rightarrow X$. 



\begin{theorem}\label{cellhtpy}
There is an isomorphism
$$\pi_*(X)\otimes R_Q \cong \sum_{i\geq 1} \pi_*( S^{h_i})\otimes R_Q$$ 
and the inclusion of each summand on the right hand side\footnote{Note that the right hand side is a finite direct sum for each $\pi_n(X)\otimes R_Q$.} is given by $\lambda_i$. 
\end{theorem}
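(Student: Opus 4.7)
The plan is to follow the template of Theorem \ref{htpy} (and its adaptation in Theorem \ref{htpyT}), adjusting throughout to work with $R_Q$ coefficients. First I would localise everything at $R_Q$: take the Moore loop space model for $\Omega X$ so that multiplication is strictly associative with strict unit, and form $S(t)=\prod_{i=1}^t \Omega S^{h_i}$. The iterated Whitehead products $\lambda_i:S^{h_i}\to X$ loop to maps $\Omega\lambda_i:\Omega S^{h_i}\to \Omega X$ which can be multiplied in any order; the compatibility $S(t)\hookrightarrow S(t')$ (basepoint in the last $t'-t$ factors) lets one assemble a map
$$\Lambda:S:=\operatorname{hocolim}_t S(t)\longrightarrow \Omega X.$$
Both $S$ and $\Omega X$ are $H$-spaces, hence simple, so once I verify that $\Lambda$ is an $R_Q$-homology isomorphism, localisation at $R_Q$ upgrades it to an $R_Q$-local weak equivalence and the desired splitting of homotopy groups follows by looping down.

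The heart of the proof is therefore the $R_Q$-homology calculation of $\Lambda$. By Theorem \ref{cellloop}, $H_*(\Omega X;R_Q)\cong T_{R_Q}(a_1,\dots,a_r)/(l)$. I would split into the two parities of $n$ exactly as in Theorem \ref{htpy}. When $n$ is odd, $l=l^u$ lies in the free Lie algebra on the $a_i$, so $T_{R_Q}(a_1,\dots,a_r)/(l)$ is the universal enveloping algebra of $\mathcal{L}_r^u$, and the PBW theorem (Proposition \ref{PBW-quad}) identifies the associated graded (for the length filtration) with the symmetric algebra $S(\mathcal{L}_r^u)$. When $n$ is even one instead invokes the graded PBW theorem (Theorem \ref{PBW}) applied to $\mathcal{L}_r^{gr}$; the underlying $R_Q$-module is free (Proposition \ref{freemod}, Theorem \ref{Liebasis}), which is exactly what is needed to apply Theorem \ref{PBW}.

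Next I would compare the two sides via the Hurewicz map
$$\rho:\pi_*(X)\cong \pi_{*-1}(\Omega X)\xrightarrow{\ \mathrm{Hur}\ } H_{*-1}(\Omega X;R_Q),$$
which by Samelson's formula \eqref{hur} sends Whitehead products to graded commutators. Thus $\rho(b(l_i))$ equals the graded Lie-algebra element written by the same Lyndon expression, modulo terms of strictly lower length filtration (equation \eqref{rhobl}). Since $H_*(S;R_Q)=R_Q[c_{h_1-1},c_{h_2-1},\dots]$ with $\Lambda_*(c_{h_i-1})=\rho(b(l_i))$, the map $\Lambda_*$ on associated graded becomes the identification of $R_Q[\rho(b(l_i))]$ with $S(\mathcal{L}_r^u)$ (respectively $E(\mathcal{L}_r^{gr,\mathrm{odd}})\otimes P(\mathcal{L}_r^{gr,\mathrm{even}})$ in the even case). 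In the odd-$n$ case this is clearly an isomorphism; in the even-$n$ case one argues as in Theorem \ref{htpy}: the monomials in $\rho(b(l_i))$ surject onto $E_0 H_*(\Omega X;R_Q)$, and a graded-rank comparison (using that both sides have the same rank in each weight by the Diamond lemma via Theorem \ref{Diamond} and Proposition \ref{freemod}) promotes surjectivity to an isomorphism of free $R_Q$-modules of the same finite rank in each grading.

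The main obstacle will be this last graded-rank comparison in the even case: showing that the length-filtered associated graded of $T_{R_Q}(a_1,\dots,a_r)/(l)$ has the same Poincar\'e series as $R_Q[\rho(b(l_i))]$, using that $\mathcal{L}_r^u$ and $\mathcal{L}_r^{gr}$ have identical Lyndon bases (both are governed by $S_RL$, independent of whether one uses graded or ungraded brackets). Once this rank equality is in place, the surjective map between free finitely generated $R_Q$-modules of the same rank (in each weight) is automatically an isomorphism, $\Lambda$ is an $R_Q$-homology equivalence, and the theorem follows from the $R_Q$-local Whitehead theorem applied to the simple spaces $S$ and $\Omega X$.
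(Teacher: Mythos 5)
Your proposal tracks the paper's actual proof of Theorem \ref{cellhtpy} essentially step for step: the map $\Lambda: S \to \Omega X$ built from the $\Omega\lambda_i$, the case split on the parity of $n$, the use of Theorem \ref{cellloop}, (graded) PBW, the Hurewicz/Samelson comparison via $\rho$ and equation \eqref{rhobl}, the spanning-plus-rank argument for the even case, and the upgrade from an $R_Q$-homology isomorphism between simple spaces to an isomorphism on $\pi_*(-)\otimes R_Q$.

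The one genuine gap is in the rank comparison for $n$ even and $r=2$. You cite Proposition \ref{freemod} and Theorem \ref{Diamond} to guarantee that both $T_{R_Q}(a_1,\dots,a_r)/(l)$ and $T_{R_Q}(a_1,\dots,a_r)/(l^u)$ carry Diamond-lemma bases over $R_Q$, and hence have the same ranks in each weight. But part (2) of Proposition \ref{freemod} only produces the required normal-form generator $W=f$ for a \emph{symmetric} relation (which is what occurs when $n$ is even) under the hypothesis that $\dim(V)\neq 2$, or that $\RR = \Z$, or that $\RR$ is local. The ring $R_Q = \Z[\frac{1}{p} : p\in\Pi_Q]$ is a PID but is generically neither $\Z$ nor local, so for $r=2$ with $n$ even your appeal to the Diamond lemma over $R_Q$ is not justified. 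The paper closes this gap with a footnote: for $r=2$ one works over each $\Z_{(p)}$ with $p\notin\Pi_Q$ (where Proposition \ref{freemod}(2) applies, since $\Z_{(p)}$ is local), verifies $\Lambda_*$ is a $\Z_{(p)}$-homology isomorphism for each such $p$, and then assembles these statements to conclude over $R_Q$. You would need to add this patch — or restrict your rank-comparison step to coefficients $\Z_{(p)}$ with $p\notin\Pi_Q$ and then assemble — to make the even-$n$, $r=2$ case airtight.
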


\begin{proof}
Write $S(t) = \prod_{i=1}^t \Omega S^{h_i}$. As in Theorem \ref{htpy}, we obtain a map 
$$\Lambda: S := \mathit{hocolim}~ S(t) \rightarrow \Omega X$$   
We prove that $\Lambda_*$ is an isomorphism in $H_*(-;R_Q)$-homology. There are two cases: $n$ is even and $n$ is odd.

We know from Theorem \ref{cellloop} that $H_*(\Omega X; R_Q) \cong T_{R_Q}(a_1,\ldots,a_r)/(l)$. When $n$ is odd $l=l^u$ which lies in the Lie algebra generated by $a_1,\ldots,a_r$.  Thus 
$$H_*(\Omega X; R_Q) \cong T_{R_Q}(a_1,\ldots,a_r)/(l)$$
is the universal enveloping algebra of the Lie algebra $\LL_r^u \cong \Lie(a_1,\ldots a_r)/ (l^u)$. By the Poincar\'e-Birkhoff-Witt Theorem,  $E_0H_*(\Omega X;R_Q)$ is the symmetric algebra on $\LL_k^u$. 

We also have
$$H_*(S;R_Q) \cong T_{R_Q} (c_{h_1 -1})\otimes_{R_Q} T_{R_Q}(c_{h_2-1})\ldots \cong R_Q[c_{h_1-1},c_{h_2 -1},\ldots ].$$
Each $c_{h_i-1}$ maps to the  Hurewicz image of $\rho (\lambda_i)\in H_{h_i-1}(\Omega X)$. The map $\rho$ carries each $\alpha_i$  to $a_i$. The element  $b(l_i)$ is mapped inside $H_*(\Omega X)$ to the element corresponding to the graded Lie algebra element upto a sign. 

Denote the Lie algebra element (ungraded) corresponding to $b(l_i)$ by the same notation. It follows from \eqref{rhobl} that
$$ E_0T_{R_Q}(a_1,\ldots,a_k)/(l) \cong R_Q [b(l_1),b(l_2),\ldots] \cong  R_Q [\rho(b(l_1)),\rho(b(l_2)),\ldots].$$
Hence monomials in $\rho(b(l_i))$ are a basis of $T_{R_Q}(a_1,\ldots,a_r)/(l)$. The map $\Lambda : S\to \Omega X$ maps $c_{h_i-1} \to \rho(b_i)$ and takes the product of elements in $R_Q[c_{h_1-1},c_{h_2 -1},\ldots ]$ to the corresponding Pontrjagin product. It follows that $\Lambda_*$ is a $H_*(-;R_Q)$-isomorphism. Thus, $\Lambda$ induces a weak equivalence between the localizations with respect to $H(-;R_Q)$. As $\Lambda$ is a map between simple spaces, $\Lambda_*$ is an isomorphism on $\pi_*(-)\otimes R_Q$. 

Now let $n$ be even. As in the odd case, it suffices to show that $T(a_1,\ldots,a_r)/(l)$ has a basis given by monomials on $\rho(b(l_1)),\rho(b(l_2)),\ldots$ where $\rho(b(l_i))$ is mapped as the above. First observe that $ T(a_1,\ldots,a_r)/(l)$ is universal enveloping algebra of the graded Lie algebra $\LL^{gr}_r$ so that the Poincar\'e-Birkhoff-Witt Theorem for graded Lie algebras implies 
$$E(\LL^{gr}_r)^{\mathit{odd}}  \otimes P(\LL^{gr}_r)^{\mathit{even}} \cong  E_0 T(a_1,\ldots,a_r)/(l).$$
As in Theorem \ref{htpy}, we conclude $\rho(b(l_i))$ span $T(a_1,\ldots,a_r)/(l)$. 


Hence we have that the graded map
$$ R_Q[\rho(b(l_1)),\rho(b(l_2)),\ldots] \to   E_0  T_{R_Q}(a_1,\ldots,a_r)/(l)$$
is surjective. We also know 
$$  R_Q[b(l_1),b(l_2),\ldots]    \to E_0 T_{R_Q}(a_1,\ldots,a_r)/(l^u)$$
is an isomorphism. Now both $T_{R_Q}(a_1,\ldots,a_r)/(l)$ and $T_{R_Q}(a_1,\ldots,a_r)/(l^u)$ have bases given by the Diamond lemma\footnote{We apply Proposition \ref{freemod}. Note that the case $r=2$ needs to be dealt with differently. In that case we use $\Z_{(p)}$ for $p \notin \Pi_Q$ in place of $R_Q$ for which the statement about Diamond lemma holds. Assimilating these results over the different primes $p\notin \Pi_Q$ we may complete the proof of this part of the Theorem for $r=2$.} and thus are of the same graded dimension. It follows that the graded pieces of $ R_Q[\rho(b(l_1)),\rho(b(l_2)),\ldots]$ and $T_{R_Q}(a_1,\ldots,a_r)/(l)$ have the same rank. Thus on graded pieces one has a surjective map between free $R_Q$-modules of the same rank. Such a map must be an isomorphism.
\end{proof}

The same proof for $(n-1)$-connected $2n$-manifolds implies the following result.
\begin{theorem}\label{cellhtpyform}
 The number of groups $\pi_s( S^m)\otimes R_Q$ in $\pi_s(X)\otimes R_Q$ is $0$ if $m$ is not of the form $d(n-1)+1$ and if $m=d(n-1)+1$ this number is 
$$\sum_{c|d}\frac{ \mu(c)}{c}\sum_{a+2b=\frac{d}{c}}(-1)^b{a+b \choose b}\frac{r^a}{a+b}.$$

\end{theorem}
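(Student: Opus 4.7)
The plan is to reduce the theorem to a Hilbert-series identity and then carry out the same M\"obius-inversion argument as in Theorem \ref{htpyform}. By Theorem \ref{cellhtpy}, the multiplicity of $\pi_s(S^m)\otimes R_Q$ inside $\pi_s(X)\otimes R_Q$ equals the number of Lyndon-basis elements $b(l_i)$ of weight $m-1$ in the graded $R_Q$-module $\LL^u_r$. Because every generator $a_i$ has degree $n-1$ and $\LL^u_r$ is generated (as an $R_Q$-module) by iterated brackets of the $a_i$'s, the graded piece in degree $m-1$ vanishes unless $m-1 = d(n-1)$ for some positive integer $d$; this yields the first half of the assertion. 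Let $l_d$ denote the rank of the $d(n-1)$-homogeneous piece of $\LL^u_r$; it remains to produce the claimed closed form for $l_d$.

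The computation proceeds via two expressions for the Hilbert series $p(t)$ of $H_\ast(\Omega X; R_Q)$. On one hand, Theorem \ref{cellloop} combined with the Diamond-Lemma basis of Proposition \ref{freemod} identifies this homology with the free $R_Q$-module $T_{R_Q}(a_1,\ldots,a_r)/(l)$, whose Hilbert series, by counting $R$-irreducible monomials in generators of degree $n-1$ modulo one quadratic relation, is
$$p(t) = \frac{1}{1 - rt^{n-1} + t^{2n-2}}.$$
On the other hand, Propositions \ref{univlie} and \ref{PBW-quad} show that this algebra is the universal enveloping algebra of the free $R_Q$-module Lie algebra $\LL^u_r$, and that its associated graded (with respect to the length filtration) is isomorphic to the symmetric algebra on $\LL^u_r$. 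Counting gives the alternative expression
$$p(t) = \prod_{d \geq 1} (1 - t^d)^{-l_d}.$$

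From this point the calculation is a verbatim reprise of the manipulation at the end of Theorem \ref{htpyform}. Taking logarithms of the equality of the two expressions for $p(t)$, expanding the right-hand side via $\log(1-t^d) = -\sum_{k\geq 1} t^{dk}/k$, and matching coefficients shows that the coefficient $\eta_m$ of $t^m$ in $\log(1 - rt^{n-1} + t^{2n-2})$ satisfies
$$\eta_m = -\frac{1}{m}\sum_{d \mid m} d\, l_d.$$
A direct expansion of this logarithm shows $\eta_m = 0$ unless $(n-1)\mid m$, and for $m = d(n-1)$ that
$$\eta_{d(n-1)} = -\sum_{a+2b=d}(-1)^b \binom{a+b}{b}\frac{r^a}{a+b}.$$
M\"obius inversion then delivers precisely the stated formula for $l_d$. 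The only ingredient specific to the CW-complex setting (as opposed to the manifold setting of Theorem \ref{htpyform}) is the freeness of $\LL^u_r$ over $R_Q$ and the validity of PBW over this PID; both are guaranteed by Propositions \ref{freemod} and \ref{PBW-quad} once the primes in $\Pi_Q$ have been inverted. No substantive obstacle arises, since inverting these primes is exactly what makes the quadratic algebra of relations behave as in the manifold case.
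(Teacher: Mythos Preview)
Your proposal is correct and follows essentially the same route as the paper: the paper simply remarks that the argument of Theorem~\ref{htpyform} carries over verbatim, and you have spelled out exactly that argument, noting along the way the one additional point needed in the CW-complex setting (freeness of $\LL^u_r$ and PBW over the PID $R_Q$, guaranteed once $\Pi_Q$ is inverted).
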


\begin{exam}
The theorem implies that if the rank of the form $Q\otimes \Q$ is at least $2$ then after inverting a finite set of primes the homotopy groups depend only on the number $r$ of $n$-spheres. We note that inverting these primes are necessary. Consider the complexes $X_f= S^2\vee S^2 \cup_f e^4$ where $f\in \pi_3 (S^2\vee S^2)= \Z\{\eta_1\}\oplus \Z\{\eta_2\}\oplus \Z\{[\iota_1,\iota_2]\}$ ($\eta_i$ are the Hopf invariant one maps on either factor and $[\iota_1,\iota_2]$ is the Whitehead product of the identity maps on either factor). Take $f_1 = p[\iota_1,\iota_2]$ and $f_2=p^2[\iota_1,\iota_2]$. For $X_{f_i}$ ($i=1,2$) rank of $Q\otimes \Q$ equals $2$ and $\Pi_Q=\{p\}$. Therefore $\pi_*X_{f_1} \otimes \Z[\frac{1}{p}] \cong   \pi_*X_{f_2} \otimes \Z[\frac{1}{p}]$ by Theorem \ref{cellhtpyform}. However we have $\pi_3X_{f_1} \cong \Z^2 \oplus \Z/p\Z$ while $\pi_3X_{f_2} \cong \Z^2 \oplus \Z/p^2\Z$ so that their $p$-local homotopy groups are not isomorphic. 
\end{exam}
\begin{exam}
In a similar vein, consider $X_f= S^2\vee S^2 \cup_f e^4$ where $f\in \pi_3 (S^2\vee S^2)$ as before with 
$$f_1=p^2[\iota_1,\iota_1]+[\iota_2,\iota_2],\,\,f_2=p[\iota_1,\iota_2].$$
The absolute value of the determinant of both the (non-singular) intersection matrices are $p^2$. However, $\pi_3X_{f_1} \cong \Z^2$ while $\pi_3X_{f_2} \cong \Z^2 \oplus \Z/p\Z$ so that their $p$-local homotopy groups are not isomorphic. This shows that inverting some prime is necessary even when the intersection form is non-singular (over $\Q$).
\end{exam}

\begin{theorem}\label{MooreCW}
Let $X$ be a CW complex with $r\geq 3$ $n$-cells and one $2n$-cell. Then $X$ is rationally hyperbolic and has unbounded $p$-primary torsion for all but finitely many primes. 
\end{theorem}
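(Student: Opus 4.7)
This theorem should follow the template of Theorem \ref{Moorehcm}(a) and Theorem \ref{MooreT}, with the only modification being that all statements are available only after inverting the finite set of primes $\Pi_Q$ from Proposition \ref{rank2quad}. The standing assumption, as in Theorem C, is $\mathrm{Rank}(Q\otimes\Q)\geq 2$, so that the hypotheses of Theorem \ref{cellhtpy} are satisfied. Both halves of the conclusion reduce to a single fact: the weight-graded pieces of the Lie algebra $\LL_r^u$ have exponentially growing rank whenever $r\geq 3$.

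For rational hyperbolicity, I would first apply Theorem \ref{cellloop} over $\Q$ to identify
\[
H_\ast(\Omega X;\Q) \;\cong\; T_\Q(a_1,\dots,a_r)/(l)
\]
as algebras, whose Poincar\'e series is $\tfrac{1}{1-rt^{n-1}+t^{2n-2}}$. For $r\geq 3$ the denominator has real roots, the larger being $\rho=\tfrac{r+\sqrt{r^2-4}}{2}>1$, so the coefficients of the series grow like $\rho^d$. Next, Milnor-Moore \cite{MM65} identifies $H_\ast(\Omega X;\Q)$ with the universal enveloping algebra of $\pi_\ast(\Omega X)\otimes\Q$, and applying PBW together with M\"obius inversion exactly as in the proof of Theorem \ref{rathom} gives the same explicit formula for $\dim_\Q \pi_{d(n-1)+1}(X)\otimes\Q$. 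The polynomial-asymptotics argument appearing after Theorem \ref{rathom} then shows these dimensions grow like $\rho^d/d$ and in particular do not vanish outside a finite range, so $X$ is rationally hyperbolic.

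For the torsion assertion, fix $p\notin \Pi_Q$. Theorem \ref{cellhtpy} delivers
\[
\pi_\ast X\otimes \Z_{(p)} \;\cong\; \bigoplus_{i\geq 1} \pi_\ast S^{h_i}\otimes \Z_{(p)},
\]
where $h_i=(n-1)w_i+1$ and $w_i$ is the weight of the $i$-th Lyndon basis element of $\LL_r^u$. The rational calculation of the previous paragraph shows, via PBW, that the weight-$w$ piece of $\LL_r^u$ has exponentially growing rank, so the set $\{h_i\}$ is unbounded. Hence for each $N$, some direct summand $\pi_\ast S^l\otimes \Z_{(p)}$ with $l\geq N$ appears in $\pi_\ast X\otimes \Z_{(p)}$. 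The proof is then completed in the same way as in Theorem \ref{Moorehcm}: for each $s\geq 1$ there exist arbitrarily large $l$ for which $\pi_\ast S^l$ contains an element of order $p^s$ (stably, via the image of $J$, see \cite{Rav86}, Theorem 1.1.13, and unstably once $l$ is large compared with the stem), so no finite $p$-exponent bounds $\pi_\ast X\otimes \Z_{(p)}$.

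The only real content that is new relative to Theorems \ref{Moorehcm} and \ref{MooreT} is the transfer from exponential growth of the Poincar\'e series of $H_\ast(\Omega X;\Q)$ to exponential growth of the weight-graded pieces of $\LL_r^u$; this is immediate from PBW, since a graded Lie algebra concentrated in a bounded range of weights cannot have an enveloping algebra of exponential growth. The rest of the argument is formal.
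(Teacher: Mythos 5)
Your proposal handles only the case $\mathrm{Rank}(Q\otimes\Q)\geq 2$, but Theorem~\ref{MooreCW} does not include that rank hypothesis --- it assumes only that $X$ has $r\geq 3$ cells of dimension $n$ and one $2n$-cell. You state that the rank condition is ``the standing assumption, as in Theorem C,'' but that is not the case for this theorem; the paper's proof explicitly splits into three cases according to whether $\rank(Q\otimes\Q)$ is $0$, $1$, or $\geq 2$. When the rank is $0$ or $1$, the cup-product form is degenerate over $\Q$, so Proposition~\ref{rank2quad} fails, $H^*(X;k)$ need not be a quadratic algebra, and Theorems~\ref{cellloop} and \ref{cellhtpy} (on which your entire argument rests) are simply unavailable. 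Your proof therefore has nothing to say about these cases.

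The paper closes this gap with a different argument for the degenerate cases: using $r\geq 3$ it finds, after a rational change of basis, two classes $v_1,v_2\in H^n(X;\Q)$ with $v_1^2=v_2^2=v_1v_2=0$, and then for all but finitely many primes $p$ shows that $S^n_{(p)}\vee S^n_{(p)}$ is a retract of $X_{(p)}$ up to homotopy. Unbounded $p$-exponents then follow from the Hilton decomposition of $\pi_*(S^n\vee S^n)$. This retraction argument, which uses the Milnor--Moore theorem and a careful analysis of the attaching map in $\pi_{2n-1}(\vee_r S^n)$, is the real content missing from your write-up. Your argument in the $\rank\geq 2$ case is essentially the same as the paper's, so the issue is precisely the omitted degenerate cases.
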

\begin{proof}
It is clear from the formula in Theorem \ref{cellhtpyform} that $X$ is rationally hyperbolic if $r\geq 3$. Now there are three cases depending on the $\rank(Q\otimes\Q)$ being $0$, $1$ and at least $2$. In the last case it follows from methods similar to Theorem \ref{Moorehcm} that $\pi_\ast S^l \otimes R_Q$ occurs as a summand of $\pi_\ast M\otimes R_Q$ for arbitrarily large $l$. Therefore, the $p$-exponents are unbounded for any primes not in $\Pi_Q$. 

Now we deal with the cases rank $0$ and $1$ by proving that for all but finitely many primes $S^n_{(p)} \vee S^n_{(p)}$ is a retract of $X_{(p)}$. The result then follows from the formulas in \cite{Hil55}. If $\rank(Q\otimes\Q)$ is $0$ or $1$, then one may choose a basis $v_1,\cdots, v_r$ of $H^n(X;\Q)$ so that $v_1^2=v_2^2=v_1v_2=0$ (this is possible because $r\geq 3$). Let $A$ be the change of basis matrix. Choose a prime $p$ so that 
\begin{enumerate}
\item $v_1,\cdots, v_r$ is a basis for $H^n(X;\Z_{(p)})$.
\item There is no $p$-torsion in $\pi_{2n-1}S^n$. 
\end{enumerate} 
Note that these conditions are satisfied for all but finitely many $p$. We prove the above statement for all these primes. 

Let $X= (\vee_r S^n) \cup_f e^{2n}$. Now we map $Y= (\vee_r S^n)_{(p)} \cup_g e^{2n} \to X_{(p)}$ by mapping the wedge of $n$-spheres along $(v_1,\cdots, v_r)$ and putting $g= f \circ A^{-1}$. Thus, $Y$ is well-defined in the category of $p$-local spaces (or rational spaces) and in these cases the map $Y\to X$ is a $p$-local equivalence (respectively, rational equivalence).

Refer to the notation $l$ from Proposition \ref{cellcobar}. It follows that in terms of the basis $v_1,\cdots,v_r$ the coefficients of $v_1^2,v_1\otimes v_2, v_2\otimes v_1, v_2^2$ in $l$ is $0$. Let $\alpha_1, \cdots \alpha_r$ be the homotopy classes $\alpha_i : S^n \stackrel{v_i}{\to} Y$. Now we have that the element 
$$g \in \oplus_{i=1}^r (\alpha_i \circ \pi_{2n-1} S^n_{(p)})  \oplus (\oplus_{i<j} \Z_{(p)}\{[\alpha_i,\alpha_j]\})$$
and similarly its rationalization
$$g_\Q \in  \oplus_{i=1}^r (\alpha_i \circ \pi_{2n-1} S^n_{\Q}) \oplus (\oplus_{i<j} \Q\{[\alpha_i,\alpha_j]\}).$$

The rational class $g_\Q$ is $0$ in the homotopy Lie algebra of $X$. By the Milnor-Moore theorem, the symmetric algebra on the homotopy Lie algebra is isomorphic to the homology $H_*(\Omega X;\Q)$. Reading off relations in $H_{2n-2}(\Omega X;\Q)$ we note that $g_\Q$ must be $0$ in this group while from the proof of Proposition \ref{cellcobar} we note that the only relations are given by multiples of $l$. It follows that the factors in $g_\Q$ along the summands $\alpha_1 \circ \pi_{2n-1} S^n_{\Q}$,  $\alpha_2 \circ \pi_{2n-1} S^n_{\Q}$ and $\Q\{[\alpha_1,\alpha_2]\}$ are $0$. 

From the second condition on $p$ above, $\pi_{2n-1} S^n_{(p)}$ is free. Since $g_\Q = g \otimes \Q$ and $\pi_{2n-1} S^n_{(p)}\to \pi_{2n-1} S^n_{\Q}$ is injective, we deduce that the factors of $g$ along the summands  $\alpha_1 \circ \pi_{2n-1} S^n_{(p)}$,  $\alpha_2 \circ \pi_{2n-1} S^n_{(p)}$ and $\Z_{(p)}\{[\alpha_1,\alpha_2]\}$ are $0$. Therefore $g$ may be expressed as 
$$g= \sum_{i=3}^r \alpha_j\circ \lambda_j + \sum_{i<j, (i,j)\neq (1,2)} \mu_{i,j} [\alpha_i,\alpha_j]$$ 





Now we may construct a map $h: Y\to S^n_{(p)}\vee S^n_{(p)}$ as follows. On the $n$-skeleton consider the map $h: \vee_r S^n_{(p)} \to S^n_{(p)} \vee S^n_{(p)}$ which projects off the last $r-2$ factors, i.e., $\alpha_1$ and $\alpha_2$ go to the two inclusions and $\alpha_j \mapsto 0$ for $j\ge 3$.  Now from the formula above the composite $h\circ g :S^{2n-1} \to S^n_{(p)}\vee S^n_{(p)}$ is null-homotopic. Thus we obtain $h : Y \to S^n_{(p)}\vee S^n_{(p)}$ which gives a retraction upto homotopy.  
\end{proof}

\begin{rmk}\label{MooreX}
When $r=1$ the space $X=S^n\cup_f e^{2n}$. Note that $\pi_{2n-1}(S^n)$ is finite when $n$ is odd, and has a $\Z$-summand when $n$ is even. If $f$ is of finite order\footnote{By \cite{Hil55} and the growth of $p$-exponents as dimension $l$ of $S^l$ increases it follows that $S^n \vee S^m$ has unbounded $p$-exponents for any prime as long as $n,m\geq 2$. The case when $f=0$ corresponds to $X=S^n\vee S^{2n}$.} then on the one hand the rational homotopy groups of $X$ are the same as that of the rationally hyperbolic space $X=S^n\vee S^{2n}$ and on the other hand, by \cite{NeSe81}, $X$ has unbounded $p$-exponents for odd primes. When $f$ is of infinite order, and necessarily $n$ is even, then 
$$\pi_\ast(X)\otimes\Q\cong \pi_\ast(S^n\cup_h  e^{2n})\otimes \Q$$
where $h:S^{2n-1}\to S^n$ generates the $\Z$-summand of $\pi_{2n-1}(S^n)$. It is clear (via minimal models) that $S^n\cup_h e^{2n}$ is rationally elliptic. When $h$ is a suitable multiple of $[\iota_{n},\iota_{n}]$ then \cite{NeSe81} proves that $X$ has finite $p$-exponents for odd primes. Some very special cases of $3$-cell complexes are also covered in \cite{NeSe81} but to our knowledge the general case for $r=2$ is unresolved.
\end{rmk}

\subsection{Decomposition of loop space}\label{OmM}
This section relates the results of \cite{BeTh14} to the methods of the current paper. The paper \cite{BeTh14} shows that in various cases homotopy groups of different manifolds are isomorphic. This is done by proving that the loop spaces are weakly equivalent. In the examples of this paper, we have shown that the loop spaces are products of loop spaces of spheres. Therefore these can be used to deduce the loop space decompositions in \cite{BeTh14}. This also covers the $8,16$ dimensional cases which was not proved in \cite{BeTh14}. 

We know that the cases when Betti number is $1$ and Betti number is at least $2$ are very different. In the latter case we have the following theorem.

\begin{theorem}\label{htpybetti}
 If $M$ and $N$ are $(n - 1)$-connected $2n$-dimensional manifolds such that $H_n(M) \cong H_n(N)\cong\Z^r$ with rank $r\ge 2$ then $\Omega M \simeq  \Omega N$. As a consequence, the homotopy groups of an $(n-1)$-connected $2n$-manifold depend only on the $n^\textit{th}$ Betti number $r$ if it is at least $2$. If $M$ and $N$ are $n$-dimensional connected sums of sphere products, then $\Omega M \simeq \Omega N$ if and only if $H_m(M ) \cong H_m (N)$ for each $m < n$.
\end{theorem}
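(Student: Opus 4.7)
The plan is to derive both parts of the theorem from the explicit loop space decompositions already constructed in Theorem \ref{htpy} and Theorem \ref{htpyT}, combined with the closed combinatorial formulas of Theorems \ref{htpyform} and \ref{htpyTform}.

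For the first statement, I would start from the weak equivalence
$$\Lambda_M : \mathit{hocolim}_t \prod_{i=1}^t \Omega S^{h_i(M)} \longrightarrow \Omega M$$
furnished by Theorem \ref{htpy}, where the sequence of heights $(h_i(M))$ is read off from the Lyndon basis of $\mathcal{L}_r^u(M)$. The key observation is that although the Lie algebra $\mathcal{L}_r^u(M)$ itself depends on the intersection form, Theorem \ref{htpyform} gives a closed formula for the number of indices $i$ with $h_i(M)$ equal to any given integer, involving only $r$. Hence the multiset $\{h_i(M)\}$ coincides with $\{h_i(N)\}$, the two target products are homeomorphic, and chaining the two weak equivalences produces $\Omega M \simeq \Omega N$. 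The asserted independence of the homotopy groups on the intersection form is then an immediate corollary.

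For the ``if'' direction of the second statement, the same strategy applies. If the Betti numbers of $M$ and $N$ in degrees less than $n$ agree, then the multisets of pairs $\{(p_j, n - p_j)\}$ coincide, and by Theorem \ref{htpyTform} the multiset of heights appearing in the Lyndon basis of $\mathcal{L}_r^u$ depends only on this combinatorial data (independent of the signs $\varepsilon_j$ in the defining relation, which affect the Lie algebra itself but not its Hilbert series). The weak equivalences from Theorem \ref{htpyT} then deliver $\Omega M \simeq \Omega N$.

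For the converse, which I expect to be the main point, my plan is to recover the multiset $\{(p_j, q_j)\}$ directly from the Poincar\'e series of $H_\ast(\Omega M;\Q)$, which is a homotopy invariant of $\Omega M$. By Theorem \ref{homloop} this series equals
$$\frac{1}{1 - \sum_{j=1}^r (t^{p_j - 1} + t^{q_j - 1}) + t^{n-2}}.$$
The hypothesis $p_j, q_j \geq 2$ forces $p_j - 1, q_j - 1 \leq n - 3$, so the $t^{n-2}$ contribution in the denominator is separated from the remaining terms and the multiset of exponents $\{p_j - 1, q_j - 1\}_{j=1}^r$ can be read off unambiguously from the coefficients of the polynomial reciprocal. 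This determines the Betti numbers $H_m(M)$ for $m < n$ and finishes the argument.
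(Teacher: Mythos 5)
Your proposal is correct and follows the same route as the paper: both rely on the weak equivalence $\Omega M \simeq \prod_i \Omega S^{h_i}$ from Theorem \ref{htpy} (resp. Theorem \ref{htpyT}) and the fact that the multiset of heights $\{h_i\}$ is governed by the combinatorial formulas of Theorems \ref{htpyform} and \ref{htpyTform}, which depend only on $r$ (resp. on the dimensions $p_j, q_j$). The paper's own proof is a four-line citation to those theorems and leaves the ``only if'' direction of the connected-sum statement essentially implicit; your argument recovering the multiset $\{p_j-1, q_j-1\}$ from the reciprocal of the Poincar\'{e} series of $H_*(\Omega M;\Q)$---using that $p_j-1, q_j-1 \le n-3$ keeps these exponents strictly below $n-2$---is exactly the missing step, and it is correct.
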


\begin{proof}
For  an $(n-1)$-connected $2n$-manifold $M$ with $n^\textup{th}$ Betti number $\ge 2$, we have the formula of $\Omega M$ from Theorem \ref{htpy} as a product of spheres. Clearly this formula only depends on the $n^\textup{th}$ Betti number of $M$. This proves the first two statements of the theorem. For the statement on products of spheres refer to Theorem \ref{htpyT}.
\end{proof}

Next we deduce the result on the decomposition of the loop space of \cite{BeTh14}.

\begin{theorem}
Let $M_r$ be an $(n-1)$-connected $2n$-dimensional manifold such that $\dim (H_n(M)) = r\ge 2$. There is a homotopy equivalence
$$\Omega M \simeq \Omega (S^n \times S^n ) \times \Omega (J \vee (J \wedge \Omega(S^n \times S^n ))) $$
where $J = \vee_{k=2}^r S^n$. 
\end{theorem}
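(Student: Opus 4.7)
The strategy is to use the Lie-algebraic decomposition developed in the previous sections to identify both sides as homotopy-equivalent products of loop spaces of spheres. The first ingredient is the classical James-type splitting
$$\Omega(A \vee B) \simeq \Omega A \times \Omega\bigl(B \vee (B \wedge \Omega A)\bigr),$$
which holds for simply connected $A$ and connected $B$ that is a suspension. It follows from the James filtration model for $\Omega\Sigma(-)$ together with the identification $B \rtimes \Omega A \simeq B \vee (B \wedge \Omega A)$ of the right half-smash. Applied with $A = S^n \times S^n$ and $B = J$, this rewrites the right-hand side of the theorem as $\Omega((S^n \times S^n) \vee J)$, which is the loop space of a CW complex of the form $(\bigvee S^n) \cup_{[\iota_1,\iota_2]} e^{2n}$, with top cell attached along a single Whitehead product.

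I would then prove $\Omega M_r \simeq \Omega((S^n \times S^n) \vee J)$ by adapting the proof of Theorem \ref{htpy}. Both $\Omega M_r$ and $\Omega((S^n \times S^n) \vee J)$ admit decompositions $\prod_i \Omega S^{h_i}$ where the dimensions $h_i$ come from Lyndon bases of Lie algebras of the form $\Lie(a_1,\ldots,a_r)/(l^u)$, with $l^u$ a single quadratic relation in the $a_i$. By Theorem \ref{htpybetti} we may choose a representative $M_r$ whose intersection form is in any convenient normal form (symplectic if $n$ is odd, or diagonalized as in Proposition \ref{cellquadratic} if $n$ is even); and by the change-of-basis argument in Proposition \ref{freemod}(2) we can then bring $l^u(M)$ into the shape $[a_1,a_2] + \text{(lower-order terms)}$ in the Lyndon order. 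Under this normalization the Lie algebras $\LL^u_r(M)$ and the analogous Lie algebra for $(S^n \times S^n) \vee J$ have the same Lyndon basis, and the iterated Whitehead products $\lambda_i : S^{h_i} \to M_r$ and $\lambda_i : S^{h_i} \to (S^n \times S^n) \vee J$ combine via the map $\Lambda$ of Theorem \ref{htpy} into a weak equivalence on either side, by the same Hurewicz-image and Pontrjagin-ring argument.

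The main obstacle will be the case $n$ even when the symmetric (diagonal) part of the intersection form cannot be eliminated by basis change. In that case one cannot simply reduce $l^u$ to $[a_1,a_2]$, and one must instead pass through the graded Lie algebra $\LL^{gr}_r(M)$ and invoke the graded PBW theorem (Theorem \ref{PBW}) to match the loop space homologies. This is precisely the argument carried out in the second half of the proof of Theorem \ref{htpy}, and it applies verbatim here to show that even with the squaring relations present both sides decompose into the same product of loop spaces of spheres. Assembling these pieces gives the desired homotopy equivalence.
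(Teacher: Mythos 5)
Your proposal is correct and follows essentially the same route as the paper. Both proofs reduce the statement to showing $\Omega M_r \simeq \Omega((S^n\times S^n)\vee J)$ and then applying the James/Theriault splitting $\Omega(A\vee B)\simeq \Omega A\times\Omega(B\vee(B\wedge\Omega A))$ (which is Theorem 2.6 of \cite{BeTh14}); the only cosmetic difference is that the paper establishes $\Omega M_r \simeq \Omega((S^n\times S^n)\vee J)$ by observing that $(S^n\times S^n)\vee J \simeq \vee_r S^n\cup_{[\alpha_1,\alpha_2]}e^{2n}$ has $\Pi_Q=\emptyset$ (there is a unimodular $2\times 2$ minor) so Theorem \ref{cellhtpy} applies integrally and its formula coincides with that of Theorem \ref{htpy}, whereas you re-run the Lyndon-basis argument directly rather than quoting that theorem.
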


\begin{proof}
Note that the homotopy type of $\Omega M$ depends only on $r$ by the proof of Theorem \ref{htpy}. Also observe that the formula in Theorem \ref{cellhtpy} can be made integral if $\Pi_Q$ is empty and this matches with the formula in Theorem \ref{htpy}. The set of primes is empty if there is a $2\times 2$ minor for the cup product form whose determinant is $\pm 1$. The condition is satisfied by the space $S^n\times S^n \vee J \simeq \vee_r S^n \cup_{[\alpha_1,\alpha_2]} e^{2n}$.  Hence
$$\Omega M_r \simeq \Omega ((S^n \times S^n) \vee_{k=2}^r S^n) \simeq \Omega ((S^n \times S^n) \vee J) .$$
Now we apply Theorem 2.6 of \cite{BeTh14} with $P=(S^n\times S^n) \vee J$ to obtain the result.
\end{proof}

The Betti number $1$ case is quite different and for each $n=2,4,8$ the situation is slightly different. The $n=2$ case is already dealt with in \cite{BeTh14}. For $n=4,8$ there are different manifolds with Betti number $1$ whose homotopy groups are not isomorphic. We have the following theorem.

\begin{theorem}
 Let $M$ be a $(n - 1)$-connected $2n$-manifold with $H_n(M) \cong \Z$.\\
\textup{(a)} If $n=2$, then $M\simeq \C P^2$. Hence $\Omega M \simeq S^1\times \Omega S^5$ and $\pi_k M \cong \pi_k S^5$ if $k\geq 3$ and $\pi_2 M \cong \Z$. \\
\textup{(b)} If $n=4$, $M\simeq V_{m,1}^8$ for some $m\in \Z/12\Z$. If $m \equiv 0~\mbox{or}~2~(\mbox{mod}~3)$, $\Omega V_{m,1}^8 \simeq S^3 \times \Omega S^{11}$ and 
$$\pi_kV_{m,1}^8 \cong \pi_{k-1}S^3 \oplus \pi_k S^{11}.$$
If $m,m'$ are congruent to $1~(\mbox{mod}~3)$ then $\pi_*V_{m,1}^8 \cong \pi_*V_{m',1}^8$ and these are not equal to  $\pi_{k-1}S^3 \oplus \pi_k S^{11}$. There is an equivalence after inverting $3$, i.e., 
$$\Omega V_{m,1}^8 \simeq_{\Z[1/3]} S^3 \times \Omega S^{11}.$$
\textup{(c)} If $n=8$, $M\simeq V_{m,1}^{16}$ for some $m\in \Z/120\Z$. In this case we have an equivalence after inverting $6$, $\Omega V_{m,1}^{16} \simeq_{\Z[1/6]} S^7 \times \Omega S^{23}$. Thus, 
$$\pi_kV_{m,1}^{16} \otimes \Z[1/6] \cong (\pi_{k-1}S^7 \oplus \pi_k S^{23})\otimes \Z[1/6].$$
 
\end{theorem}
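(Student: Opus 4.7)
The plan is to reduce each case to mapping cones of Hopf-invariant-one maps and then invoke the results of \S\ref{Betti1}. Since $M$ is $(n-1)$-connected with $H_n(M) \cong \Z$, a minimal CW model gives $M \simeq S^n \cup_h e^{2n}$ for some $h \in \pi_{2n-1}(S^n)$, and Poincar\'e duality on $M$ forces the cup product pairing $H^n(M) \otimes H^n(M) \to H^{2n}(M)$ to be non-singular, i.e., the Hopf invariant of $h$ is $\pm 1$. By Adams's theorem this restricts $n$ to $\{2,4,8\}$, accounting for the three parts of the statement.

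For part (a), the Hopf invariant one generators of $\pi_3(S^2) \cong \Z$ are $\pm \eta$, and both mapping cones are homotopy equivalent to $\C P^2$ (the degree $-1$ self-map of $S^2$ converts one into the other). The loop space decomposition then follows from the principal circle bundle $S^1 \to S^5 \to \C P^2$: rotating produces a fiber sequence $\Omega \C P^2 \to S^1 \to S^5$ whose second map is null-homotopic (since $[S^1, S^5] = \pi_1(S^5) = 0$), so the fibration $\Omega S^5 \to \Omega \C P^2 \to S^1$ admits a section and splits. The homotopy group statements then follow from the long exact sequence of the original fibration.

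For parts (b) and (c), the mapping cone realization identifies $M$ with $V_{m,1}^{8}$ (respectively $V_{m,1}^{16}$) for some $m$, using the parametrization of Hopf invariant one elements by $\Z/12$ (resp.\ $\Z/120$) discussed before Theorem \ref{4assoc}: every such $h$ has the form $\pm h_{m,1}$, and the sign can be absorbed by a degree $-1$ self-map of $S^{2n}$. The remaining statements have already been established earlier in the paper: Theorem \ref{4assoc} yields the integral splitting $\Omega V_{m,1}^{8} \simeq S^3 \times \Omega S^{11}$ when $m \equiv 0, 2 \pmod{3}$, and Theorem \ref{4bet1htpy} (respectively Theorem \ref{8bet1htpy}) supplies the $\Z[1/3]$-local (resp.\ $\Z[1/6]$-local) decomposition together with the assertion that the $V_{m,1}^{8}$ with $m \equiv 1 \pmod 3$ share integral homotopy groups.

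The substantive content therefore lies in the preceding sections, not in this summary theorem; the main difficulty, already overcome there, was to control the $A_3$-obstructions to homotopy associativity of the $H$-multiplications on $S^3$ and $S^7$, valued in $\pi_9 S^3 \cong \Z/3$ and $\pi_{21} S^7 \cong \Z/24 \oplus \Z/4$ respectively, via James's analysis together with Stasheff's quasifibration for $A_3$-spaces. Once these obstructions are shown to vanish (integrally when $m \not\equiv 1 \pmod 3$ in dimension $8$, and after inverting $3$ or $6$ in general), the splittings follow from the quasifibration $S^{n-1} \to S^{3n-1} \to V_{m,1}^{2n}$ together with a direct homology comparison using Proposition \ref{bet1loophom}.
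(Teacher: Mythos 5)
Your proposal is correct and follows essentially the same route as the paper: part (a) is handled by the classical fibration $S^1 \to S^5 \to \C P^2$, and parts (b), (c) are deferred to Theorems \ref{4assoc}, \ref{4bet1htpy}, and \ref{8bet1htpy}, whose proofs already produce the stated loop space decompositions. You supply a bit more justification than the paper does (the minimal CW model, the Hopf invariant $\pm1$ and absorption of the sign, the explicit section argument for the circle bundle), but these are expansions of the same argument rather than a different approach.
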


\begin{proof}
The formula in (a) is well known. The results in (b) follows from Theorem \ref{4assoc} and Theorem \ref{4bet1htpy}. The proof of these Theorems actually prove the mentioned loop space decompositions. 
The result in (c) follows from Theorem \ref{8bet1htpy}. 
\end{proof}

Similar results can be obtained for CW complexes of the type $X_r \simeq \vee_r S^n \cup e^{2n}$. 
\begin{theorem}
For a CW complex $X_r$ as above, suppose that the cup product bilinear form $Q$ has rank at least $2$. There exists a finite set of primes $\Pi_Q$ so that after localization with respect to $H(-;R_Q)$, 
$$\Omega X_r \simeq_{R_Q} \Omega (J \vee (J \wedge \Omega(S^n \times S^n ))) $$
where $R_Q$ is the subring of $\Q$ obtained by inverting the primes in $\Pi_Q$. 
\end{theorem}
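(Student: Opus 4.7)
The plan is to parallel the manifold case just proved: reduce $X_r$ after inverting $\Pi_Q$ to a wedge $(S^n\times S^n)\vee J$, and then invoke Theorem~2.6 of \cite{BeTh14} to obtain the loop-space decomposition.

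\emph{Normalization over $R_Q$.} By Proposition \ref{rank2quad}, the primes in $\Pi_Q$ can be chosen so that $Q\otimes R_Q$ has a $2\times 2$ invertible block. Applying the diagonalizations used in the proof of Proposition \ref{cellquadratic} over the localization $R_Q$, I choose a basis $\alpha_1,\ldots,\alpha_r$ of $H^n(X;R_Q)$ in which $(\alpha_1,\alpha_2)$ realizes this block and is orthogonal to $\alpha_3,\ldots,\alpha_r$; the change of basis lifts to an $R_Q$-equivalence of the $n$-skeleton $\bigvee_r S^n$.

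\emph{Simplifying the top cell.} The attaching map $f\in \pi_{2n-1}\bigl(\bigvee_r S^n\bigr)$ decomposes as
$$f \,\in\, \Big(\bigoplus_i \alpha_i\circ \pi_{2n-1}S^n\Big) \oplus \Big(\bigoplus_{i<j}\Z\{[\alpha_i,\alpha_j]\}\Big).$$
The argument in the last paragraph of the proof of Theorem \ref{MooreCW} -- freeness of $\pi_{2n-1}S^n_{(p)}$, injectivity of $\pi_{2n-1}S^n_{(p)}\hookrightarrow \pi_{2n-1}S^n_{\Q}$ for $p\notin\Pi_Q$, and identification of the relations in $H_{2n-2}(\Omega X;R_Q)$ via the cobar computation of Proposition \ref{cellcobar} -- shows that after enlarging $\Pi_Q$ if needed to invert any relevant torsion exponents in $\pi_{2n-1}S^n$, every summand of $f$ except the $[\alpha_1,\alpha_2]$-component can be removed by a further self-equivalence of the $n$-skeleton. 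The remaining coefficient of $[\alpha_1,\alpha_2]$ is a unit in $R_Q$ by the normalization, and a change of orientation turns it into $1$. Hence
$$X_r\simeq_{R_Q} (S^n\times S^n)\vee J,\qquad J:=\bigvee_{k=2}^{r}S^n.$$

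\emph{Invoking the Beben--Th\'eriault splitting and main obstacle.} Theorem 2.6 of \cite{BeTh14}, applied with $A=S^n\times S^n$ and $B=J$, packages $\Omega((S^n\times S^n)\vee J)$ into the form $\Omega(J\vee (J\wedge \Omega(S^n\times S^n)))$ appearing in the statement; composing with the previous step yields the claimed $\Omega X_r\simeq_{R_Q}\Omega(J\vee (J\wedge \Omega(S^n\times S^n)))$. The hard part is the normalization of the top-cell attaching map: controlling the summands $\alpha_i\circ \pi_{2n-1}S^n$ and the off-diagonal Whitehead products requires enlarging $\Pi_Q$ to include primes dividing both off-diagonal entries of $Q$ and the relevant exponents of $\pi_{2n-1}S^n$. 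Once these primes are inverted, one replays the Milnor--Moore/Hurewicz argument of Theorem \ref{MooreCW} to kill all but the distinguished Whitehead product, after which the loop-space identification is a direct quotation of \cite{BeTh14}.
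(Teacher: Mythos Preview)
Your reduction step is where the argument breaks. You assert that, after a suitable localization and a self-equivalence of the $n$-skeleton, every summand of the attaching map $f$ other than the $[\alpha_1,\alpha_2]$-component can be removed, yielding $X_r\simeq_{R_Q}(S^n\times S^n)\vee J$. This is false whenever $\mathrm{rank}(Q\otimes\Q)>2$. A self-equivalence of $\bigvee_r S^n$ is (after localization) an element of $GL_r(R_Q)$ acting on $H^n$; it changes $Q$ only by congruence and hence preserves its rank. If, say, $\epsilon_{3,4}\neq 0$ in your normalized basis, the Whitehead product $[\alpha_3,\alpha_4]$ appears in $f$ with nonzero coefficient, and no base change or further localization kills it. Concretely, for $X_r=\#^2(S^n\times S^n)$ the form $Q$ is unimodular of rank $4$, $\Pi_Q=\emptyset$, and the cohomology ring of $X_r$ visibly differs from that of $(S^n\times S^n)\vee S^n\vee S^n$: in the latter every class coming from $J$ has trivial cup square and trivial cup product with everything, while in the connected sum $\alpha_3^*\cup\alpha_4^*$ is the top class. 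So $X_r\not\simeq_{R_Q}(S^n\times S^n)\vee J$ in general, and the argument you borrow from the proof of Theorem~\ref{MooreCW} does not apply here---that argument was run precisely in the \emph{low-rank} regime $\mathrm{rank}(Q\otimes\Q)\le 1$, where one can find two classes with vanishing mutual cup products.

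The paper avoids this obstruction by never comparing the spaces $X_r$ and $(S^n\times S^n)\vee J$ directly. Instead, the proof of Theorem~\ref{cellhtpy} already produces an $R_Q$-local weak equivalence $\Omega X_r\simeq_{R_Q}\prod_i\Omega S^{h_i}$, and Theorem~\ref{cellhtpyform} shows the multiset $\{h_i\}$ depends only on $r$. Since $(S^n\times S^n)\vee J$ is itself of this type with $\Pi_Q=\emptyset$, both loop spaces decompose as the \emph{same} product of looped spheres, hence are $R_Q$-equivalent; only then does one quote \cite{BeTh14}. Your invocation of Beben--Theriault at the end is fine, but it must be fed the loop-space equivalence coming from Theorem~\ref{cellhtpy}, not a nonexistent equivalence of the underlying complexes.
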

\begin{proof}
The result directly follows from the proof of Theorem \ref{cellhtpy}.
\end{proof}


\bibliographystyle{siam}

\vspace*{1.5cm}

\noindent{\small D}{\scriptsize EPARTMENT OF }{\small M}{\scriptsize ATHEMATICS, }{\small R}{\scriptsize AMAKRISHNA }{\small M}{\scriptsize ISSION }{\small V}{\scriptsize IVEKANANDA }{\small U}{\scriptsize NIVERSITY, }{\small H}{\scriptsize OWRAH, }{\small WB} {\footnotesize 711202, }{\small INDIA}\\
{\it E-mail address} : \texttt{samik.basu2@gmail.com}, \,\,\texttt{samik@rkmvu.ac.in}\\[0.2cm]

\noindent{\small D}{\scriptsize EPARTMENT OF }{\small M}{\scriptsize ATHEMATICS, }{\small R}{\scriptsize AMAKRISHNA }{\small M}{\scriptsize ISSION }{\small V}{\scriptsize IVEKANANDA }{\small U}{\scriptsize NIVERSITY, }{\small H}{\scriptsize OWRAH, }{\small WB} {\footnotesize 711202, }{\small INDIA}\\
{\it E-mail address} : \texttt{basu.somnath@gmail.com}, \,\,\texttt{somnath@rkmvu.ac.in}\\[0.2cm]

\end{document}